\theoremstyle{plain} 
\newtheorem{theorem}{Theorem}[section] 
\newtheorem{lemma}[theorem]{Lemma}
\newtheorem{corollary}[theorem]{Corollary}
\newtheorem{proposition}[theorem]{Proposition}
\theoremstyle{definition} 
\newtheorem{definition}[theorem]{Definition}
\newtheorem{example}[theorem]{Example}
\theoremstyle{remark}
\newtheorem{remark}[theorem]{Remark}
\newcommand{\jump}[1]{\ensuremath{[\![#1]\!]}}
\newcommand{\Q}{\mathop{\mathbb{Q}}\nolimits}
\newcommand{\li}{\mathop{\mathrm{li}}\nolimits}
\newcommand{\wt}{\mathop{\mathrm{wt}}\nolimits}
\newcommand{\dep}{\mathop{\mathrm{dep}}\nolimits}
\newcommand{\pr}{\mathop{\mathrm{pr}}\nolimits}
\newcommand{\Li}{\mathop{\mathrm{Li}}\nolimits}
\begin{document}

\title{\uppercase{On functional equations of finite multiple polylogarithms}} 

\author{
	\textsc{Kenji Sakugawa and Shin-ichiro Seki} 
}
\date{}

\begin{abstract}
Recently, several people study finite multiple zeta values (FMZVs) and finite polylogarithms (FPs). In this paper, we introduce finite multiple polylogarithms (FMPs), which are natural generalizations of FMZVs and FPs, and we establish functional equations of FMPs. As applications of these functional equations, we calculate special values of FMPs containing generalizations of congruences obtained by Me\v{s}trovi\'c, Z. W. Sun, L. L. Zhao, Tauraso, and J. Zhao. We show supercongruences for certain generalized Bernoulli numbers and the Bernoulli numbers as an appendix.
\end{abstract}

\maketitle

\section{Introduction}
\label{sec:Introduction}
From the end of twentieth century to the beginning of twenty-first century, Hoffman and J. Zhao had started research about mod $p$ multiple harmonic sums, which are motivated by various generalizations of classical Wolstenholme's theorem.
Recently, Kaneko and Zagier introduced a new ``ad\'elic'' framework to describe the pioneer works by Hoffman and Zhao and they defined finite multiple zeta values (FMZVs). Let $k_1, \dots, k_m$ be positive integers and $\Bbbk := (k_1, \dots, k_m)$.
\begin{definition}[Kaneko and Zagier \cite{K, KZ}]
	{\em The finite multiple zeta value} $\zeta_{\mathcal{A}}(\Bbbk )$ is defined by
	\[
	\zeta_{\mathcal{A}}(\Bbbk ) := \Biggl( \sum_{p> n_1 > \cdots > n_m >0}\frac{1}{n_1^{k_1}\cdots n_m^{k_m}} \bmod{p} \Biggr)_p \in \mathcal{A}\hspace{4mm}
	\]
	and {\em the finite multiple zeta-star value} $\zeta_{\mathcal{A}}^{\star}(\Bbbk)$ is defined by
	\[
	\zeta_{\mathcal{A}}^{\star}(\Bbbk ) := \Biggl( \sum_{p-1\geq n_1 \geq \cdots \geq n_m \geq 1}\frac{1}{n_1^{k_1}\cdots n_m^{k_m}} \bmod{p} \Biggr)_p \in \mathcal{A}.
	\] 
	Here, the $\Q$-algebra $\mathcal{A}$ is defined by
	\[
	\mathcal{A} := \Biggl( \prod_{p}\mathbb{F}_p \Biggr) \left/ \Biggl( \bigoplus_{p}\mathbb{F}_p \Biggr) \right. ,
	\]
	where $p$ runs over all prime numbers.
\end{definition}
In this framework, Kaneko and Zagier established a conjecture, which states that there is an isomorphism between the $\Q$-algebra spanned by FMZVs and the quotient $\mathbb{Q}$-algebra modulo the ideal generated by $\zeta (2)$ of 
the $\Q$-algebra spanned by the usual multiple zeta values. 

On the other hand,  Kontsevich \cite{Ko}, Elbaz-Vincent and Gangl \cite{EG} introduced finite version of polylogarithms and studied functional equations of them. Based on their works, Mattarei and Tauraso \cite{MT1} calculated special values of finite polylogarithms.

Inspired by these studies, we introduce a finite version of multiple polylogarithms in the framework of Kaneko and Zagier:
\begin{definition}[See Definition \ref{def of FMP}]
	{\em The finite multiple polylogarithms} (FMPs) $\text{\rm \pounds}_{\mathcal{A}, \Bbbk}(t)$, $\text{\rm \pounds}_{\mathcal{A}, \Bbbk}^{\star}(t)$, $\widetilde{\text{\rm \pounds}}_{\mathcal{A}, \Bbbk}(t)$, and $\widetilde{\text{\rm \pounds}}_{\mathcal{A}, \Bbbk}^{\star}(t)$ are defined by
	\begin{align*}
	\text{\rm \pounds}_{\mathcal{A}, \Bbbk}(t) &:= \Biggl( \sum_{p> n_1 > \cdots > n_m >0}\frac{t^{n_1}}{n_1^{k_1}\cdots n_m^{k_m}} \bmod{p} \Biggr)_p \in \mathcal{A}_{\mathbb{Z} [t]},\\
	\text{\rm \pounds}_{\mathcal{A}, \Bbbk}^{\star}(t) &:= \Biggl( \sum_{p-1\geq n_1 \geq \cdots \geq n_m \geq 1}\frac{t^{n_1}}{n_1^{k_1}\cdots n_m^{k_m}} \bmod{p} \Biggr)_p \in \mathcal{A}_{\mathbb{Z} [t]},\\
	\widetilde{\text{\rm \pounds}}_{\mathcal{A}, \Bbbk}(t) &:= \Biggl( \sum_{p> n_1 > \cdots > n_m >0}\frac{t^{n_m}}{n_1^{k_1}\cdots n_m^{k_m}} \bmod{p} \Biggr)_p \in \mathcal{A}_{\mathbb{Z} [t]},\\
	\widetilde{\text{\rm \pounds}}_{\mathcal{A}, \Bbbk}^{\star}(t) &:= \Biggl( \sum_{p-1\geq n_1 \geq \cdots \geq n_m \geq 1}\frac{t^{n_m}}{n_1^{k_1}\cdots n_m^{k_m}} \bmod{p} \Biggr)_p \in \mathcal{A}_{\mathbb{Z} [t]}.
	\end{align*}
	Here, the $\Q$-algebra $\mathcal{A}_{\mathbb{Z}[t]}$ is defined by
	\[
	\mathcal{A}_{\mathbb{Z} [t]} = \Biggl( \prod_{p}\mathbb{F}_p[t] \Biggr) \left/ \Biggl( \bigoplus_{p}\mathbb{F}_p [t] \Biggr) \right. , 
	\]
	where $p$ runs over all prime numbers.
\end{definition}
The symbol $\text{\rm \pounds}$ is used for the finite polylogarithms by Elbaz-Vincent and Gangl in their paper \cite{EG}.

The main purpose of this paper is to establish functional equations of FMPs. Special cases of the main results are as follows:
\begin{theorem}[Main Theorem]
	The following functional equations hold in $\mathcal{A}_{{\mathbb Z}[t]}$:
	\begin{equation}
	\widetilde{\text{\rm \pounds}}_{\mathcal{A}, \Bbbk}^{\star}(t) = \widetilde{\text{\rm \pounds}}_{\mathcal{A}, \Bbbk^{\vee}}^{\star}(1-t)-\zeta_{\mathcal{A}}^{\star}(\Bbbk^{\vee}),
	\label{fn eq 1}\end{equation}
	\begin{equation}
	(-1)^{m-1}\text{\rm \pounds}_{\mathcal{A}, \Bbbk}(t)=\widetilde{\text{\rm \pounds}}_{\mathcal{A}, \overline{\Bbbk}}^{\star}(t)+\sum_{j=1}^{m-1}(-1)^j\text{\rm \pounds}_{\mathcal{A}, (k_1, \dots, k_j)}(t)\zeta_{\mathcal{A}}^{\star}(k_m, \dots, k_{j+1}).
	\label{fn eq 2}\end{equation}
	Here, $\overline{\Bbbk}$ is the reverse index and $\Bbbk^{\vee}$ the Hoffman dual of $\Bbbk$ $($see Subsection \ref{subsec:Notations for indices and the Hoffman dual}$)$.
	\label{introthm}\end{theorem}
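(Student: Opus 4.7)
The plan is to prove both functional equations as identities of polynomials in $\mathbb{F}_p[t]$ holding for all sufficiently large primes $p$, by directly manipulating the mod-$p$ finite sums that define each FMP. The two equations have rather different characters: \eqref{fn eq 2} is essentially a combinatorial identity of inclusion--exclusion type, while \eqref{fn eq 1} is a genuine duality identity requiring nontrivial binomial congruences modulo $p$.

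For equation \eqref{fn eq 2}, I would first reindex every term on the right-hand side so that it becomes a sum over $(n_1,\ldots,n_m) \in \{1,\ldots,p-1\}^m$ of the \emph{common} summand $t^{n_1}/(n_1^{k_1}\cdots n_m^{k_m})$; this is achieved by reversing the order of summation in $\widetilde{\text{\rm \pounds}}_{\mathcal{A},\overline{\Bbbk}}^{\star}(t)$ and in each factor $\zeta_{\mathcal{A}}^{\star}(k_m,\ldots,k_{j+1})$. After this renormalization the identity becomes equivalent to the purely combinatorial statement
\[
\mathbf{1}[n_1 > \cdots > n_m] \;=\; \sum_{j=0}^{m-1} (-1)^{m-1-j}\,\mathbf{1}[n_1 > \cdots > n_j]\cdot\mathbf{1}[n_{j+1}\leq \cdots \leq n_m],
\]
where vacuous chains (at $j=0$ and $j=m-1$) are treated as identically true, and the $j=0$ term accounts for $\widetilde{\text{\rm \pounds}}_{\mathcal{A},\overline{\Bbbk}}^{\star}(t)$. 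This indicator identity is then straightforward to prove by induction on $m$, using $\mathbf{1}[a>b]=1-\mathbf{1}[a\leq b]$ applied at the boundary position $n_{m-1}$ versus $n_m$.

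For equation \eqref{fn eq 1}, I would expand $(1-t)^{n_{m'}}$ by the binomial theorem inside $\widetilde{\text{\rm \pounds}}_{\mathcal{A},\Bbbk^\vee}^{\star}(1-t) - \zeta_{\mathcal{A}}^{\star}(\Bbbk^\vee)$, so that the subtracted $\zeta_{\mathcal{A}}^{\star}(\Bbbk^\vee)$ absorbs precisely the $\ell=0$ contribution. Swapping the order of summation and extracting the coefficient of $t^\ell$ for each $\ell\in\{1,\ldots,p-1\}$ then yields a family of congruences between nested multiple harmonic sums modulo $p$ that must be matched against the corresponding coefficient of $\widetilde{\text{\rm \pounds}}_{\mathcal{A},\Bbbk}^{\star}(t)$. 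The combinatorial backbone of this matching is Hoffman's description of $\Bbbk^\vee$ via the complementary separator subset of $\Bbbk$ in $\{1,\ldots,\wt(\Bbbk)-1\}$; the analytic backbone is the congruence $\binom{p-1-r}{\ell}\equiv(-1)^\ell\binom{r+\ell}{\ell}\pmod{p}$, already visible in the depth-one special case where it reduces to $\binom{p-1}{\ell}\equiv(-1)^\ell$ together with the hockey-stick identity.

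The main obstacle is expected to be equation \eqref{fn eq 1}: the binomial expansion produces nested sums that have to collapse onto $\widetilde{\text{\rm \pounds}}_{\mathcal{A},\Bbbk}^{\star}(t)$ modulo $p$, and organizing this cleanly will likely require an induction on the depth $m$ of $\Bbbk$ interacting with the combinatorial recursion for $\Bbbk^\vee$ (splitting off the last entry of $\Bbbk$ either appends a $1$ to $\Bbbk^\vee$ or increments its final entry). The specialization $t=1$ serves as a useful consistency check: equation \eqref{fn eq 1} then reduces to $\zeta_{\mathcal{A}}^{\star}(\Bbbk)=-\zeta_{\mathcal{A}}^{\star}(\Bbbk^\vee)$, the Hoffman duality for finite multiple zeta-star values, which must be recovered as the boundary condition of any valid inductive setup.
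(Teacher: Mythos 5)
Your treatment of (\ref{fn eq 2}) is correct and takes a genuinely different route from the paper's. After reversing the summation variables inside $\widetilde{\text{\rm \pounds}}_{\mathcal{A}, \overline{\Bbbk}}^{\star}(t)$ and each $\zeta_{\mathcal{A}}^{\star}(k_m,\dots,k_{j+1})$, every term on the right of (\ref{fn eq 2}) becomes a sum of the common weight $t^{n_1}/(n_1^{k_1}\cdots n_m^{k_m})$ over $\{1,\dots,p-1\}^m$ against an indicator, and your identity
\[
\mathbf{1}[n_1>\cdots>n_m]=\sum_{j=0}^{m-1}(-1)^{m-1-j}\,\mathbf{1}[n_1>\cdots>n_j]\,\mathbf{1}[n_{j+1}\le\cdots\le n_m]
\]
is exactly what is needed; your induction works (isolate the $j=m-1$ term, note that every other term carries the factor $\mathbf{1}[n_{m-1}\le n_m]$, and apply the inductive hypothesis to what remains). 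This is the antipode identity for the quasi-shuffle product, which the paper itself points to in Remark \ref{antipode}; the paper instead deduces (\ref{fn eq 2}) from Theorem \ref{A_n fneq}, obtained by combining the binomial identities (\ref{maltivar identity2}) and (\ref{multivar identity4}). Your argument is more elementary, holds verbatim over any coefficient ring and any truncation bound, and hence also yields the $\mathcal{A}_n$ and multi-variable statement (Theorem \ref{MTC}) at no extra cost.

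For (\ref{fn eq 1}) there is a genuine gap. Your reduction is set up correctly: writing $\Bbbk^\vee=(k_1',\dots,k_{m'}')$ and expanding $(1-t)^{n_{m'}}$, the statement becomes the family of congruences, for $1\le\ell\le p-1$,
\[
\sum_{p-1\ge n_1\ge\cdots\ge n_{m-1}\ge\ell}\frac{1}{n_1^{k_1}\cdots n_{m-1}^{k_{m-1}}\,\ell^{k_m}}
\;\equiv\;(-1)^{\ell}\sum_{p-1\ge n_1\ge\cdots\ge n_{m'}\ge\ell}\binom{n_{m'}}{\ell}\frac{1}{n_1^{k_1'}\cdots n_{m'}^{k_{m'}'}}\pmod{p},
\]
but this congruence \emph{is} the theorem: everything you actually verify (the depth-one case via the hockey-stick identity, the consistency check at $t=1$) sits at the boundary, and the step you defer to ``an induction on the depth interacting with the recursion for $\Bbbk^\vee$'' is precisely the nontrivial content. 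Working coefficient-by-coefficient also makes that induction harder than you suggest: the two elementary moves of Lemma \ref{Induction lemma for dual} (increment an outer entry of $\Bbbk$; append a new entry $1$) act on the generating polynomials by the operators $I_t$ and $J^{\star}_{t,s}$ of Subsection \ref{subsec:Truncated integral operators}, and on the level of individual coefficients the second of these is the convolution $t^n\mapsto\sum_{j=1}^{n}t^{n-j}s^{j}/j$, which does not preserve your single-$\ell$ congruences but mixes all of them. The paper's resolution is to prove the stronger \emph{polynomial} identity of Corollary \ref{cor of thmA}, keeping the factor $(-1)^{n_1}\binom{N}{n_1}$ for an arbitrary $N$, by induction on the weight starting from Euler's identity (Lemma \ref{starting identity}), and only at the end to set $N=p-1$ and use $(-1)^{n}\binom{p-1}{n}\equiv 1\pmod p$. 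To complete your argument you must either lift it to that polynomial level (at which point you are reproving Theorem \ref{theoremA}) or supply the full coefficient-wise induction; neither is present in the proposal.
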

The equality (\ref{fn eq 1}) is a generalization of the Hoffman duality \cite[Theorem 4.6]{Ho}. The precise version of the main results are Theorem \ref{MTA}, Corollary \ref{MTB}, Remark \ref{1-var remark}, and Theorem \ref{MTC} which consist of the multiple variable cases of (\ref{fn eq 1}) and (\ref{fn eq 2}), the $\mathcal{A}_2$-version of (\ref{fn eq 1}), and the $\mathcal{A}_n$-version of (\ref{fn eq 2}) for any positive integer $n$ (see Theorem \ref{gen of sun conj intro} below and Subsection \ref{subsec:The ring A} for the definition of $\mathcal{A}_n$). Main results are obtained as $\bmod$ $p$ reductions of generalizations (= Theorem \ref{theoremA} and Theorem \ref{theoremB}) of classical Euler's identity:
\begin{equation}
\sum_{n=1}^{N} (-1)^{n-1}\binom{N}{n}\frac{1}{n} = \sum_{n=1}^{N}\frac{1}{n},
\label{Euler's identity}\end{equation}
where $N$ is a positive integer (\cite{E}). The functional equation (\ref{fn eq 2}) and its generalization also hold for the usual multiple polylogarithms (Theorem \ref{UMP}).

As applications of the functional equations, we will calculate some special values of the finite multiple polylogarithms by using Tauraso and J. Zhao's results for the alternating multiple harmonic sums in Subsection \ref{subsec:Special values of F(S)MPs}.

Several people study supercongruences involving the harmonic numbers (Z. W. Sun, L. L. Zhao, Me\v{s}trovi\'c, and so on).  For instance, Z. W. Sun and L. L. Zhao proved the following congruence:
\begin{theorem}[Z. W. Sun and L. L. Zhao {\cite[Theorem 1.1]{SZ}}]
	Let $p$ be a prime number greater than  $3$. Then
	\[
	\sum_{k=1}^{p-1}\frac{H_k}{k2^k} \equiv \frac{7}{24}pB_{p-3} \pmod{p^2},
	\]
	where $H_k = \sum_{j=1}^k1/j$ is the $k$-th harmonic number and $B_{p-3}$ is the $(p-3)$-rd Bernoulli number.
	\label{Sun conjecture}\end{theorem}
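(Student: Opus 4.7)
My plan is to recognize the sum as a single value of a finite multiple polylogarithm at $t=1/2$, apply the $\mathcal{A}_2$-refinement of the Hoffman-type functional equation (\ref{fn eq 1}) announced in Remark~\ref{1-var remark}, and then substitute known mod $p^2$ evaluations due to Glaisher and Mattarei--Tauraso.

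Interchanging the order of summation in $H_k=\sum_{j=1}^{k}1/j$ gives
$$\sum_{k=1}^{p-1}\frac{H_k}{k\,2^k}=\sum_{p-1 \ge n_1 \ge n_2 \ge 1}\frac{(1/2)^{n_1}}{n_1 n_2},$$
which is exactly the $t=1/2$ specialization of $\text{\rm \pounds}^{\star}_{\mathcal{A}_2,(1,1)}(t)$ in the mod $p^2$ framework, so the theorem is the evaluation of this single FMP value modulo $p^2$. Since $(1,1)^{\vee}=(2)$ and $t=1/2$ is the fixed point of $t\mapsto 1-t$, equation (\ref{fn eq 1}) in $\mathcal{A}$ gives
$$\widetilde{\text{\rm \pounds}}^{\star}_{\mathcal{A},(1,1)}(1/2)=\text{\rm \pounds}^{\star}_{\mathcal{A},(2)}(1/2)-\zeta^{\star}_{\mathcal{A}}(2),$$
while a direct swap of summation combined with Wolstenholme's $H_{p-1}\equiv 0\pmod{p^2}$ rewrites $\widetilde{\text{\rm \pounds}}^{\star}_{\mathcal{A}_2,(1,1)}(1/2)\equiv -\text{\rm \pounds}^{\star}_{\mathcal{A}_2,(1,1)}(1/2)+\text{\rm \pounds}^{\star}_{\mathcal{A}_2,(2)}(1/2)\pmod{p^2}$. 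Together these degenerate in $\mathcal{A}$ to $\text{\rm \pounds}^{\star}_{\mathcal{A},(1,1)}(1/2)\equiv \zeta^{\star}_{\mathcal{A}}(2)\pmod{p}$, which merely recovers Wolstenholme; all arithmetic content therefore lives in the $p$-correction appearing in the $\mathcal{A}_2$-lift of Remark~\ref{1-var remark}, and that $\mathcal{A}_2$-identity is what I would use to isolate $\text{\rm \pounds}^{\star}_{\mathcal{A}_2,(1,1)}(1/2)$ modulo $p^2$ in terms of $\text{\rm \pounds}^{\star}_{\mathcal{A}_2,(2)}(1/2)$, $\zeta^{\star}_{\mathcal{A}_2}(2)$, and an explicit $O(p)$ correction.

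Finally, I would substitute Glaisher's $\sum_{k=1}^{p-1}1/k^2\equiv (2p/3)B_{p-3}\pmod{p^2}$ for $\zeta^{\star}_{\mathcal{A}_2}(2)$ and the Mattarei--Tauraso mod $p^2$ evaluation of $\sum_{k=1}^{p-1}1/(k^2 2^k)$ in terms of $B_{p-3}$ for $\text{\rm \pounds}^{\star}_{\mathcal{A}_2,(2)}(1/2)$, and then collect $B_{p-3}$-terms to read off the rational $7/24$.

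The main obstacle is the accurate extraction of the $p$-correction in the $\mathcal{A}_2$-lift of (\ref{fn eq 1}): since both sides of the key $\mathcal{A}$-identity vanish modulo $p$ by Wolstenholme, the entire content of Theorem~\ref{Sun conjecture} sits in the next-order term, and hitting exactly the constant $7/24$ depends on a delicate balance between the three Bernoulli evaluations (Glaisher, Mattarei--Tauraso, and the correction itself). The structural reduction from depth two to depth one is clean; the fiddly part is keeping the signs and the factors of $1/2$ in the $O(p)$ bookkeeping straight.
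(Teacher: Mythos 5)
Your reduction is essentially the one the paper itself uses: the sum is $\text{\rm \pounds}_{\mathcal{A}_2,\{1\}^2}^{\star}(1/2)$, the statement is the $m=2$ star case of Theorem \ref{A_2 theorem} (\ref{A_2, m, 1/2}), and both you and the paper drive the computation with the $\mathcal{A}_2$-lift (\ref{gen. of Hoffman's duality}) of (\ref{fn eq 1}). You differ in two legitimate details: you specialize at the self-dual point $t=1/2$ rather than at $t=2$, and you pass between $\widetilde{\text{\rm \pounds}}^{\star}$ and $\text{\rm \pounds}^{\star}$ via the stuffle identity plus Wolstenholme instead of the reversal formula (\ref{reverse A_2}). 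Your route even has a simplification you did not exploit: combining
\begin{equation*}
\widetilde{\text{\rm \pounds}}_{\mathcal{A}_2,\{1\}^2}^{\star}(1/2)=\text{\rm \pounds}_{\mathcal{A}_2,2}(1/2)-\zeta_{\mathcal{A}_2}^{\star}(2)-\bigl(\widetilde{\text{\rm \pounds}}_{\mathcal{A},\{1\}^3}^{\star}(1/2)-\widetilde{\text{\rm \pounds}}_{\mathcal{A},(2,1)}^{\star}(1/2)\bigr)\mathbf{p}
\end{equation*}
with $\text{\rm \pounds}_{\mathcal{A}_2,\{1\}^2}^{\star}(1/2)=-\widetilde{\text{\rm \pounds}}_{\mathcal{A}_2,\{1\}^2}^{\star}(1/2)+\text{\rm \pounds}_{\mathcal{A}_2,2}(1/2)$ makes the finite dilogarithm cancel outright, leaving $\text{\rm \pounds}_{\mathcal{A}_2,\{1\}^2}^{\star}(1/2)=\zeta_{\mathcal{A}_2}^{\star}(2)+\bigl(\widetilde{\text{\rm \pounds}}_{\mathcal{A},\{1\}^3}^{\star}(1/2)-\widetilde{\text{\rm \pounds}}_{\mathcal{A},(2,1)}^{\star}(1/2)\bigr)\mathbf{p}$; the depth-one Mattarei--Tauraso congruence for $\sum 1/(k^2 2^k)$ that you planned to feed in is therefore not needed at all.

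The genuine gap is the $O(p)$ correction itself, which you name as the main obstacle but leave without a method. It is not bookkeeping: it requires the mod $p$ values of two \emph{weight-three} finite multiple polylogarithms at $1/2$, namely $\widetilde{\text{\rm \pounds}}_{\mathcal{A},\{1\}^3}^{\star}(1/2)=\tfrac16 q_{\mathbf{p}}(2)^3+\tfrac{7}{48}B_{\mathbf{p}-3}$ and $\widetilde{\text{\rm \pounds}}_{\mathcal{A},(2,1)}^{\star}(1/2)=\tfrac16 q_{\mathbf{p}}(2)^3+\tfrac{25}{48}B_{\mathbf{p}-3}$ (Proposition \ref{Fermat quotient} and (\ref{A, 1^3, 1/2})), whose difference $-\tfrac38 B_{\mathbf{p}-3}$ combines with Glaisher's $\zeta_{\mathcal{A}_2}^{\star}(2)=\tfrac23 B_{\mathbf{p}-3}\mathbf{p}$ to give exactly $\tfrac{7}{24}B_{\mathbf{p}-3}\mathbf{p}$. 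Neither of these is a Glaisher-type or depth-one congruence: in the paper they come out of Proposition \ref{special values in A} and Proposition \ref{Fermat quotient}, which rest on Tauraso--Zhao's evaluations of alternating (multiple) harmonic sums (Lemma \ref{aux}) pushed through the mod $p$ functional equations and the reversal formula. Since this correction carries roughly half of the constant $7/24$, your argument is incomplete until you supply those two weight-three evaluations; everything else in your outline checks out.
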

We can regard such congruences as explicit formulas of special values of FMPs and we will give generalizations of some of them. As an application of main results,
we obtain the following theorem which is a generalization of Theorem \ref{Sun conjecture} (= the case $m=2$ in Theorem \ref{gen of sun conj intro}):
\begin{theorem}[cf. Theorem \ref{A_2 theorem}]
	Let $m$ be an even positive integer. Then we have
	\[
	\text{\rm \pounds}_{\mathcal{A}_2, \{ 1 \}^m}^{\star} (1/2)  = \left( \frac{2^{m+1}-1}{2^{m+1}}\frac{B_{p-m-1}}{m+1}p \bmod{p^2} \right)_p \ \text{in} \ \mathcal{A}_2,
	\]
	where $B_{p-m-1}$ is the $(p-m-1)$-st Bernoulli number and
	\[
	\text{\rm \pounds}_{\mathcal{A}_2, \{1 \}^m}^{\star}(1/2) := \left( \sum_{p-1 \geq n_1 \geq \cdots \geq n_m \geq 1}\frac{1}{n_1\cdots n_m2^{n_1}} \bmod{p^2} \right)_p \ \text{in} \ \mathcal{A}_2.
	\]
	Here, the $\Q$-algebra $\mathcal{A}_2$ is defined by
	\[
	\mathcal{A}_2 := \Biggl( \prod_{p}{\mathbb Z} /p^2{\mathbb Z} \Biggr) \left/ \Biggl( \bigoplus_{p}{\mathbb Z} /p^2{\mathbb Z} \Biggr) \right. .
	\]
	\label{gen of sun conj intro}\end{theorem}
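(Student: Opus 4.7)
The plan is to derive this supercongruence as an application of the $\mathcal{A}_2$-refinement of the functional equation (\ref{fn eq 1}) (the $\mathcal{A}_2$-version promised in the introduction), applied at the self-dual point $t=1/2$ where $1-t=t$, and then to reduce everything to classical single sums modulo $p^2$.

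Since the Hoffman dual of $\Bbbk=\{1\}^m$ is the length-one index $(m)$, applying the $\mathcal{A}_2$-version of (\ref{fn eq 1}) with $\Bbbk=(m)$ and $t=1/2$ produces an identity relating
$$\widetilde{\text{\rm \pounds}}^{\star}_{\mathcal{A}_2,(m)}(1/2)=\sum_{k=1}^{p-1}\frac{1}{k^m 2^k}\pmod{p^2}$$
to $\widetilde{\text{\rm \pounds}}^{\star}_{\mathcal{A}_2,\{1\}^m}(1/2)$ and $\zeta^{\star}_{\mathcal{A}_2}(\{1\}^m)$, up to the explicit $p$-order correction terms that appear when passing from $\mathcal{A}$ to $\mathcal{A}_2$. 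The target FMP $\text{\rm \pounds}^{\star}_{\mathcal{A}_2,\{1\}^m}(1/2)$ differs from $\widetilde{\text{\rm \pounds}}^{\star}_{\mathcal{A}_2,\{1\}^m}(1/2)$ only in whether $t=1/2$ is raised to the largest or the smallest index of summation; to bridge them I apply the involution $n_i\mapsto p-n_{m+1-i}$ on the tuple of indices. Expanding
$$\frac{1}{p-n}\equiv-\frac{1}{n}-\frac{p}{n^2}\pmod{p^2},\qquad (1/2)^{p-n}\equiv(1/2)^p\cdot 2^n\pmod{p^2},$$
together with $(1/2)^p\equiv(1/2)(1-p\,q_p(2))\pmod{p^2}$ via the Fermat quotient $q_p(2):=(2^{p-1}-1)/p$, and using that $(-1)^m=1$ since $m$ is even, identifies $\text{\rm \pounds}^{\star}_{\mathcal{A}_2,\{1\}^m}(1/2)$ with $(1/2)^p\cdot\widetilde{\text{\rm \pounds}}^{\star}_{\mathcal{A}_2,\{1\}^m}(2)$ modulo $p$ times FMPs of depth $m$ and weight $m+1$ evaluated at $t=2$, each of which is already controlled modulo $p$ by the $\mathcal{A}$-version of the functional equations.

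A second application of (\ref{fn eq 1}), now at $t=2$ (so $1-t=-1$), rewrites $\widetilde{\text{\rm \pounds}}^{\star}_{\mathcal{A}_2,\{1\}^m}(2)$ in terms of $\sum_{k=1}^{p-1}(-1)^k/k^m$ and $\zeta^{\star}_{\mathcal{A}_2}(m)$. Combining the two functional identities with the reflection reduces the whole problem to the mod-$p^2$ evaluation of the three single sums
$$\sum_{k=1}^{p-1}\frac{1}{k^m 2^k},\qquad \sum_{k=1}^{p-1}\frac{(-1)^k}{k^m},\qquad \sum_{k=1}^{p-1}\frac{1}{k^m},$$
together with $\zeta^{\star}_{\mathcal{A}_2}(m)$ and $\zeta^{\star}_{\mathcal{A}_2}(\{1\}^m)$. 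Each collapses modulo $p^2$ to a rational multiple of $B_{p-m-1}$ by standard Glaisher-type congruences and by the generalized Bernoulli supercongruences established in the appendix. For $m$ even, the Fermat-quotient contributions and all non-Bernoulli terms will cancel, leaving the stated coefficient $\frac{2^{m+1}-1}{2^{m+1}}\cdot\frac{B_{p-m-1}}{m+1}$.

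The main obstacle will be the mod $p^2$ bookkeeping: one must simultaneously track the $q_p(2)$-contributions coming from $(1/2)^p$, the $p/n^2$-corrections from $1/(p-n)$, and the explicit correction terms arising in the $\mathcal{A}_2$-refinement of (\ref{fn eq 1}), and verify that after all these manipulations every term outside the single $p\cdot B_{p-m-1}/(m+1)$ contribution cancels. The parity hypothesis $m$ even is essential both for the sign in the reflection step and for ensuring that $\zeta^{\star}_{\mathcal{A}_2}(\{1\}^m)$ and the auxiliary sums combine correctly.
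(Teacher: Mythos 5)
Your plan is essentially the paper's own proof: the paper likewise combines the $\mathcal{A}_2$-version of (\ref{fn eq 1}) at $t=2$ (so $1-t=-1$) with the mod-$p^2$ reversal $n_i\mapsto p-n_i$ (Proposition \ref{reverse prop}), evaluates the resulting alternating sums $\text{\rm \pounds}_{\mathcal{A}_2,m}(-1)$, $\widetilde{\text{\rm \pounds}}^{\star}_{\mathcal{A}_2,(1,m)}(-1)$ and the weight-$(m+1)$ correction terms at $t=2$ by known Bernoulli congruences, and uses $\sum_{i=1}^m(-1)^i\binom{m+1}{i}=0$ for $m$ even together with $\text{\rm \pounds}^{\star}_{\mathcal{A},\{1\}^m}(1/2)=0$ to cancel the Fermat-quotient contribution from $2^{\mathbf{p}}$. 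Your opening application of the functional equation at the self-dual point $t=1/2$ is redundant (and would require the nontrivial mod-$p^2$ value of $\sum_{k=1}^{p-1}1/(k^m2^k)$ as an extra input); the reflection-plus-duality-at-$t=2$ chain you describe afterwards is exactly the argument of Theorem \ref{A_2 theorem}.
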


Independently of us, Ono and Yamamoto gave another definition of FMPs and  established the shuffle relation in their preprint \cite{OY}. We will investigate a relation between our FMPs and Ono-Yamamoto's FMPs and calculate its special values.

This paper is organized as follows:

\noindent In Section \ref{sec:Generalizations of Euler's identity}, we prove generalizations of Euler's identity involving binomial coefficients by introducing some formal truncated integral operators.
In Section \ref{sec:Functional equations of finite multiple polylogarithms}, we define the ring $\mathcal{A}_{n, R}^{\Sigma}$ and recall some known results on FMZVs. Then we introduce FMPs and prove the functional equations by applying identities obtained in Section \ref{sec:Generalizations of Euler's identity}.
In Section \ref{sec:Special values of finite multiple polylogarithms},  as applications of the functional equations, we calculate special values of FMPs. We also study the relation between our FMPs and the ones defined by Ono and Yamamoto.
There are two appendices in this paper. 
In \ref{sec:An elementary proof of the congruences between Bernoulli numbers and generalized Bernoulli numbers}, we give an elementary proof of supercongruences for the generalized Bernoulli number for powers of the Teichm\"uller character and the Bernoulli numbers. \ref{sec:Table of sufficient conditions for congruences} is a table of sufficient conditions for special values of FMPs.

\section{Generalizations of Euler's identity}
\label{sec:Generalizations of Euler's identity}
\subsection{Notations for indices and the Hoffman dual}
\label{subsec:Notations for indices and the Hoffman dual}
Here, we recall an involution introduced by Hoffman on the set of indices.

We define the set $I$ by
\[
I := \coprod_{m \in \mathbb{Z}_{> 0}}(\underbrace{\mathbb{Z}_{>0} \times \cdots \times \mathbb{Z}_{>0}}_{m})
\]
and we call an element of $I$ {\em an index}. For an index $\Bbbk = (k_1, \ldots , k_m) \in I$ we define {\em the weight} (resp. {\em the depth}) of $\Bbbk$ to be $k_1+\cdots +k_m$ (resp. $m$) and we denote it by $\wt (\Bbbk)$ (resp. $\dep (\Bbbk)$).

For a non-negative integer $k$, the symbol $\{ k \}^m$ denotes $m$ repetitions $(k, \ldots , k) \in \mathbb{Z}_{\geq0}^m$ of $k$. Let $\mathbf{e}_i:=(\{0\}^{i-1}, 1, \{0\}^{m-i})$ when $m$ is clear from the context. Throughout this paper, we use the following operators for indices:

For three indices $\Bbbk =(k_1, \dots, k_m), \Bbbk_1 = (k_1', \dots, k_m')$, and $\Bbbk_2 = (k_1'', \dots, k_{m'}'')$, we define $\overline{\Bbbk}, \Bbbk_1\sqcup \Bbbk_2$, and $\Bbbk \oplus \Bbbk_1$ by $\overline{\Bbbk} := (k_m, \dots, k_1), \Bbbk_1 \sqcup \Bbbk_2:=(k_1', \dots, k_m', k_1'', \dots, k_{m'}''),$ and $\Bbbk \oplus \Bbbk_1 := (k_1+k_1', \dots, k_m+k_m')$, respectively.

Let $W$ be the free monoid generated by the set $\{ 0, 1 \}$. We denote by $W_1$ the set of words in $W$ of the form $\cdots 1$. We see that the correspondence \[
(k_1, \ldots , k_m) \mapsto \underbrace{0\cdots 0}_{k_1-1}1\underbrace{0\cdots 0}_{k_2-1}1\cdots 1\underbrace{0\cdots 0}_{k_m-1}1
\]
induces a bijection $w \colon I\xrightarrow{\sim}W_1$.
\begin{definition}[{cf.\ \cite[Section 3]{Ho}}]
	Let $\tau \colon W \xrightarrow{\sim} W$ be a monoid homomorphism defined by $\tau (0) =1$ and $\tau (1)=0$. Then we define an involution ${}^{\vee} \colon I \to I$ by the equality $w(\Bbbk^{\vee}) = \tau (w(\Bbbk)1^{-1})1$. We call this involution {\em the Hoffman dual}.
	\label{def of hof dual}\end{definition}
By the definition of the Hoffman dual, we see that $\Bbbk^{\vee \vee} = \Bbbk$ holds for any index $\Bbbk$. 
We can use the notation $\overline{\Bbbk}^{\vee}$ since the Hoffman dual and the reversal operator $\Bbbk \mapsto \overline{\Bbbk}$ commute. 
\begin{example}
	We have the following equalities:
	\begin{align*}
	&m^{\vee} = \{ 1 \}^{m}, \ (k_1, k_2)^{\vee} = (\{ 1 \}^{k_1-1}, 2, \{ 1 \}^{k_2-1}),
	\\
	&(k_1, k_2, k_3)^{\vee} = (\{ 1 \}^{k_1-1}, 2, \{ 1 \}^{k_2-2}, 2, \{ 1 \}^{k_3-1}),
	\\
	&(k_1, \{ 1 \}^{k_2-1})^{\vee} = (\{ 1 \}^{k_1-1}, k_2).
	\end{align*}
	Here, $m, k_1, k_2$ and $k_3$ are positive integers and the third equality holds when $k_2$ is greater than or equal to $2$.
\end{example}
The following lemma is useful for inductive arguments on weight:
\begin{lemma}
	Let $\Bbbk$ be an index and $\Bbbk^{\vee}$ its dual. Then we have $(\Bbbk \oplus \mathbf{e}_1)^{\vee}=\{1\} \sqcup \Bbbk^{\vee}$ and  $(\{1\} \sqcup \Bbbk)^{\vee}$ $=$ $(\Bbbk^{\vee}\oplus \mathbf{e}_1)$.
	\label{Induction lemma for dual}\end{lemma}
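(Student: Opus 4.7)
The plan is to argue directly from the word encoding $w \colon I \xrightarrow{\sim} W_1$ defined in the excerpt, exploiting the fact that both elementary operations on indices have very clean descriptions at the level of words. Concretely, I would first observe that adding $\mathbf{e}_1$ to $\Bbbk = (k_1,\ldots,k_m)$ replaces $k_1$ by $k_1+1$, which according to the encoding formula simply prepends one extra $0$ to $w(\Bbbk)$:
\[
w(\Bbbk \oplus \mathbf{e}_1) = 0 \cdot w(\Bbbk).
\]
Similarly, concatenating a leading $1$ in $I$ corresponds to prepending the letter $1$ (since $k=1$ contributes $0^{0}1 = 1$):
\[
w(\{1\} \sqcup \Bbbk) = 1 \cdot w(\Bbbk).
\]

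Next I would plug these into the defining formula $w(\Bbbk^\vee) = \tau(w(\Bbbk) \cdot 1^{-1}) \cdot 1$ from Definition \ref{def of hof dual}. Since $w(\Bbbk)$ lies in $W_1$, deleting the final $1$ is well defined, and because $\tau$ is a monoid homomorphism with $\tau(0)=1$ and $\tau(1)=0$, it commutes with concatenation. For the first identity,
\[
w\bigl((\Bbbk \oplus \mathbf{e}_1)^\vee\bigr) = \tau\bigl(0 \cdot w(\Bbbk) \cdot 1^{-1}\bigr) \cdot 1 = 1 \cdot \tau\bigl(w(\Bbbk) \cdot 1^{-1}\bigr) \cdot 1 = 1 \cdot w(\Bbbk^\vee),
\]
which is precisely $w(\{1\} \sqcup \Bbbk^\vee)$, so injectivity of $w$ gives $(\Bbbk \oplus \mathbf{e}_1)^\vee = \{1\} \sqcup \Bbbk^\vee$. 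For the second identity I would run the same computation with a leading $1$ instead of $0$, obtaining
\[
w\bigl((\{1\} \sqcup \Bbbk)^\vee\bigr) = \tau\bigl(1 \cdot w(\Bbbk) \cdot 1^{-1}\bigr) \cdot 1 = 0 \cdot w(\Bbbk^\vee) = w(\Bbbk^\vee \oplus \mathbf{e}_1),
\]
and conclude by injectivity of $w$ again.

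There is no real obstacle: the lemma is a bookkeeping statement saying that the two elementary index operations $\oplus \mathbf{e}_1$ and $\{1\} \sqcup (-)$ correspond to prepending $0$ and $1$ respectively, and are hence swapped by the $0 \leftrightarrow 1$ involution $\tau$. The only minor points to watch are that $w(\Bbbk)$ genuinely ends in $1$ (so $w(\Bbbk) \cdot 1^{-1}$ makes sense) and that the resulting words still lie in $W_1$ (both end in $1$, as required for being in the image of $w$); both are immediate from the construction. In particular the argument does not require induction on the weight of $\Bbbk$.
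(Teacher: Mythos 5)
Your proof is correct and follows exactly the route the paper intends: the paper's "proof" is simply the one-line remark that the identities are obvious from Definition \ref{def of hof dual}, and what you have written out is precisely that word-level computation (prepending $0$ or $1$ and using that $\tau$ is a monoid homomorphism swapping them). Nothing further is needed.
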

\begin{proof}
	These are obvious by the definition of the Hoffman dual.
\end{proof}
We can prove that the equalities $\wt (\Bbbk^{\vee})=\wt (\Bbbk)$ and $\dep (\Bbbk) + \dep (\Bbbk^{\vee}) = \wt (\Bbbk)+1$ hold for any index $\Bbbk$ by using the above lemma.
\begin{remark}
	Hoffman defined the dual $\Bbbk \mapsto \Bbbk^{\vee}$ by another way (see \cite[Section 3]{Ho}). Let $I_w := \{ \Bbbk \in I \mid \wt (\Bbbk)=w \}$ for a positive integer $w$ and $\mathcal{P} (\{ 1, 2, \dots w-1 \} )$ the power set of $\{ 1, 2, \dots , w-1 \}$. Then there is a bijection $\psi \colon I_w \xrightarrow{\sim} \mathcal{P} ( \{ 1, 2, \dots , w-1 \} )$ defined by the correspondence 
	\[
	\Bbbk = (k_1, \ldots , k_m) \mapsto \{ k_1 , k_1+k_2, \cdots , k_1+\cdots +k_{m-1} \}
	\]
	and the original Hoffman dual $\Bbbk^{\vee}$ is defined by
	\[
	\Bbbk^{\vee} := \psi^{-1}(\{ 1, 2, \dots , w-1 \} \setminus \psi (\Bbbk))
	\]
	for $\Bbbk \in I_w$. This definition is equivalent to the first definition since the identities in Lemma \ref{Induction lemma for dual} also hold for this dual.
\end{remark}
\subsection{Generalizations of Euler's identity and their corollaries}
\label{subsec:Generalizations of Euler's identity and their corollaries}
In this subsection, we state polynomial identities which will be used for the proof of our main results in Subsection \ref{subsec:Definitions and functional equations of finite multiple polylogarithms}. We will give the proofs of Theorem \ref{theoremA} and Theorem \ref{theoremB} in Subsection \ref{subsec:Proofs of Theorem A and Theorem B}. Through this subsection, let $R$ be a commutative ring including the field of rational numbers. 
\begin{theorem}
	Let $\Bbbk = (k_1, \dots , k_m)$ be an index of weight $w$ and $N$ a positive integer. Then the following polynomial identity holds in $R[t_1, \dots, t_m]:$
	{\small \begin{equation}
	\begin{split}
	&\sum_{N\geq n_1 \geq \cdots \geq n_m \geq 1}(-1)^{n_1}\binom{N}{n_1}\frac{t_1^{n_1-n_2}\cdots t_{m-1}^{n_{m-1}-n_m}t_m^{n_m}}{n_1^{k_1} \cdots n_m^{k_m}}= \\
	& \sum_{N\geq n_1 \geq \cdots \geq n_w \geq 1}\frac{(1-t_1)^{n_{l_1}-n_{l_1+1}} \cdots (1-t_{m-1})^{n_{l_{m-1}}-n_{l_{m-1}+1}}\{ (1-t_m)^{n_{l_m}}-1 \}}{n_1\cdots n_w},\hspace{20mm}
	\end{split}
	\label{maltivar identity}\end{equation}
	\begin{equation}
	\begin{split}
	&\sum_{N\geq n_1 \geq \cdots \geq n_m \geq 1}\frac{t_1^{n_1-n_2}\cdots t_{m-1}^{n_{m-1}-n_m}t_m^{n_m}}{n_1^{k_1} \cdots n_m^{k_m}}= \\
	& \sum_{N\geq n_1 \geq \cdots \geq n_w \geq 1}(-1)^{n_1}\binom{N}{n_1}\frac{(1-t_1)^{n_{l_1}-n_{l_1+1}} \cdots (1-t_{m-1})^{n_{l_{m-1}}-n_{l_{m-1}+1}}\{ (1-t_m)^{n_{l_m}}-1 \}}{n_1\cdots n_w},
	\end{split}
	\label{maltivar identity2}\end{equation}
}where $l_1=k_1, l_2=k_1+k_2, \dots, l_m=k_1+\cdots +k_m(=w)$.
	\label{theoremA}\end{theorem}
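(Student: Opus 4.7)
I would prove both polynomial identities (\ref{maltivar identity}) and (\ref{maltivar identity2}) simultaneously by induction on the weight $w=\wt(\Bbbk)$, using two formal truncated integral operators acting on the polynomial ring $R[t_1,\dots,t_m]$ (and, for the second operator, on a ring with one additional variable $t_{m+1}$).

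The base case is $w=1$, i.e.\ $\Bbbk=(1)$, in which (\ref{maltivar identity}) reduces to the polynomial form of Euler's identity
$$\sum_{n=1}^{N}(-1)^{n}\binom{N}{n}\frac{t^{n}}{n}=\sum_{n=1}^{N}\frac{(1-t)^{n}-1}{n}.$$
This I would establish by expanding $(1-t)^{n}-1$ binomially on the right, using $\binom{n}{k}/n=\binom{n-1}{k-1}/k$, and applying the hockey-stick identity $\sum_{n=k}^{N}\binom{n-1}{k-1}=\binom{N}{k}$ to collect the binomial coefficients on the left. The $w=1$ case of (\ref{maltivar identity2}) then follows from this together with the scalar identity (\ref{Euler's identity}).

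For the inductive step I would introduce two operators. The first is $T_m(f)(\ldots,t_m):=\int_{0}^{t_m}\bigl(f(\ldots,s)-f(\ldots,0)\bigr)/s\,ds$, which sends $t_m^{n}$ to $t_m^{n}/n$; acting on either left-hand side for $\Bbbk$ it directly produces the left-hand side for $\Bbbk\oplus\mathbf{e}_m$, and acting on either right-hand side it affects only the factor $(1-t_m)^{n_{l_m}}-1$, where the base case identity introduces a new summation index $n_{w+1}$ exactly in the shape required by $\Bbbk\oplus\mathbf{e}_m$. The second operator is
$$B_m(F)(\ldots,t_m,t_{m+1}):=\int_{0}^{t_{m+1}}\frac{F(\ldots,t_m)-F(\ldots,u)}{t_m-u}\,du,$$
which, after expanding $(t_m^{n}-u^{n})/(t_m-u)$ as a geometric sum and integrating term by term, converts the left-hand side for $\Bbbk$ into the left-hand side for $\Bbbk\sqcup(1)$. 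For the right-hand side one similarly expands $\bigl((1-t_m)^{n}-(1-u)^{n}\bigr)/(t_m-u)$, integrates in $u$, and invokes the base case to produce the right-hand side for $\Bbbk\sqcup(1)$. Since every index is obtained from $(1)$ by iterating the two operations $\Bbbk\mapsto\Bbbk\oplus\mathbf{e}_m$ and $\Bbbk\mapsto\Bbbk\sqcup(1)$, the induction closes and both identities follow.

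The main obstacle will be the $B_m$-step on the right-hand side: the kernel $(t_m-u)^{-1}$ is non-local in $t_m$, while the RHS carries a ``boundary'' $-1$ attached to the factor $(1-t_m)^{n_{l_m}}$. One has to verify carefully that under $B_m$ this $-1$ migrates to $(1-t_{m+1})^{n_{l_{m+1}}}-1$ and that the positions $l_1,\dots,l_m$ shift in the way dictated by $\Bbbk\sqcup(1)$. Once this bookkeeping is complete for (\ref{maltivar identity}), the same two operators, together with the corresponding $w=1$ case, yield (\ref{maltivar identity2}) by a parallel induction.
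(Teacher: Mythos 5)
Your proposal is correct and follows essentially the same route as the paper: induction on the weight via the two extensions $\Bbbk\mapsto(k_1,\dots,k_m+1)$ and $\Bbbk\mapsto(k_1,\dots,k_m,1)$, with your operators $T_m$ and $B_m$ coinciding on polynomials with the paper's truncated integral operators $I_{t_m}$ and $J^{\star}_{t_m,t_{m+1}}$ (the latter defined there as $\pr\circ I_{t,s}$, whose effect is exactly your kernel $\int_0^{t_{m+1}}\bigl(F(t_m)-F(u)\bigr)/(t_m-u)\,du$). The only cosmetic difference is the base case, where you use the hockey-stick identity rather than applying $I_t$ to the binomial expansion of $(1-t)^N-1$.
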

When we substitute $1$ for some of $t_1, \dots, t_{m-1}$ in the left hand side of (\ref{maltivar identity}), some terms vanish and some $0^0$ appear in the summation of the right hand side. We consider $0^0$ as $1$ since $t^{n-n}=t^0$ is $1$ as a constant function for a variable $t$ and a positive integer $n$. In such situations, we can rewrite (\ref{maltivar identity}) as follows:  
\begin{corollary}
	We use the same notations as in Theorem \ref{theoremA}. Let $\{ i_1, \dots , i_h \}$ be any subset of $\{ 1, \dots, w-1\}$ with the complement $\{ j_1, \dots, j_{h'} \}$ and put $(k_1', \dots, k_{m'}')$ $:=$ $(k_1+\cdots +k_{i_1}, k_{i_1+1}+\cdots +k_{i_2}, \dots, k_{i_h+1}+\cdots +k_{m})^{\vee}$. Then we have
	\begin{equation*}
	\begin{split}
	&\left. \sum_{N\geq n_1 \geq \cdots \geq n_m \geq 1}(-1)^{n_1}\binom{N}{n_1}\frac{t_1^{n_1-n_2}\cdots t_{m-1}^{n_{m-1}-n_m}t_m^{n_m}}{n_1^{k_1} \cdots n_m^{k_m}}\right|_{t_{i_1}=\cdots = t_{i_h}=1} = \\
	& \sum_{N\geq n_1 \geq \cdots \geq n_{m'} \geq 1}\frac{(1-t_{j_1})^{n_{L_1}-n_{L_1+1}}(1-t_{j_2})^{n_{L_2}-n_{L_2+1}} \cdots (1-t_{j_{h'}})^{n_{L_{h'}}-n_{L_{h'}+1}}\{ (1-t_m)^{n_{m'}}-1 \}}{n_1^{k_1'}\cdots n_{m'}^{k_{m'}'}},
	\end{split}
	\end{equation*}
	where $L_i:=l_{j_i}-j_i+i$ for $i=1, \dots, m'$. 
	\label{substitution corollary}\end{corollary}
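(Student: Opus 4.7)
The plan is to derive the corollary directly by substituting $t_{i_1}=\cdots=t_{i_h}=1$ into equation (\ref{maltivar identity}) of Theorem \ref{theoremA}. The LHS of the corollary is literally this substitution applied to the LHS of (\ref{maltivar identity}); only the RHS needs analysis.

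First I would observe that setting $t_{i_s}=1$ turns the factor $(1-t_{i_s})^{n_{l_{i_s}}-n_{l_{i_s}+1}}$ on the RHS of (\ref{maltivar identity}) into $0^{n_{l_{i_s}}-n_{l_{i_s}+1}}$, which under the stated convention $0^0=1$ equals $1$ when $n_{l_{i_s}}=n_{l_{i_s}+1}$ and vanishes otherwise. Consequently the summation collapses to those tuples $(n_1,\dots,n_w)$ obeying $n_{l_{i_s}}=n_{l_{i_s}+1}$ for all $s=1,\dots,h$. These $h$ equalities partition $\{1,\dots,w\}$ into $m':=w-h$ maximal blocks of consecutive indices on which the $n_j$ are forced equal, with ``cuts'' between blocks occurring at the positions in $\{1,\dots,w-1\}\setminus\{l_{i_1},\dots,l_{i_h}\}$. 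After renaming the distinct values decreasingly as $n_1\ge n_2\ge\cdots\ge n_{m'}\ge 1$, the denominator $n_1\cdots n_w$ becomes $n_1^{k_1'}\cdots n_{m'}^{k_{m'}'}$, where $k_s'$ is the length of the $s$-th block.

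The combinatorial heart of the argument is to identify this block-length sequence with the Hoffman dual of the index $\Bbbk_\sharp:=(k_1+\cdots+k_{i_1},\,k_{i_1+1}+\cdots+k_{i_2},\dots,k_{i_h+1}+\cdots+k_m)$. Using the word description of Definition \ref{def of hof dual}, the word $w(\Bbbk_\sharp)$ carries $1$'s exactly at positions $l_{i_1},\dots,l_{i_h},w$, so $\tau(w(\Bbbk_\sharp)1^{-1})1$ carries $1$'s precisely at the cut positions together with $w$. Parsing this word back into an index yields exactly the block-length sequence, so $(k_1',\dots,k_{m'}')=\Bbbk_\sharp^{\vee}$; the identity $\dep(\Bbbk_\sharp)+\dep(\Bbbk_\sharp^{\vee})=\wt(\Bbbk_\sharp)+1$ is in particular consistent with $m'=w-h$.

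Finally I would translate the surviving factors into the new variables. For each complementary index $j_i$, the factor $(1-t_{j_i})^{n_{l_{j_i}}-n_{l_{j_i}+1}}$ becomes $(1-t_{j_i})^{n_{L_i}-n_{L_i+1}}$, where $L_i$ indexes the block whose right endpoint is the cut at position $l_{j_i}$. Counting the cut positions in $\{1,\dots,l_{j_i}\}$ as the $l_{j_i}-j_i$ non-$l$-positions there plus the $i$ complementary $l$-positions $l_{j_1},\dots,l_{j_i}$ yields $L_i=l_{j_i}-j_i+i$. Since $l_m=w$ lies in the last block, the terminal factor $(1-t_m)^{n_{l_m}}-1$ becomes $(1-t_m)^{n_{m'}}-1$ after relabeling, completing the match with the stated RHS. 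The main obstacle is the combinatorial step of identifying the block-length sequence with $\Bbbk_\sharp^{\vee}$; once this dictionary between the forced-equality pattern and the binary word of the Hoffman dual is in hand, the remainder is routine indexing.
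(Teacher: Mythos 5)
Your proposal is correct and follows essentially the same route as the paper's proof: substitute $t_{i_1}=\cdots=t_{i_h}=1$ into (\ref{maltivar identity}), observe that a term survives only when $n_{l_{i_s}}=n_{l_{i_s}+1}$ for all $s$, identify the resulting block structure of forced equalities with the Hoffman dual $(k_1',\dots,k_{m'}')$, and relabel. Your word-based verification of the dual identification and the count yielding $L_i=l_{j_i}-j_i+i$ simply make explicit what the paper dispatches with ``by the definition of the Hoffman dual.''
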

\begin{proof}
	If the condition
	\begin{equation}
	n_{l_{i_1}}=n_{l_{i_1}+1}, \
	n_{l_{i_2}}=n_{l_{i_2}+1}, \
	\cdots, \ n_{l_{i_h}}=n_{l_{i_h}+1} 
	\tag{$\ast$}
	\end{equation}
	holds, then we have
	\[
	\begin{split}
	&(1-t_1)^{n_{l_1}-n_{l_1+1}}(1-t_2)^{n_{l_2}-n_{l_2+1}} \cdots (1-t_{m-1})^{n_{l_{m-1}}-n_{l_{m-1}+1}}\{ (1-t_m)^{n_{l_m}}-1 \} \\
	&=(1-t_{j_1})^{n_{l_{j_1}}-n_{l_{j_1}+1}}(1-t_{j_2})^{n_{l_{j_2}}-n_{l_{j_2}+1}}\cdots (1-t_{j_{h'}})^{n_{l_{j_{h'}}}-n_{l_{j_{h'}}+1}}\{ (1-t_m)^{n_{w}}-1 \}
	\end{split}
	\]
	as a polynomial equality. On the other hand, if $(n_1, \dots, n_w)$ does not satisfy the condition ($\ast$), then the term for $(n_1, \dots, n_w)$ of the right hand side of Theorem \ref{theoremA} (\ref{maltivar identity}) vanishes when $t_{i_1}=\cdots =t_{i_h}=1$.
	
	By the definition of the Hoffman dual, we have
	\[
	\{ l_1', l_2', \dots, l_{m'-1}' \} = \{ 1, \dots, w-1 \} \setminus \{ l_{i_1}, \dots, l_{i_h} \}
	\]
	where $l_1'=k_1', l_2'=k_1'+k_2', \dots, l_{m'-1}'=k_1'+\cdots +k_{m'-1}'$. Therefore we can rewrite the condition ($\ast$) as follows:
	\[
	n_1=\cdots =n_{l_1'}, \ n_{l_1'+1}=\cdots =n_{l_2'}, \ \cdots, \ n_{l_{m'-1}'+1}=\cdots =n_w.
	\]
	Hence we have the desired formula.
\end{proof}
In particular, we have the following corollary:
\begin{corollary}
	Let $\Bbbk = (k_1, \dots , k_m)$ be an index, $\Bbbk^{\vee} = (k_1', \dots , k_{m'}')$, and $N$ a positive integer. Then we have the polynomial identity
	\begin{equation}
	\sum_{N\geq n_1\geq \dots \geq n_m\geq 1}(-1)^{n_1}\binom{N}{n_1}\frac{t^{n_m}}{n_1^{k_1}\cdots n_m^{k_m}}
	=\sum_{N\geq n_1\geq \dots \geq n_{m'}\geq 1}\frac{(1-t)^{n_{m'}}-1}{n_1^{k_1'}\cdots n_{m'}^{k_{m'}'}},
	\label{1-var. Hoffman identity}\end{equation}
	in $R[t]$.
	\label{cor of thmA}\end{corollary}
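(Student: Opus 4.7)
The plan is to deduce Corollary \ref{cor of thmA} from the multivariable identity (\ref{maltivar identity}) of Theorem \ref{theoremA} by specializing $t_1 = t_2 = \cdots = t_{m-1} = 1$ while keeping $t_m = t$; equivalently, this is the case of Corollary \ref{substitution corollary} in which the subset is $\{1, 2, \ldots, m-1\}$ and its complement is empty.

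On the left-hand side of (\ref{maltivar identity}), every factor $t_i^{n_i - n_{i+1}}$ with $i < m$ specializes to $1$, so the left-hand side collapses immediately to the left-hand side of (\ref{1-var. Hoffman identity}). On the right-hand side of (\ref{maltivar identity}), each factor $(1 - t_i)^{n_{l_i} - n_{l_i+1}}$ for $i < m$ becomes $0^{n_{l_i} - n_{l_i+1}}$, which by the $0^0 = 1$ convention recalled immediately after Theorem \ref{theoremA} equals $1$ when $n_{l_i} = n_{l_i+1}$ and $0$ otherwise. Hence only those chains $N \geq n_1 \geq \cdots \geq n_w \geq 1$ satisfying the $m-1$ equality constraints $n_{l_i} = n_{l_i+1}$ survive, and in those the remaining factor in the numerator is $(1-t)^{n_w} - 1$.

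The remaining step is the combinatorial identification of this constrained sum with the right-hand side of (\ref{1-var. Hoffman identity}). Writing $\Bbbk^{\vee} = (k_1', \ldots, k_{m'}')$ and $l_j' := k_1' + \cdots + k_j'$, I would invoke Definition \ref{def of hof dual} (conveniently packaged via the word description and Lemma \ref{Induction lemma for dual}) to conclude that $\{l_1', \ldots, l_{m'-1}'\}$ is exactly the complement of $\{l_1, \ldots, l_{m-1}\}$ inside $\{1, \ldots, w-1\}$. Therefore the forced equalities split $(n_1, \ldots, n_w)$ into $m'$ constant blocks of respective sizes $k_1', \ldots, k_{m'}'$; letting $a_j$ denote the common value on the $j$-th block produces a bijection with chains $N \geq a_1 \geq \cdots \geq a_{m'} \geq 1$ under which $n_1 \cdots n_w = a_1^{k_1'} \cdots a_{m'}^{k_{m'}'}$ and $n_w = a_{m'}$. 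Substituting yields precisely the right-hand side of (\ref{1-var. Hoffman identity}).

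The only nontrivial point is this final complementary-seam identification with the Hoffman dual, and even that is a direct unpacking of Definition \ref{def of hof dual}; I do not anticipate any serious obstacle beyond careful bookkeeping of the block sizes and the resulting exponents.
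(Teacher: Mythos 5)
Your proposal is correct and follows essentially the same route as the paper: the authors obtain Corollary \ref{cor of thmA} as the extreme case of Corollary \ref{substitution corollary} (substituting $1$ for all of $t_1,\dots,t_{m-1}$ in Theorem \ref{theoremA} (\ref{maltivar identity})), using the $0^0=1$ convention to isolate the chains satisfying the seam equalities and the complementary-partial-sums description of the Hoffman dual to reindex the surviving sum by blocks of sizes $k_1',\dots,k_{m'}'$. Nothing is missing.
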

\begin{remark}
	The case $\Bbbk = m \in \mathbb{Z}_{>0}$ of Corollary \ref{cor of thmA} (\ref{1-var. Hoffman identity}) gives Tauraso-Zhao's identity \cite[Lemam 5.5 (42)]{TZ} and the case $t=1$ of Corollary \ref{cor of thmA} (\ref{1-var. Hoffman identity}) gives Hoffman's identity \cite[Theorem 4.2]{Ho}. Dilcher's identity \cite{D} and Hern\'andez's identity \cite{He} are special cases of Hoffman's identity. All these are generalizations of Euler's identity (\ref{Euler's identity}).
\end{remark}
\begin{remark}
	Theorem \ref{theoremA} is also deduced from Kawashima-Tanaka's formula \cite[Theorem 2.6]{KT}, which is a generalization of the identity
	\begin{equation*}
	\sum_{n=0}^N (-1)^n \binom{N}{n}\frac{1}{n+1} = \frac{1}{N+1}
	\label{simple version}\end{equation*}
	(cf. \cite[Woord's solution]{He}). Our proof of Theorem \ref{theoremA} in Subsection \ref{subsec:Proofs of Theorem A and Theorem B} is quite different from the proof by Kawashima and Tanaka.
\end{remark}
\begin{theorem}
	Let $\Bbbk = (k_1, \dots , k_m)$ be an index of weight $w$ and $N$ a positive integer. Then the following identity holds in $R[t_1, t_2^{\pm 1}, \dots, t_m^{\pm 1}]:$
	{\footnotesize \begin{equation}
	\begin{split}
	&\sum_{N+1 > n_1 > \cdots > n_m > 0}(-1)^{n_m}\binom{N}{n_m}\frac{(t_1/t_2)^{n_1}\cdots (t_{m-1}/t_m)^{n_{m-1}}t_m^{n_m}}{n_1^{k_1}\cdots n_m^{k_m}} \\
	&= (-1)^{m-1}\sum_{N \geq n_1 \geq \cdots \geq n_w \geq 1}\frac{(1-t_m)^{n_{l_1}-n_{l_1+1}}\cdots (1-t_2)^{n_{l_{m-1}}-n_{l_{m-1}+1}}\{(1-t_1)^{n_{l_m}}-1\}}{n_1\cdots n_w}\\
	&\hspace{4mm}+\sum_{j=1}^{m-1}(-1)^{m-j-1}\left(\sum_{N+1>n_1>\cdots >n_j>0}\frac{(t_1/t_2)^{n_1}\cdots (t_j/t_{j+1})^{n_j}}{n_1^{k_1}\cdots n_j^{k_j}}\right) \times \\
	&\hspace{8mm}\left( \sum_{N\geq n_1\geq \cdots \geq n_{l_{m-j}}\geq 1}\frac{(1-t_m)^{n_{l_1}-n_{l_1+1}}\cdots (1-t_{j+2})^{n_{l_{m-j-1}}-n_{l_{m-j-1}+1}}\{(1-t_{j+1})^{n_{l_m-j}}-1\}}{n_1\cdots n_{l_{m-j}}} \right)
	\end{split}
	\label{multivar identity3}\end{equation}
}and
	{\tiny \begin{equation}
		\begin{split}
		&\sum_{N+1 > n_1 > \cdots > n_m > 0}\frac{(t_1/t_2)^{n_1}\cdots (t_{m-1}/t_m)^{n_{m-1}}t_m^{n_m}}{n_1^{k_1}\cdots n_m^{k_m}} \\
		&= (-1)^{m-1}\sum_{N \geq n_1 \geq \cdots \geq n_w \geq 1}(-1)^{n_1}\binom{N}{n_1}\frac{(1-t_m)^{n_{l_1}-n_{l_1+1}}\cdots (1-t_2)^{n_{l_{m-1}}-n_{l_{m-1}+1}}\{(1-t_1)^{n_{l_m}}-1\}}{n_1\cdots n_w}\\
		&+\sum_{j=1}^{m-1}(-1)^{m-j-1}\left(\sum_{N+1>n_1>\cdots >n_j>0}\frac{(t_1/t_2)^{n_1}\cdots (t_j/t_{j+1})^{n_j}}{n_1^{k_1}\cdots n_j^{k_j}}\right) \times \\
		&\left( \sum_{N\geq n_1\geq \cdots \geq n_{l_{m-j}}\geq 1}(-1)^{n_1}\binom{N}{n_1}\frac{(1-t_m)^{n_{l_1}-n_{l_1+1}}\cdots (1-t_{j+2})^{n_{l_{m-j-1}}-n_{l_{m-j-1}+1}}\{(1-t_{j+1})^{n_{l_m-j}}-1\}}{n_1\cdots n_{l_{m-j}}} \right),
		\end{split}
		\label{multivar identity4}\end{equation}
	}where $l_1=k_m, l_2=k_m+k_{m-1}, \dots, l_m=k_m+\cdots +k_1(=w)$.
	\label{theoremB}\end{theorem}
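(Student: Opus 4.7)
The plan is to deduce Theorem \ref{theoremB} from Theorem \ref{theoremA} by induction on the depth $m$ of $\Bbbk$, combined with the standard combinatorial conversion between strict and non-strict chain sums. The base case $m=1$ is immediate: the strict condition $N+1 > n_1 > 0$ coincides with $N \geq n_1 \geq 1$, the correction sum $\sum_{j=1}^{m-1}$ is empty, and $\overline{\Bbbk} = \Bbbk$, so identities (\ref{multivar identity3}) and (\ref{multivar identity4}) reduce to the $m=1$ specializations of (\ref{maltivar identity}) and (\ref{maltivar identity2}).

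For the inductive step with $m \geq 2$, I would decompose the strict chain sum via iterated application of $\sum_{n_i > n_{i+1}} = \sum_{n_i \geq n_{i+1}} - \sum_{n_i = n_{i+1}}$, expressing the LHS of (\ref{multivar identity3}) as a signed combination indexed by compositions of $m$ recording which adjacent pairs are collapsed to equalities. The collapse $n_i = n_{i+1}$ trivializes the monomial factor $(t_i/t_{i+1})^{n_i}$ and merges $k_i, k_{i+1}$ into $k_i + k_{i+1}$, producing a non-strict chain sum of smaller depth with a merged index. To each such non-strict chain sum I would apply Theorem \ref{theoremA} after the variable substitution $s_i = t_{m-i+1}$ together with the chain reversal $n_i \mapsto n_{m-i+1}$; the reversal sends the binomial $(-1)^{n_1}\binom{N}{n_1}$ of Theorem A (anchored on the largest index of a descending chain) to the $(-1)^{n_m}\binom{N}{n_m}$ of Theorem B (anchored on the smallest), and converts the index to $\overline{\Bbbk}$ so that $l_j$ becomes $k_m + k_{m-1} + \cdots + k_{m-j+1}$ as required. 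The trivial composition (no collapse) produces the $(-1)^{m-1}\sum_{N \geq n_1 \geq \cdots \geq n_w \geq 1}(\cdots)$ main term on the RHS of (\ref{multivar identity3}), while the partial-collapse terms, in which the last $m-j$ indices merge into a single block (for $j = 1, \ldots, m-1$), factor as a strict prefix sum of depth $j$ on $(k_1, \ldots, k_j)$ multiplied by a non-strict tail sum, producing the correction $\sum_{j=1}^{m-1}(-1)^{m-j-1}(\cdots)(\cdots)$. Identity (\ref{multivar identity4}) then follows from (\ref{multivar identity3}) by the substitution $t_i \mapsto 1 - t_i$, paralleling the derivation of (\ref{maltivar identity2}) from (\ref{maltivar identity}).

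The main obstacle will be the combinatorial bookkeeping of the signs $(-1)^{m-j-1}$ in the correction sum, which must assemble correctly from the interplay of the inclusion-exclusion signs in the strict-versus-non-strict conversion on the tail, the leading $(-1)^{m-1}$ supplied by Theorem \ref{theoremA}, and the orientation-reversing signs from the chain reversal. A secondary subtlety is handling the Laurent structure $R[t_1, t_2^{\pm 1}, \ldots, t_m^{\pm 1}]$: the exponents $n_i - n_{i-1}$ in the factorization $(t_1/t_2)^{n_1} \cdots t_m^{n_m} = \prod_{i=1}^m t_i^{n_i - n_{i-1}}$ (with $n_0 = 0$) are negative for $i \geq 2$ in the strict chain, whereas Theorem \ref{theoremA}'s LHS monomials have nonnegative exponents $n_i - n_{i+1}$, and the identification of these under the reversal must be verified carefully so that no spurious poles are introduced.
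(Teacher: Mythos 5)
Your reduction of Theorem \ref{theoremB} to Theorem \ref{theoremA} does not go through, for a reason that is visible already at depth $m=2$. In Theorem \ref{theoremA} the binomial weight $(-1)^{n_1}\binom{N}{n_1}$ sits on the \emph{maximum} of the chain $N\geq n_1\geq\cdots\geq n_m\geq 1$, whereas in Theorem \ref{theoremB} the weight $(-1)^{n_m}\binom{N}{n_m}$ sits on the \emph{minimum} of the chain $N+1>n_1>\cdots>n_m>0$. The relabeling $n_i\mapsto n_{m-i+1}$ you invoke is only a renaming of summation variables: it changes which letter carries the binomial, but not which \emph{value} (max or min) the binomial is evaluated at. Concretely, after writing $\sum_{n_1>n_2}=\sum_{n_1\geq n_2}-\sum_{n_1=n_2}$ for $m=2$, the surviving non-strict sum is $\sum_{N\geq n_1\geq n_2\geq 1}(-1)^{n_2}\binom{N}{n_2}(t_1/t_2)^{n_1}t_2^{n_2}/(n_1^{k_1}n_2^{k_2})$, with the binomial on the smaller variable; no instance of (\ref{maltivar identity}) applies to it, and none of the other identities in Section \ref{sec:Generalizations of Euler's identity} evaluates such a sum. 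A second, independent obstruction is the shape of the correction terms: inclusion--exclusion over which adjacent inequalities are collapsed produces single chains with merged exponents $k_i+k_{i+1}$, never a \emph{product} of a strict prefix sum over $(k_1,\dots,k_j)$ with an independent non-strict tail sum. Yet the terms $\sum_{j=1}^{m-1}(-1)^{m-j-1}(\cdots)\times(\cdots)$ in (\ref{multivar identity3}) are genuine products of two sums over disjoint sets of variables, so they cannot arise from your decomposition.

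The paper's proof is not a reduction to Theorem \ref{theoremA} but a separate induction on the weight $w$, structurally parallel to the proof of Theorem \ref{theoremA} but anchored at the other end of the index: one passes from $\Bbbk$ to $(k_1+1,\dots,k_m)$ by applying $I_{t_1}$, and from $\Bbbk$ to $(1,k_1,\dots,k_m)$ by applying the truncated operator $J^{N}_{t_1,t_0}$ of Lemma \ref{August31}. The key point is that $J^{N}_{t,s}\bigl((1-t)^n-1\bigr)$ in (\ref{J_4}) produces, besides the expected term, the extra summand $\bigl(\sum_{j=1}^{N}(s/t)^j/j\bigr)\{(1-t)^n-1\}$, and it is exactly this extra summand that generates the product correction terms in (\ref{multivar identity3}); your proposal contains no mechanism that could produce them. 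Finally, (\ref{multivar identity4}) is not obtained from (\ref{multivar identity3}) by substituting $t_i\mapsto 1-t_i$ (that substitution turns the left side into a sum of $\bigl((1-t_1)/(1-t_2)\bigr)^{n_1}\cdots$, not into the binomial-free sum required); the paper proves it by running the same induction with (\ref{right start}), (\ref{J_3}) and (\ref{J_4}) in the binomial-transposed form.
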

\begin{theorem}
	Let $\Bbbk = (k_1, \dots , k_m)$ be an index and $N$ a positive integer. Then the following identity holds in $R[t_1, \dots, t_m]:$
	\begin{multline*}
	(-1)^{m-1}\sum_{N+1>n_1>\cdots >n_m>0}\frac{t_1^{n_1}\dots t_m^{n_m}}{n_1^{k_1}\cdots n_m^{k_m}}
	=\sum_{N\geq n_1\geq \cdots \geq n_m\geq 1}\frac{t_m^{n_1}\cdots t_1^{n_m}}{n_1^{k_m}\cdots n_m^{k_1}}\\
	+\sum_{j=1}^{m-1}(-1)^j\left( \sum_{N+1>n_1>\cdots >n_j>0}\frac{t_1^{n_1}\cdots t_j^{n_j}}{n_1^{k_1}\cdots n_j^{k_j}}\right) \left( \sum_{N\geq n_1\geq \cdots \geq n_{m-j}\geq 1}\frac{t_m^{n_1}\cdots t_{j+1}^{n_{m-j}}}{n_1^{k_m}\cdots n_{m-j}^{k_{j+1}}} \right).
	\end{multline*}
	\label{A_n fneq}\end{theorem}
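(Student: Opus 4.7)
The plan is to reduce the identity to a single pointwise statement about indicator functions on tuples of integers, then establish it by a short telescoping computation.

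First I would reindex the right-hand side so that every summand is expressed in a common form. In each weakly decreasing factor
\[
\sum_{N\geq n_1\geq\cdots\geq n_{m-j}\geq 1}\frac{t_m^{n_1}\cdots t_{j+1}^{n_{m-j}}}{n_1^{k_m}\cdots n_{m-j}^{k_{j+1}}},
\]
apply the reversal $n_i\mapsto n_{m-j+1-i}$; it becomes the weakly \emph{increasing} sum $\sum_{1\leq n_1\leq\cdots\leq n_{m-j}\leq N}\prod_{l=1}^{m-j}t_{j+l}^{n_l}/n_l^{k_{j+l}}$. Combining this with the strictly decreasing factor $\sum_{N+1>n_1>\cdots>n_j>0}\prod_{i=1}^{j}t_i^{n_i}/n_i^{k_i}$ and relabelling the second factor's indices as $n_{j+1},\ldots,n_m$, the $j$-th product on the right-hand side becomes $(-1)^j$ times a single sum over tuples $(n_1,\ldots,n_m)\in\{1,\ldots,N\}^m$ with $n_1>\cdots>n_j$ and $n_{j+1}\leq\cdots\leq n_m$ (with no constraint between $n_j$ and $n_{j+1}$), weighted by $\prod_{i=1}^m t_i^{n_i}/n_i^{k_i}$. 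The left-hand side, in the same terms, is $(-1)^{m-1}$ times the sum of this weight over tuples satisfying $n_1>\cdots>n_m$. Hence the theorem reduces to the pointwise identity
\[
\sum_{j=0}^{m-1}(-1)^{j}\,[n_1>\cdots>n_j]\,[n_{j+1}\leq\cdots\leq n_m]=(-1)^{m-1}[n_1>\cdots>n_m],
\]
holding for every $(n_1,\ldots,n_m)$, where $[\,\cdot\,]$ denotes the Iverson bracket.

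To prove this pointwise identity I would telescope. Set $f_j:=[n_1>\cdots>n_j]$ and $g_j:=[n_{j+1}\leq\cdots\leq n_m]$, so that $f_0=f_1=g_{m-1}=g_m=1$. The elementary relations $f_{j+1}=f_j\,[n_j>n_{j+1}]$ and $g_{j-1}=g_j\,[n_j\leq n_{j+1}]$, together with $[n_j>n_{j+1}]+[n_j\leq n_{j+1}]=1$, give
\[
f_{j}g_{j}=f_{j+1}g_{j}+f_{j}g_{j-1}\qquad(j\geq 1).
\]
Substituting this into $\sum_{j=0}^{m-1}(-1)^{j}f_{j}g_{j}$ and reindexing the two resulting subsums, the interior terms cancel in pairs, leaving $g_0-f_1g_0+(-1)^{m-1}f_mg_{m-1}=(-1)^{m-1}f_m$, which is the desired identity.

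The only delicate part of the argument is the subscript bookkeeping in the reindexing step; the telescoping itself is essentially a one-line calculation. I do not foresee any serious obstacle beyond keeping indices straight.
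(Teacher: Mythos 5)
Your proof is correct, but it takes a genuinely different route from the paper. The paper obtains this theorem by combining two earlier polynomial identities -- Theorem \ref{theoremA} (\ref{maltivar identity2}) and Theorem \ref{theoremB} (\ref{multivar identity4}), each proved by induction on the weight using the truncated integral operators $I_t$, $J^{\star}_{t,s}$, $J^{N}_{t,s}$ -- and then performing the substitution $t_i/t_{i+1}\mapsto t_i$. You instead give a direct, self-contained combinatorial argument: after reversing the weakly monotone factors you reduce everything to the pointwise Iverson-bracket identity $\sum_{j=0}^{m-1}(-1)^{j}[n_1>\cdots>n_j][n_{j+1}\leq\cdots\leq n_m]=(-1)^{m-1}[n_1>\cdots>n_m]$, which you verify by the telescoping relation $f_jg_j=f_{j+1}g_j+f_jg_{j-1}$. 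I checked the reindexing (the weight at position $l'$ of the reversed factor is indeed $t_{j+l'}^{n_{l'}}/n_{l'}^{k_{j+l'}}$) and the boundary terms of the telescope ($f_1=g_{m-1}=1$, so the surviving terms are $g_0-f_1g_0+(-1)^{m-1}f_m$); both are right. Your argument is essentially the explicit antipode/inclusion--exclusion computation for the harmonic (quasi-shuffle) algebra that the paper itself cites in Remark \ref{antipode} as an alternative source for identity (\ref{non-star and star}); it is shorter and more elementary for this particular statement, whereas the paper's derivation reuses machinery (the binomial-coefficient identities of Theorems \ref{theoremA} and \ref{theoremB}) that is needed anyway for the $\mathcal{A}_2$-results such as Theorem \ref{MTA}, which your method does not yield.
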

\begin{proof}
	By combining Theorem \ref{theoremA} (\ref{maltivar identity2}) and Theorem \ref{theoremB} (\ref{multivar identity4}), we have
	{\scriptsize \begin{equation*}
		\begin{split}
		&(-1)^{m-1}\sum_{N+1 > n_1 > \cdots > n_m > 0}\frac{(t_1/t_2)^{n_1}\cdots (t_{m-1}/t_m)^{n_{m-1}}t_m^{n_m}}{n_1^{k_1}\cdots n_m^{k_m}}
		=\sum_{N\geq n_1 \geq \cdots \geq n_m \geq 1}\frac{t_m^{n_1-n_2}\cdots t_{2}^{n_{m-1}-n_m}t_1^{n_m}}{n_1^{k_m} \cdots n_m^{k_1}}\\
		&+\sum_{j=1}^{m-1}(-1)^{j}\left(\sum_{N+1>n_1>\cdots >n_j>0}\frac{(t_1/t_2)^{n_1}\cdots (t_j/t_{j+1})^{n_j}}{n_1^{k_1}\cdots n_j^{k_j}}\right)
		\left( \sum_{N\geq n_1 \geq \cdots \geq n_{m-j} \geq 1}\frac{t_m^{n_1-n_2}\cdots t_{j+2}^{n_{m-j-1}-n_{m-j}}t_{j+1}^{n_{m-j}}}{n_1^{k_m} \cdots n_{m-j}^{k_{j+1}}} \right)
		\end{split}
		\end{equation*}
	}in $R[t_1, t_2^{\pm 1}, \dots, t_m^{\pm 1}]$. By replacing $t_1/t_2 \mapsto t_1, \dots, t_{m-1}/t_m \mapsto t_{m-1}$, we obtain the desired identity.
\end{proof}
As applications of Theorem \ref{A_n fneq}, we can prove not only a formula for the finite multiple polylogarithms (= Theorem \ref{MTC}) but also a formula for the usual multiple polylogarithms by taking the limit $N \to \infty$. We recall the definition of the multiple polylogarithms:
\begin{definition}
	Let $\Bbbk = (k_1, \dots, k_m)$ be an index and $z_1, \dots, z_m$ complex numbers satisfying at least one of the following conditions for absolute convergence:
	
	\vspace{2mm}
	\noindent (i) $|z_1|<1$ and $|z_i| \leq 1 \ (2 \leq i \leq m)$, \hspace{8mm} (ii) $|z_i| \leq 1 \ (1 \leq i \leq m)$ and $k_1 \geq 2$.
	\vspace{2mm}
	
	Then, we define {\em the multiple polylogarithms} by
	\begin{equation*}
	\Li_{\Bbbk}(z_1, \dots, z_m) := \sum_{n_1>n_2>\cdots>n_m\geq 1}\frac{z_1^{n_1}\cdots z_m^{n_m}}{n_1^{k_1}\cdots n_m^{k_m}}, \hspace{3mm} \Li_{\Bbbk}^{\star}(z_1, \dots, z_m) := \sum_{n_1\geq n_2\geq \cdots\geq n_m\geq 1}\frac{z_1^{n_1}\cdots z_m^{n_m}}{n_1^{k_1}\cdots n_m^{k_m}}.
	\end{equation*}
	If $k_1\geq 2$, then we define {\em the multiple zeta$($-star$)$ values} $\zeta(\Bbbk)$ and $\zeta^{\star}(\Bbbk)$ by
	$\zeta(\Bbbk):=\Li_{\Bbbk}(\{1\}^m)$ and $\zeta^{\star}(\Bbbk):=\Li_{\Bbbk}^{\star}(\{1\}^m)$, respectively.
\end{definition}
\begin{theorem}
	Let $\Bbbk = (k_1, \dots, k_m)$ be an index and $z_1, \dots, z_m$ complex numbers satisfying at least one of the following conditions$:$
	
	\vspace{2mm}
	\noindent (i) $|z_1|<1$, $|z_i| \leq 1 \ (2 \leq i \leq m-1)$, and $|z_m|<1$,
	
	\noindent(ii) $|z_i| \leq 1 \ (1 \leq i \leq m)$, $k_1 \geq 2$, and $k_m \geq 2$.
	\vspace{2mm}
	
	\noindent Then we have
	\begin{equation*}
	\sum_{j=0}^{m}(-1)^j\Li_{(k_1, \dots, k_j)}(z_1, \dots , z_j)\Li_{(k_m, \dots, k_{j+1})}^{\star}(z_m, \dots, z_{j+1})=0.
	\end{equation*}
	Here, we consider $\Li_{(k_1, \dots, k_j)}(z_1, \dots , z_j)$ $($resp. $\Li_{(k_m, \dots, k_{j+1})}^{\star}(z_m, \dots, z_{j+1}))$ as $1$ when $j=0$ $($resp. $j=m)$.
	\label{UMP}\end{theorem}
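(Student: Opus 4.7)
The plan is to derive Theorem \ref{UMP} by specializing Theorem \ref{A_n fneq} to $t_i = z_i$ and then passing to the limit $N \to \infty$.

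First, I would rearrange the identity in Theorem \ref{A_n fneq} so that every term sits on the same side. Introducing the notation
\[
A_j(N) := \sum_{N\geq n_1>\cdots >n_j\geq 1}\frac{z_1^{n_1}\cdots z_j^{n_j}}{n_1^{k_1}\cdots n_j^{k_j}}, \quad B_j(N) := \sum_{N\geq n_1\geq \cdots \geq n_{m-j}\geq 1}\frac{z_m^{n_1}\cdots z_{j+1}^{n_{m-j}}}{n_1^{k_m}\cdots n_{m-j}^{k_{j+1}}}
\]
for $1 \le j \le m-1$, together with the conventions $A_0(N) := 1$ and $B_m(N) := 1$, the $j=0$ term of the right-hand side of Theorem \ref{A_n fneq} matches $A_0(N)B_0(N)$, the middle terms $(-1)^jA_j(N)B_j(N)$ are already displayed, and transposing the left-hand side $(-1)^{m-1}A_m(N)$ contributes $(-1)^m A_m(N)B_m(N)$. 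Thus Theorem \ref{A_n fneq} is equivalent to
\[
\sum_{j=0}^{m}(-1)^j A_j(N)B_j(N)=0.
\]
By construction, $A_j(N)$ and $B_j(N)$ are precisely the $N$-truncations of $\Li_{(k_1,\dots,k_j)}(z_1,\dots,z_j)$ and $\Li_{(k_m,\dots,k_{j+1})}^{\star}(z_m,\dots,z_{j+1})$, respectively.

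Next I would verify that under either hypothesis (i) or (ii), each factor $\Li_{(k_1,\dots,k_j)}(z_1,\dots,z_j)$ and $\Li_{(k_m,\dots,k_{j+1})}^{\star}(z_m,\dots,z_{j+1})$ converges absolutely. Under hypothesis (i), the strict inequality $|z_1|<1$ bounds the outermost factor $|z_1|^{n_1}$ by a geometric series, which dominates the polynomial factor $\binom{n_1-1}{j-1}$ arising from the number of inner configurations, so each $A_j(N)$ is bounded uniformly in $N$; symmetrically, $|z_m|<1$ controls each $B_j(N)$. Under hypothesis (ii), $k_1\ge 2$ reduces $|A_j(N)|$ to the admissible multiple zeta value $\zeta(k_1,k_2,\dots,k_j)$, and $k_m\ge 2$ dominates $|B_j(N)|$ by the admissible multiple zeta-star value $\zeta^{\star}(k_m,\dots,k_{j+1})$.

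Once absolute convergence is established, $A_j(N)\to \Li_{(k_1,\dots,k_j)}(z_1,\dots,z_j)$ and $B_j(N)\to \Li_{(k_m,\dots,k_{j+1})}^{\star}(z_m,\dots,z_{j+1})$ as $N\to\infty$. Since the alternating sum contains only $m+1$ terms, I can pass to the limit term by term to obtain the theorem. The only non-trivial point is the bookkeeping needed to recognize the right-hand side of Theorem \ref{A_n fneq} as $\sum_{j=0}^m(-1)^j A_jB_j$ with the correct signs and boundary conventions; the case analysis for convergence is otherwise routine.
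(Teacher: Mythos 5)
Your proposal is correct and follows exactly the route the paper intends: the paper derives Theorem \ref{UMP} precisely by specializing Theorem \ref{A_n fneq} at $t_i=z_i$, transposing the left-hand side to obtain $\sum_{j=0}^m(-1)^jA_j(N)B_j(N)=0$, and letting $N\to\infty$ term by term, with conditions (i) and (ii) guaranteeing absolute convergence of each factor. Your bookkeeping of the signs and the boundary conventions $A_0=B_m=1$, as well as the convergence estimates, all check out.
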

If $k_1 \geq 2$ and $k_m \geq 2$, then we have
\begin{equation}
\sum_{j=0}^m(-1)^j\zeta(k_1, \dots, k_j)\zeta^{\star}(k_m, \dots, k_{j+1})=0.
\label{usual non-star and star}\end{equation}
See Remark \ref{antipode}.
\subsection{Truncated integral operators}
\label{subsec:Truncated integral operators}
In this subsection, we introduce {\em truncated integral operators} to prove the theorems in Subsection \ref{subsec:Generalizations of Euler's identity and their corollaries}. Through this subsection, let $R$ be a commutative ring including the field of rational numbers $\mathbb{Q}$ and $N$ a positive integer. Let $t$ and $s$ be indeterminates.

Let $\int \! \ast dt \colon R\jump{t} \to R\jump{t}$ be the formal indefinite integral operator satisfying the condition that the constant term with respect to $t$ is equal to $0$, that is,
\[
\int \left( \sum_{n=0}^{\infty}a_nt^n \right) dt := \sum_{n=0}^{\infty}\frac{a_n}{n+1}t^{n+1}
.
\]
We prepare the following five $R$-linear operators:
\begin{align*}
&I_{t; R} \colon tR[t] \longrightarrow tR[t]\hspace{13mm}I_{t, s; R} \colon R[t, s] \longrightarrow R\jump{s/t}[t] \hspace{13mm}\tau_{t ; R}^{\leq N} \colon R\jump{t} \longrightarrow R[t]\\
&\hspace{10mm} f(t) \longmapsto \int \frac{f(t)}{t}dt, \hspace{15mm} f(t, s) \longmapsto \int \frac{f(t, s)}{t-s}ds, \hspace{17mm} \sum_{n=0}^{\infty}a_nt^n \longmapsto \sum_{n=0}^{N}a_nt^n,\\
&\pr_{t ; R} \colon R(\!(t^{-1})\!) \longrightarrow R[t] \hspace{66mm} \pr_{t ; R}^{-} \colon R (\!(t^{-1})\!) \to t^{-1}R\jump{t^{-1}}\\
&\hspace{10mm} \sum_{n=-\infty}^{n_0}a_nt^n \longmapsto \begin{cases} \sum_{n=0}^{n_0}a_nt^n & \text{if $n_0$ is non-negative,} \\ 0 & \text{otherwise,} \end{cases} \hspace{15mm}\sum_{n=n_0}^{\infty}a_nt^{-n} \longmapsto \sum_{n=1}^{\infty}a_nt^{-n}.\\
\end{align*}
Here, we consider the formal integral operator in the definition of $I_{t, s ; R}$ as an operator on $R(\!(t^{-1})\!)[s]$. For instance, we have
\begin{equation}
I_{t, s; R}(s^n) = \sum_{j=1}^{\infty}\frac{s^{n+j}t^{-j}}{j} 
\label{I_t,s}\end{equation}
for a non-negative integer $n$.
\begin{definition}
	We define {\em the truncated integral operators} $J_{t, s; R}^{\star}$ and $J_{t, s; R}^{N}$ by
	\begin{align*}
	&J_{t, s; R}^{\star} :=\pr_{t; R\jump{s}} \circ I_{t, s; R} \colon R[t, s] \longrightarrow R\jump{s}[t], \\ 
	&J_{t, s; R}^{N} := \tau_{s; R\jump{t^{-1}}}^{\leq N} \circ \pr_{t, R\jump{s}}^{-} \circ I_{t, s; R} \colon R[t, s] \longrightarrow t^{-1}R\jump{t^{-1}}[s].
	\end{align*}
	We can check easily that the image of $J_{t, s; R}^{\star}$ (resp. $J_{t, s; R}^{N}$) is included in $R[t, s]$ (resp. $t^{-1}R[t^{-1}, s])$.
\end{definition}
For simplicity, we omit the ring $R$ from our notations. The following Lemma \ref{I_0-I_1} and Lemma \ref{August31} are fundamental for the proofs of Theorem \ref{theoremA} and Theorem \ref{theoremB}.
\begin{lemma}Let $n$ be a positive integer. Then we have  the following identities$:$
	\begin{align}
	&I_t(t^n)=\frac{t^n}{n},
	\label{I_0'}\\
	&I_t((1-t)^n-1)=\sum_{j=1}^{n}\frac{(1-t)^j-1}{j}
	\label{I_0},\\
	&J_{t,s}^{\star}(t^n)=\sum_{j=1}^n\frac{t^{n-j}s^j}{j},
	\label{J1}\\
	&J_{t,s}^{\star}((1-t)^n-1)=\sum_{j=1}^n\frac{(1-t)^{n-j}\{ (1-s)^j-1 \}}{j}.
	\label{J2}
	\end{align}
	\label{I_0-I_1}\end{lemma}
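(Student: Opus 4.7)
The plan is to treat the four identities in sequence, the first three by direct computation from the definitions and the fourth by induction on $n$.

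Identity (\ref{I_0'}) is immediate: $I_t(t^n) = \int t^{n-1}\,dt = t^n/n$. For (\ref{I_0}), the key observation is the factorization
\[
(1-t)^n - 1 = \bigl((1-t) - 1\bigr)\sum_{k=0}^{n-1}(1-t)^k = -t\sum_{k=0}^{n-1}(1-t)^k,
\]
which cancels the division by $t$. Termwise integration using the antiderivative $\int (1-t)^k\,dt = \bigl(1 - (1-t)^{k+1}\bigr)/(k+1)$, normalized to vanish at $t = 0$, and re-indexing $j = k+1$ yields the desired formula. For (\ref{J1}), I expand $\frac{1}{t-s} = \sum_{j \geq 0}s^j t^{-j-1}$ in $R[[s/t]][t]$ so that $\frac{t^n}{t-s} = \sum_{j\geq 0}s^j t^{n-j-1}$; integrating term-by-term with respect to $s$ gives $I_{t,s}(t^n) = \sum_{k \geq 1}s^k t^{n-k}/k$, and $\pr_t$ retains exactly the terms with $n - k \geq 0$, giving the claimed finite sum.

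For (\ref{J2}), I argue by induction on $n$, with $n = 1$ following from (\ref{J1}). For the inductive step, the decomposition
\[
(1-t)^n - 1 = \bigl((1-t)^{n-1} - 1\bigr) - t(1-t)^{n-1}
\]
reduces matters to $J_{t,s}^{\star}(t(1-t)^{n-1})$. Because $I_{t,s}$ integrates in $s$, we have $I_{t,s}(tf) = t \cdot I_{t,s}(f)$; combined with the bookkeeping identity $\pr_t(tg) = t\cdot\pr_t(g) + [t^{-1}]g$, where $[t^{-1}]g$ is the coefficient of $t^{-1}$, this yields the correction $J_{t,s}^{\star}(tf) = tJ_{t,s}^{\star}(f) + [t^{-1}]I_{t,s}(f)$. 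The residue $[t^{-1}]I_{t,s}((1-t)^{n-1})$ is then evaluated by binomial expansion, noting that the $t^{-1}$-coefficient of $I_{t,s}(t^i)$ is $s^{i+1}/(i+1)$, and applying the elementary identity $\sum_{i=0}^{n-1}\binom{n-1}{i}\frac{x^i}{i+1} = \frac{(1+x)^n - 1}{nx}$ with $x = -s$; this residue equals $(1 - (1-s)^n)/n$. Using also $J_{t,s}^{\star}(1) = 0$ (since $I_{t,s}(1)$ has only negative powers of $t$) to replace $J_{t,s}^{\star}((1-t)^{n-1})$ by $J_{t,s}^{\star}((1-t)^{n-1}-1)$, everything assembles into the recursion
\[
J_{t,s}^{\star}\bigl((1-t)^n - 1\bigr) = (1-t)\,J_{t,s}^{\star}\bigl((1-t)^{n-1} - 1\bigr) + \frac{(1-s)^n - 1}{n},
\]
and the inductive hypothesis on the first term telescopes to the desired sum.

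The main obstacle is the correction term in $\pr_t(tg) = t\pr_t(g) + [t^{-1}]g$, specifically isolating the right $t^{-1}$ residue of $I_{t,s}((1-t)^{n-1})$; once this residue is pinned down to $(1 - (1-s)^n)/n$, the recursion is forced and the telescoping structure of the target expression falls out immediately from the inductive hypothesis.
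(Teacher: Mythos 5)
Your proposal is correct, and for the first three identities it runs along essentially the same lines as the paper (direct computation for (\ref{I_0'}) and (\ref{J1}); for (\ref{I_0}) your factorization $(1-t)^n-1=-t\sum_{k=0}^{n-1}(1-t)^k$ is the same geometric-series cancellation the paper performs after substituting $T=1-t$). The genuine divergence is in (\ref{J2}). The paper proves it in one stroke: it rewrites
\[
\frac{(1-t)^n}{t-s}=-\sum_{j=0}^{n-1}(1-s)^j(1-t)^{n-j-1}+\frac{(1-s)^n}{t-s}
\]
via the finite geometric series in $\frac{s-1}{t-1}$, notes that $J_{t,s}^{\star}$ kills any $f(s)\in R[s]$ (so the last term drops after $\pr_{t}$), and integrates the remaining polynomial in $s$ directly. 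You instead induct on $n$ through the recursion
\[
J_{t,s}^{\star}\bigl((1-t)^n-1\bigr)=(1-t)\,J_{t,s}^{\star}\bigl((1-t)^{n-1}-1\bigr)+\frac{(1-s)^n-1}{n},
\]
obtained from the commutation rule $\pr_{t}(tg)=t\,\pr_{t}(g)+[t^{-1}]g$ together with an explicit evaluation of the residue $[t^{-1}]I_{t,s}((1-t)^{n-1})$; I checked the residue computation (via $\binom{n-1}{i}\frac{1}{i+1}=\frac{1}{n}\binom{n}{i+1}$) and the telescoping, and both are right. The paper's route is shorter and avoids induction, at the cost of a somewhat unmotivated partial-fraction identity; your route makes the telescoping shape of the answer inevitable, and your residue/binomial bookkeeping is close in spirit to what the paper must do anyway in the harder Lemma \ref{August31} for $J_{t,s}^N$, where the analogous correction terms cannot be avoided. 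Either argument is acceptable here.
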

\begin{proof}The equality (\ref{I_0'}) can be easily checked.
	We show the equality (\ref{I_0}). Set $T:=1-t$. Then the left hand side of (\ref{I_0}) equals to $$-\int\frac{T^k-1}{1-T}dT
	=\int\sum_{j=0}^{k-1} T^jdT=\sum_{j=1}^k\frac{T^j-1}{j}.$$
	By the definition of $T$, we obtain the equality (\ref{I_0}).
	Let us show the equality (\ref{J1}). By the equality (\ref{I_t,s}), the following equalities hold:
	\begin{equation*}
	J_{t,s}^{\star}(t^n)=\pr_{t, R\jump{s}}(t^nI_{t, s; R}(1))=\pr_{t, R\jump{s}}\left(\sum_{j=1}^{\infty}\frac{s^jt^{n-j}}{j} \right) =\sum_{j=1}^{n}\frac{s^jt^{n-j}}{j}.
	\end{equation*}
	Finally, we show the equality (\ref{J2}). Note that the following equalities hold:
	\begin{equation*}
	\begin{split}
	\frac{(1-t)^n}{t-s}=-\frac{(1-t)^{n-1}}{1-\frac{s-1}{t-1}} &=-(1-t)^{n-1}\left(\frac{1-(\frac{s-1}{t-1})^n}{1-\frac{s-1}{t-1}}+\frac{(\frac{s-1}{t-1})^n}{1-\frac{s-1}{t-1}}\right)\\
	&=-(1-t)^{n-1}\sum_{j=0}^{n-1}\left(\frac{s-1}{t-1}\right)^j +\frac{(1-s)^n}{t-s}\\
	&=-\sum_{j=0}^{n-1}(1-s)^j(1-t)^{n-j-1}+\frac{(1-s)^n}{t-s}.
	\end{split}
	\end{equation*}
	As $J_{t,s}^{\star}(f(s))=\pr_{t, R\jump{s}}(\int\frac{f(s)ds}{t-s})=0$ for each $f(s)\in R[s]$ by the equality (\ref{I_t,s}), we have
	{\footnotesize \begin{equation*}
	J_{t,s}^{\star}((1-t)^n-1) =J_{t,s}^{\star}((1-t)^n) =\int\left(-\sum_{j=0}^{n-1}(1-s)^j(1-t)^{n-j-1}\right)ds=\sum_{j=1}^n\frac{(1-t)^{n-j}\{ (1-s)^j-1\}}{j}.
	\end{equation*}
}This completes the proof of the lemma.
\end{proof}
Before we give the lemma for $J_{t,s}^{N}$, we prepare the following two auxiliary lemmas:
\begin{lemma}[{cf. \cite[Proof of Lemma 4.1]{ZHS2}}]Let $N$ be a positive integer.
	We have the following polynomial identities in $R[t]:$
	\begin{equation}
	\sum_{n=1}^{N}(-1)^{n}\binom{N}{n}\frac{t^n}{n} = \sum_{n=1}^{N}\frac{(1-t)^n-1}{n},
	\label{left start}\end{equation}
	\begin{equation}
	\sum_{n=1}^{N}\frac{t^n}{n} = \sum_{n=1}^{N}(-1)^{n}\binom{N}{n}\frac{(1-t)^n-1}{n}.
	\label{right start}\end{equation}
	\label{starting identity}\end{lemma}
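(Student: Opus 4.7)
The plan is to derive both identities from the binomial theorem together with the operator $I_t$ introduced in the previous subsection. For equality (\ref{left start}), start from the elementary polynomial identity
\[
(1-t)^N - 1 = \sum_{n=1}^{N}(-1)^n \binom{N}{n}t^n
\]
and apply $I_t$ to both sides. By $R$-linearity and equality (\ref{I_0'}), the right side becomes $\sum_{n=1}^N (-1)^n \binom{N}{n}t^n/n$, which is the left side of (\ref{left start}). By equality (\ref{I_0}), the left side becomes $\sum_{j=1}^N ((1-t)^j - 1)/j$, which is the right side of (\ref{left start}).

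For equality (\ref{right start}), the cleanest route is to first substitute $t \mapsto 1-t$ in the identity (\ref{left start}) just proved, which yields
\[
\sum_{n=1}^N (-1)^n \binom{N}{n}\frac{(1-t)^n}{n} = \sum_{n=1}^N \frac{t^n - 1}{n}.
\]
Next, specialize (\ref{left start}) to $t = 1$; this gives Euler's classical identity (\ref{Euler's identity}), namely $\sum_{n=1}^N (-1)^n\binom{N}{n}/n = -\sum_{n=1}^N 1/n$. Subtracting this constant identity from the previous equation, the left sides combine into $\sum_{n=1}^N (-1)^n \binom{N}{n}((1-t)^n - 1)/n$ while the right sides combine into $\sum_{n=1}^N t^n/n$, which is exactly (\ref{right start}).

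There is no substantial obstacle: once one notices that both identities are the $I_t$-image of the binomial expansion of $(1-t)^N - 1$, the only additional bookkeeping, used to pass from (\ref{left start}) to (\ref{right start}), is the removal of the constant term produced by the substitution $t \mapsto 1-t$, which is precisely furnished by the $t=1$ specialization of (\ref{left start}) (i.e.\ by Euler's identity). Thus the lemma is entirely a corollary of Lemma \ref{I_0-I_1}.
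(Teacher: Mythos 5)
Your proof is correct and follows essentially the same route as the paper: identity (\ref{left start}) is obtained by applying $I_t$ to the binomial expansion of $(1-t)^N-1$ via Lemma \ref{I_0-I_1} (\ref{I_0'}) and (\ref{I_0}), and identity (\ref{right start}) then follows from the substitution $t\mapsto 1-t$ combined with Euler's identity (\ref{Euler's identity}), exactly as in the paper's proof. Your write-up merely makes the bookkeeping for the second step more explicit.
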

\begin{proof}
	First, we remark that $(1-t)^N-1 = \sum_{n=1}^{N}\binom{N}{n}(-t)^n$. Then by applying $I_t$ to both sides and using Lemma \ref{I_0-I_1} (\ref{I_0'}) and (\ref{I_0}), we obtain the identity (\ref{left start}). The identity (\ref{right start}) is obtained by the substitution $t \mapsto 1-t$ and the Euler's identity (\ref{Euler's identity}), which is a special case of the identity (\ref{left start}).
\end{proof}
\begin{lemma}
	Let $j$ and $n$ be non-negative integers satisfying $j \leq n$. Then we have the following polynomial identity in $R[t]:$
	\[
	\sum_{k=j}^n\binom{n}{k}\binom{k}{j}t^k = \binom{n}{j}t^j(1+t)^{n-j}.
	\]	
	\label{double binomial coefficient}\end{lemma}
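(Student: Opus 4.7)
The plan is to reduce the identity to the binomial theorem via the \emph{subset-of-a-subset} identity
\[
\binom{n}{k}\binom{k}{j} = \binom{n}{j}\binom{n-j}{k-j},
\]
which holds for $0 \leq j \leq k \leq n$. One verifies this directly by expanding both sides in factorials: both equal $\frac{n!}{j!(k-j)!(n-k)!}$. Combinatorially, both sides count the number of ways to choose a $k$-element subset of an $n$-element set together with a distinguished $j$-element subset of that $k$-subset.

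Granting this, the left-hand side becomes
\[
\sum_{k=j}^{n}\binom{n}{k}\binom{k}{j}t^k = \binom{n}{j}\sum_{k=j}^{n}\binom{n-j}{k-j}t^k = \binom{n}{j}t^j\sum_{m=0}^{n-j}\binom{n-j}{m}t^m,
\]
after substituting $m = k-j$. Applying the binomial theorem to the inner sum gives $(1+t)^{n-j}$, and we obtain the claimed equality $\binom{n}{j}t^j(1+t)^{n-j}$.

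There is no real obstacle here; the only thing to check is the range of summation (the terms with $k<j$ vanish because $\binom{k}{j}=0$, so the identity could equivalently be written as a sum from $k=0$ to $n$). The proof is therefore a one-line application of the subset-of-a-subset identity followed by the binomial theorem.
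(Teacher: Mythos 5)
Your proof is correct. It takes a different route from the paper: you invoke the subset-of-a-subset (trinomial revision) identity $\binom{n}{k}\binom{k}{j}=\binom{n}{j}\binom{n-j}{k-j}$, verified by factorials, and then finish with a single application of the binomial theorem after the shift $m=k-j$. The paper instead expands the two-variable polynomial $(t+s+ts)^n$ in two ways, as $\{t+(1+t)s\}^n$ and as $\{t(1+s)+s\}^n$, and compares the coefficients of $s^{n-j}$; the desired identity drops out without ever writing a factorial. The two arguments are essentially dual: the coefficient comparison in the paper is a generating-function encoding of exactly the identity you quote. Your version is more elementary and self-contained (everything reduces to one factorial computation plus the binomial theorem), while the paper's version stays within the generating-function style of manipulation used throughout that section and avoids handling the degenerate range issues explicitly, since the vanishing of $\binom{k}{j}$ for $k<j$ is automatic in the coefficient extraction. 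Your remark about the summation range is accurate and harmless. No gaps.
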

\begin{proof}
	By the binomial expansion, we have
	\[
	(t+s+ts)^n = \{t+(1+t)s\}^n=\sum_{j=0}^n\binom{n}{j}t^j(1+t)^{n-j}s^{n-j}.
	\]	
	On the other hand, we have
	\[
	\begin{split}
	(t+s+ts)^n &= \{ t(1+s)+s\}^n = \sum_{k=0}^n\binom{n}{k}t^k(1+s)^ks^{n-k}\\
	&=\sum_{k=0}^n\binom{n}{k}t^k\sum_{j=0}^k\binom{k}{j}s^{k-j}s^{n-k}=\sum_{j=0}^n\left\{\sum_{k=j}^n\binom{n}{k}\binom{k}{j}t^k\right\}s^{n-j}
	\end{split}
	\]
	Compare the coefficient of $s^{n-j}$.
\end{proof}
\begin{lemma}
	Let $N$ and $n$ be positive integers. Then we have the following identities in $R[t^{\pm1}, s]:$
	\begin{align}
	&J_{t, s}^N(t^n) = \sum_{j=n+1}^N\frac{s^jt^{n-j}}{j},\label{J_3}\\
	&J_{t, s}^N((1-t)^n-1) = -\sum_{j=1}^n\frac{(1-t)^{n-j}\{(1-s)^j-1\}}{j}+\left( \sum_{j=1}^N\frac{(s/t)^j}{j}\right)\{(1-t)^n-1\}.\label{J_4}
	\end{align}
	Here, we understand the summation in the right hand side of the equality $(\ref{J_3})$ as $0$ if $n+1$ is greater than $N$.
	\label{August31}\end{lemma}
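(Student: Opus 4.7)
The plan is to prove both identities by unfolding the definition $J_{t,s}^N = \tau_s^{\leq N} \circ \pr_t^- \circ I_{t,s}$ and computing each step directly, re-using the formulas already established for $J_{t,s}^{\star}$ in Lemma \ref{I_0-I_1}. The central organizing observation is the decomposition $\mathrm{id} = \pr_t + \pr_t^-$ on $R(\!(t^{-1})\!)$, which lets us write $\pr_t^- \circ I_{t,s} = I_{t,s} - \pr_t \circ I_{t,s} = I_{t,s} - J_{t,s}^{\star}$.

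For identity (\ref{J_3}), I would first expand $\frac{1}{t-s} = \sum_{k=0}^{\infty} s^k t^{-k-1}$ in $R(\!(t^{-1})\!)[s]$, multiply by $t^n$, and integrate termwise in $s$ to obtain $I_{t,s}(t^n) = \sum_{j=1}^{\infty} s^j t^{n-j}/j$. Then $\pr_t^-$ retains only terms with strictly negative $t$-power, namely those with $j > n$, and $\tau_s^{\leq N}$ truncates the remaining series at $s$-degree $N$. This yields the asserted finite sum $\sum_{j=n+1}^{N} s^j t^{n-j}/j$ (empty, and thus zero, when $n+1 > N$).

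For identity (\ref{J_4}), the key observation is that $(1-t)^n - 1$ is independent of $s$, so that $I_{t,s}((1-t)^n - 1) = \{(1-t)^n - 1\} \cdot I_{t,s}(1)$, where the same geometric expansion gives $I_{t,s}(1) = \sum_{j=1}^{\infty} (s/t)^j/j$. Applying the decomposition above, the subtracted piece is precisely $J_{t,s}^{\star}((1-t)^n - 1)$, which Lemma \ref{I_0-I_1}(\ref{J2}) evaluates as $\sum_{j=1}^{n} (1-t)^{n-j}\{(1-s)^j - 1\}/j$. Thus before truncation we have
\[
\pr_t^- \circ I_{t,s}\bigl((1-t)^n - 1\bigr) = \{(1-t)^n - 1\}\sum_{j=1}^{\infty}\frac{(s/t)^j}{j} - \sum_{j=1}^{n}\frac{(1-t)^{n-j}\{(1-s)^j - 1\}}{j}.
\]
Applying $\tau_s^{\leq N}$ cuts the infinite series $\sum_{j=1}^{\infty}(s/t)^j/j$ to $\sum_{j=1}^{N}(s/t)^j/j$ while fixing the polynomial-in-$s$ piece (of degree $\leq n$, in the regime where the identity is applied below), yielding the displayed right-hand side of (\ref{J_4}).

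The main bookkeeping obstacle is simply tracking the $t$-degrees: one must verify that $(1-t)^n \sum_{j=1}^{\infty}(s/t)^j/j$ genuinely contains mixed positive and negative $t$-powers and that the positive-$t$-power part extracted by $\pr_t$ coincides with the explicit polynomial identity from Lemma \ref{I_0-I_1}(\ref{J2}). Once this is aligned, the two identities reduce to clean consequences of the geometric-series expansion of $1/(t-s)$ in $R(\!(t^{-1})\!)[s]$ and the linearity of $I_{t,s}$ in its integrand.
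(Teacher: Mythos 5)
Your proof is correct, and for the second identity it takes a genuinely different and shorter route than the paper. For (\ref{J_3}) the two arguments coincide: both read the claim off the expansion (\ref{I_t,s}) by observing which powers of $t$ survive $\pr_{t}^{-}$ and which powers of $s$ survive $\tau_{s}^{\leq N}$. For (\ref{J_4}), the paper expands $(1-t)^n$ binomially, applies (\ref{J_3}) monomial by monomial, and is then forced to evaluate the double sum $\sum_{n\geq k\geq j\geq 1}(-1)^{k}\binom{n}{k}\frac{s^jt^{k-j}}{j}$, which it does by passing through Lemma \ref{starting identity} (\ref{right start}), the double-binomial identity of Lemma \ref{double binomial coefficient}, and Lemma \ref{starting identity} (\ref{left start}) again. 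Your decomposition $\pr_{t}^{-}=\mathrm{id}-\pr_{t}$, hence $\pr_{t}^{-}\circ I_{t,s}=I_{t,s}-J_{t,s}^{\star}$, bypasses that entire computation by recycling Lemma \ref{I_0-I_1} (\ref{J2}); in effect the paper's double-sum evaluation is a second, independent derivation of (\ref{J2}), and you avoid duplicating it. What the paper's route buys is that it never leaves the ring of explicit finite sums, whereas yours requires a brief check that the operator identity is legitimate on $R(\!(t^{-1})\!)\jump{s}$ and that $\tau_{s}^{\leq N}$ distributes over the two pieces --- both of which you handle. One caveat applies equally to both proofs: the polynomial piece $\sum_{j=1}^{n}\frac{(1-t)^{n-j}\{(1-s)^j-1\}}{j}$ has $s$-degree $n$, so $\tau_{s}^{\leq N}$ fixes it only when $n\leq N$ (correspondingly, the paper's step $\sum_{j=k+1}^{N}=\sum_{j=1}^{N}-\sum_{j=1}^{k}$ needs $k\leq N$); for $n>N$ the stated identity (\ref{J_4}) actually fails (e.g.\ $N=1$, $n=2$ gives $0$ on the left and $-s^2/2$ on the right). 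You flag this by restricting to ``the regime where the identity is applied,'' which is more careful than the paper; in the proof of Theorem \ref{theoremB} the exponent is always an $n_{l_m}\leq N$, so nothing downstream is affected.
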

\begin{proof}
	The equality (\ref{J_3}) is an immediate consequence of the equality (\ref{I_t,s}). We show the equality (\ref{J_4}).
	By the equality
	\[
	I_{t, s; R}((1-t)^n)=(1-t)^n\sum_{j=1}^{\infty}\frac{s^jt^{-j}}{j} = \sum_{k=0}^n(-1)^k\binom{n}{k}\left( \sum_{j=1}^{\infty}\frac{s^jt^{k-j}}{j}\right),
	\]
	we have
	{\footnotesize \begin{equation}
	\begin{split}
	J_{t, s}^{N}((1-t)^n) = \sum_{k=0}^n(-1)^k\binom{n}{k}\left( \sum_{j=k+1}^N\frac{s^jt^{k-j}}{j} \right)
	&=\sum_{k=0}^n(-1)^k\binom{n}{k}\left( \sum_{j=1}^N\frac{s^jt^{k-j}}{j}-\sum_{j=1}^{k}\frac{s^jt^{k-j}}{j}\right) \\
	&=\sum_{j=1}^N\frac{(s/t)^{j}}{j}(1-t)^n-\sum_{n \geq k \geq j \geq 1}(-1)^{k}\binom{n}{k}\frac{s^jt^{k-j}}{j}.
	\end{split}
	\label{(1-t)^n}\end{equation}
}Since the equality
	\[
	\sum_{j=1}^k\frac{(s/t)^j}{j}=\sum_{j=1}^k(-1)^j\binom{k}{j}\frac{(1-s/t)^j-1}{j}
	\]
	holds by Lemma \ref{starting identity} (\ref{right start}), we have
	\[
	\sum_{n \geq k \geq j \geq 1}(-1)^{k}\binom{n}{k}\frac{s^jt^{k-j}}{j} = \sum_{n \geq k \geq j \geq 1}(-1)^{j+k}\binom{n}{k}\binom{k}{j} \frac{(1-s/t)^j-1}{j}t^k.
	\]
	Furthermore, by Lemma \ref{double binomial coefficient}, we have
	\begin{equation*}
	\begin{split}
	\sum_{n \geq k \geq j \geq 1}(-1)^{j+k}\binom{n}{k}\binom{k}{j} \frac{(1-s/t)^j-1}{j}t^k
	&=\sum_{j=1}^n\binom{n}{j}t^j(1-t)^{n-j}\frac{(1-s/t)^j-1}{j}\\
	&=\sum_{j=1}^n\binom{n}{j}(1-t)^{n-j}\frac{(t-s)^j-t^j}{j}\\
	&=(1-t)^n\sum_{j=1}^n\binom{n}{j}\frac{1}{j}\left\{\left(\frac{t-s}{1-t}\right)^j-\left(\frac{t}{1-t}\right)^j\right\}.
	\end{split}
	\end{equation*}
	Therefore, according to Lemma \ref{starting identity} (\ref{left start}), we can delete the binomial coefficients completely as follows: 
	{\small \begin{equation*}
	\begin{split}
	\sum_{n \geq k \geq j \geq 1}(-1)^{j+k}\binom{n}{k}\binom{k}{j} \frac{(1-s/t)^j-1}{j}t^k
	&=(1-t)^n\sum_{j=1}^n\frac{1}{j}\left\{ \left( 1-\frac{s-t}{1-t}\right)^j-\left( 1-\frac{-t}{1-t}\right)^j\right\}\\
	&=(1-t)^n\sum_{j=1}^n\frac{1}{j}\left\{\left(\frac{1-s}{1-t}\right)^j-\left( \frac{1}{1-t}\right)^j\right\}\\
	&=\sum_{j=1}^n\frac{(1-t)^{n-j}\left\{(1-s)^j-1\right\}}{j}.
	\end{split}
	\end{equation*}
}Hence, we have the desired identity by the equalities (\ref{(1-t)^n}) and $J_{t, s}^{N}(1)=\sum_{j=1}^N\frac{(s/t)^j}{j}$.
\end{proof}
\subsection{Proofs of Theorem \ref{theoremA} and Theorem \ref{theoremB}}
\label{subsec:Proofs of Theorem A and Theorem B}
\begin{proof}[Proof of Theorem $\ref{theoremA}$]
	We show this theorem by the induction on the weight $w$ of the index. If $w=1$, the assertion of the theorem is nothing but Lemma \ref{starting identity}. We show only the equality (\ref{maltivar identity}) because the proof of the equality (\ref{maltivar identity2}) is completely the same. Now, we assume that the assertion holds for an index $\Bbbk = (k_1, \dots , k_m)$. Then it is sufficient to show that the assertions also hold
	for the indices $\Bbbk_1:=(k_1, \dots , k_m+1)$ and $\Bbbk_2 := (k_1, \dots , k_m, 1)$. 
	
	First, we consider the case $\Bbbk_1$. By Lemma \ref{I_0-I_1} (\ref{I_0'}), we have
	\begin{equation*}
	I_{t_m}\Biggl( \sum\nolimits' (-1)^{n_1}\binom{N}{n_1}\frac{t_1^{n_1-n_2}\cdots t_{m-1}^{n_{m-1}-n_m}t_m^{n_m}}{n_1^{k_1} \cdots n_m^{k_m}}\Biggr) 
	=\sum\nolimits' (-1)^{n_1}\binom{N}{n_1}\frac{t_1^{n_1-n_2}\cdots t_{m-1}^{n_{m-1}-n_m}t_m^{n_m}}{n_1^{k_1} \cdots n_m^{k_m+1}},
	\end{equation*}
	where the summation $\sum'$ runs over $N\geq n_1 \geq \cdots \geq n_m \geq 1$ and $I_{t_m}:=I_{t_m; R[t_1, \dots, t_{m-1}]}$. On the other hand, by Lemma \ref{I_0-I_1} (\ref{I_0}), we have
	\begin{equation*}
	\begin{split}
	&I_{t_m}\left( \sum_{N\geq n_1 \geq \cdots \geq n_w\geq 1} \frac{(1-t_1)^{n_{l_1}-n_{l_1+1}}\cdots (1-t_{m-1})^{n_{l_{m-1}}-n_{l_{m-1}+1}}\{ (1-t_m)^{n_{l_m}}-1 \}}{n_1\cdots n_w}\right) \\
	=& \sum_{N \geq n_1 \geq \cdots \geq n_{w+1} \geq 1} \frac{(1-t_1)^{n_{l_1}-n_{l_1+1}}\cdots (1-t_{m-1})^{n_{l_{m-1}}-n_{l_{m-1}+1}}\{ (1-t_m)^{n_{l_m+1}}-1 \}}{n_1\cdots n_{w+1}},
	\end{split}
	\end{equation*}
	where $l_1=k_1, l_2=k_1+k_2, \dots , l_m=k_1+\cdots +k_m(=w)$. Thus, the equality (\ref{maltivar identity}) in the theorem also holds for $\Bbbk_1$ by the induction hypothesis.
	
	Next, we check the equality for the index $\Bbbk_2$. By Lemma \ref{I_0-I_1} (\ref{J1}), we have
	{\footnotesize \begin{equation*}
	J_{t_m, t_{m+1}}^{\star}\left( \sum\nolimits'(-1)^{n_1}\binom{N}{n_1}\frac{t_1^{n_1-n_2}\cdots t_{m-1}^{n_{m-1}-n_m}t_m^{n_m}}{n_1^{k_1} \cdots n_m^{k_m}}\right)=\sum\nolimits''(-1)^{n_1}\binom{N}{n_1}\frac{t_1^{n_1-n_2}\cdots t_{m}^{n_{m}-n_{m+1}}t_{m+1}^{n_{m+1}}}{n_1^{k_1} \cdots n_m^{k_m}n_{m+1}},
	\end{equation*}
}where the summation $\sum''$ runs over $N\geq n_1 \geq \cdots \geq n_{m+1} \geq 1$ and $J_{t_{m}, t_{m+1}}^{\star}:=J_{t_m, t_{m+1}; R[t_1, \dots, t_{m-1}]}^{\star}$. On the other hand, by Lemma \ref{I_0-I_1} (\ref{J2}), we have
	\begin{equation*}
	\begin{split}
	&J_{t_m, t_{m+1}}^{\star}\left( \sum_{N\geq n_1 \geq \cdots \geq n_w \geq 1} \frac{(1-t_1)^{n_{l_1}-n_{l_1+1}}\cdots (1-t_{m-1})^{n_{l_{m-1}}-n_{l_{m-1}+1}}\{ (1-t_m)^{n_{l_m}}-1 \}}{n_1\cdots n_w}\right) \\
	=& \sum_{N \geq n_1 \geq \cdots \geq n_{w+1} \geq 1} \frac{(1-t_1)^{n_{l_1}-n_{l_1+1}}\cdots (1-t_m)^{n_{l_{m}}-n_{l_{m}+1}}\{ (1-t_{m+1})^{n_{l_m+1}}-1 \}}{n_1\cdots n_{w+1}}.
	\end{split}
	\label{J_2'}\end{equation*}
	Using the induction hypothesis, the assertion of the equality (\ref{maltivar identity}) holds for the index $\Bbbk_2$. This completes the proof of Theorem \ref{theoremA}.
\end{proof}
\begin{proof}[Proof of Theorem $\ref{theoremB}$]
	We show this theorem by the induction on the weight $w$ of the index. If $w=1$, the assertion of the theorem is nothing but Lemma \ref{starting identity}. We show only the equality (\ref{multivar identity3}) because the proof of the equality (\ref{multivar identity4}) is completely the same. Now, we assume that the assertion holds for an index $\Bbbk = (k_1, \dots , k_m)$. Then it is sufficient to show that the assertions also hold for the indices $\Bbbk_1:=(k_1+1, \dots , k_m)$ and $\Bbbk_2 := (1, k_1, \dots , k_m)$. 
	
	First, we consider the case $\Bbbk_1$. By Lemma \ref{I_0-I_1} (\ref{I_0'}), we have
	{\scriptsize \begin{equation*}
		I_{t_1}\Biggl( \sum\nolimits' (-1)^{n_m}\binom{N}{n_m}\frac{(t_1/t_2)^{n_1}\cdots (t_{m-1}/t_m)^{n_{m-1}}t_m^{n_m}}{n_1^{k_1}\cdots n_m^{k_m}}\Biggr) 
		=\sum\nolimits'(-1)^{n_m}\binom{N}{n_m}\frac{(t_1/t_2)^{n_1}\cdots (t_{m-1}/t_m)^{n_{m-1}}t_m^{n_m}}{n_1^{k_1+1}\cdots n_m^{k_m}},
		\end{equation*}
	}where the summation $\sum'$ runs over $N+1 > n_1 > \cdots > n_m > 0$ and $I_{t_1}:=I_{t_1; R[t_2^{\pm1}, \dots, t_{m}^{\pm1}]}$. On the other hand, by Lemma \ref{I_0-I_1} (\ref{I_0}), we have
	{\small \begin{equation*}
	\begin{split}
	&I_{t_1}(\text{R. H. S. of the equality (\ref{multivar identity3})})=\\
	&(-1)^{m-1}\sum_{N \geq n_1 \geq \cdots \geq n_{w+1} \geq 1}\frac{(1-t_m)^{n_{l_1}-n_{l_1+1}}\cdots (1-t_2)^{n_{l_{m-1}}-n_{l_{m-1}+1}}\{(1-t_1)^{n_{l_m+1}}-1\}}{n_1\cdots n_{w+1}}\\
	&+\sum_{j=1}^{m-1}(-1)^{m-j-1}\left(\sum_{N+1>n_1>\cdots >n_j>0}\frac{(t_1/t_2)^{n_1}\cdots (t_j/t_{j+1})^{n_j}}{n_1^{k_1+1}\cdots n_j^{k_j}}\right) \times \\
	&\hspace{8mm}\left( \sum_{N\geq n_1\geq \cdots \geq n_{l_{m-j}}\geq 1}\frac{(1-t_m)^{n_{l_1}-n_{l_1+1}}\cdots (1-t_{j+2})^{n_{l_{m-j-1}}-n_{l_{m-j-1}+1}}\{(1-t_{j+1})^{n_{l_m-j}}-1\}}{n_1\cdots n_{l_{m-j}}} \right)
	\end{split}
	\end{equation*}
}where $l_1=k_m, l_2=k_m+k_{m-1}, \dots , l_m=k_m+\cdots +k_1(=w)$. Thus, the equality (\ref{multivar identity3}) in the theorem also holds for the index $\Bbbk_1$ by the induction hypothesis.
	
	Next, we check the equality for the index $\Bbbk_2$. By Lemma \ref{August31} (\ref{J_3}), we have
	{\scriptsize \begin{equation*}
		J_{t_1, t_{0}}^{N}\left(\sum\nolimits' (-1)^{n_m}\binom{N}{n_m}\frac{(t_1/t_2)^{n_1}\cdots (t_{m-1}/t_m)^{n_{m-1}}t_m^{n_m}}{n_1^{k_1}\cdots n_m^{k_m}}\right)=\sum\nolimits''(-1)^{n_m}\binom{N}{n_m}\frac{(t_0/t_1)^{n_0}\cdots (t_{m-1}/t_m)^{n_{m-1}}t_m^{n_m}}{n_0n_1^{k_1}\cdots n_m^{k_m}},
		\end{equation*}
	}where the summation $\sum''$ runs over $N+1>n_0> n_1 > \cdots > n_{m}>0$ and $J_{t_{1}, t_{0}}^{N}:=J_{t_1, t_{0}; R[t_2^{\pm1}, \dots, t_{m}^{\pm1}]}^{N}$. On the other hand, by Lemma \ref{August31} (\ref{J_4}), we have
	{\footnotesize \begin{equation*}
	\begin{split}
	&(-1)^mJ_{t_1, t_0}^N(\text{R. H. S. of the equality (\ref{multivar identity3})})=\\
	&\sum_{N \geq n_1 \geq \cdots \geq n_{w+1} \geq 1}\frac{(1-t_m)^{n_{l_1}-n_{l_1+1}}\cdots (1-t_1)^{n_{l_{m}}-n_{l_{m}+1}}\{(1-t_0)^{n_{l_m+1}}-1\}}{n_1\cdots n_{w+1}}\\
	&-\left(\sum_{n_0=1}^{N}\frac{(t_0/t_1)^{n_0}}{n_0}\right)
	\left(\sum_{N \geq n_1 \geq \cdots \geq n_{w} \geq 1}\frac{(1-t_m)^{n_{l_1}-n_{l_1+1}}\cdots (1-t_2)^{n_{l_{m-1}}-n_{l_{m-1}+1}}\{(1-t_1)^{n_{l_m}}-1\}}{n_1\cdots n_{w}}\right)\\
	&+\sum_{j=1}^{m-1}(-1)^{j-1}\left(\sum_{N+1>n_0>n_1>\cdots >n_j>0}\frac{(t_0/t_1)^{n_0}\cdots (t_j/t_{j+1})^{n_j}}{n_0n_1^{k_1}\cdots n_j^{k_j}}\right) \times \\
	&\hspace{8mm}\left( \sum_{N\geq n_1\geq \cdots \geq n_{l_{m-j}}\geq 1}\frac{(1-t_m)^{n_{l_1}-n_{l_1+1}}\cdots (1-t_{j+2})^{n_{l_{m-j-1}}-n_{l_{m-j-1}+1}}\{(1-t_{j+1})^{n_{l_m-j}}-1\}}{n_1\cdots n_{l_{m-j}}} \right)
	\end{split}
	\end{equation*}
}Using the induction hypothesis, the assertion of the equality (\ref{multivar identity3}) holds for the index $\Bbbk_2$. This completes the proof of Theorem \ref{theoremB}.
\end{proof}
\section{Functional equations of finite multiple polylogarithms}
\label{sec:Functional equations of finite multiple polylogarithms}
\subsection{The ring $\mathcal{A}_{n, R}^{\Sigma}$}
\label{subsec:The ring A}
Kaneko and Zagier defined finite multiple zeta(-star) values as elements of the ${\mathbb Q}$-algebra $\mathcal{A} = (\prod_p \mathbb{F}_p) \otimes_{{\mathbb Z}} {\mathbb Q}$ where $p$ runs over the set of all rational primes. See \cite{K} and their forthcoming paper \cite{KZ}. The ring $\mathcal{A}$ has been used in a different context by Kontsevich \cite[2.2]{K02}. Now, we introduce more general rings of such a type for the definition of finite multiple polylogarithms.
\begin{definition}
	Let $R$ be a commutative ring and $\Sigma$ a family of ideals of $R$. Then we define the ring $\mathcal{A}_{n, R}^{\Sigma}$ for each positive integer $n$ as follows:
	\[
	\mathcal{A}_{n, R}^{\Sigma} := \Biggl( \prod_{ I\in \Sigma} R/I^n \Biggr) \left/ \Biggl( \bigoplus_{I \in \Sigma} R/I^n \Biggr) \right. .
	\] 
\end{definition}
Since we use only the case $\Sigma = \{ pR \mid p \ \text{is a rational prime} \}$, we omit the notation $\Sigma$ in the rest of this paper. We denote $\mathcal{A}_{n, {\mathbb Z}} = \mathcal{A}_n$ and $\mathcal{A}_{1, R} = \mathcal{A}_R$. Then the ring $\mathcal{A}_{1, {\mathbb Z}}$ coincides with $\mathcal{A}$. We will define the finite multiple polylogarithms as elements of $\mathcal{A}_{n, {\mathbb Z} [t_1, \dots , t_m]}$ (that is not equal to the polynomial ring $\mathcal{A}_n[t_1, \dots , t_m]$). Note that $\mathcal{A}_{n, R}$ is a $\mathbb{Q}$-algebra in some important case even if $R$ is not a $\mathbb{Q}$-algebra. 
\begin{example}
	We often use 
	\[
	\mathcal{A}_{{\mathbb Z} [t]} = \Biggl( \prod_{p}\mathbb{F}_p[t] \Biggr) \left/ \Biggl( \bigoplus_{p}\mathbb{F}_p[t] \Biggr) \right.
	\]
	in the next section. Here, the direct product and the direct sum run over the all rational primes and $\mathcal{A}_{{\mathbb Z} [t]}$ coincides with $\mathcal{B}$ defined by Ono and Yamamoto in their paper \cite{OY}.
\end{example}
We denote each element of $\mathcal{A}_{n, R}$ as $(a_p)_p$ where $a_p \in R/p^nR$, so $(a_p)_p = (b_p)_p$ holds if and only if $a_p = b_p$ for all but finitely many rational primes $p$. We may use the notation $(a_p)_p \in \mathcal{A}_{n, R}$ even if $a_p$ is not defined for finitely many rational primes $p$. See Example \ref{example} (3). We define the element $\mathbf{p}_n \in \mathcal{A}_{n, R}$ to be $(p \bmod{p^n})_p$ and we denote it by $\mathbf{p}$ when $n$ is clear from the context. Note that $\mathcal{A}_n/\mathbf{p}^m\mathcal{A}_n = \mathcal{A}_{m}$ for $n \geq m$.
\begin{example}
	We give some typical examples of elements of $\mathcal{A}_{n, R}$.
	\begin{itemize}
		\item[(1)] 
		$t^{\mathbf{p}} := (t^p)_p \in \mathcal{A}_{n, {\mathbb Z} [t]}$. 
		\item[(2)]
		For any rational number $r$, we define $r^{\mathbf{p}}$ (resp. $r$) to be $(r^p)_p$ (resp. $(r)_p$) in $\mathcal{A}$. Then $r^{\mathbf{p}}=r \in \mathcal{A}$ holds by Fermat's little theorem.
		\item[(3)]
		Let $k$ be a positive integer greater than 2. We use the notation $B_{\mathbf{p}-k}$ as
		\[
		B_{\mathbf{p}-k} = (B_{p-k} \bmod{p^n})_p \in \mathcal{A}_n ,
		\]
		where $B_m$ is the $m$-th Bernoulli number defined by
		\[
		\frac{te^t}{e^t-1} = \sum_{n=0}^{\infty}B_m\frac{t^m}{m!}.
		\]
		By the von Staudt-Clausen theorem \cite[Theorem 5.10]{Wa}, $B_{p-k}$ is a $p$-adic integer for a rational prime greater than $k$. Therefore, the element $B_{\mathbf{p}-k}$ is well-defined as an element of $\mathcal{A}_n$ and conjecturally, it is non-zero by the conjecture that there are infinitely many regular primes. For later use, we define $\widehat{B}_m$ to be $\frac{B_m}{m}$ for a positive integer $m$. For example, we use the notation $\widehat{B}_{\mathbf{p}-k}$ as ($\frac{B_{\mathbf{p}-k}}{\mathbf{p}-k} \bmod{p^n})_p  \in \mathcal{A}_n$.
	\end{itemize} 
	\label{example}\end{example}
The following lemma will be used for deducing the functional equations of the finite multiple polylogarithms from the generalizations of Euler's identity obtained in Section \ref{sec:Generalizations of Euler's identity}.
\begin{lemma}
	Let $n$ be a positive integer. Then we have
	\begin{equation}
	(-1)^n \binom{\mathbf{p}-1}{n} = 1 -\mathbf{p}H_n \ \text{in} \ \mathcal{A}_2,
	\label{A_2 vanish}\end{equation}
	where $H_n = \sum_{j=1}^n1/j$ is the $n$-th harmonic number.
	
	Let $p$ be an odd prime number greater than $n$. Then the following congruence is satisfied$:$
	\begin{equation}
	H_{p-n-1} \equiv H_{n} \pmod{p}.
	\label{H_{p-n}}\end{equation}
	Here, we define $H_0$ to be $1$.
	\label{vanish lemma}\end{lemma}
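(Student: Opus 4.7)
The plan is to handle the two assertions separately, both by direct elementary manipulation; neither should involve real difficulty beyond care with mod $p^2$ arithmetic.

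For (\ref{A_2 vanish}), the strategy is to write out the falling-factorial expression
\[
(-1)^n\binom{\mathbf{p}-1}{n}=(-1)^n\frac{(\mathbf{p}-1)(\mathbf{p}-2)\cdots(\mathbf{p}-n)}{n!}=\prod_{k=1}^{n}\frac{k-\mathbf{p}}{k}=\prod_{k=1}^{n}\Bigl(1-\frac{\mathbf{p}}{k}\Bigr),
\]
and then expand the product componentwise. For each prime $p>n$, one has the ordinary congruence $\prod_{k=1}^{n}(1-p/k)\equiv 1-p\sum_{k=1}^{n}1/k \pmod{p^{2}}$, because every term of the expansion containing two or more factors of $p/k$ lies in $p^{2}\mathbb Z_{(p)}$. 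Since this holds for all but finitely many primes, taking the image in $\mathcal A_{2}$ gives $1-\mathbf{p}H_{n}$, as required. The only cosmetic issue is checking that $n!$ is invertible at $p$, which is automatic once $p>n$, and finitely many exceptional primes do not matter in $\mathcal A_{2}$.

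For (\ref{H_{p-n}}), the idea is the classical reflection $k\mapsto p-k$. Writing
\[
H_{p-n-1}=\sum_{k=1}^{p-n-1}\frac{1}{k}
\]
and substituting $j=p-k$, the index $j$ runs through $\{n+1,\dots,p-1\}$, so
\[
H_{p-n-1}=\sum_{j=n+1}^{p-1}\frac{1}{p-j}\equiv -\sum_{j=n+1}^{p-1}\frac{1}{j}=-\bigl(H_{p-1}-H_{n}\bigr)\pmod{p}.
\]
To finish, I invoke the well-known fact $H_{p-1}\equiv 0\pmod{p}$ for odd primes $p$, which itself is obtained by the same reflection trick applied to the full sum: $H_{p-1}=\sum_{k=1}^{p-1}1/k\equiv -\sum_{k=1}^{p-1}1/k=-H_{p-1}\pmod{p}$, so $2H_{p-1}\equiv 0\pmod{p}$, giving $H_{p-1}\equiv 0$ since $p$ is odd. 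Combining yields $H_{p-n-1}\equiv H_{n}\pmod{p}$.

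Neither step presents a serious obstacle: Part (\ref{A_2 vanish}) is a bounded expansion of a product, and Part (\ref{H_{p-n}}) is a reflection plus Wolstenholme's vanishing for $H_{p-1}$ modulo $p$. The only things to watch are (i) handling the condition $p>n$ so the denominators are $p$-adic units, which is harmless since $\mathcal A_{2}$ ignores finitely many primes, and (ii) respecting the stated convention $H_{0}=1$, which enters only when $n=0$ and is irrelevant for the range $n\geq 1$ where the argument proceeds unchanged.
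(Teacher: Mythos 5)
Your proof is correct and follows essentially the same route as the paper: the first identity via expanding $\prod_{j=1}^{n}(1-\mathbf{p}/j)$ modulo $p^{2}$, and the second via the reflection $k\mapsto p-k$ together with $H_{p-1}\equiv 0\pmod p$ (which the paper uses implicitly and you spell out). The only cosmetic quibble is that calling $H_{p-1}\equiv 0\pmod p$ ``Wolstenholme's vanishing'' slightly overstates the name --- Wolstenholme's theorem is the mod $p^{2}$ statement --- but your elementary reflection argument for it is fine.
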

\begin{proof}
	For each prime number $p>n$, we have the following congruence:
	\[
	(-1)^n\binom{p-1}{n} = \prod_{j=1}^n\left( 1-\frac{p}{j} \right) \equiv 1-pH_n \pmod{p^2}.
	\]
	Therefore the first assertion holds. Let $p>n+1$. By the substitution $n \mapsto p-n$, we have
	\begin{equation}
	H_{p-n-1} = \sum_{k=1}^{p-n-1}\frac{1}{k} = \sum_{k=n+1}^{p-1}\frac{1}{p-k} \equiv -\sum_{k=n+1}^{p-1}\frac{1}{k} = -(H_{p-1} - H_{n}) \equiv H_n \pmod{p}.
	\end{equation}
\end{proof}
\subsection{Definitions and functional equations of finite multiple polylogarithms}
\label{subsec:Definitions and functional equations of finite multiple polylogarithms}
First, we recall the definition of the finite multiple zeta(-star) values (FMZ(S)Vs).
\begin{definition}
	Let $n$ be a positive integer and $\Bbbk = (k_1, \dots , k_m)$ an index. Then we define {\em the finite multiple zeta value} $\zeta_{\mathcal{A}_n}(\Bbbk )$ by
	\[
	\zeta_{\mathcal{A}_n}(\Bbbk ) := \Biggl( \sum_{p> n_1 > \cdots > n_m >0}\frac{1}{n_1^{k_1}\cdots n_m^{k_m}} \bmod{p^n} \Biggr)_p \in \mathcal{A}_n\hspace{4mm}
	\]
	and we define {\em the finite multiple zeta-star value} $\zeta_{\mathcal{A}_n}^{\star}(\Bbbk )$ by
	\[
	\zeta_{\mathcal{A}_n}^{\star}(\Bbbk ) := \Biggl( \sum_{p-1\geq n_1 \geq \cdots \geq n_m \geq 1}\frac{1}{n_1^{k_1}\cdots n_m^{k_m}} \bmod{p^n} \Biggr)_p \in \mathcal{A}_n.
	\] 
	\label{def of FMZV}\end{definition}
\begin{remark}
	Several people use the notation $\zeta_{\mathcal{A}_n}^{\bullet} (\Bbbk )$ instead of $\zeta_{\mathcal{A}_n}^{\bullet} (\overline{\Bbbk})$ for $\bullet \in \{\emptyset, \star\}$. Therefore we have to be careful when we read other papers.
\end{remark}
For later use, we summarize known results about FMZ(S)Vs.
\begin{proposition}
	Let $m, k, k_1, k_2, k_3$ be positive integers, $\Bbbk = (k_1, \dots , k_m)$ an index and $\bullet \in \{ \emptyset , \star \}$. Then the following equalities hold$:$  
\begin{align}
	&\zeta_{\mathcal{A}_2}(\{ k \}^m )=(-1)^{m-1}\zeta_{\mathcal{A}_2}^{\star}(\{ k \}^m )=(-1)^{m-1}k\frac{B_{\mathbf{p}-mk-1}}{mk+1}\mathbf{p},
	\label{Zhou-Cai}\\
	&\zeta_{\mathcal{A}_3}(k) = \begin{cases}
	\binom{k+1}{2}\widehat{B}_{\mathbf{p}-k-2}\mathbf{p}^2 & \text{if $k$ is odd,}\\
	k\left( \widehat{B}_{2\mathbf{p}-k-2}-2\widehat{B}_{\mathbf{p}-k-1} \right) \mathbf{p} & \text{if $k$ is even,}
	\end{cases}
	\label{RZA3}
	\end{align}
	{\small \begin{equation}
	\zeta_{\mathcal{A}_4}(k) = \begin{cases}
	-\binom{k+1}{2}\left( \widehat{B}_{2\mathbf{p}-k-3}-2\widehat{B}_{\mathbf{p}-k-2} \right) \mathbf{p}^2 & \text{if $k$ is odd,}\\
	-k\left( \widehat{B}_{3\mathbf{p}-k-3}-3\widehat{B}_{2\mathbf{p}-k-2}+3\widehat{B}_{\mathbf{p}-k-1}\right) \mathbf{p} - \binom{k+2}{3}\widehat{B}_{\mathbf{p}-k-3}\mathbf{p}^3 & \text{if $k$ is even,}
	\end{cases}
	\label{RZA4}
	\end{equation}}
	\begin{align}
	&\zeta_{\mathcal{A}}^{\bullet}(k_1, k_2) = (-1)^{k_1}\binom{k_1+k_2}{k_1}\frac{B_{\mathbf{p}-k_1-k_2}}{k_1+k_2},
	\label{dep2 for FMZV}\\
	&\zeta_{\mathcal{A}_2}(k_1, k_2) = \frac{1}{2}\left\{ (-1)^{k_2}k_1\binom{w+1}{k_2}-(-1)^{k_1}k_2\binom{w+1}{k_1}-w \right\} \frac{B_{\mathbf{p}-w-1}}{w+1}\mathbf{p}
	\label{dep2 A_2}\end{align}
	if $w=k_1+k_2$ is even,
	\begin{equation}
	\zeta_{\mathcal{A}_2}^{\star}(k_1, k_2) = \frac{1}{2}\left\{ (-1)^{k_2}k_1\binom{w+1}{k_2}-(-1)^{k_1}k_2\binom{w+1}{k_1}+w \right\} \frac{B_{\mathbf{p}-w-1}}{w+1}\mathbf{p}
	\label{dep2 A_2 star}\end{equation}
	if $w=k_1+k_2$ is even,
	\begin{equation}
	\zeta_{\mathcal{A}}(k_1, k_2, k_3) = -\zeta_{\mathcal{A}}^{\star}(k_1, k_2, k_3) = \frac{1}{2}\left\{ (-1)^{k_3}\binom{w'}{k_3}-(-1)^{k_1}\binom{w'}{k_1} \right\}\frac{B_{\mathbf{p}-w'}}{w'}
	\label{dep3 for FMZV}\end{equation}
	if $w'=k_1+k_2+k_3$ is odd,
	\begin{equation}
	\sum_{\sigma \in S_m}\zeta_{\mathcal{A}}^{\bullet}(\sigma (\Bbbk )) = 0,
	\label{sym sum}\end{equation}
	where $S_m$ denotes the $m$-th symmetric group and $\sigma (\Bbbk ) := (k_{\sigma (1)}, \dots , k_{\sigma (m)})$,
	\begin{align}
	&\zeta_{\mathcal{A}}^{\bullet}(\Bbbk ) = (-1)^{\wt (\Bbbk )}\zeta_{\mathcal{A}}^{\bullet}(\overline{\Bbbk}),
	\label{reverse for FMZV}\\
	&\zeta_{\mathcal{A}}^{\star}(\Bbbk ) = -\zeta_{\mathcal{A}}^{\star}(\Bbbk^{\vee}),
	\label{Hoffman's duality for FMZV}\\
	&\zeta_{\mathcal{A}}^{\star}(k_1, \{ 1 \}^{k_2-1}) = (-1)^{k_1}\zeta_{\mathcal{A}}(k_1, \{ 1 \}^{k_2-1}).
	\label{A-S for FMZV}\end{align}
	\label{FMZV's properties}\end{proposition}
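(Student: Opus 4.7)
Since Proposition \ref{FMZV's properties} is a summary of known results about FMZVs rather than a new theorem, my plan is to furnish each identity with a reference and a short strategic comment, rather than a full ground-up derivation. The explicit formulas (\ref{Zhou-Cai})--(\ref{dep3 for FMZV}) appear in the literature on mod-$p^n$ harmonic sums (Hoffman, Zhou--Cai, Rosen, J.~Zhao, and Z.-W.~Sun), while the structural relations (\ref{sym sum})--(\ref{A-S for FMZV}) go back to Hoffman's foundational papers.

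For the explicit congruences my approach starts from the Kummer-type expansion of $\sum_{n=1}^{p-1} n^{-k}$ modulo powers of $p$ as a rational combination of $\widehat B_{\mathbf p - k - 1}, \widehat B_{2\mathbf p - k - 2}, \dots$, obtained by writing $n^{-k}$ as a telescoping difference of Bernoulli polynomials over a complete residue system modulo $p$. The depth-two formulas (\ref{dep2 for FMZV})--(\ref{dep2 A_2 star}) then come from the depth-one ones via the stuffle relation
\[
\zeta_{\mathcal A_n}(k_1)\,\zeta_{\mathcal A_n}(k_2) = \zeta_{\mathcal A_n}(k_1, k_2) + \zeta_{\mathcal A_n}(k_2, k_1) + \zeta_{\mathcal A_n}(k_1 + k_2),
\]
combined with the reversal identity (\ref{reverse for FMZV}); the depth-three identity (\ref{dep3 for FMZV}) follows analogously with one more combinatorial step. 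The formula (\ref{Zhou-Cai}) for the repeated index $\{k\}^m$ would be deduced by iterating the stuffle product and using that most of the resulting symmetric sums vanish by (\ref{sym sum}).

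The structural identities are essentially automatic. For the reversal (\ref{reverse for FMZV}) the substitution $n_i \mapsto p - n_{m+1-i}$ converts each factor $n_i^{-k_i}$ into $(-1)^{k_i} n_{m+1-i}^{-k_i}$ modulo $p$, producing the global sign $(-1)^{\wt(\Bbbk)}$. The symmetric sum vanishing (\ref{sym sum}) then follows from (\ref{reverse for FMZV}) after expanding $\prod_i \sum_{n_i} n_i^{-k_i}$, subtracting the lower-depth diagonal contributions (which cancel inductively), and using that $\sum_{n=1}^{p-1} n^{-k} \equiv 0 \pmod p$ for $1 \le k \le p-2$.

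The Hoffman duality (\ref{Hoffman's duality for FMZV}) is the most substantive structural entry, and it is exactly the $t=1$ specialization of Corollary \ref{cor of thmA} (\ref{1-var. Hoffman identity}) with $N = p-1$: on the left, Lemma \ref{vanish lemma}(\ref{A_2 vanish}) reduces $(-1)^{n_1}\binom{p-1}{n_1}$ to $1$ modulo $p$, leaving $\zeta_{\mathcal A}^\star(\Bbbk)$; on the right, $(1-t)^{n_{m'}} - 1$ evaluates at $t=1$ to $-1$ (since $n_{m'} \ge 1$), leaving $-\zeta_{\mathcal A}^\star(\Bbbk^\vee)$. Finally, (\ref{A-S for FMZV}) comes by combining (\ref{Hoffman's duality for FMZV}), the dualization $(k_1, \{1\}^{k_2-1})^\vee = (\{1\}^{k_1-1}, k_2)$, and the reversal (\ref{reverse for FMZV}). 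I expect the main bookkeeping obstacle to lie in (\ref{RZA3})--(\ref{RZA4}): carrying the expansion of $\sum_{n=1}^{p-1} n^{-k}$ modulo $p^3$ and $p^4$ through the Bernoulli-polynomial identities while keeping track of the parity cases $k$ odd versus $k$ even.
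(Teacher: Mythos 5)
Your treatment of this proposition matches the paper's at the level that matters: the paper's entire proof is a list of citations (the explicit formulas to J.~Zhao \cite{Z} and Z.~H.~Sun \cite{ZHS1}, the structural relations to Hoffman \cite{Ho}), and you correctly identify the statement as a compendium of known results to be attributed rather than re-derived. Your observation that (\ref{Hoffman's duality for FMZV}) is the $t=1$, $N=p-1$ specialization of Corollary \ref{cor of thmA} is also consistent with what the paper itself notes in Remark \ref{1-var remark}. Two of your supplementary sketches, however, would not go through as stated. First, for (\ref{dep2 for FMZV}): the stuffle relation in $\mathcal{A}$ gives $\zeta_{\mathcal{A}}(k_1,k_2)+\zeta_{\mathcal{A}}(k_2,k_1)=0$ (since depth-one values vanish), and combined with the reversal (\ref{reverse for FMZV}) this only forces $\zeta_{\mathcal{A}}(k_1,k_2)=0$ when $k_1+k_2$ is even --- which is exactly the case where the right-hand side vanishes anyway because $B_{\mathbf{p}-k_1-k_2}$ is an odd-index Bernoulli number. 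In the nontrivial case $k_1+k_2$ odd, stuffle plus reversal give no information, and the binomial coefficient $\binom{k_1+k_2}{k_1}$ must come from expanding the partial power sum $\sum_{i=1}^{j}i^{-k_1}$ via Bernoulli polynomials, as in \cite[Theorem 3.1]{Z} or \cite[Theorem 6.1]{Ho}. Second, for (\ref{A-S for FMZV}): duality and reversal each send star values to star values, so they cannot by themselves produce a comparison between $\zeta_{\mathcal{A}}^{\star}$ and the non-star $\zeta_{\mathcal{A}}$; the paper instead obtains this identity (in Remark \ref{antipode}) from the antipode relation (\ref{non-star and star}) together with the vanishing $\zeta_{\mathcal{A}}^{\star}(\{1\}^k)=0$ and then reversal. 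Since the proposition is proved by citation in any case, these are defects of the optional sketches rather than of the proof strategy itself.
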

\begin{proof}
	The equalities (\ref{Zhou-Cai}), (\ref{dep2 for FMZV}), (\ref{dep2 A_2}), (\ref{dep2 A_2 star}), and (\ref{dep3 for FMZV}) are \cite[Theorem 1.6]{Z}, \cite[Theorem 3.1]{Z} (cf. \cite[Theorem 6.1]{Ho}), \cite[Theorem 3.2]{Z}, \cite[Theorem 3.2]{Z}, and \cite[Theorem 3.5]{Z} (cf. \cite[Theorem 6.2]{Ho}), respectively. The equalities (\ref{RZA3}) and (\ref{RZA4}) are \cite[Theorem 5.1]{ZHS1} and \cite[Remark 5.1]{ZHS1}, respectively. The equalities (\ref{sym sum}), (\ref{reverse for FMZV}), (\ref{Hoffman's duality for FMZV}), and (\ref{A-S for FMZV}) are \cite[Theorem 4.4]{Ho}, \cite[Theorem 4.5]{Ho}, \cite[Theorem 4.6]{Ho}, and \cite[Theorem 5.1]{Ho}, respectively.
\end{proof}
Now, we define the finite multiple polylogarithms which are main objects in this paper.
\begin{definition}
	Let $n$ be a positive integer and $\Bbbk = (k_1, \dots , k_m)$ an index. Then we define the various multi-variable finite multiple polylogarithms as follows:
	
	\noindent{\em The finite harmonic multiple polylogarithm $(${\rm FHMP}$)$}:
	\[
	\text{\rm \pounds}_{\mathcal{A}_n, \Bbbk}^{\ast}(t_1, \dots , t_m) := \Biggl( \sum_{p>n_1> \dots >n_m>0}\frac{t_1^{n_1}\cdots t_m^{n_m}}{n_1^{k_1}\cdots n_m^{k_m}} \bmod{p^n} \Biggr)_p \in \mathcal{A}_{n, {\mathbb Z} [t_1, \dots , t_m]},\hspace{24mm}
	\]
	{\em The finite harmonic star-multiple polylogarithm $(${\rm FHSMP}$)$}:
	\[
	\text{\rm \pounds}_{\mathcal{A}_n, \Bbbk}^{\ast , \star}(t_1, \dots , t_m) := \Biggl( \sum_{p-1\geq n_1\geq \dots \geq n_m\geq 1}\frac{t_1^{n_1}\cdots t_m^{n_m}}{n_1^{k_1}\cdots n_m^{k_m}} \bmod{p^n} \Biggr)_p \in \mathcal{A}_{n, {\mathbb Z} [t_1, \dots , t_m]},\hspace{21mm}
	\]
	{\em The finite shuffle multiple polylogarithm $(${\rm FSMP}$)$}:
	\[
	\text{\rm \pounds}_{\mathcal{A}_n, \Bbbk}^{\cyr sh}(t_1, \dots , t_m) := \Biggl( \sum_{p>n_1> \dots >n_m>0}\frac{t_1^{n_1-n_2}\cdots t_{m-1}^{n_{m-1}-n_m}t_m^{n_m}}{n_1^{k_1}\cdots n_m^{k_m}} \bmod{p^n} \Biggr)_p \in \mathcal{A}_{n, {\mathbb Z} [t_1, \dots , t_m]},\hspace{4mm}
	\]
	{\em The finite shuffle star-multiple polylogarithm $(${\rm FSSMP}$)$}:
	\[
	\text{\rm \pounds}_{\mathcal{A}_n, \Bbbk}^{\cyr sh , \star}(t_1, \dots , t_m) := \Biggl( \sum_{p-1\geq n_1\geq \dots \geq n_m\geq 1}\frac{t_1^{n_1-n_2}\cdots t_{m-1}^{n_{m-1}-n_m}t_m^{n_m}}{n_1^{k_1}\cdots n_m^{k_m}} \bmod{p^n} \Biggr)_p \in \mathcal{A}_{n, {\mathbb Z} [t_1, \dots , t_m]}.
	\]
	We also define 1-variable finite (star-)multiple polylogarithms (F(S)MP) as follows:
	\begin{equation*}
	\begin{split}
	\text{\rm \pounds}_{\mathcal{A}_n, \Bbbk}^{\bullet}(t) &:= \text{\rm \pounds}_{\mathcal{A}_n, \Bbbk}^{\ast , \bullet} (t, \{ 1 \}^{m-1}) = \text{\rm \pounds}_{\mathcal{A}_n, \Bbbk}^{\cyr sh , \bullet} (\{ t \}^{m}) \in \mathcal{A}_{n, {\mathbb Z} [t]},\\
	\widetilde{\text{\rm \pounds}}_{\mathcal{A}_n, \Bbbk}^{\bullet}(t) &:= \text{\rm \pounds}_{\mathcal{A}_n, \Bbbk}^{\ast , \bullet} (\{ 1 \}^{m-1}, t) = \text{\rm \pounds}_{\mathcal{A}_n, \Bbbk}^{\cyr sh , \bullet} (\{ 1 \}^{m-1}, t) \in \mathcal{A}_{n, {\mathbb Z} [t]},
	\end{split}
	\end{equation*}
	where $\bullet \in \{ \emptyset , \star \}$. We call $\text{\rm \pounds}_{\mathcal{A}_n, m}^{\bullet}(t) = \widetilde{\text{\rm \pounds}}_{\mathcal{A}_n, m}^{\bullet}(t)$ {\em the $m$-th finite polylogarithm}.
	\label{def of FMP}\end{definition}
\begin{remark}
	The original definition of the $m$-th finite polylogarithm by Elbaz-Vincent and Gangl is the $p$-component of $\text{\rm \pounds}_{\mathcal{A}, m}(t)$ in $\mathbb{F}_p[t]$ (cf. \cite[Definition 5.1]{EG}).
\end{remark}
\begin{remark}
	Let $R$ be a commutative ring. For any subset $\{ i_1, \dots , i_h \}$ of $\{ 1, \dots , m \}$ and $r_1, \dots , r_h \in R$, the substitution mapping
	\[
	\mathcal{A}_{n, {\mathbb Z} [t_1, \dots , t_m]} \longrightarrow \mathcal{A}_{n, R[t_{j_1}, \dots , t_{j_{h'}}]}
	\]
	defined by
	\[
	(f_p(t_1, \cdots , t_m))_p \mapsto (\left. f_p(t_1, \dots , t_m) \right|_{t_{i_1}=r_1, \dots , t_{i_h}=r_h})_p
	\]
	where $\{ j_1, \dots , j_{h'} \}$ is the complement of $\{ i_1, \dots , i_h \}$ with respect to $\{ 1, \dots , m \}$. For example, we have
	\[
	\text{\rm \pounds}_{\mathcal{A}_n, \Bbbk}^{\bullet}(1) = \widetilde{\text{\rm \pounds}}_{\mathcal{A}_n, \Bbbk}^{\bullet}(1) = \zeta_{\mathcal{A}_n}^{\bullet}(\Bbbk ) \in \mathcal{A}_n
	\] 
	for $\bullet \in \{ \emptyset , \star \}$. Our definition of FMPs is natural in this sense.
\end{remark}
The following proposition is a generalization of Proposition \ref{FMZV's properties} (\ref{reverse for FMZV}) (cf. \cite[Lemma 5.4]{TZ}):
\begin{proposition}[Reversal formulas]
	Let $\Bbbk =(k_1, \dots, k_m)$ be an index and $\bullet \in \{ \emptyset , \star \}$. Then we have the following equality in $\mathcal{A}_{2, {\mathbb Z} [t_1^{\pm1}, \dots, t_m^{\pm1}]}:$
	{\footnotesize \begin{equation}
	\text{\rm \pounds}_{\mathcal{A}_2, \Bbbk}^{\ast , \bullet}(t_1, \dots , t_m)= (-1)^{\wt (\Bbbk )}(t_1\cdots t_m)^{\mathbf{p}}\Bigl( \text{\rm \pounds}_{\mathcal{A}_2, \overline{\Bbbk}}^{\ast , \bullet}(t_m^{-1}, \dots , t_1^{-1}) + \mathbf{p}\sum_{i=1}^mk_i\text{\rm \pounds}_{\mathcal{A}_2, \overline{\Bbbk \oplus \mathbf{e}_i}}^{\ast, \bullet}(t_m^{-1}, \dots, t_1^{-1})\Bigr).
	\label{reverse A_2}
	\end{equation}}
	In particular, we have
	\begin{align}
	&\text{\rm \pounds}_{\mathcal{A}, \Bbbk}^{\ast , \bullet}(t_1, \dots , t_m) = (-1)^{\wt (\Bbbk )}(t_1\cdots t_m)^{\mathbf{p}}\text{\rm \pounds}_{\mathcal{A}, \overline{\Bbbk}}^{\ast , \bullet}(t_m^{-1}, \dots , t_1^{-1}),\hspace{30mm}
	\label{reverse1}\\
	&\text{\rm \pounds}_{\mathcal{A}, {\Bbbk}}^{\bullet}(t) =(-1)^{\wt (\Bbbk )}t^{\mathbf{p}}\widetilde{\text{\rm \pounds}}_{\mathcal{A}, \overline{\Bbbk}}^{\bullet}(t^{-1}) , \ \ \widetilde{\text{\rm \pounds}}_{\mathcal{A}, {\Bbbk}}^{\bullet}(t) =(-1)^{\wt (\Bbbk )}t^{\mathbf{p}}\text{\rm \pounds}_{\mathcal{A}, \overline{\Bbbk}}^{\bullet}(t^{-1}).
	\label{reverse2}\end{align}
	\label{reverse prop}
\end{proposition}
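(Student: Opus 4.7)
The plan is to work at the level of each $p$-component and apply the reflection substitution $n_i \mapsto p - n'_{m+1-i}$ to the defining multiple sum. Under this involution, both the strict chain $p > n_1 > \cdots > n_m > 0$ and the weak chain $p-1 \geq n_1 \geq \cdots \geq n_m \geq 1$ are preserved (the order simply flips within the same range), so the new indices $n'_1, \ldots, n'_m$ range over exactly the same domain. The numerator $t_1^{n_1}\cdots t_m^{n_m}$ rewrites as $(t_1\cdots t_m)^p \prod_{j=1}^m t_{m+1-j}^{-n'_j}$, which already explains the global factor $(t_1\cdots t_m)^{\mathbf{p}}$ and the reversed list of variables $(t_m^{-1}, \ldots, t_1^{-1})$ appearing on the right-hand side.

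The substantive step is the $\bmod\ p^2$ expansion of the denominator. Since $(p-n)^{-k} \equiv (-n)^{-k}(1 + kp/n) \pmod{p^2}$ for each $n$ prime to $p$, taking the product over $j = 1, \ldots, m$ yields the sign $(-1)^{\wt(\Bbbk)}$ together with a correction factor $1 + p\sum_{j=1}^m k_{m+1-j}/n'_j$. Distributing this correction splits the sum into two pieces. The zeroth-order piece equals $\text{\rm \pounds}_{\mathcal{A}_2, \overline{\Bbbk}}^{\ast, \bullet}(t_m^{-1}, \ldots, t_1^{-1})$ once one notes that the exponent of $n'_j$ is $k_{m+1-j}$, which is exactly the $j$-th entry of $\overline{\Bbbk}$. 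In the $\mathbf{p}$-linear piece, the extra factor $1/n'_j$ raises the exponent of $n'_j$ from $k_{m+1-j}$ to $k_{m+1-j}+1$; this is the sum attached to the index $\overline{\Bbbk} \oplus \mathbf{e}_j$, and the elementary identity $\overline{\Bbbk} \oplus \mathbf{e}_j = \overline{\Bbbk \oplus \mathbf{e}_{m+1-j}}$ together with the relabeling $i = m+1-j$ produces exactly $\mathbf{p} \sum_{i=1}^m k_i \text{\rm \pounds}_{\mathcal{A}_2, \overline{\Bbbk \oplus \mathbf{e}_i}}^{\ast, \bullet}(t_m^{-1}, \ldots, t_1^{-1})$ with the prescribed coefficients.

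The corollaries are then immediate. Identity (\ref{reverse1}) is the projection of (\ref{reverse A_2}) along $\mathcal{A}_2 \twoheadrightarrow \mathcal{A}$, which annihilates the $\mathbf{p}$-multiple correction. The one-variable formulas (\ref{reverse2}) follow by specializing: substituting $(t_1, \ldots, t_m) = (t, 1, \ldots, 1)$ in (\ref{reverse1}) turns the left-hand side into $\text{\rm \pounds}_{\mathcal{A}, \Bbbk}^{\bullet}(t)$ and the right-hand side into $(-1)^{\wt(\Bbbk)} t^{\mathbf{p}} \widetilde{\text{\rm \pounds}}_{\mathcal{A}, \overline{\Bbbk}}^{\bullet}(t^{-1})$ via the definition of $\widetilde{\text{\rm \pounds}}$, and the substitution $(t_1, \ldots, t_m) = (1, \ldots, 1, t)$ yields the companion identity.

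The main obstacle is purely bookkeeping: one must track simultaneously the reversal of the variables $(t_1, \ldots, t_m)$, the reversal of the exponents $(k_1, \ldots, k_m)$, and the relabeling of the summation variables. In particular, verifying the symmetric identity $\overline{\Bbbk} \oplus \mathbf{e}_j = \overline{\Bbbk \oplus \mathbf{e}_{m+1-j}}$ is essential to reconcile the index $\overline{\Bbbk} \oplus \mathbf{e}_j$ produced naturally by the expansion with the form $\overline{\Bbbk \oplus \mathbf{e}_i}$ stated in the theorem, together with the coefficient $k_i$ rather than $k_{m+1-j}$. Beyond that, the argument is a straightforward application of the $(p-n)^{-k}$ expansion to the natural involution $n \mapsto p - n$ on residues mod $p$.
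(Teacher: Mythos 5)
Your proposal is correct and follows essentially the same route as the paper: the reflection $n_i\mapsto p-n_i$, the mod $p^2$ expansion of $(p-n)^{-k}$ as $(-n)^{-k}(1+kp/n)$ (the paper writes this as $(p+n)^k/n^{2k}$, which is the same thing), and the index bookkeeping $\overline{\Bbbk}\oplus\mathbf{e}_j=\overline{\Bbbk\oplus\mathbf{e}_{m+1-j}}$. The deductions of (\ref{reverse1}) by reduction mod $\mathbf{p}$ and of (\ref{reverse2}) by specializing the variables also match the paper's (implicit) derivation.
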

\begin{proof}
	We show only the case $\bullet = \emptyset$ since the proof for the case $\bullet = \star$ is similar. By using the substitution trick $n_i \mapsto p-n_i$, we have
	\begin{equation*}
	\begin{split}
	&\text{\rm \pounds}_{\mathcal{A}_2, \Bbbk}^{\ast}(t_1, \dots , t_m)= \Biggl( \sum_{p>n_1>\cdots > n_m>0}\frac{t_1^{n_1}\cdots t_m^{n_m}}{n_1^{k_1}\cdots n_m^{k_m}} \bmod{p^2} \Biggr)_p \\
	&= \Biggl( \sum_{p>p-n_1>\cdots > p-n_m>0}\frac{t_1^{p-n_1}\cdots t_m^{p-n_m}}{(p-n_1)^{k_1}\cdots (p-n_m)^{k_m}} \bmod{p^2} \Biggr)_p \\
	&= (-1)^{\wt (\Bbbk )}(t_1\cdots t_m)^{\mathbf{p}}\Biggl( \sum_{p>n_m>\cdots > n_1>0}\frac{(p+n_m)^{k_m}\cdots (p+n_1)^{k_1}t_m^{-n_m}\cdots t_1^{-n_1}}{n_m^{2k_m}\cdots n_1^{2k_1}} \bmod{p^2} \Biggr)_p.
	\end{split}
	\end{equation*}
	Since $(p+n_m)^{k_m}\cdots (p+n_1)^{k_1} = n_m^{k_m}\cdots n_1^{k_1}+p\sum_{i=1}^mk_in_m^{k_m}\cdots n_i^{k_i-1}\cdots n_1^{k_1} \pmod{p^2}$, we have the equality (\ref{reverse A_2}).
\end{proof}
Our main results in this paper are Theorem \ref{MTA}, Corollary \ref{MTB} and Theorem \ref{MTC} below:
\begin{theorem}
	Let $\Bbbk =(k_1, \dots, k_m)$ be an index of weight $w$. Then we have the following functional equation in $\mathcal{A}_{2, {\mathbb Z} [t_1, \dots , t_m]}$ for FSSMPs$:$
	\begin{equation}
	\begin{split}
	\text{\rm \pounds}_{\mathcal{A}_2, \Bbbk}^{\cyr sh , \star}&(t_1, \dots , t_m)+\Bigl( \text{\rm \pounds}_{\mathcal{A}_2, \{1\}\sqcup \Bbbk}^{\cyr sh , \star}(1, t_1, \dots, t_m)-\text{\rm \pounds}_{\mathcal{A}_2, \mathbf{e}_1\oplus \Bbbk}^{\cyr sh , \star}(t_1, \dots, t_m) \Bigr)\mathbf{p}\\
	&=\text{\rm \pounds}_{\mathcal{A}_2, \{ 1 \}^w }^{\cyr sh , \star}(\{ 1 \}^{k_1-1}, 1-t_1 , \{ 1 \}^{k_2-1}, 1-t_2, \dots , \{ 1 \}^{k_m-1}, 1-t_m)\\
	&\hspace{7.5mm}- \text{\rm \pounds}_{\mathcal{A}_2, \{ 1 \}^w }^{\cyr sh , \star}(\{ 1 \}^{k_1-1}, 1-t_1 , \dots , \{ 1 \}^{k_{m-1}-1}, 1-t_{m-1}, \{ 1 \}^{k_m}).
	\end{split}
	\label{main functional equation}\end{equation}
	\label{MTA}\end{theorem}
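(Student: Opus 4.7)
The plan is to deduce Theorem \ref{MTA} as the mod $p^2$ reduction, in the ring $\mathcal{A}_{2,\mathbb{Z}[t_1,\ldots,t_m]}$, of the polynomial identity (\ref{maltivar identity}) of Theorem \ref{theoremA} specialized to $N = p-1$. I would interpret both sides $p$-componentwise and handle the two sides of (\ref{maltivar identity}) separately.

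The right-hand side of the theorem is essentially a bookkeeping exercise. Expanding $\text{\rm \pounds}_{\mathcal{A}_2,\{1\}^w}^{\cyr sh,\star}(u_1,\ldots,u_w)$ from Definition \ref{def of FMP} with $u_{l_j} = 1-t_j$ for $j = 1,\ldots,m$ and $u_i = 1$ otherwise, every factor $u_i^{n_i-n_{i+1}}$ with $i \notin \{l_1,\ldots,l_m\}$ equals $1$ and collapses, leaving exactly the first sum on the right-hand side of (\ref{maltivar identity}) with $N = p-1$. The second FSSMP in the statement, which carries $\{1\}^{k_m}$ at the end, similarly collapses to the ``$-1$'' contribution coming from $(1-t_m)^{n_{l_m}}-1$. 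Thus the right-hand side of the theorem is nothing but the mod $p^2$ reduction of the right-hand side of (\ref{maltivar identity}).

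For the left-hand side, I would apply Lemma \ref{vanish lemma}, equation (\ref{A_2 vanish}), to replace $(-1)^{n_1}\binom{p-1}{n_1}$ by $1 - \mathbf{p}H_{n_1}$ in $\mathcal{A}_2$. The constant term reproduces $\text{\rm \pounds}_{\mathcal{A}_2,\Bbbk}^{\cyr sh,\star}(t_1,\ldots,t_m)$, while the remainder is $-\mathbf{p}$ times a summation carrying an extra factor $H_{n_1}$. Because this correction is already multiplied by $\mathbf{p}$, only the residue of $H_{n_1}$ modulo $p$ is needed. Wolstenholme's congruence $H_{p-1}\equiv 0 \pmod p$ (valid for $p\geq 3$) then yields $H_{n_1}\equiv -\sum_{n_0=n_1+1}^{p-1}1/n_0 \pmod p$, so substituting and interchanging summations converts the correction into
\[
\mathbf{p}\sum_{p-1\geq n_0>n_1\geq n_2\geq\cdots\geq n_m\geq 1}\frac{t_1^{n_1-n_2}\cdots t_{m-1}^{n_{m-1}-n_m}t_m^{n_m}}{n_0\, n_1^{k_1}\cdots n_m^{k_m}}\quad\text{in }\mathcal{A}_{2,\mathbb{Z}[t_1,\ldots,t_m]}.
\]

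The final step is to recognize this last sum as the FSSMP difference in the statement. By the definition of $\{1\}\sqcup\Bbbk$, the value $\text{\rm \pounds}_{\mathcal{A}_2,\{1\}\sqcup\Bbbk}^{\cyr sh,\star}(1,t_1,\ldots,t_m)$ is exactly the sum over $p-1\geq n_0\geq n_1\geq\cdots\geq n_m\geq 1$ of the same summand, and isolating the diagonal $n_0 = n_1$ produces precisely $\text{\rm \pounds}_{\mathcal{A}_2,\mathbf{e}_1\oplus\Bbbk}^{\cyr sh,\star}(t_1,\ldots,t_m)$; the difference therefore picks out the strict part $n_0 > n_1$, completing the identification. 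The main obstacle is this middle step: correctly turning the ``unordered'' harmonic factor $H_{n_1}$ into a star-multiple-polylogarithm shape via Wolstenholme, and verifying that the endpoint conventions (the implicit $n_{w+1} = 0$ in (\ref{maltivar identity}) versus the standard FSSMP summation ranges) align properly so that both sides of the reduced identity match term by term.
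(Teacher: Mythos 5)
Your proposal is correct and follows the paper's strategy in outline: both arguments reduce Theorem \ref{theoremA} (\ref{maltivar identity}) at $N=p-1$ modulo $p^2$, and both invoke Lemma \ref{vanish lemma} (\ref{A_2 vanish}) to trade $(-1)^{n_1}\binom{p-1}{n_1}$ for $1-\mathbf{p}H_{n_1}$, with the right-hand side identified in the same way (the subtracted FSSMP absorbing the $-1$ in $(1-t_m)^{n_{l_m}}-1$). Where you genuinely diverge is in converting the harmonic correction term. The paper performs the substitution $n_i\mapsto p-n_i$ twice: it first reverses all the summation variables, uses $H_{p-n-1}\equiv H_{n}\pmod p$ (Lemma \ref{vanish lemma} (\ref{H_{p-n}})) to rewrite $H_{p-n_1}$ as $H_{n_1}-1/n_1$, splits the sum into a part with an extra smallest variable $n_0\leq n_1$ and a part with $n_1^{k_1+1}$, and then reverses back --- this is the only reason the congruence (\ref{H_{p-n}}) appears in Lemma \ref{vanish lemma} at all. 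You instead use $H_{p-1}\equiv 0\pmod p$ (which is elementary by pairing $k$ with $p-k$; the full strength of Wolstenholme is not needed) to write $H_{n_1}\equiv -\sum_{n_0=n_1+1}^{p-1}1/n_0$, which directly produces a strictly larger auxiliary variable $n_0>n_1$, and you then recover $\text{\rm \pounds}_{\mathcal{A}_2, \{1\}\sqcup \Bbbk}^{\cyr sh , \star}(1,t_1,\dots,t_m)-\text{\rm \pounds}_{\mathcal{A}_2, \mathbf{e}_1\oplus \Bbbk}^{\cyr sh , \star}(t_1,\dots,t_m)$ by peeling off the diagonal $n_0=n_1$. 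Your route avoids the double reversal and is slightly more economical; the paper's route keeps the auxiliary variable on the non-strict side, which is why its intermediate expression appears with the opposite sign before reversing back. Both yield the identical decomposition, so the proof is complete as proposed.
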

\begin{proof}
	By Lemma \ref{vanish lemma} (\ref{A_2 vanish}), we have
	\begin{equation*}
	\begin{split}
	&\left(\sum_{p-1\geq n_1 \geq \cdots \geq n_m \geq 1}(-1)^{n_1}\binom{p-1}{n_1}\frac{t_1^{n_1-n_2}\cdots t_{m-1}^{n_{m-1}-n_m}t_m^{n_m}}{n_1^{k_1} \cdots n_m^{k_m}}\bmod{p^2}\right)_p \\
	= \ &\left(\sum_{p-1\geq n_1 \geq \cdots \geq n_m \geq 1}(1-pH_{n_1})\frac{t_1^{n_1-n_2}\cdots t_{m-1}^{n_{m-1}-n_m}t_m^{n_m}}{n_1^{k_1} \cdots n_m^{k_m}}\bmod{p^2}\right)_p \\
	= \ & \text{\rm \pounds}_{\mathcal{A}_2, \Bbbk}^{\cyr sh, \star}(t_1, \dots, t_m)-\mathbf{p}\left( \sum_{p-1 \geq n_1 \geq \cdots \geq n_m \geq 1}\frac{H_{n_1}t_1^{n_1-n_2}\cdots t_{m-1}^{n_{m-1}-n_m}t_m^{n_m}}{n_1^{k_1}\cdots n_m^{k_m}} \bmod{p^2}\right)_p.\\
	\end{split}
	\end{equation*}
	By substitutions $n_i \mapsto p-n_i$ and Lemma \ref{vanish lemma} (\ref{H_{p-n}}), we have
	\begin{equation*}
	\begin{split}
	&\left( \sum_{p-1 \geq n_1 \geq \cdots \geq n_m \geq 1}\frac{H_{n_1}t_1^{n_1-n_2}\cdots t_{m-1}^{n_{m-1}-n_m}t_m^{n_m}}{n_1^{k_1}\cdots n_m^{k_m}} \bmod{p}\right)_p\\
	= \ & \left( \sum_{p-1 \geq p-n_1 \geq \cdots \geq p-n_m \geq 1}\frac{H_{p-n_1}t_1^{(p-n_1)-(p-n_2)}\cdots t_{m-1}^{(p-n_{m-1})-(p-n_m)}t_m^{p-n_m}}{(p-n_1)^{k_1}\cdots (p-n_m)^{k_m}} \bmod{p} \right)_p\\
	= \ & (-1)^{\wt(\Bbbk)}\left( \sum_{p-1 \geq n_m \geq \cdots \geq n_1 \geq 1}\frac{(H_{n_1}-\frac{1}{n_1})t_1^{n_2-n_1}\cdots t_{m-1}^{n_m-n_{m-1}}t_m^{p-n_m}}{n_m^{k_m}\cdots n_1^{k_1}} \bmod{p} \right)_p\\
	= \ & (-1)^{\wt(\Bbbk)}\left( \sum_{p-1\geq n_m \geq \cdots \geq n_1 \geq n_0 \geq 1}\frac{t_1^{n_2-n_1}\cdots t_{m-1}^{n_m-n_{m-1}}t_m^{p-n_m}}{n_m^{k_m}\cdots n_1^{k_1}n_0} \right. \\
	&\hspace{20mm}\left. -\sum_{p-1 \geq n_m \geq \cdots \geq n_1 \geq 1}\frac{t_1^{n_2-n_1}\cdots t_{m-1}^{n_m-n_{m-1}}t_m^{p-n_m}}{n_m^{k_m}\cdots n_1^{k_1+1}}\bmod{p} \right)_p\\
	= \ & (-1)^{\wt(\Bbbk)}\left( \sum_{p-1\geq p-n_m \geq \cdots \geq p-n_1 \geq p-n_0 \geq 1}\frac{t_1^{(p-n_2)-(p-n_1)}\cdots t_{m-1}^{(p-n_m)-(p-n_{m-1})}t_m^{p-(p-n_m)}}{(p-n_m)^{k_m}\cdots (p-n_1)^{k_1}(p-n_0)} \right. \\
	&\left. -\sum_{p-1 \geq p-n_m \geq \cdots \geq p-n_1 \geq 1}\frac{t_1^{(p-n_2)-(p-n_1)}\cdots t_{m-1}^{(p-n_m)-(p-n_{m-1})}t_m^{p-(p-n_m)}}{(p-n_m)^{k_m}\cdots (p-n_1)^{k_1+1}}\bmod{p} \right)_p\\
= \ & -\left( \sum_{p-1\geq n_0 \geq n_1 \geq \cdots \geq n_m \geq 1}\frac{t_1^{n_1-n_2}\cdots t_{m-1}^{n_{m-1}-n_m}t_m^{n_m}}{n_0n_1^{k_1}\cdots n_m^{k_m}} \right.\\
&\hspace{15mm}\left.-\sum_{p-1\geq n_1 \geq \cdots \geq n_m \geq 1}\frac{t_1^{n_1-n_2}\cdots t_{m-1}^{n_{m-1}-n_m}t_m^{n_m}}{n_1^{k_1+1}\cdots n_m^{k_m}} \bmod{p} \right)_p\\
= \ &-\text{\rm \pounds}_{\mathcal{A}, \{1\}\sqcup \Bbbk}^{\cyr sh , \star}(1, t_1, \dots, t_m)+\text{\rm \pounds}_{\mathcal{A}, \mathbf{e}_1\oplus \Bbbk}^{\cyr sh , \star}(t_1, \dots, t_m).
	\end{split}
	\end{equation*}
	Therefore, we have the desired functional equation by Theorem {\ref{theoremA}}.
\end{proof}
When we substitute $1$ for some of the variables $t_1, \dots , t_{m-1}$ in (\ref{main functional equation}), we have the following functional equation:  
\begin{corollary}
	Let $\Bbbk = (k_1, \dots, k_m)$ be an index, $S = \{ i_1, \dots, i_h \}$ a subset of $\{ 1, \dots, m-1 \}$, and $\{ j_1, \dots, j_{h'} \}$ the complement of $\{ i_1, \dots , i_h \}$ with respect to $\{ 1, \dots , m-1 \}$. We define $\Bbbk_S := (k_1+\cdots +k_{i_1}, k_{i_1+1}+\cdots +k_{i_2}, \dots, k_{i_h+1}+\cdots +k_m)$ and $m'$ := $\dep (\Bbbk_S^{\vee})$. Then we have the following functional equation in $\mathcal{A}_{2, {\mathbb Z} [t_{j_1}, \dots, t_{j_{h'}}]}$ for FSSMPs$:$
	\begin{equation}
	\begin{split}
	&\left. \left\{ \text{\rm \pounds}_{\mathcal{A}_2, \Bbbk}^{\cyr sh , \star}(t_1, \dots , t_m)+\Bigl( \text{\rm \pounds}_{\mathcal{A}_2, \{1\}\sqcup \Bbbk}^{\cyr sh , \star}(1, t_1, \dots, t_m)-\text{\rm \pounds}_{\mathcal{A}_2, \mathbf{e}_1\oplus \Bbbk}^{\cyr sh , \star}(t_1, \dots, t_m) \Bigr)\mathbf{p} \right\} \right|_{t_{i_1}=\cdots =t_{i_h}=1}\\
	&= \ \text{\rm \pounds}_{\mathcal{A}_2, \Bbbk_S^{\vee} }^{\cyr sh , \star}(\{ 1 \}^{l_1}, 1-t_{j_1} , \{ 1 \}^{l_2}, 1-t_{j_2}, \dots , \{ 1 \}^{l_{h'}}, 1-t_{j_{h'}}, \{ 1 \}^{m'-M_{h'}-1}, 1-t_m)\\
	&\hspace{18mm}- \text{\rm \pounds}_{\mathcal{A}_2, \Bbbk_S^{\vee} }^{\cyr sh , \star}(\{ 1 \}^{l_1}, 1-t_{j_1} , \{ 1 \}^{l_2}, 1-t_{j_2}, \dots , \{ 1 \}^{l_{h'}}, 1-t_{j_{h'}}, \{ 1 \}^{m'-M_{h'}}),
	\end{split}
	\label{substitution functional equation}\end{equation}
	where $l_1= k_1+\cdots+k_{j_1}-j_1$, $l_2=k_{j_1+1}+\cdots+ k_{j_2}-j_2+j_1$, $\dots$, $l_{h'}=k_{j_{h'-1}+1}+\cdots+k_{j_{h'}}-j_{h'}+j_{h'-1}$, and $M_{h'} = k_1+\cdots +k_{j_{h'}}-j_{h'}+h'$. 
	\label{MTB}\end{corollary}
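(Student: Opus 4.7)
\medskip

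The plan is to directly substitute $t_{i_1}=\cdots=t_{i_h}=1$ into both sides of the functional equation (\ref{main functional equation}) of Theorem \ref{MTA}. Since the substitution mapping commutes with the $\mathcal{A}_2$-valued sums (see the remark after Definition \ref{def of FMP}), the left-hand side of (\ref{main functional equation}) specializes directly to the left-hand side of (\ref{substitution functional equation}). The content of the proof is therefore to show that the two specialized FSSMPs on the right-hand side of (\ref{main functional equation}) collapse into the two FSSMPs at the Hoffman dual $\Bbbk_S^\vee$ appearing in the right-hand side of (\ref{substitution functional equation}).

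For this collapse, I would examine the polynomial structure of the FSSMP
\[
\text{\rm \pounds}_{\mathcal{A}_2,\{1\}^w}^{\cyr sh,\star}(\{1\}^{k_1-1},1-t_1,\ldots,\{1\}^{k_m-1},1-t_m).
\]
Setting $t_{i_j}=1$ turns the factor $(1-t_{i_j})^{n_{l_{i_j}}-n_{l_{i_j}+1}}$ into $0^{n_{l_{i_j}}-n_{l_{i_j}+1}}$, which forces $n_{l_{i_j}}=n_{l_{i_j}+1}$ in the summation. This is precisely the combinatorial specialization carried out at the polynomial level in Corollary \ref{substitution corollary}, where summing over chains $p-1\geq n_1\geq\cdots\geq n_w\geq 1$ with these forced equalities is rewritten as a sum over the shorter chain $p-1\geq n_1\geq\cdots\geq n_{m'}\geq 1$ indexed by $\Bbbk_S^\vee$, with $m'=\dep(\Bbbk_S^\vee)$. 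The surviving free variables $1-t_{j_1},\ldots,1-t_{j_{h'}},1-t_m$ then appear at positions $L_i=l_{j_i}-j_i+i$ in the new chain, and the remaining positions are filled by $1$, producing the arguments $(\{1\}^{l_1},1-t_{j_1},\{1\}^{l_2},1-t_{j_2},\ldots)$ as written in (\ref{substitution functional equation}); the two terms (one ending with $1-t_m$, the other with the extra $\{1\}$) correspond to the $\{(1-t_m)^{n_{m'}}-1\}$ bracket on the right-hand side of Corollary \ref{substitution corollary}.

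As an alternative, one could redo the proof of Theorem \ref{MTA} from scratch, replacing the input Theorem \ref{theoremA} (\ref{maltivar identity}) by Corollary \ref{substitution corollary}: set $N=p-1$, reduce mod $p^2$, use Lemma \ref{vanish lemma} (\ref{A_2 vanish}) to expand $(-1)^{n_1}\binom{p-1}{n_1}=1-\mathbf{p}H_{n_1}$, and convert the $H_{n_1}$ correction via the substitution $n_i\mapsto p-n_i$ and Lemma \ref{vanish lemma} (\ref{H_{p-n}}) into the $\mathbf{p}(\text{\rm \pounds}_{\mathcal{A}_2,\{1\}\sqcup\Bbbk}^{\cyr sh,\star}-\text{\rm \pounds}_{\mathcal{A}_2,\mathbf{e}_1\oplus\Bbbk}^{\cyr sh,\star})$ terms. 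This second route avoids any reference to Theorem \ref{MTA} but amounts to the same calculation.

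The main obstacle I anticipate is purely combinatorial bookkeeping: verifying that the indices $L_i=l_{j_i}-j_i+i$ furnished by Corollary \ref{substitution corollary} match the claimed arguments of the FSSMP on the right-hand side of (\ref{substitution functional equation}), namely that the $\{1\}^{l_i}$ blocks have exactly the lengths $l_1=k_1+\cdots+k_{j_1}-j_1$, $l_2=k_{j_1+1}+\cdots+k_{j_2}-j_2+j_1$, \ldots, and the trailing block has length $m'-M_{h'}-1$ (resp.\ $m'-M_{h'}$). This requires an explicit check using the description of the Hoffman dual together with Lemma \ref{Induction lemma for dual}; no conceptual difficulty, but care is needed to align the indexing conventions of Corollary \ref{substitution corollary} (which uses the $l$-sequence of the original $\Bbbk$) with those of the statement of Corollary \ref{MTB} (which uses the $l$-sequence of $\Bbbk_S$).
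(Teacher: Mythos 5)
Your proposal is correct and matches the paper's (very terse) proof, which simply combines the proof of Theorem \ref{MTA} with Corollary \ref{substitution corollary}: your second route is literally that argument, and your first route (specializing $t_{i_1}=\cdots=t_{i_h}=1$ in the already-established equation (\ref{main functional equation}) and then invoking the collapse of Corollary \ref{substitution corollary} to identify the right-hand side) is an equivalent reformulation. The bookkeeping of the block lengths $l_i$ and the positions $L_i=l_{j_i}-j_i+i$ that you flag is indeed the only real content, and it is exactly what Corollary \ref{substitution corollary} supplies.
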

\begin{proof}
	This is obtained by combining the proof of Theorem \ref{MTA} and Corollary \ref{substitution corollary} .
\end{proof}
\begin{remark}
	In particular, we have the following functional equation in $\mathcal{A}_{2, {\mathbb Z} [t]}$ (cf. Corollary \ref{cor of thmA}):
	\begin{equation}
	\widetilde{\text{\rm \pounds}}_{\mathcal{A}_2, \Bbbk}^{\star}(t) + \bigl( \widetilde{\text{\rm \pounds}}_{\mathcal{A}_2, \{1\}\sqcup \Bbbk}^{\star}(t)-\widetilde{\text{\rm \pounds}}_{\mathcal{A}_2, \mathbf{e}_1\oplus \Bbbk}^{\star}(t) \bigr) \mathbf{p} = \widetilde{\text{\rm \pounds}}_{\mathcal{A}_2, \Bbbk^{\vee}}^{\star}(1-t)-\zeta_{\mathcal{A}_2}^{\star}(\Bbbk^{\vee}).
	\label{gen. of Hoffman's duality}\end{equation}
	Therefore, we also have the functional equation (\ref{fn eq 1}) in Introduction. The case $t=1$ gives the Hoffman duality (Proposition \ref{FMZV's properties} (\ref{Hoffman's duality for FMZV})) and its generalization in $\mathcal{A}_2$ (\cite[Theorem 2.11]{Z}):
	\begin{equation}
	\zeta_{\mathcal{A}_2}^{\star}(\Bbbk)+\bigl( \zeta_{\mathcal{A}_2}^{\star}(\{1\}\sqcup \Bbbk) - \zeta_{\mathcal{A}_2}^{\star}(\mathbf{e}_1\oplus \Bbbk) \bigr)\mathbf{p} = -\zeta_{\mathcal{A}_2}^{\star}(\Bbbk^{\vee}).
	\label{Hoffman-Zhao duality}\end{equation}
	\label{1-var remark}\end{remark}
\begin{theorem}
	Let $n$ be a positive integer and $\Bbbk = (k_1, \dots , k_m)$ an index. Then we have the following functional equation in $\mathcal{A}_{n, {\mathbb Z} [t_1, \dots, t_m]}:$
	\begin{equation}
	\sum_{j=0}^{m}(-1)^j\text{\rm \pounds}_{\mathcal{A}_n, (k_1, \dots, k_j)}^{\ast}(t_1, \dots, t_j)\text{\rm \pounds}_{\mathcal{A}_n, (k_m, \dots, k_{j+1})}^{\ast , \star}(t_m, \dots, t_{j+1})=0.
	\label{A_n formula}\end{equation}
	Here, we consider $\text{\rm \pounds}_{\mathcal{A}_n, (k_1, \dots, k_j)}^{\ast}(t_1, \dots, t_j)$ $($resp. $\text{\rm \pounds}_{\mathcal{A}_n, (k_m, \dots, k_{j+1})}^{\ast , \star}(t_m, \dots, t_{j+1}))$ as $1$ when $j=0$ $($resp. $j=m)$.
	\label{MTC}\end{theorem}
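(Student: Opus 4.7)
The theorem is essentially a specialization of the polynomial identity of Theorem~\ref{A_n fneq}. The plan is to take $N = p - 1$ for each prime $p$, reduce the identity modulo $p^n$ in $\mathbb{Z}/p^n\mathbb{Z}[t_1,\dots,t_m]$, and interpret each sum as the $p$-component of a finite multiple polylogarithm from Definition~\ref{def of FMP}.

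More precisely, I would first invoke Theorem~\ref{A_n fneq} over $R = \mathbb{Q}$ with $N = p - 1$. Since every $n_i$ appearing in the sums satisfies $1 \leq n_i \leq p-1$, the denominators $n_1^{k_1}\cdots n_m^{k_m}$ are units modulo $p^n$ and the reduction is well-defined. Under this specialization, the range $N + 1 > n_1 > \cdots > n_j > 0$ becomes $p > n_1 > \cdots > n_j > 0$ and $N \geq n_1 \geq \cdots \geq n_{m-j} \geq 1$ becomes $p - 1 \geq n_1 \geq \cdots \geq n_{m-j} \geq 1$, matching exactly the summation conditions defining $\text{\rm \pounds}^{\ast}$ and $\text{\rm \pounds}^{\ast, \star}$. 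Taking the class in $\mathcal{A}_{n, \mathbb{Z}[t_1,\dots,t_m]}$, Theorem~\ref{A_n fneq} becomes
\[
(-1)^{m-1}\text{\rm \pounds}^{\ast}_{\mathcal{A}_n,\Bbbk}(t_1,\dots,t_m) = \text{\rm \pounds}^{\ast,\star}_{\mathcal{A}_n,\overline{\Bbbk}}(t_m,\dots,t_1) + \sum_{j=1}^{m-1}(-1)^j \text{\rm \pounds}^{\ast}_{\mathcal{A}_n,(k_1,\dots,k_j)}(t_1,\dots,t_j)\,\text{\rm \pounds}^{\ast,\star}_{\mathcal{A}_n,(k_m,\dots,k_{j+1})}(t_m,\dots,t_{j+1}).
\]

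To conclude, I would move all terms to a common side and incorporate the extreme indices $j = 0$, which contributes $\text{\rm \pounds}^{\ast,\star}_{\mathcal{A}_n,\overline{\Bbbk}}(t_m,\dots,t_1)$ with sign $+1$, and $j = m$, which contributes $(-1)^m \text{\rm \pounds}^{\ast}_{\mathcal{A}_n,\Bbbk}(t_1,\dots,t_m)$, both under the stated convention that an empty-tuple polylogarithm equals $1$. A short sign check then yields exactly the identity~(\ref{A_n formula}).

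There is no real obstacle once Theorem~\ref{A_n fneq} is in hand; the argument is essentially bookkeeping. The only points requiring minor care are the verification that every denominator is invertible modulo $p^n$ (automatic since $n_i < p$) and the sign arithmetic when combining the $j = 0$ and $j = m$ boundary terms with the middle summation. This last step is where I expect to spend the most attention, though it remains entirely routine.
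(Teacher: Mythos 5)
Your proposal is correct and is exactly the paper's argument: the authors deduce Theorem \ref{MTC} as an immediate consequence of Theorem \ref{A_n fneq} by specializing $N=p-1$ and reducing modulo $p^n$, precisely the bookkeeping you describe. The details you flag (invertibility of the denominators since $n_i<p$, and the absorption of the $j=0$ and $j=m$ boundary terms with sign $(-1)^m$) all check out.
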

\begin{proof}
	This is an immediate consequence of Theorem \ref{A_n fneq}.
\end{proof}
\begin{corollary}
	Let $n$, $k$, and $m$ be positive integers and $\Bbbk = (k_1, \dots , k_m)$ an index. Then the following equalities hold$:$
	\begin{align}
	&\text{\rm \pounds}_{\mathcal{A}, \{ k \}^m}^{\ast}(\{ 1 \}^{i-1}, t, \{ 1 \}^{m-i})+(-1)^m\text{\rm \pounds}_{\mathcal{A}, \{ k \}^m}^{\ast, \star}(\{ 1 \}^{m-i}, t, \{ 1 \}^{i-1})=0,\label{Tauraso-Zhao}\\
	&(-1)^{m-1}\text{\rm \pounds}_{\mathcal{A}_n, \Bbbk}(t)=\widetilde{\text{\rm \pounds}}_{\mathcal{A}_n, \overline{\Bbbk}}^{\star}(t)+\sum_{j=1}^{m-1}(-1)^j\text{\rm \pounds}_{\mathcal{A}_n, (k_1, \dots, k_j)}(t)\zeta_{\mathcal{A}_n}^{\star}(k_m, \dots, k_{j+1}),\label{nonstar-star}\\
	&(-1)^{m-1}\widetilde{\text{\rm \pounds}}_{\mathcal{A}_n, \Bbbk}(t)=\text{\rm \pounds}_{\mathcal{A}_n, \overline{\Bbbk}}^{\star}(t)+\sum_{j=1}^{m-1}(-1)^j\zeta_{\mathcal{A}_n}(k_1, \dots, k_j)\text{\rm \pounds}_{\mathcal{A}_n, (k_m, \dots, k_{j+1})}^{\star}(t),\label{nonstar and star2}\\
	&\sum_{j=0}^m(-1)^j\zeta_{\mathcal{A}_n}(k_1, \dots , k_j)\zeta_{\mathcal{A}_n}^{\star}(k_m, \dots , k_{j+1}) = 0,\label{non-star and star}
	\end{align}
	Here, we consider $\zeta_{\mathcal{A}_n}^{\bullet}(\emptyset)$ as $1$ for $\bullet \in \{\emptyset, \star\}$.
	\label{zeta corollary}\end{corollary}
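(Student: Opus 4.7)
The plan is to deduce all four equalities as direct specializations of the master identity (\ref{A_n formula}) from Theorem \ref{MTC}, by appropriately setting variables $t_i$ to $1$ or to $t$, and then, for the first identity, by further reducing to $\mathcal{A}=\mathcal{A}_1$ and using Zhou--Cai's vanishing (\ref{Zhou-Cai}) in $\mathcal{A}$. The key observation is that, under a specialization, each factor on the left-hand side of (\ref{A_n formula}) degenerates either to a polylogarithm in one variable or to a multiple zeta(-star) value, so each piece of the alternating sum matches a term of the target formula.

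For (\ref{non-star and star}), substitute $t_1=\cdots=t_m=1$ into (\ref{A_n formula}); every FHMP becomes $\zeta_{\mathcal{A}_n}$ and every FHSMP becomes $\zeta_{\mathcal{A}_n}^{\star}$, which is exactly the claim. For (\ref{nonstar-star}), substitute $t_1=t$ and $t_2=\cdots=t_m=1$: then $\text{\rm \pounds}_{\mathcal{A}_n,(k_1,\dots,k_j)}^{\ast}(t,1,\dots,1)=\text{\rm \pounds}_{\mathcal{A}_n,(k_1,\dots,k_j)}(t)$ for $j\ge 1$, while $\text{\rm \pounds}_{\mathcal{A}_n,(k_m,\dots,k_{j+1})}^{\ast,\star}(t_m,\dots,t_{j+1})$ contains the variable $t$ only for $j=0$ (where it yields $\widetilde{\text{\rm \pounds}}_{\mathcal{A}_n,\overline{\Bbbk}}^{\star}(t)$), and otherwise reduces to $\zeta_{\mathcal{A}_n}^{\star}(k_m,\dots,k_{j+1})$. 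Isolating the $j=m$ term and moving it to the left-hand side, using $(-1)^{m+1}=(-1)^{m-1}$, gives (\ref{nonstar-star}). For (\ref{nonstar and star2}) the procedure is symmetric: substitute $t_m=t$ and $t_1=\cdots=t_{m-1}=1$; now the variable $t$ survives only in the star factor at position $1$, yielding $\text{\rm \pounds}_{\mathcal{A}_n,(k_m,\dots,k_{j+1})}^{\star}(t)$, while the non-star factor becomes $\zeta_{\mathcal{A}_n}(k_1,\dots,k_j)$ for $j<m$ and $\widetilde{\text{\rm \pounds}}_{\mathcal{A}_n,\Bbbk}(t)$ for $j=m$; rearranging gives the claim.

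For (\ref{Tauraso-Zhao}), the specialization is $\Bbbk=\{k\}^m$ and $t_\ell=1$ for $\ell\ne i$, $t_i=t$; crucially one works modulo $\mathbf{p}$, i.e., in $\mathcal{A}_1=\mathcal{A}$. Tracking where the variable $t$ lands: in the $j$-th term of (\ref{A_n formula}), the non-star factor involves $t$ precisely when $j\ge i$, and the star factor (with reversed variables $(t_m,t_{m-1},\dots,t_{j+1})$) involves $t$ precisely when $j<i$. Hence for $0<j<i$ the non-star factor is $\zeta_{\mathcal{A}}(\{k\}^j)$, and for $i\le j<m$ the star factor is $\zeta_{\mathcal{A}}^{\star}(\{k\}^{m-j})$; by (\ref{Zhou-Cai}) these are all multiples of $\mathbf{p}$, hence vanish in $\mathcal{A}$. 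Only the boundary terms $j=0$ and $j=m$ survive, yielding
\[
\text{\rm \pounds}_{\mathcal{A},\{k\}^m}^{\ast,\star}(\{1\}^{m-i},t,\{1\}^{i-1}) + (-1)^m\text{\rm \pounds}_{\mathcal{A},\{k\}^m}^{\ast}(\{1\}^{i-1},t,\{1\}^{m-i}) = 0,
\]
after identifying the position of $t$ inside the reversed tuple (position $m-i+1$ from the left).

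The only step that requires care is the bookkeeping for (\ref{Tauraso-Zhao}): one must verify that in the reversed argument list $(t_m,t_{m-1},\dots,t_{j+1})$ of the star factor, the unique non-unit variable $t_i=t$ appears at the correct slot, giving $\{1\}^{m-i}$ before it and $\{1\}^{i-j-1}$ after it (which at $j=0$ produces exactly $\{1\}^{m-i}$ and $\{1\}^{i-1}$). Everything else is routine substitution; no analytic or combinatorial obstacle beyond Theorem \ref{MTC} and the Zhou--Cai formula.
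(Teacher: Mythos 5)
Your proposal is correct and is essentially the paper's own argument: the authors likewise obtain (\ref{Tauraso-Zhao}) by substituting $t_1=\cdots=t_{i-1}=t_{i+1}=\cdots=t_m=1$, $t_i=t$ into Theorem \ref{MTC} and killing the middle terms with Proposition \ref{FMZV's properties} (\ref{Zhou-Cai}), and they dismiss the other three identities as ``clear'' specializations of (\ref{A_n formula}). Your bookkeeping of where $t$ lands in the reversed argument list (position $m-i+1$, giving $\{1\}^{m-i}$ and $\{1\}^{i-1}$ at $j=0$) is accurate, so the write-up is simply a more explicit version of the same proof.
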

\begin{proof}
	We obtain the equality (\ref{Tauraso-Zhao}) by the substitution $t_1=\cdots = t_{i-1}=t_{i+1}=\cdots =t_m=1, t_i=t$ and Lemma \ref{FMZV's properties} (\ref{Zhou-Cai}). The equalities (\ref{nonstar-star}), (\ref{nonstar and star2}), and (\ref{non-star and star}) are clear. 
\end{proof}
\begin{remark}
	The equality (\ref{Tauraso-Zhao}) has been proved by Tauraso and J. Zhao (\cite[Lemma 5.9]{TZ}). 
	By considering the case $\Bbbk = (k_1, \{ 1 \}^{k_2-1})$ and $n=1$ in the equality (\ref{non-star and star}), we have
	\[
	\zeta_{\mathcal{A}}(k_1, \{ 1 \}^{k_2-1}) + (-1)^{k_2}\zeta_{\mathcal{A}}^{\star}(\{ 1 \}^{k_2-1}, k_1)=0
	\]
	since $\zeta_{\mathcal{A}}^{\star}(\{ 1 \}^k)=0$ for every positive integer $k$. Therefore, the equality (\ref{non-star and star}) is a generalization of Proposition \ref{FMZV's properties} (\ref{A-S for FMZV}) since
	\[
	\zeta_{\mathcal{A}}^{\star}(\{ 1 \}^{k_2-1}, k_1) = (-1)^{k_1+k_2-1}\zeta_{\mathcal{A}}^{\star}(k_1, \{ 1 \}^{k_2-1})
	\]
	by Proposition \ref{FMZV's properties} (\ref{reverse for FMZV}).
	The equality (\ref{non-star and star}) and its analogue for the usual multiple zeta values (the equality (\ref{usual non-star and star})) are consequences of the explicit formula of the antipode of the harmonic algebra or the Hopf algebra of quasi-symmetric functions (\cite[Theorem 3.2]{Ho2} or \cite[Theorem 3.1]{Ho}). See also \cite[Theorem3]{Zl}, \cite[Proposition 6]{IKOO}, \cite[Proposition 7,1]{Ka2}, and \cite[Proposition 3.7]{Y}.
	\label{antipode}\end{remark}
As an application of Remark \ref{1-var remark} (\ref{Hoffman-Zhao duality}) and Corollary \ref{zeta corollary} (\ref{non-star and star}), we give another proof of the following theorem which is a part of recent deep works by Kh.\ Hessami Pilehrood, T. Hessami Pilehrood, and Tauraso. The original proof is based on the identity \cite[Theorem 2.2]{PPT} which is different from our identities in Subsection \ref{subsec:Generalizations of Euler's identity and their corollaries}.
\begin{theorem}[{\cite[Theorem 4.3]{PPT}}]
	Let $k_1$ and $k_2$ be positive integers satisfying the condition that $k_1+k_2$ is even. Then we have
	\begin{equation}
	\zeta_{\mathcal{A}_2}(\{ 1 \}^{k_1-1}, 2 , \{ 1 \}^{k_2-1}) = \frac{1}{2}\left\{ 1 - (-1)^{k_2}\binom{k_1+k_2+1}{k_1+1} \right\} \frac{B_{\mathbf{p}-k_1-k_2-1}}{k_1+k_2+1}\mathbf{p},
	\label{PPT1}\end{equation}
	\begin{equation}
	\zeta_{\mathcal{A}_2}^{\star}(\{ 1 \}^{k_1-1}, 2 , \{ 1 \}^{k_2-1}) = \frac{1}{2}\left\{ 1 - (-1)^{k_2}\binom{k_1+k_2+1}{k_2+1} \right\} \frac{B_{\mathbf{p}-k_1-k_2-1}}{k_1+k_2+1}\mathbf{p}.
	\label{PPT2}\end{equation}
	\label{PPT theorem}\end{theorem}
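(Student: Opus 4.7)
Let $\Bbbk := (\{1\}^{k_1-1}, 2, \{1\}^{k_2-1})$ of weight $w := k_1+k_2$ (even by assumption); from the example after Definition \ref{def of hof dual}, $\Bbbk^{\vee} = (k_1, k_2)$. The plan is to prove the star identity (\ref{PPT2}) directly from the refined Hoffman-Zhao duality of Remark \ref{1-var remark}, and then to deduce the non-star identity (\ref{PPT1}) by induction on $w$ using the antipode-type relation Corollary \ref{zeta corollary} (\ref{non-star and star}).

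For (\ref{PPT2}), specializing (\ref{Hoffman-Zhao duality}) to $\Bbbk$ gives
\[
\zeta_{\mathcal{A}_2}^{\star}(\Bbbk) = -\zeta_{\mathcal{A}_2}^{\star}(k_1,k_2) - \mathbf{p}\bigl(\zeta_{\mathcal{A}}^{\star}(\{1\}\sqcup\Bbbk) - \zeta_{\mathcal{A}}^{\star}(\mathbf{e}_1\oplus\Bbbk)\bigr),
\]
the two correction terms needing only their mod-$p$ reductions. The leading term is supplied by (\ref{dep2 A_2 star}). By Lemma \ref{Induction lemma for dual} the Hoffman duals are $(\{1\}\sqcup\Bbbk)^{\vee}=(k_1+1,k_2)$ and $(\mathbf{e}_1\oplus\Bbbk)^{\vee}=(1,k_1,k_2)$, so by Hoffman's duality (\ref{Hoffman's duality for FMZV}) each correction becomes the negative of a depth-two or depth-three FMZSV, evaluated by (\ref{dep2 for FMZV}) and (\ref{dep3 for FMZV}) respectively (the latter being applicable since $1+k_1+k_2$ is odd). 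Pulling out the common factor $\mathbf{p}\,B_{\mathbf{p}-w-1}/(w+1)$ reduces (\ref{PPT2}) to a binomial identity; using $(-1)^{k_1}=(-1)^{k_2}$ and $\binom{w+1}{k_1+1}=\binom{w+1}{k_2}$, this collapses to $(k_1+1)\binom{w+1}{k_1+1}=(k_2+1)\binom{w+1}{k_2+1}$, both sides being $(w+1)!/(k_1!\,k_2!)$.

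For (\ref{PPT1}), I proceed by strong induction on the even weight $w$; the base $w=2$ (so $\Bbbk=(2)$) follows immediately from (\ref{Zhou-Cai}). For the inductive step, apply Corollary \ref{zeta corollary} (\ref{non-star and star}) with $n=2$ to $\Bbbk$: the $j=0$ term is $\zeta_{\mathcal{A}_2}^{\star}(\overline{\Bbbk})$, which by (\ref{PPT2}) applied to $\overline{\Bbbk}$ (the same shape with $k_1 \leftrightarrow k_2$) together with the sign coincidence $(-1)^{k_1}=(-1)^{k_2}$ equals the right-hand side of (\ref{PPT1}), while the $j=m$ term is $-\zeta_{\mathcal{A}_2}(\Bbbk)$ since $m=k_1+k_2-1$ is odd. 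The main obstacle is to show that each intermediate product $\zeta_{\mathcal{A}_2}(\Bbbk_{[1:j]})\,\zeta_{\mathcal{A}_2}^{\star}(\overline{\Bbbk}_{[1:m-j]})$ for $0<j<m$ vanishes in $\mathcal{A}_2$: every prefix arising here is either an all-ones index $\{1\}^r$ (handled by (\ref{Zhou-Cai})) or of the form $(\{1\}^{a-1},2,\{1\}^{b-1})$ of weight strictly less than $w$ (handled by (\ref{PPT2}) or by the inductive hypothesis for (\ref{PPT1})), and a parity-driven case analysis then shows that in every case either one factor is identically zero in $\mathcal{A}_2$ (the relevant Bernoulli number $B_{p-s}$ having odd argument $s\geq 3$) or both factors lie in $\mathbf{p}\mathcal{A}_2$, so their product lies in $\mathbf{p}^2\mathcal{A}_2=0$.
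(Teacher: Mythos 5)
Your proposal is correct and follows essentially the same route as the paper: the star case via the $\mathcal{A}_2$-refined Hoffman duality (\ref{Hoffman-Zhao duality}) combined with the depth-two and depth-three evaluations (\ref{dep2 A_2 star}), (\ref{dep2 for FMZV}), (\ref{dep3 for FMZV}), and the non-star case via the antipode relation (\ref{non-star and star}) with a parity argument killing the cross terms. The only differences are presentational — you apply the duality to $\Bbbk$ rather than to $(k_1,k_2)$ and then dualize the mod-$p$ corrections back, and you package the vanishing of the intermediate products as a strong induction on the weight where the paper simply invokes the already-proved star case — so the underlying computation is identical.
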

\begin{remark}
	They also calculated $\zeta_{\mathcal{A}}^{\bullet}(\{ 2 \}^{k_1}, 3, \{ 2 \}^{k_2})$ and $\zeta_{\mathcal{A}}^{\bullet}(\{ 2 \}^{k_1}, 1 , \{ 2 \}^{k_2})$ where $k_1$ and $k_2$ are positive integers and $\bullet \in \{ \emptyset , \star \}$ (\cite[Theorem 4.1 and Theorem 4.2]{PPT}).
\end{remark}
\begin{proof}[Proof of Theorem $\ref{PPT theorem}$]
	Let $k_1$ and $k_2$ be positive integers such that $k_1+k_2$ is even. Let $w:=k_1+k_2+1$. First, we show the star case. By Remark \ref{1-var remark} (\ref{Hoffman-Zhao duality}), Proposition \ref{FMZV's properties} (\ref{dep2 for FMZV}), (\ref{dep2 A_2 star}), and (\ref{dep3 for FMZV}), we have
	{\small \begin{equation*}
	\begin{split}
	\zeta_{\mathcal{A}_2}^{\star}(\{1\}^{k_1-1}, 2, \{1\}^{k_2-1}) &= -\zeta_{\mathcal{A}_2}^{\star}(k_1, k_2)-\bigl( \zeta_{\mathcal{A}_2}^{\star}(1, k_1, k_2)-\zeta_{\mathcal{A}_2}^{\star}(k_1+1, k_2) \bigr)\mathbf{p}\\
	&= -\frac{1}{2}\left\{ (-1)^{k_2}k_1\binom{w}{k_2}-(-1)^{k_1}k_2\binom{w}{k_1}-w+1 \right\} \frac{B_{\mathbf{p}-w}}{w}\mathbf{p}\\
	&\hspace{4mm}-\Biggl( -\frac{1}{2} \left\{(-1)^{k_2}\binom{w}{k_2} +w\right\} \frac{B_{\mathbf{p}-w}}{w} -(-1)^{k_1+1}\binom{w}{k_1+1}\frac{B_{\mathbf{p}-w}}{w} \Biggr) \mathbf{p} \\
	&= \frac{1}{2}\left\{ 1-(-1)^{k_2}\binom{w}{k_2+1} \right\} \frac{B_{\mathbf{p}-w}}{w}\mathbf{p}.
	\end{split}
	\end{equation*}
}Let $\Bbbk = (\{1\}^{k_1-1}, 2, \{1\}^{k_2-1}) =: (l_1, \dots, l_{w-2})$. By Corollary \ref{zeta corollary} (\ref{non-star and star}), we have
	\begin{equation}
	\zeta_{\mathcal{A}_2}(\Bbbk)=\zeta_{\mathcal{A}_2}^{\star}(\overline{\Bbbk})+\sum_{j=1}^{w-3}(-1)^{j}\zeta_{\mathcal{A}_2}(l_1, \dots, l_j)\zeta_{\mathcal{A}_2}^{\star}(l_{w-2}, \dots, l_{j+1}).
	\label{41}\end{equation}
	We see that one of the following cases is satisfied for $j=1, \dots, w-3$: 
	
	\noindent (i) At least one of $\zeta_{\mathcal{A}_2}(l_1, \dots, l_j)$ and $\zeta_{\mathcal{A}_2}^{\star}(l_{w-2}, \dots, l_{j+1})$ is zero,
	
	\noindent (ii) Both of $\zeta_{\mathcal{A}_2}(l_1, \dots, l_j)$ and $\zeta_{\mathcal{A}_2}^{\star}(l_{w-2}, \dots, l_{j+1})$ belong to $\mathbf{p}\mathcal{A}_2$. 
	
	\noindent Therefore, the summation in the equality (\ref{41}) vanishes and we have
	\[
	\zeta_{\mathcal{A}_2}(\Bbbk)=\zeta_{\mathcal{A}_2}^{\star}(\overline{\Bbbk}).
	\]
	This completes the proof.
\end{proof}
\subsection{Functional equations for the index $\{ 1\}^m$}
\label{subsec:Functional equations for the index 1^m}
In this subsection, we argue about functional equations of FMPs of the index $\{1\}^m$.
\begin{lemma}
	Let $m$ be a positive integer. Then
	\begin{align}
	&\widetilde{\text{\rm \pounds}}_{\mathcal{A}, \{1\}^m}^{\star}(t) = \text{\rm \pounds}_{\mathcal{A}, m}(1-t),
	\label{1^m, 1}\\
	&\text{\rm \pounds}_{\mathcal{A}, \{ 1 \}^m}(t) = (-1)^{m-1}\text{\rm \pounds}_{\mathcal{A}, m}(1-t).
	\label{MT Fn-Eq}\end{align}
	\label{multi-FPL}\end{lemma}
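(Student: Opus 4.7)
The plan is to derive both identities directly from the main functional equations (\ref{fn eq 1}) and (\ref{fn eq 2}) specialized to the index $\Bbbk = \{1\}^m$, using the Hoffman-duality relation $\{1\}^{m\,\vee} = m$ (from the example following Definition \ref{def of hof dual}) and known vanishings of FMZVs. No new identities from Section \ref{sec:Generalizations of Euler's identity} are required.

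For (\ref{1^m, 1}), I would apply (\ref{fn eq 1}) with $\Bbbk = \{1\}^m$, so that $\Bbbk^{\vee} = m$, to obtain
\[
\widetilde{\text{\rm \pounds}}_{\mathcal{A}, \{1\}^m}^{\star}(t) = \widetilde{\text{\rm \pounds}}_{\mathcal{A}, m}^{\star}(1-t) - \zeta_{\mathcal{A}}^{\star}(m).
\]
Since the depth of $m$ is one, $\widetilde{\text{\rm \pounds}}_{\mathcal{A}, m}^{\star}(1-t)$ coincides with $\text{\rm \pounds}_{\mathcal{A}, m}(1-t)$ by the very definition of the one-variable FMPs. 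The remaining term $\zeta_{\mathcal{A}}^{\star}(m) = \zeta_{\mathcal{A}}(m)$ vanishes in $\mathcal{A}$: indeed, the depth-one case of Proposition \ref{FMZV's properties} (\ref{Zhou-Cai}) shows $\zeta_{\mathcal{A}_2}(m) \in \mathbf{p}\mathcal{A}_2$, and reducing modulo $\mathbf{p}$ gives $\zeta_{\mathcal{A}}(m) = 0$. This yields (\ref{1^m, 1}).

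For (\ref{MT Fn-Eq}), I would apply (\ref{fn eq 2}) with $\Bbbk = \{1\}^m$, noting $\overline{\Bbbk} = \{1\}^m$:
\[
(-1)^{m-1}\text{\rm \pounds}_{\mathcal{A}, \{1\}^m}(t) = \widetilde{\text{\rm \pounds}}_{\mathcal{A}, \{1\}^m}^{\star}(t) + \sum_{j=1}^{m-1}(-1)^j\text{\rm \pounds}_{\mathcal{A}, \{1\}^j}(t)\,\zeta_{\mathcal{A}}^{\star}(\{1\}^{m-j}).
\]
Each coefficient $\zeta_{\mathcal{A}}^{\star}(\{1\}^{m-j})$ vanishes: by Proposition \ref{FMZV's properties} (\ref{Zhou-Cai}) applied with $k=1$, one has $\zeta_{\mathcal{A}_2}^{\star}(\{1\}^{k'}) \in \mathbf{p}\mathcal{A}_2$ for every $k' \geq 1$, hence $\zeta_{\mathcal{A}}^{\star}(\{1\}^{k'}) = 0$ in $\mathcal{A}$. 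Thus the entire sum collapses and (\ref{1^m, 1}) substituted into the remaining term produces (\ref{MT Fn-Eq}).

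The proof is essentially mechanical given the results already in hand; the only verification that demands care is that every FMZV appearing as a coefficient in the expansion of (\ref{fn eq 2}) belongs to the chain $\zeta_{\mathcal{A}}^{\star}(\{1\}^{k'})$ which is annihilated in $\mathcal{A}$. Once this is noted, both identities in the lemma follow as immediate corollaries of the main Theorem \ref{introthm}.
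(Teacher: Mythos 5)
Your proof is correct and follows exactly the paper's route: the paper's own (one-line) proof likewise specializes Theorem \ref{introthm} (\ref{fn eq 1}) and (\ref{fn eq 2}) to $\Bbbk=\{1\}^m$ and invokes Proposition \ref{FMZV's properties} (\ref{Zhou-Cai}) to kill the $\zeta_{\mathcal{A}}^{\star}$ terms. You have simply written out the details the authors leave implicit.
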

\begin{proof}
	By Proposition \ref{FMZV's properties} (\ref{Zhou-Cai}), the cases $\Bbbk = \{ 1 \}^m$ in theorem \ref{introthm} (\ref{fn eq 1}) and (\ref{fn eq 2}) give the equalities (\ref{1^m, 1}) and (\ref{MT Fn-Eq}), respectively.
\end{proof}
By Lemma \ref{multi-FPL} and Proposition \ref{reverse prop} (\ref{reverse2}), we can express every FMP of the index $\{1\}^m$ by a FP. Therefore, we can obtain functional equations of FMPs of the index $\{1\}^m$ from functional equations of FPs. For example, we get distribution properties for FMPs of the index $\{1\}^m$ by the following result by Elbaz-Vincent and Gangl: 
\begin{proposition}[Elbaz-Vincent and Gangl {\cite[Proposition 5.7 (2)]{EG}}]
	Let $n$ be a {\em non-zero} integer and $m$ a positive integer. Let $\zeta_n$ be a primitive $|n|$-th root of unity. Then we have the following equality in $\mathcal{A}_{{\mathbb Z}[\zeta_n, t]}:$
	\begin{equation}
	\text{\rm \pounds}_{\mathcal{A}, m}(t^n) = n^{m-1}\sum_{k=0}^{|n|-1}\frac{1-t^{n\mathbf{p}}}{1-(\zeta_n^kt)^{\mathbf{p}}}\text{\rm \pounds}_{\mathcal{A}, m}(\zeta_n^kt).
	\label{dist0}\end{equation}
	\label{dist}\end{proposition}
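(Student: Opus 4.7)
The plan is to verify the identity prime-by-prime in $\mathcal{A}_{\mathbb{Z}[\zeta_n,t]}$. Since altering the $p$-component at finitely many primes does not affect an element of $\mathcal{A}$, I may restrict attention to primes $p$ with $p \nmid n$. I would first prove the case $n > 0$ in full, and then reduce the case $n < 0$ to it via Proposition \ref{reverse prop}(\ref{reverse2}) applied after the substitution $t \mapsto t^{-1}$.

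For $n > 0$, the starting point is the polynomial identity
\[
\frac{1-t^{np}}{1-(\zeta_n^k t)^p} \;=\; \sum_{j=0}^{n-1}(\zeta_n^k t)^{jp} \quad \text{in } \mathbb{F}_p[\zeta_n, t],
\]
which holds because $\zeta_n^{np} = 1$. Multiplying this by the $p$-component $\sum_{i=1}^{p-1}(\zeta_n^k t)^i/i^m$ of $\text{\rm \pounds}_{\mathcal{A}, m}(\zeta_n^k t)$, and re-indexing via $l := i + jp$ (so that $l$ ranges over the integers in $[1,\,np-1]$ with $p \nmid l$, and $i = l \bmod p$ lies in $\{1,\ldots,p-1\}$), I may rewrite the $p$-component of the right-hand side of the asserted identity as
\[
n^{m-1}\sum_{\substack{1 \le l \le np-1 \\ p \nmid l}} \frac{t^l}{(l \bmod p)^m} \sum_{k=0}^{n-1} \zeta_n^{kl}.
\]

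The next step is to invoke character orthogonality $\sum_{k=0}^{n-1}\zeta_n^{kl} = n\cdot\mathbf{1}_{n\mid l}$, valid in $\mathbb{F}_p[\zeta_n]$ because $p \nmid n$. After the substitution $l = ni$, the constraints $1 \le l \le np-1$ and $p \nmid l$ force $i$ to run exactly over $\{1,\ldots,p-1\}$, so the expression collapses to
\[
n^m\sum_{i=1}^{p-1}\frac{t^{ni}}{(ni \bmod p)^m} \;=\; n^m\sum_{i=1}^{p-1}\frac{t^{ni}}{n^m i^m} \;=\; \sum_{i=1}^{p-1}\frac{t^{ni}}{i^m},
\]
where the middle equality uses $(ni \bmod p)^m \equiv (ni)^m = n^m i^m \pmod p$. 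The last sum is exactly the $p$-component of $\text{\rm \pounds}_{\mathcal{A}, m}(t^n)$, completing the case $n > 0$.

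I expect the main subtlety to lie in the case $n < 0$, since then $t^n$ does not literally belong to $\mathbb{Z}[t]$ and the statement must be read after an implicit substitution. The natural resolution is to apply the already-proved formula with $n' := |n|$ to $t^{-1}$ in place of $t$, and then use Proposition \ref{reverse prop}(\ref{reverse2}) to translate each $\text{\rm \pounds}_{\mathcal{A}, m}(\cdot^{-1})$ back into the unbarred $\text{\rm \pounds}_{\mathcal{A}, m}$; keeping track of the resulting sign $(-1)^m$ and the Frobenius twist $t^{\mathbf{p}}$ is the only point requiring careful bookkeeping.
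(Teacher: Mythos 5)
Your proof is correct. Note first that the paper itself gives no proof of this proposition: it is quoted directly from Elbaz-Vincent and Gangl \cite[Proposition 5.7 (2)]{EG}, so the only thing to compare against is their original argument, which is likewise a direct manipulation with roots of unity; your write-up is a self-contained verification in the $\mathcal{A}$-framework. The $n>0$ case is sound: after discarding the finitely many primes dividing $n$, the expansion $\frac{1-t^{np}}{1-(\zeta_n^kt)^p}=\sum_{j=0}^{n-1}(\zeta_n^kt)^{jp}$, the reindexing $l=i+jp$, and the orthogonality relation $\sum_{k=0}^{n-1}\zeta_n^{kl}$ equal to $n$ if $n\mid l$ and $0$ otherwise collapse the sum to $\sum_{i=1}^{p-1}t^{ni}/i^m$ exactly as you say. (One small remark: the orthogonality holds already in the integral domain $\mathbb{Z}[\zeta_n]$ and therefore survives reduction mod $p$; the hypothesis $p\nmid n$ is not what makes that step work, though it is needed elsewhere, e.g.\ to invert $n$ and to identify $(ni\bmod p)^m$ with $n^mi^m$.) For $n<0$ you only sketch the reduction, but it does close: applying the positive case at $t^{-1}$, using $\text{\rm \pounds}_{\mathcal{A},m}(s^{-1})=(-1)^m s^{-\mathbf{p}}\text{\rm \pounds}_{\mathcal{A},m}(s)$ termwise together with $1-(\zeta_n^kt^{-1})^{\mathbf{p}}=-(\zeta_n^{-k}t)^{-\mathbf{p}}\bigl(1-(\zeta_n^{-k}t)^{\mathbf{p}}\bigr)$, the Frobenius twists cancel, the substitution $k\mapsto -k$ restores the sum, and the signs combine as $|n|^{m-1}(-1)^{m-1}=n^{m-1}$, which is the asserted constant. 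The only caveat worth recording is the one you already flag: for $n<0$ the identity should be read in $\mathcal{A}_{\mathbb{Z}[\zeta_n,t^{\pm1}]}$.
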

\begin{theorem}[Distribution properties for FMPs of the index $\{1\}^m$]
	Let $n$ be an non-zero integer and $m$ a positive integer. Let $\zeta_n$ be a primitive $|n|$-th root of unity. Then the following equalities hold in $\mathcal{A}_{{\mathbb Z}[\zeta_n]\jump{t}}:$
	\begin{align}
	&\text{\rm \pounds}_{\mathcal{A}, \{1\}^m}(1-t^n) = n^{m-1}\sum_{k=0}^{|n|-1}\frac{1-t^{n\mathbf{p}}}{1-(\zeta_{n}^kt)^{\mathbf{p}}}\text{\rm \pounds}_{\mathcal{A}, \{1\}^m}(1-\zeta_n^kt),
	\label{dist1}\\
	&\widetilde{\text{\rm \pounds}}_{\mathcal{A}, \{1\}^m}\left( \frac{1}{1-t^n} \right) = n^{m-1}\sum_{k=0}^{|n|-1}\widetilde{\text{\rm \pounds}}_{\mathcal{A}, \{1\}^m}\left( \frac{1}{1-\zeta_n^kt} \right),\\
	&\text{\rm \pounds}_{\mathcal{A}, \{1\}^m}^{\star}\left( \frac{1}{1-t^n} \right) = n^{m-1}\sum_{k=0}^{|n|-1}\text{\rm \pounds}_{\mathcal{A}, \{1\}^m}^{\star}\left( \frac{1}{1-\zeta_n^kt} \right),
	\label{first dist}\\
	&\widetilde{\text{\rm \pounds}}_{\mathcal{A}, \{1\}^m}^{\star}(1-t^n) = n^{m-1}\sum_{k=0}^{|n|-1}\frac{1-t^{n\mathbf{p}}}{1-(\zeta_{n}^kt)^{\mathbf{p}}}\widetilde{\text{\rm \pounds}}_{\mathcal{A}, \{1\}^m}^{\star}(1-\zeta_n^kt).
	\label{dist4}
	\end{align}
	\label{dist thm}\end{theorem}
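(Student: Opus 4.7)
The plan is to reduce all four distribution identities to the Elbaz-Vincent–Gangl formula (Proposition \ref{dist}) via Lemma \ref{multi-FPL} and the reversal formulas (Proposition \ref{reverse prop} (\ref{reverse2})). The first two identities will come directly; the remaining two require the reversal trick plus the Frobenius observation $(1-x)^{\mathbf p} = 1-x^{\mathbf p}$ in $\mathcal{A}_{\mathbb{Z}[\zeta_n]\jump{t}}$.

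First, for (\ref{dist1}) I would substitute $t \mapsto 1-t^n$ and $t \mapsto 1-\zeta_n^k t$ in Lemma \ref{multi-FPL} (\ref{MT Fn-Eq}) to obtain
\[
\text{\rm \pounds}_{\mathcal{A}, \{1\}^m}(1-t^n) = (-1)^{m-1}\text{\rm \pounds}_{\mathcal{A}, m}(t^n), \qquad \text{\rm \pounds}_{\mathcal{A}, \{1\}^m}(1-\zeta_n^k t) = (-1)^{m-1}\text{\rm \pounds}_{\mathcal{A}, m}(\zeta_n^k t),
\]
then multiply Proposition \ref{dist} (\ref{dist0}) by $(-1)^{m-1}$. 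For (\ref{dist4}), the same substitutions into Lemma \ref{multi-FPL} (\ref{1^m, 1}) give $\widetilde{\text{\rm \pounds}}_{\mathcal{A}, \{1\}^m}^{\star}(1-s) = \text{\rm \pounds}_{\mathcal{A}, m}(s)$, and (\ref{dist4}) follows immediately from (\ref{dist0}).

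For the two identities involving $1/(1-t^n)$, I would apply the reversal formulas of Proposition \ref{reverse prop} (\ref{reverse2}) (noting $\overline{\{1\}^m} = \{1\}^m$ and $\wt(\{1\}^m)=m$). For instance,
\[
\text{\rm \pounds}_{\mathcal{A},\{1\}^m}^{\star}\!\left(\frac{1}{1-t^n}\right) = (-1)^m (1-t^n)^{-\mathbf p}\,\widetilde{\text{\rm \pounds}}_{\mathcal{A},\{1\}^m}^{\star}(1-t^n),
\]
and analogously for the $\widetilde{\text{\rm \pounds}}$ variant using Lemma \ref{multi-FPL} (\ref{MT Fn-Eq}). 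Substituting the already-established (\ref{dist4}) (respectively Proposition \ref{dist}) on the right-hand side and then reversing back each summand yields a sum in which the prefactors conspire to collapse: each ratio $(1-\zeta_n^k t)^{\mathbf p}/(1-(\zeta_n^k t)^{\mathbf p})$ equals $1$ by the Frobenius identity $(1-x)^p \equiv 1-x^p \pmod p$, which holds in every component $\mathbb{F}_p[\zeta_n]\jump{t}$, hence in $\mathcal{A}_{\mathbb{Z}[\zeta_n]\jump{t}}$. The global $(1-t^n)^{-\mathbf p}(1-t^{n\mathbf p})$ factor likewise cancels, leaving precisely (\ref{first dist}) and the second equation of the theorem.

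The hard part will not be any individual calculation but rather ensuring that all the Frobenius cancellations and sign factors $(-1)^m$, $(-1)^{m-1}$ line up correctly across the four cases, and that every expression is interpreted consistently in the ring $\mathcal{A}_{\mathbb{Z}[\zeta_n]\jump{t}}$ — in particular, that the reversal formulas, originally stated in $\mathcal{A}_{2,\mathbb{Z}[t_1^{\pm 1},\ldots,t_m^{\pm 1}]}$, extend via the natural inclusion to the completion where $1/(1-t^n)$ and $(1-t^n)^{-\mathbf p}$ make sense as formal power series.
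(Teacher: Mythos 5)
Your proposal is correct and follows essentially the same route as the paper: the paper's (very terse) proof likewise reduces all four identities to Proposition \ref{dist} by expressing each FMP of index $\{1\}^m$ through a finite polylogarithm via Lemma \ref{multi-FPL} and Proposition \ref{reverse prop} (\ref{reverse2}), and then invokes exactly the Frobenius identities $(1-t^n)^{\mathbf{p}}=1-t^{n\mathbf{p}}$ and $(1-\zeta_n^kt)^{\mathbf{p}}=1-(\zeta_n^kt)^{\mathbf{p}}$ to make the prefactors cancel. Your write-up merely makes explicit the bookkeeping of signs and the substitution into the power-series ring that the paper leaves implicit.
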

\begin{proof}
	These are obtained by Proposition \ref{dist}. Note that $(1-t^n)^{\mathbf{p}}=1-t^{n\mathbf{p}}$ and $(1-\zeta_n^kt)^{\mathbf{p}} = 1-(\zeta_n^kt)^{\mathbf{p}}$ in $\mathcal{A}_{{\mathbb Z}[\zeta_n, t]}$.
\end{proof}
\begin{corollary}
	Let $m$ be a positive integer. Then the following equalities hold in $\mathcal{A}_{{\mathbb Z}[t]}:$
	\begin{align}
	&\widetilde{\text{\rm \pounds}}_{\mathcal{A}, \{1\}^m}(t)=(-1)^{m-1}\widetilde{\text{\rm \pounds}}_{\mathcal{A}, \{1\}^m}(1-t),\label{ZS}\\
	&\text{\rm \pounds}_{\mathcal{A}, \{1\}^m}^{\star}(t) = (-1)^{m-1}\text{\rm \pounds}_{\mathcal{A}, \{1\}^m}^{\star}(1-t).
	\label{ZS star}\end{align}
	\label{ZS lemma}\end{corollary}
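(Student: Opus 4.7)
The plan is to deduce the corollary from Theorem~\ref{dist thm} in the degenerate case $n=-1$. Since $\zeta_{-1}$ denotes a primitive $1$-st root of unity (so $\zeta_{-1}=1$) and $|{-1}|-1=0$, the sums on the right-hand sides of (\ref{first dist}) and (\ref{dist4}) each collapse to the single term indexed by $k=0$. This immediately yields, in $\mathcal{A}_{{\mathbb Z}\jump{t}}$, the two identities
\[
\widetilde{\text{\rm \pounds}}_{\mathcal{A}, \{1\}^m}\!\left(\tfrac{1}{1-t^{-1}}\right) = (-1)^{m-1}\widetilde{\text{\rm \pounds}}_{\mathcal{A}, \{1\}^m}\!\left(\tfrac{1}{1-t}\right),
\]
together with the analogous equality for $\text{\rm \pounds}_{\mathcal{A}, \{1\}^m}^{\star}$.

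The second step is to exploit the elementary algebraic identity
\[
\frac{1}{1-t^{-1}}=\frac{t}{t-1}=1-\frac{1}{1-t}.
\]
Introducing the variable $s:=\frac{1}{1-t}$, the two preceding equalities formally become
\[
\widetilde{\text{\rm \pounds}}_{\mathcal{A}, \{1\}^m}(1-s)=(-1)^{m-1}\widetilde{\text{\rm \pounds}}_{\mathcal{A}, \{1\}^m}(s), \qquad \text{\rm \pounds}_{\mathcal{A}, \{1\}^m}^{\star}(1-s)=(-1)^{m-1}\text{\rm \pounds}_{\mathcal{A}, \{1\}^m}^{\star}(s).
\]
Renaming $s$ back to $t$ gives exactly (\ref{ZS}) and (\ref{ZS star}).

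The only subtlety — and the main (though quite mild) obstacle — is to justify that the substitution step does produce an identity in $\mathcal{A}_{{\mathbb Z}[t]}$ from an equality a priori living in $\mathcal{A}_{{\mathbb Z}\jump{t}}$. For each prime $p$, the components of $\widetilde{\text{\rm \pounds}}_{\mathcal{A}, \{1\}^m}$ and $\text{\rm \pounds}_{\mathcal{A}, \{1\}^m}^{\star}$ are polynomials in $\mathbb{F}_p[s]$, so both sides of each asserted equality are polynomials in $s$. It therefore suffices to verify that the substitution homomorphism $\mathbb{F}_p[s]\hookrightarrow \mathbb{F}_p\jump{t}$ given by $s\mapsto 1/(1-t)=\sum_{k\geq 0}t^k$ is injective. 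This in turn follows from the transcendence of $1/(1-t)$ over $\mathbb{F}_p$: a polynomial relation $\sum_i a_i s^i=0$ pulled back would give $\sum_i a_i(1-t)^{d-i}=0$ after multiplying by $(1-t)^d$, forcing all $a_i=0$ since the $(1-t)^{d-i}$ are linearly independent. Once this injectivity is noted, the polynomial identities in $s$ hold component-wise and hence in $\mathcal{A}_{{\mathbb Z}[s]}$, which completes the proof after relabeling.
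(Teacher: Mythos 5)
Your proof is correct and is essentially the paper's own argument: the paper likewise sets $n=-1$ in Theorem \ref{dist thm} and replaces $1/(1-t)$ by $t$, leaving the collapse of the one-term sum and the legitimacy of the substitution implicit where you spell them out. One small slip in citation: for (\ref{ZS}) the relevant identity is the second (unlabelled) equation of Theorem \ref{dist thm}, the one for $\widetilde{\text{\rm \pounds}}_{\mathcal{A}, \{1\}^m}\bigl(1/(1-t^n)\bigr)$, rather than (\ref{dist4}) — the displayed identity you actually use is the right one, so only the reference needs fixing.
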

\begin{proof}
	Let $n=-1$ in Theorem \ref{dist thm}. Then we have the desired formulas by replacing $1/(1-t)$ with $t$.
\end{proof}
\begin{remark}
	Lemma \ref{multi-FPL} (\ref{MT Fn-Eq}) has been proved by Mattarei and Tauraso (\cite[The proof of Theorem 2.3]{T}, \cite[Lemma 3.2]{MT2}) and Lemma \ref{ZS lemma} (\ref{ZS}) has been proved by L. L. Zhao and Z. W. Sun (\cite[Theorem 1.2]{ZS}).
\end{remark}
\section{Special values of finite multiple polylogarithms}
\label{sec:Special values of finite multiple polylogarithms}
\subsection{Special values of F(S)MPs}
\label{subsec:Special values of F(S)MPs}
We calculate some special values of F(S)MPs in $\mathcal{A}$ and $\mathcal{A}_2$ by applying our main results. 
\begin{lemma}[Tauraso and J. Zhao \cite{TZ}]
	Let $m$ be an integer greater than $1$. Let $k_1$ and $k_2$ be positive integers such that $w:= k_1+k_2$ is odd. Then we have the following equalities$:$
	\begin{align}
	&\text{\rm \pounds}_{\mathcal{A}, m}(-1) = \frac{1-2^{m-1}}{2^{m-2}}\frac{B_{\mathbf{p}-m}}{m},
	\label{m, -1}\\
	&\text{\rm \pounds}_{\mathcal{A}, (k_1, k_2)}(-1)=\widetilde{\text{\rm \pounds}}_{\mathcal{A}, (k_1, k_2)}(-1) = \frac{2^{w-1}-1}{2^{w-1}}\frac{B_{\mathbf{p}-w}}{w},
	\label{k_1, k_2, -1, non-star}\\
	&\text{\rm \pounds}_{\mathcal{A}, (k_1, k_2)}^{\star}(-1) = \widetilde{\text{\rm \pounds}}_{\mathcal{A}, (k_1, k_2)}^{\star}(-1)=\frac{1-2^{w-1}}{2^{w-1}}\frac{B_{\mathbf{p}-w}}{w}.
	\label{k_1, k_2, -1, star}\end{align}
	\label{aux}\end{lemma}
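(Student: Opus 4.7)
The plan is to recover these four special values---originally due to Tauraso and J.~Zhao---from the machinery developed in this paper, with everything ultimately resting on one classical half-range power-sum congruence.

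For the single-variable formula (\ref{m, -1}), the approach is to split $\text{\rm \pounds}_{\mathcal{A},m}(-1)=\sum_{n=1}^{p-1}(-1)^n/n^m\pmod p$ according to the parity of $n$; isolating the even indices $n=2k$ rewrites the sum as $2^{1-m}\sum_{k=1}^{(p-1)/2}1/k^m-\zeta_{\mathcal{A}}(m)$, and the second term vanishes in $\mathcal{A}$ by Proposition \ref{FMZV's properties} (\ref{Zhou-Cai}). It then remains to identify the half-range sum with $-2(2^{m-1}-1)B_{p-m}/m \pmod p$, which follows from Faulhaber's expansion of $\sum_{k\leq (p-1)/2}k^{p-1-m}$ together with the multiplication formula $B_m(1/2)=(2^{1-m}-1)B_m$; reassembling the constants reproduces $(1-2^{m-1})/2^{m-2}\cdot B_{\mathbf{p}-m}/m$.

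For the depth-two equalities, the plan is first to exploit symmetry. The reversal formula Proposition \ref{reverse prop} (\ref{reverse2}) applied at $t=-1$ with $w=k_1+k_2$ odd gives $\text{\rm \pounds}_{\mathcal{A},(k_1,k_2)}(-1)=\widetilde{\text{\rm \pounds}}_{\mathcal{A},(k_2,k_1)}(-1)$ and its star analogue, so since the claimed values are symmetric in $k_1,k_2$ it suffices to handle a single one; the gap between $\text{\rm \pounds}^{\star}$ and $\text{\rm \pounds}$ is exactly $\text{\rm \pounds}_{\mathcal{A},w}(-1)$ by isolating the diagonal $n_1=n_2$ in the defining sums. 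The remaining computation uses Corollary \ref{zeta corollary} (\ref{nonstar-star}) at $t=-1$ for $\Bbbk=(k_1,k_2)$, which reads
\[
-\text{\rm \pounds}_{\mathcal{A},(k_1,k_2)}(-1)=\widetilde{\text{\rm \pounds}}_{\mathcal{A},(k_2,k_1)}^{\star}(-1)-\text{\rm \pounds}_{\mathcal{A},k_1}(-1)\,\zeta_{\mathcal{A}}^{\star}(k_2).
\]
In $\mathcal{A}$ the last term vanishes because $\zeta_{\mathcal{A}}^{\star}(k_2)=0$, and rewriting $\widetilde{\text{\rm \pounds}}^{\star}_{\mathcal{A},(k_2,k_1)}(-1)$ as $\widetilde{\text{\rm \pounds}}_{\mathcal{A},(k_2,k_1)}(-1)+\text{\rm \pounds}_{\mathcal{A},w}(-1)$ and then applying the reversal identity forces $2\,\text{\rm \pounds}_{\mathcal{A},(k_1,k_2)}(-1)=-\text{\rm \pounds}_{\mathcal{A},w}(-1)$. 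Substituting (\ref{m, -1}) yields (\ref{k_1, k_2, -1, non-star}), and applying the diagonal identity once more gives (\ref{k_1, k_2, -1, star}); the $\widetilde{\text{\rm \pounds}}$ statements then follow from the reversal identity combined with the symmetry of the answer in $k_1,k_2$.

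The main obstacle is the half-range power-sum congruence behind (\ref{m, -1}): handling it requires separating even $m$ (where both sides vanish since $B_{p-m}=0$ for odd $p-m\geq 3$) from odd $m$, and either Faulhaber together with $B_m(1/2)=(2^{1-m}-1)B_m$ or an equivalent Lehmer--Glaisher computation. Once this classical input is granted, the remaining depth-two deductions are essentially formal, combining only the reversal symmetry, the diagonal comparison between star and non-star FMPs, and Corollary \ref{zeta corollary} (\ref{nonstar-star}).
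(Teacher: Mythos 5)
Your proposal is correct, but it takes a genuinely different route from the paper: the paper's proof of this lemma is a one-line citation of Tauraso and J.~Zhao (\cite[Corollary 2.3]{TZ} for (\ref{m, -1}) and \cite[Theorem 3.1 (17), (18)]{TZ} for the depth-two values), whereas you rederive everything internally. Your ingredients all check out: the parity splitting of $\text{\rm \pounds}_{\mathcal{A},m}(-1)$ is exactly the relation (\ref{trivial formula}) that the paper itself records later, the half-range congruence $\sum_{k\leq (p-1)/2}k^{-m}\equiv \frac{2-2^m}{m}B_{p-m}\pmod p$ follows as you say from Faulhaber and $B_n(1/2)=(2^{1-n}-1)B_n$ (and is the only external input), and the depth-two computation closes correctly: reversal at $t=-1$ with $w$ odd gives $\text{\rm \pounds}_{\mathcal{A},(k_1,k_2)}(-1)=\widetilde{\text{\rm \pounds}}_{\mathcal{A},(k_2,k_1)}(-1)$, the diagonal decomposition gives $\text{\rm \pounds}^{\star}=\text{\rm \pounds}+\text{\rm \pounds}_{\mathcal{A},w}(-1)$, and feeding these into Corollary \ref{zeta corollary} (\ref{nonstar-star}) with $\zeta_{\mathcal{A}}^{\star}(k_2)=0$ yields $2\text{\rm \pounds}_{\mathcal{A},(k_1,k_2)}(-1)=-\text{\rm \pounds}_{\mathcal{A},w}(-1)$, and the constants $\frac{2^{w-1}-1}{2^{w-1}}$ and $\frac{1-2^{w-1}}{2^{w-1}}$ come out right. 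There is no circularity, since Proposition \ref{reverse prop} and Corollary \ref{zeta corollary} are established independently of this lemma. What your approach buys is self-containedness and a nice illustration that the paper's functional equations already imply the Tauraso--Zhao depth-two values from the depth-one one; what the citation buys is brevity and avoiding a repetition of a known classical computation. One small point of care if this were written out in full: in the Faulhaber step one should note that all Bernoulli numbers appearing have index $<p-1$, so von Staudt--Clausen guarantees $p$-integrality.
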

\begin{proof}
	The equalities (\ref{m, -1}), (\ref{k_1, k_2, -1, non-star}), and (\ref{k_1, k_2, -1, star}) are \cite[Corollary 2.3]{TZ}, \cite[Theorem 3.1 (17)]{TZ}, and \cite[Theorem 3.1 (18)]{TZ}, respectively.
\end{proof}
\begin{proposition}
	Let $m$ be an integer greater than $1$. Let $k_1$ and $k_2$ be positive integers such that $w:= k_1+k_2$ is odd. Then we have the following equalities$:$
	\begin{align}
	&\text{\rm \pounds}_{\mathcal{A}_2, m}(-1)= \begin{cases} \frac{m(2^m-1)}{2^m}\widehat{B}_{\mathbf{p}-m-1}\mathbf{p}
	\label{A_2, m, -1} & \text{if $m$ is even},\\
	\frac{2^{m-1}-1}{2^{m-2}}(2\widehat{B}_{\mathbf{p}-m}-\widehat{B}_{2\mathbf{p}-m-1}) & \text{if $m$ is odd},\end{cases} \\
	&\text{\rm \pounds}_{\mathcal{A}_2, (k_1, k_2)}(-1) =  \frac{1-2^{w-1}}{2^{w-1}}(2\widehat{B}_{\mathbf{p}-w}-\widehat{B}_{2\mathbf{p}-w-1})+\frac{k_2(1-2^{k_1-1})}{2^{k_1-1}}\widehat{B}_{\mathbf{p}-k_1}\widehat{B}_{\mathbf{p}-k_2-1}\mathbf{p},\label{57}\\
	&\text{\rm \pounds}_{\mathcal{A}_2, (k_1, k_2)}^{\star}(-1) =  \frac{2^{w-1}-1}{2^{w-1}}(2\widehat{B}_{\mathbf{p}-w}-\widehat{B}_{2\mathbf{p}-w-1})+\frac{k_2(1-2^{k_1-1})}{2^{k_1-1}}\widehat{B}_{\mathbf{p}-k_1}\widehat{B}_{\mathbf{p}-k_2-1}\mathbf{p},\label{59}\\
	&\widetilde{\text{\rm \pounds}}_{\mathcal{A}_2, (k_1, k_2)}(-1) =  \frac{1-2^{w-1}}{2^{w-1}}(2\widehat{B}_{\mathbf{p}-w}-\widehat{B}_{2\mathbf{p}-w-1})+\frac{k_1(1-2^{k_2-1})}{2^{k_2-1}}\widehat{B}_{\mathbf{p}-k_1-1}\widehat{B}_{\mathbf{p}-k_2}\mathbf{p},\label{58}\\
	&\widetilde{\text{\rm \pounds}}_{\mathcal{A}_2, (k_1, k_2)}^{\star}(-1) =  \frac{2^{w-1}-1}{2^{w-1}}(2\widehat{B}_{\mathbf{p}-w}-\widehat{B}_{2\mathbf{p}-w-1})+\frac{k_1(1-2^{k_2-1})}{2^{k_2-1}}\widehat{B}_{\mathbf{p}-k_1-1}\widehat{B}_{\mathbf{p}-k_2}\mathbf{p}\label{60},
	\end{align}
	where we assume that $k_1$ $($resp. $k_2)$ is greater than $1$ in the equalities $(\ref{57})$ and $(\ref{59})$ $($resp. $(\ref{58})$ and $(\ref{60}))$.
	\label{A_2 aux}\end{proposition}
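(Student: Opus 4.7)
The proposition lifts Tauraso--Zhao's $\mathcal{A}$-level identities of Lemma \ref{aux} to $\mathcal{A}_2$, producing the $O(\mathbf{p})$ correction for each of the four FMPs at $t=-1$. My plan is to combine the reversal formula (\ref{reverse A_2}) of Proposition \ref{reverse prop} (which is particularly effective at $t=-1$, where $(-1)^{\mathbf{p}}=-1$ in $\mathcal{A}_2$) with the $\mathcal{A}_2$-level Hoffman duality (\ref{gen. of Hoffman's duality}) from Remark \ref{1-var remark}, and then to feed the resulting relations into the known FMZV values collected in Proposition \ref{FMZV's properties}, reducing $O(\mathbf{p})$ corrections to $\mathcal{A}$-level quantities covered by Lemma \ref{aux}.

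For the single-index case, when $m$ is even I would specialize (\ref{reverse A_2}) to $\Bbbk=(m)$ and $t=-1$, obtaining
\[
\text{\rm \pounds}_{\mathcal{A}_2, m}(-1)=-\bigl(\text{\rm \pounds}_{\mathcal{A}_2, m}(-1)+m\mathbf{p}\,\text{\rm \pounds}_{\mathcal{A}_2, m+1}(-1)\bigr),
\]
which is solvable for $\text{\rm \pounds}_{\mathcal{A}_2, m}(-1)$ after evaluating the $\mathbf{p}\,\text{\rm \pounds}_{\mathcal{A}, m+1}(-1)$ term via Lemma \ref{aux} (\ref{m, -1}) applied to the odd index $m+1$. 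When $m$ is odd, the reversal formula becomes tautological, so I would instead apply (\ref{gen. of Hoffman's duality}) with $\Bbbk=(m)$ and $t=-1$; the unknown right-hand-side piece $\widetilde{\text{\rm \pounds}}_{\mathcal{A}_2,\{1\}^m}^{\star}(2)$ is then pinned down either by another reversal (sending $2\mapsto 1/2$) together with known FMZV values such as (\ref{Zhou-Cai}), or equivalently by rewriting the alternating sum $\sum_{n=1}^{p-1}(-1)^n/n^m$ in terms of the half-harmonic sum $\sum_{n=1}^{(p-1)/2}1/n^m$ and invoking its $\mathcal{A}_2$-value (which is precisely what produces the combination $2\widehat{B}_{\mathbf{p}-m}-\widehat{B}_{2\mathbf{p}-m-1}$).

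For the depth-two values, the plan is parallel: the reversal formula (\ref{reverse A_2}) applied to $\Bbbk=(k_1,k_2)$ at $(t_1,t_2)=(-1,-1)$ interchanges the four depth-two FMPs at $(-1,-1)$ with those of $\overline{\Bbbk}$, plus an $O(\mathbf{p})$ correction consisting of depth-two weight-$(w+1)$ FMPs that are already known to $\mathcal{A}$-level accuracy by Lemma \ref{aux} (\ref{k_1, k_2, -1, non-star})--(\ref{k_1, k_2, -1, star}). These reversal identities together with the obvious relations $\text{\rm \pounds}_{\mathcal{A}_2,(k_1,k_2)}^{\cyr sh,\bullet}(-1,-1)=\text{\rm \pounds}_{\mathcal{A}_2,(k_1,k_2)}^{\bullet}(-1)$ (and analogously for $\widetilde{\text{\rm \pounds}}$) yield a linear system for the four unknowns $\text{\rm \pounds}_{\mathcal{A}_2,(k_1,k_2)}^{(\star)}(-1)$, $\widetilde{\text{\rm \pounds}}_{\mathcal{A}_2,(k_1,k_2)}^{(\star)}(-1)$, which I would solve modulo $\mathbf{p}^2$.

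The main obstacle I anticipate is the bookkeeping of Bernoulli-number congruences. Each reversal step produces correction terms such as $\widehat{B}_{\mathbf{p}-k_1}\widehat{B}_{\mathbf{p}-k_2-1}\mathbf{p}$, and to collapse the cumulated correction into the clean combinations $2\widehat{B}_{\mathbf{p}-w}-\widehat{B}_{2\mathbf{p}-w-1}$ appearing in the proposition one needs the supercongruences between Bernoulli numbers proved in the appendix referenced in the introduction (as used already for (\ref{RZA3}) and (\ref{RZA4})). Beyond that, there is purely mechanical manipulation of binomial/Bernoulli identities, but no conceptual new input is required.
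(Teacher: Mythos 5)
Your overall framework (lift the $\mathcal{A}$-level values of Lemma \ref{aux} to $\mathcal{A}_2$ via the reversal formula plus known FMZV data) is the right spirit, but the depth-two part has a genuine gap. The reversal formula (\ref{reverse A_2}), applied at the relevant points, only relates $\text{\rm \pounds}_{\mathcal{A}_2,(k_1,k_2)}(-1)$ to $\widetilde{\text{\rm \pounds}}_{\mathcal{A}_2,(k_2,k_1)}(-1)$ up to $\mathbf{p}$-corrections covered by Lemma \ref{aux}, and the star values differ from the non-star ones only by the diagonal term $\text{\rm \pounds}_{\mathcal{A}_2,k_1+k_2}(-1)$. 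These relations determine only the \emph{difference} of the two essentially distinct unknowns; your linear system is underdetermined and no amount of reversal or duality will close it. The missing input is a product relation: the paper uses the harmonic (stuffle) identity
\[
\text{\rm \pounds}_{\mathcal{A}_2, k_1}(-1)\,\zeta_{\mathcal{A}_2}(k_2) = \text{\rm \pounds}_{\mathcal{A}_2, (k_1, k_2)}(-1) + \widetilde{\text{\rm \pounds}}_{\mathcal{A}_2, (k_2, k_1)}(-1) + \text{\rm \pounds}_{\mathcal{A}_2, k_1+k_2}(-1),
\]
which supplies the \emph{sum} of the two unknowns; sum and difference together give (\ref{57})--(\ref{60}). (Equivalently you could invoke the antipode relation of Corollary \ref{zeta corollary} (\ref{nonstar-star}), but you must name some such product-type relation.) A further slip: evaluating the harmonic-type reversal formula at $(t_1,t_2)=(-1,-1)$, as you write, produces the doubly alternating sums $\text{\rm \pounds}^{\ast}_{\mathcal{A}_2,(k_1,k_2)}(-1,-1)$ of Lemma \ref{harmonic alt}, not the quantities $\text{\rm \pounds}^{\ast}_{\mathcal{A}_2,(k_1,k_2)}(-1,1)$ and $\text{\rm \pounds}^{\ast}_{\mathcal{A}_2,(k_1,k_2)}(1,-1)$ appearing in the proposition; the correct evaluation points are $(-1,1)$ and $(1,-1)$.

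On the depth-one case: your reversal argument for even $m$ is correct and is a clean alternative to the paper (which instead uses the half-sum identity (\ref{trivial formula}) for both parities). For odd $m$, however, your primary route through Hoffman duality needs $\widetilde{\text{\rm \pounds}}^{\star}_{\mathcal{A}_2,\{1\}^m}(2)$, which is not independently available --- the paper computes the even-weight analogue of that quantity in Theorem \ref{A_2 theorem} \emph{using} the present proposition, so this direction is circular. Your fallback, rewriting $\text{\rm \pounds}_{\mathcal{A}_2,m}(-1)=-\zeta_{\mathcal{A}_2}(m)+2^{1-m}\sum_{k\le (p-1)/2}k^{-m}$, is exactly the paper's proof, but it is not ``equivalent'' to the duality route: it requires Z.\ H.\ Sun's evaluation of the half harmonic sum modulo $p^2$ as an external input, and that evaluation --- not the supercongruences of the appendix --- is what produces the combination $2\widehat{B}_{\mathbf{p}-m}-\widehat{B}_{2\mathbf{p}-m-1}$.
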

\begin{proof}
	The equality (\ref{A_2, m, -1}) is obtained by Z. H. Sun's results (\cite[Theorem 5.2 (b), Corollary 5.2 (a)]{ZHS1}) and the relation
	\begin{equation}
	\text{\rm \pounds}_{\mathcal{A}_n, m}(-1)=-\zeta_{\mathcal{A}_n}(m)+\frac{1}{2^{m-1}}\left( \sum_{k=1}^{\frac{p-1}{2}}\frac{1}{k^m}\bmod{p^n}\right)_p,
	\label{trivial formula}\end{equation}
	where $n$ is any positive integer. Tauraso and J. Zhao also proved the even case of the equality (\ref{A_2, m, -1}) (\cite[Corollary 2.3]{TZ}). Now, we consider the following relation:
	\[
	\text{\rm \pounds}_{\mathcal{A}_2, k_1}(-1)\zeta_{\mathcal{A}_2}(k_2) = \text{\rm \pounds}_{\mathcal{A}_2, (k_1, k_2)}(-1) + \widetilde{\text{\rm \pounds}}_{\mathcal{A}_2, (k_2, k_1)}(-1) + \text{\rm \pounds}_{\mathcal{A}_2, k_1+k_2}(-1).
	\]
	Since $k_1+k_2$ is odd, we have $\text{\rm \pounds}_{\mathcal{A}_2, (k_1, k_2)}(-1) = \widetilde{\text{\rm \pounds}}_{\mathcal{A}_2, (k_2, k_1)}(-1)$ by Proposition \ref{reverse prop} (\ref{reverse2}). Therefore, we obtain the equalities (\ref{57}) and (\ref{58}) by Proposition \ref{FMZV's properties} (\ref{Zhou-Cai}), Lemma \ref{aux} (\ref{m, -1}), and the equality (\ref{A_2, m, -1}). The proof of the equalities (\ref{59}) and (\ref{60}) is similar.
\end{proof}
\begin{proposition}
	Let $m$ be an integer greater than $1$. Let $k_1$ and $k_2$ be integers such that $w:= k_1+k_2$ is odd. Then we have the following equalities$:$
	\begin{align}
	&\text{\rm \pounds}_{\mathcal{A}, \{ 1 \}^m}(2) = \widetilde{\text{\rm \pounds}}_{\mathcal{A}, \{ 1 \}^m}^{\star}(2) = \frac{1-2^{m-1}}{2^{m-2}}\frac{B_{\mathbf{p}-m}}{m},
	\label{easy gen of Sun}\\
	&\widetilde{\text{\rm \pounds}}_{\mathcal{A}, \{ 1 \}^m}(1/2) = \text{\rm \pounds}_{\mathcal{A}, \{ 1 \}^m}^{\star}(1/2) = \frac{2^{m-1}-1}{2^{m-1}}\frac{B_{\mathbf{p}-m}}{m},
	\label{gen of Sun}\\
	&\text{\rm \pounds}_{\mathcal{A}, (\{ 1 \}^{k_1-1}, 2, \{ 1 \}^{k_2-1})}(2) =  \left\{ \frac{2^{w-1}-1}{2^{w-1}}-(-1)^{k_1}\binom{w}{k_1} \right\} \frac{B_{\mathbf{p}-w}}{w},
	\label{k_1, k_2, 2, non-star}\\
	&\widetilde{\text{\rm \pounds}}_{\mathcal{A}, (\{ 1 \}^{k_1-1}, 2, \{ 1 \}^{k_2-1})}^{\star}(2) = \left\{ \frac{1-2^{w-1}}{2^{w-1}}-(-1)^{k_1}\binom{w}{k_1} \right\} \frac{B_{\mathbf{p}-w}}{w}
	\label{k_1, k_2, 2},\\
	&\widetilde{\text{\rm \pounds}}_{\mathcal{A}, (\{ 1 \}^{k_1-1}, 2, \{ 1 \}^{k_2-1})}(1/2) = \frac{1}{2}\left\{ \frac{1-2^{w-1}}{2^{w-1}}-(-1)^{k_1}\binom{w}{k_1} \right\} \frac{B_{\mathbf{p}-w}}{w},\label{66}\\
	&\text{\rm \pounds}_{\mathcal{A}, (\{ 1 \}^{k_1-1}, 2, \{ 1 \}^{k_2-1})}^{\star}(1/2) = \frac{1}{2}\left\{ \frac{2^{w-1}-1}{2^{w-1}}-(-1)^{k_1}\binom{w}{k_1} \right\} \frac{B_{\mathbf{p}-w}}{w}.\label{k_1, k_2, 2, star}
	\end{align}
	\label{special values in A}\end{proposition}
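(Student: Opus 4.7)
The plan is to deduce the six identities from the reversal formulas of Proposition \ref{reverse prop}, the equalities of Lemma \ref{multi-FPL} relating FMPs of the index $\{1\}^m$ to the $m$-th FP, the previously computed values at $t=-1$ given in Lemma \ref{aux}, and the main functional equations (\ref{fn eq 1}) and Corollary \ref{zeta corollary} (\ref{nonstar-star}), together with simple parity arguments in $\mathcal{A}$. For the pair (\ref{easy gen of Sun}) at $t=2$, I would rewrite $\text{\rm \pounds}_{\mathcal{A},\{1\}^m}(2) = (-1)^{m-1}\text{\rm \pounds}_{\mathcal{A},m}(-1)$ and $\widetilde{\text{\rm \pounds}}_{\mathcal{A},\{1\}^m}^{\star}(2) = \text{\rm \pounds}_{\mathcal{A},m}(-1)$ via Lemma \ref{multi-FPL}, and then invoke Lemma \ref{aux} (\ref{m, -1}); the sign discrepancy caused by $(-1)^{m-1}$ is absorbed by the fact that $B_{\mathbf{p}-m}=0$ in $\mathcal{A}$ whenever $m$ is even. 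The pair (\ref{gen of Sun}) at $t=1/2$ then follows by applying the reversal formulas (\ref{reverse2}) to the $t=2$ values, together with $(1/2)^{\mathbf{p}}=1/2$ in $\mathcal{A}$ (Example \ref{example} (2)).

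The depth-$(w-1)$ star-tilde value (\ref{k_1, k_2, 2}) is obtained as follows. Write $\Bbbk_0 := (\{1\}^{k_1-1}, 2, \{1\}^{k_2-1})$, so that $\Bbbk_0^{\vee} = (k_1,k_2)$ by the Example of Hoffman duals. Specializing the main functional equation (\ref{fn eq 1}) at $t=2$ gives
\[
\widetilde{\text{\rm \pounds}}_{\mathcal{A}, \Bbbk_0}^{\star}(2) = \widetilde{\text{\rm \pounds}}_{\mathcal{A},(k_1,k_2)}^{\star}(-1) - \zeta_{\mathcal{A}}^{\star}(k_1,k_2),
\]
and substituting Lemma \ref{aux} (\ref{k_1, k_2, -1, star}) and Proposition \ref{FMZV's properties} (\ref{dep2 for FMZV}) immediately yields the claimed identity.

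The non-star identity (\ref{k_1, k_2, 2, non-star}) is the main obstacle, as no Hoffman-style dual equation is directly available for the non-star FMP. The approach is to apply Corollary \ref{zeta corollary} (\ref{nonstar-star}) with $\Bbbk = \Bbbk_0$ at $t=2$. Since $\dep(\Bbbk_0) = w-1$ with $w$ odd, its left-hand side reads $-\text{\rm \pounds}_{\mathcal{A},\Bbbk_0}(2)$, the leading right-hand-side term is $\widetilde{\text{\rm \pounds}}_{\mathcal{A},\overline{\Bbbk_0}}^{\star}(2)$, known from the previous paragraph after swapping $k_1\leftrightarrow k_2$, and each remaining term is a product $\text{\rm \pounds}_{\mathcal{A},\Bbbk'}(2)\cdot\zeta_{\mathcal{A}}^{\star}(\Bbbk'')$ in which exactly one of $\Bbbk'$, $\Bbbk''$ is a string of $1$'s. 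When $\Bbbk''$ is such a string of positive length, its star-FMZV vanishes in $\mathcal{A}$ by Proposition \ref{FMZV's properties} (\ref{Zhou-Cai}); otherwise $\Bbbk' = \{1\}^j$ and, using Lemma \ref{multi-FPL} together with Hoffman duality (\ref{Hoffman's duality for FMZV}) and (\ref{dep2 for FMZV}), the two factors are proportional to $B_{\mathbf{p}-j}/j$ and $B_{\mathbf{p}-(w-j)}/(w-j)$ respectively, whose product vanishes in $\mathcal{A}$ because $j$ and $w-j$ have opposite parities. Thus the entire sum collapses to zero, and rearranging, together with the identity $(-1)^{k_2}\binom{w}{k_2} = -(-1)^{k_1}\binom{w}{k_1}$ that holds precisely because $w$ is odd, yields (\ref{k_1, k_2, 2, non-star}).

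Finally, the two remaining identities (\ref{66}) and (\ref{k_1, k_2, 2, star}) at $t=1/2$ are obtained by applying the reversal formulas (\ref{reverse2}) to (\ref{k_1, k_2, 2, non-star}) and (\ref{k_1, k_2, 2}) respectively: one has $\widetilde{\text{\rm \pounds}}_{\mathcal{A},\Bbbk_0}(1/2) = (-1)^{w}(1/2)\,\text{\rm \pounds}_{\mathcal{A},\overline{\Bbbk_0}}(2)$ and $\text{\rm \pounds}_{\mathcal{A},\Bbbk_0}^{\star}(1/2) = (-1)^{w}(1/2)\,\widetilde{\text{\rm \pounds}}_{\mathcal{A},\overline{\Bbbk_0}}^{\star}(2)$, and the swap $k_1 \leftrightarrow k_2$ induced by reversal is undone by the same binomial parity identity used above.
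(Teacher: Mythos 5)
Your proposal is correct and follows essentially the same route as the paper: the star values at $t=2$ come from the Hoffman-dual functional equation (\ref{fn eq 1}) specialized at $t=2$ combined with the known values at $t=-1$, the non-star value comes from Corollary \ref{zeta corollary} (\ref{nonstar-star}) with the cross terms killed by the same parity dichotomy (the paper invokes Theorem \ref{PPT theorem} for the odd-$j$ terms where you use Hoffman duality plus (\ref{dep2 for FMZV}), which is equivalent), and the $t=1/2$ values follow by the reversal formulas. The only slight imprecision is that for $j=1$ the factor $\text{\rm \pounds}_{\mathcal{A},1}(2)=-2q_{\mathbf{p}}(2)$ is not itself proportional to a Bernoulli number, but your conclusion survives because the companion factor, of even weight $w-1$, still vanishes.
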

\begin{proof}
	First, we prove the star cases. We use the functional equation (\ref{fn eq 1}) for an index $\Bbbk^{\vee}$:
	\begin{equation}
	\widetilde{\text{\rm \pounds}}_{\mathcal{A}, \Bbbk^{\vee}}^{\star}(t) = \widetilde{\text{\rm \pounds}}_{\mathcal{A}, \Bbbk}^{\star}(1-t) - \zeta_{\mathcal{A}}^{\star}(\Bbbk ).
	\label{dual of dual}\end{equation}
	Consider the case $t=2$ and $\Bbbk = m$ of the equality (\ref{dual of dual}) (or the equality (\ref{1^m, 1})). Then we obtain the star case of the equality (\ref{easy gen of Sun}) by Lemma \ref{aux} (\ref{m, -1}). Considering the case $\Bbbk = (k_1, k_2)$ of  the equality (\ref{dual of dual}), we obtain the equality (\ref{k_1, k_2, 2}) by Lemma \ref{aux} (\ref{k_1, k_2, -1, non-star}) and Proposition \ref{FMZV's properties} (\ref{dep2 for FMZV}). The star case of  the equality (\ref{gen of Sun}) and  the equality (\ref{k_1, k_2, 2, star}) are obtained by Proposition \ref{reverse prop}.
	
	Next, we prove the non-star cases by Corollary \ref{zeta corollary} (\ref{nonstar-star}) for $\mathcal{A}$. By considering the case $t=2$ and $\Bbbk = \{ 1 \}^m$ (i.e. the case $t=2$ of the equality (\ref{MT Fn-Eq})), we have the non-star case of  the equality (\ref{easy gen of Sun}). We consider the case $\Bbbk = (\{ 1 \}^{k_1-1}, 2, \{ 1 \}^{k_2-1})$ satisfying the condition that $k_1+k_2$ is odd. The summation for $j=k_1, \dots , w-2$ of the right hand side of (\ref{nonstar-star}) vanishes since $\zeta_{\mathcal{A}}^{\star}(\{ 1 \}^{w-1-j})=0$. We suppose that $j$ is an element of $\{ 0, \dots , k_1-1 \}$. If $j$ is odd, we have $\zeta_{\mathcal{A}}^{\star}(\{ 1 \}^{k_2-1}, 2, \{ 1 \}^{k_1-j-1}) = 0$ by Theorem \ref{PPT theorem} and if $j$ is even, we have $\text{\rm \pounds}_{\mathcal{A}, \{ 1 \}^j}(2) = 0$ by  the equality (\ref{easy gen of Sun}). Hence, we see that the summation in the right hand side of (\ref{nonstar-star}) vanishes and we have the equality (\ref{k_1, k_2, 2, non-star}). The non-star case of the equality (\ref{gen of Sun}) and the equality (\ref{66}) are obtained by Proposition \ref{reverse prop}.
\end{proof}
\begin{remark}
	Z. W. Sun proved that $\text{\rm \pounds}_{\mathcal{A}, \{ 1 \}^2}^{\star}(1/2) = 0$ (see \cite[Theorem 1.1]{ZWS}). The proof is based on some technical calculations. The case $(k_1, k_2) = (1, 2)$ or $(k_1, k_2)=(2, 1)$ of Proposition \ref{special values in A} (\ref{k_1, k_2, 2}) and (\ref{k_1, k_2, 2, non-star}) have already been obtained  by Me\v{s}trovi\'c \cite[Theorem 1.1, Corollary 1.2]{M} and by Tauraso and J. Zhao \cite[Proposition 7.1]{TZ}.
\end{remark}
\begin{definition}
	Let $n$ be a positive integer and $a$ a non-zero rational number. We define the element $q_{\mathbf{p}}(a)$ of $\mathcal{A}_n$ to be $(q_p(a) \bmod{p^n})_p$ where $q_p(a)$ is the Fermat quotient, that is, 
	\[
	q_p(a) = \frac{a^{p-1}-1}{p}
	\]
	for a prime number $p$. 
	\label{def of FQ}\end{definition}
By Fermat's little theorem, $q_{\mathbf{p}}(a)$ is well-defined as an element of $\mathcal{A}_n$. Under the hypothesis that $abc$-conjecture is true, we see that $q_{\mathbf{p}}(a)$ is non-zero. See \cite{Si}.
\begin{theorem}
	Let $m$ be a positive even number. Then we have the following equalities in $\mathcal{A}_2:$
	\begin{align}
	&\text{\rm \pounds}_{\mathcal{A}_2, \{ 1 \}^m}(2)=-\widetilde{\text{\rm \pounds}}_{\mathcal{A}_2, \{ 1 \}^m}^{\star}(2) = \left( \frac{m+1}{2^m}-m-2 \right) \frac{B_{\mathbf{p}-m-1}}{m+1}\mathbf{p},
	\label{A_2, m, 2}\\
	&\widetilde{\text{\rm \pounds}}_{\mathcal{A}_2, \{ 1 \}^m}(1/2) =-\text{\rm \pounds}_{\mathcal{A}_2, \{ 1 \}^m}^{\star} (1/2) = \frac{1-2^{m+1}}{2^{m+1}}\frac{B_{\mathbf{p}-m-1}}{m+1}\mathbf{p}.
	\label{A_2, m, 1/2}
	\end{align}
	\label{A_2 theorem}\end{theorem}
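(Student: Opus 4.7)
The plan is to derive the theorem in three steps: first collapsing the four displayed identities into two value computations via two sign identities, then computing $\widetilde{\text{\rm \pounds}}_{\mathcal{A}_2, \{1\}^m}^{\star}(2)$ by the generalized Hoffman duality, and finally obtaining $\widetilde{\text{\rm \pounds}}_{\mathcal{A}_2, \{1\}^m}(1/2)$ via the $\mathcal{A}_2$-reversal.

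First I would apply Corollary \ref{zeta corollary} at the symmetric index $\Bbbk = \{1\}^m$: equation (\ref{nonstar-star}) at $t = 2$ and equation (\ref{nonstar and star2}) at $t = 1/2$. Each summand in the resulting identity is a product of a finite multiple zeta(-star) value (which lies in $\mathbf{p}\mathcal{A}_2$ by (\ref{Zhou-Cai})) and a finite multiple polylogarithm value, so only the $\mathcal{A}$-reduction of the latter matters. Combining (\ref{Zhou-Cai}) with (\ref{easy gen of Sun}) at $t = 2$ (respectively (\ref{gen of Sun}) at $t = 1/2$), a short parity check shows that for $m$ even exactly one factor in each product involves a Bernoulli number $B_r$ with $r$ odd and $r\geq 3$: when $j$ is odd the zeta factor vanishes, and when $j$ is even $\geq 2$ the polylogarithm factor vanishes. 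Consequently the sums collapse and one obtains the sign identities $\text{\rm \pounds}_{\mathcal{A}_2,\{1\}^m}(2) = -\widetilde{\text{\rm \pounds}}_{\mathcal{A}_2,\{1\}^m}^{\star}(2)$ and $\widetilde{\text{\rm \pounds}}_{\mathcal{A}_2,\{1\}^m}(1/2) = -\text{\rm \pounds}_{\mathcal{A}_2,\{1\}^m}^{\star}(1/2)$, reducing the theorem to computing one value on each side.

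Second, I would evaluate $\widetilde{\text{\rm \pounds}}_{\mathcal{A}_2,\{1\}^m}^{\star}(2)$ by specializing Remark \ref{1-var remark} (\ref{gen. of Hoffman's duality}) to $\Bbbk = m$ (so $\Bbbk^{\vee} = \{1\}^m$) and $t = -1$, which after identifying depth-one $\widetilde{\text{\rm \pounds}}^{\star}_{\mathcal{A}_n,k}$ with $\text{\rm \pounds}_{\mathcal{A}_n,k}$ rearranges to
\[
\widetilde{\text{\rm \pounds}}_{\mathcal{A}_2,\{1\}^m}^{\star}(2) = \text{\rm \pounds}_{\mathcal{A}_2,m}(-1) + \mathbf{p}\bigl(\widetilde{\text{\rm \pounds}}_{\mathcal{A},(1,m)}^{\star}(-1) - \text{\rm \pounds}_{\mathcal{A},m+1}(-1)\bigr) + \zeta_{\mathcal{A}_2}^{\star}(\{1\}^m).
\]
Substituting Proposition \ref{A_2 aux} (\ref{A_2, m, -1}) (even case) for the $\mathcal{A}_2$-term, Lemma \ref{aux} ((\ref{m, -1}) at the odd weight $m+1$ and (\ref{k_1, k_2, -1, star}) at $w = m+1$) for the two $\mathcal{A}$-terms, Proposition \ref{FMZV's properties} (\ref{Zhou-Cai}) for the zeta-term, and the reduction $\widehat{B}_{\mathbf{p}-m-1}\mathbf{p} \equiv -B_{\mathbf{p}-m-1}\mathbf{p}/(m+1) \pmod{\mathbf{p}^2}$, routine algebra collapses the right-hand side to $(m+2 - (m+1)/2^m)\,B_{\mathbf{p}-m-1}\mathbf{p}/(m+1)$, which together with the first-step sign identity yields (\ref{A_2, m, 2}).

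Third, I would obtain $\widetilde{\text{\rm \pounds}}_{\mathcal{A}_2,\{1\}^m}(1/2)$ through Proposition \ref{reverse prop} (\ref{reverse A_2}) applied to $\text{\rm \pounds}_{\mathcal{A}_2,\{1\}^m}^{\ast}(2, 1, \ldots, 1)$. Writing $2^{\mathbf{p}} = 2 + 2\mathbf{p}\,q_{\mathbf{p}}(2)$, the Fermat-quotient contribution is killed because $\widetilde{\text{\rm \pounds}}_{\mathcal{A},\{1\}^m}(1/2) = 0$ in $\mathcal{A}$ for $m$ even by (\ref{gen of Sun}); the reversal then reads
\[
\text{\rm \pounds}_{\mathcal{A}_2,\{1\}^m}(2) = 2\,\widetilde{\text{\rm \pounds}}_{\mathcal{A}_2,\{1\}^m}(1/2) + 2\mathbf{p}\sum_{i=1}^{m} \widetilde{\text{\rm \pounds}}_{\mathcal{A},(\{1\}^{m-i},\,2,\,\{1\}^{i-1})}(1/2).
\]
Each summand is given by Proposition \ref{special values in A} (\ref{66}) with $(k_1, k_2) = (m-i+1, i)$ and $w = m+1$ odd; summing over $i = 1, \ldots, m$ one encounters the binomial piece $\sum_{k_1=1}^{m}(-1)^{k_1}\binom{m+1}{k_1}$, which vanishes for $m$ even since $(1-1)^{m+1} = 0$ and the two omitted endpoints $k_1 \in \{0, m+1\}$ cancel. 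The remaining clean multiple of $B_{\mathbf{p}-m-1}\mathbf{p}/(m+1)$ is combined with the value of $\text{\rm \pounds}_{\mathcal{A}_2,\{1\}^m}(2)$ from the second step and solved for $\widetilde{\text{\rm \pounds}}_{\mathcal{A}_2,\{1\}^m}(1/2)$ to produce (\ref{A_2, m, 1/2}). The main obstacle is this last step: carefully tracking the $\mathbf{p}$-order corrections in the reversal formula and verifying the binomial telescoping, since the reversal in $\mathcal{A}_2$ (unlike in $\mathcal{A}$) carries a genuine $\mathbf{p}$-correction whose contribution must be pinned down exactly.
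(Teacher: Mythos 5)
Your proposal is correct and follows essentially the same route as the paper's proof: the generalized Hoffman duality (\ref{gen. of Hoffman's duality}) at $\Bbbk=m$, $t=-1$ to evaluate $\widetilde{\text{\rm \pounds}}_{\mathcal{A}_2,\{1\}^m}^{\star}(2)$, the antipode identities of Corollary \ref{zeta corollary} with the Bernoulli parity argument to kill the cross terms and relate star to non-star, and the $\mathcal{A}_2$-reversal (\ref{reverse A_2}) together with the depth-two weight-$(m+1)$ values and the binomial cancellation $\sum_{i=1}^m(-1)^i\binom{m+1}{i}=0$ to pass between $t=2$ and $t=1/2$. The only differences are organizational (you establish both sign identities up front, and read the reversal formula in the opposite direction, citing (\ref{66}) in place of (\ref{k_1, k_2, 2, non-star})), which does not change the substance of the argument.
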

\begin{proof}
	First, we prove the star cases. By the functional equation Remark \ref{1-var remark} (\ref{gen. of Hoffman's duality}), we have
	\[
	\widetilde{\text{\rm \pounds}}_{\mathcal{A}_2, \{ 1 \}^m}^{\star}(2) - \zeta_{\mathcal{A}_2}^{\star}(\{ 1 \}^m ) = \text{\rm \pounds}_{\mathcal{A}_2, m}(-1) + \bigl(\widetilde{\text{\rm \pounds}}_{\mathcal{A}_2, (1, m)}^{\star}(-1)-\text{\rm \pounds}_{\mathcal{A}_2, m+1}(-1) \bigr)\mathbf{p}.
	\]
	Therefore, by combining Proposition \ref{FMZV's properties} (\ref{Zhou-Cai}), Lemma \ref{aux} (\ref{m, -1}), (\ref{k_1, k_2, -1, non-star}) and Proposition \ref{A_2 aux} (\ref{A_2, m, -1}), we have
	\begin{equation*}
	\widetilde{\text{\rm \pounds}}_{\mathcal{A}_2, \{ 1 \}^m}^{\star} (2) -\frac{B_{\mathbf{p}-m-1}}{m+1}\mathbf{p}= \frac{m(2^m-1)}{2^m}\frac{B_{\mathbf{p}-m-1}}{m+1}\mathbf{p}+ \left(\frac{1-2^m}{2^m}\frac{B_{\mathbf{p}-m-1}}{m+1}-\frac{1-2^m}{2^{m-1}}\frac{B_{\mathbf{p}-m-1}}{m+1} \right)\mathbf{p}
	\end{equation*}
	or
	\begin{equation}
	\widetilde{\text{\rm \pounds}}_{\mathcal{A}_2, \{1\}^m}^{\star}(2)= \left( m+2 -\frac{m+1}{2^m} \right) \frac{B_{\mathbf{p}-m-1}}{m+1}\mathbf{p}.
	\label{firstA2}\end{equation}
	By the equality (\ref{reverse A_2}), we have
	\[
	\text{\rm \pounds}_{\mathcal{A}_2, \{ 1 \}^m}^{\star}(1/2) = \frac{1}{2^{\mathbf{p}}} \left( \widetilde{\text{\rm \pounds}}_{\mathcal{A}_2, \{ 1 \}^m}^{\star}(2) + \mathbf{p}\sum_{i=1}^m \widetilde{\text{\rm \pounds}}_{\mathcal{A}_2, (\{ 1 \}^{i-1}, 2, \{ 1 \}^{m-i})}^{\star}(2) \right) .
	\]
	Hence, by combining the equality (\ref{firstA2}) and Proposition \ref{special values in A} (\ref{k_1, k_2, 2}), we have
	{\small \begin{equation*}
	\begin{split}
	\text{\rm \pounds}_{\mathcal{A}_2, \{ 1 \}^m}^{\star}(1/2) &= \frac{1}{2}\left\{ \left( m+2-\frac{m+1}{2^m} \right) \frac{B_{\mathbf{p}-m-1}}{m+1}\mathbf{p} + \sum_{i=1}^m\left\{ \frac{1-2^m}{2^m}- (-1)^i \binom{m+1}{i} \right\}\frac{B_{\mathbf{p}-m-1}}{m+1}\mathbf{p}\right\} \\
	&= \frac{2^{m+1}-1}{2^{m+1}}\frac{B_{\mathbf{p}-m-1}}{m+1}\mathbf{p},
	\end{split}
	\end{equation*}
}since $m$ is even and $\sum_{i=1}^m (-1)^i\binom{m+1}{i}=0$. Note that the equality $2^{\mathbf{p}}=2(1+q_{\mathbf{p}}(2)\mathbf{p})$ holds in $\mathcal{A}_2$ and $\text{\rm \pounds}_{\mathcal{A}, \{1\}^m}^{\star}(1/2)=0$.
	
	Next, we prove the non-star cases. By Corollary \ref{zeta corollary} (\ref{nonstar-star}) for $\mathcal{A}_2$, we have
	\begin{equation}
	\text{\rm \pounds}_{\mathcal{A}_2, \{1\}^m}(2)=-\widetilde{\text{\rm \pounds}}_{\mathcal{A}_2, \{1\}^m}^{\star}(2)+\sum_{j=1}^{m-1}(-1)^{j-1}\text{\rm \pounds}_{\mathcal{A}_2, \{1\}^j}(2)\zeta_{\mathcal{A}_2}^{\star}(\{1\}^{m-j}).
	\label{with sum}\end{equation}
	Since $\zeta_{\mathcal{A}_2}^{\star}(\{1\}^{m-j})$ is contained in $\mathbf{p}\mathcal{A}_2$, we have
	\[
	\text{\rm \pounds}_{\mathcal{A}_2, \{ 1 \}^j}(2)\zeta_{\mathcal{A}_2}^{\star}(\{1\}^{m-j}) = \text{(a certain rational number)} \times B_{\mathbf{p}-m+j-1}B_{\mathbf{p}-j}\mathbf{p}
	\]
	for any $j=1, \dots , m-1$ by Proposition \ref{FMZV's properties} (\ref{Zhou-Cai}) and Proposition \ref{special values in A} (\ref{easy gen of Sun}). If $j$ is odd, we have $B_{\mathbf{p}-m+j-1}=0$ and if $j$ is even, we have $B_{\mathbf{p}-j}=0$ because $B_{2n+1}=0$ for any positive integer $n$. Therefore, the summation in the right hand side of (\ref{with sum}) vanishes and we have 
	\begin{equation}
	\text{\rm \pounds}_{\mathcal{A}_2, \{ 1 \}^m}(2)=-\widetilde{\text{\rm \pounds}}_{\mathcal{A}_2, \{1\}^m}^{\star}(2)=\left( \frac{m+1}{2^m}-m-2 \right) \frac{B_{\mathbf{p}-m-1}}{m+1}\mathbf{p}.
	\label{secondA2}\end{equation}
	by Proposition \ref{special values in A} (\ref{easy gen of Sun}) and (\ref{k_1, k_2, 2, non-star}), Proposition \ref{FMZV's properties} (\ref{Zhou-Cai}), and Proposition \ref{A_2 aux} (\ref{A_2, m, -1}). By the equality (\ref{reverse A_2}), the equality (\ref{secondA2}), and Proposition \ref{special values in A} (\ref{k_1, k_2, 2, non-star}), we also have
	{\small \begin{equation*}
	\begin{split}
	\widetilde{\text{\rm \pounds}}_{\mathcal{A}_2, \{ 1 \}^m} (1/2) &= \frac{1}{2^{\mathbf{p}}}\bigl( \text{\rm \pounds}_{\mathcal{A}_2, \{ 1 \}^m}(2) + \mathbf{p}\sum_{i=1}^m\text{\rm \pounds}_{\mathcal{A}_2, (\{ 1 \}^{i-1}, 2, \{ 1 \}^{m-i})}(2) \bigr) \\
	&= \frac{1}{2} \left\{ \left( \frac{m+1}{2^m}-m-2 \right) \frac{B_{\mathbf{p}-m-1}}{m+1}\mathbf{p}+\sum_{i=1}^m\left\{ \frac{2^m-1}{2^m} -(-1)^i\binom{m+1}{i}\right\} \frac{B_{\mathbf{p}-m-1}}{m+1}\mathbf{p}\right\} \\
	&= \frac{1-2^{m+1}}{2^{m+1}}\frac{B_{p-m-1}}{m+1}\mathbf{p}.
	\end{split}
	\end{equation*}}
\end{proof}
\begin{remark}
	The cases $m=2$ of Theorem \ref{A_2 theorem} have already been given by Z. W. Sun and L. L. Zhao \cite{SZ}, Me\v{s}trovi\'c \cite{M}, and Tauraso and J. Zhao \cite{TZ}. Indeed, the non-star case of the equality (\ref{A_2, m, 2}) is \cite[Proposition 7.1 (78)]{TZ} and the star case of the equality (\ref{A_2, m, 2}) which is equivalent to Proposition \ref{Fermat quotient} (\ref{A_2, 1^2, 2}) below is \cite[Theorem 1.1 (1)]{M} or \cite[Proposition 7.1(77)]{TZ}. The star case of Theorem \ref{A_2 theorem} (\ref{A_2, m, 1/2}) was conjectured by Z. W. Sun \cite[Conjecture 1.1]{Z} and proved by Z. W. Sun and L. L. Zhao \cite{SZ}. Me\v{s}trovi\'c gave another proof of Sun's conjecture in \cite{M} and our proof of the equality (\ref{A_2, m, 1/2}) is similar to his proof.
\end{remark}
Now, we recall the following results for the finite polylogarithms obtained by Z. H. Sun \cite{ZHS1, ZHS2}, Dilcher and Skula \cite{DS}, and Me\v{s}trovi\'c \cite{M2}:
\begin{lemma}
	The following equalities hold$:$
	\begin{align}
	&\text{\rm \pounds}_{\mathcal{A}_3, 1}(-1) = -2q_{\mathbf{p}}(2)+q_{\mathbf{p}}(2)^2\mathbf{p}-\left( \frac{2}{3}q_{\mathbf{p}}(2)^3+\frac{1}{4}B_{\mathbf{p}-3}\right) \mathbf{p}^2,
	\label{FP1}\\
	&\text{\rm \pounds}_{\mathcal{A}_3, 1}(2) = -2q_{\mathbf{p}}(2)-\frac{7}{12}B_{\mathbf{p}-3}\mathbf{p}^2,
	\label{A_3, 2}\\
	&\text{\rm \pounds}_{\mathcal{A}_2, 2}(2) = -q_{\mathbf{p}}(2)^2+\left( \frac{2}{3}q_{\mathbf{p}}(2)^3+\frac{7}{6}B_{\mathbf{p}-3} \right) \mathbf{p},
	\label{A_2, 2, 2}\\
	&\text{\rm \pounds}_{\mathcal{A}, 3}(2) = -\frac{1}{3}q_{\mathbf{p}}(2)^3-\frac{7}{24}B_{\mathbf{p}-3}, 
	\label{A, 3, 2}\\
	&\text{\rm \pounds}_{\mathcal{A}_3, 1}(1/2) = q_{\mathbf{p}}(2)-\frac{1}{2}q_{\mathbf{p}}(2)^2\mathbf{p}+\left( \frac{1}{3}q_{\mathbf{p}}(2)^3-\frac{7}{48}B_{\mathbf{p}-3} \right) \mathbf{p}^2,
	\label{A_2, 1, 1/2}\\
	&\text{\rm \pounds}_{\mathcal{A}_2, 2}(1/2) = -\frac{1}{2}q_{\mathbf{p}}(2)^2+\left( \frac{1}{2}q_{\mathbf{p}}(2)^3+\frac{7}{
		24}B_{\mathbf{p}-3} \right) \mathbf{p},
	\label{A, 2, 1/2}\\
	&\text{\rm \pounds}_{\mathcal{A}, 3}(1/2) = \frac{1}{6}q_{\mathbf{p}}(2)^3+\frac{7}{48}B_{\mathbf{p}-3}.
	\label{A, 3, 1/2}
	\end{align}
	\label{FP values}\end{lemma}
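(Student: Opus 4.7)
The lemma is essentially a compilation of known supercongruences for the finite polylogarithms at the three special arguments $-1$, $2$, $1/2$, and the plan is to invoke the cited references while minimizing the number of independent statements by exploiting the internal bridges available in the paper. The organizational backbone consists of two such bridges. First, equation (\ref{trivial formula}) ties $\text{\rm \pounds}_{\mathcal{A}_n,m}(-1)$ to the half-range sum $\sum_{k=1}^{(p-1)/2} 1/k^m$, and hence to $\zeta_{\mathcal{A}_n}(m)$, so the values at $-1$ reduce to classical Wolstenholme-type expansions. Second, the reversal formula of Proposition \ref{reverse prop} (and the $\mathcal{A}_2$-version (\ref{reverse A_2})) exchanges $t$ and $1/t$ up to the factor $t^{\mathbf{p}}$; specialization to $t=2$ introduces $2^{\mathbf{p}}=2(1+q_{\mathbf{p}}(2)\mathbf{p}+\cdots)$, which is precisely the source of every $q_{\mathbf{p}}(2)$ appearing in the lemma.

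Concretely, I would proceed as follows. Equation (\ref{FP1}) is the classical expansion of the alternating harmonic sum $\sum_{k=1}^{p-1}(-1)^{k-1}/k$ modulo $p^3$, taken directly from Z. H. Sun \cite{ZHS1}. Equation (\ref{A_3, 2}) then follows from (\ref{FP1}) via the elementary identity $\text{\rm \pounds}_{\mathcal{A}_3,1}(2)+\text{\rm \pounds}_{\mathcal{A}_3,1}(-1)=2\zeta_{\mathcal{A}_3}(1)-\text{\rm \pounds}_{\mathcal{A}_3,1}(2)\cdot 0$—more precisely, from the Wolstenholme congruence for $\zeta_{\mathcal{A}_3}(1)$ combined with the relation $\sum_{k=1}^{p-1}2^k/k\equiv -2\sum_{k=1}^{(p-1)/2}1/k$ modulo $p^3$, which is the $m=1$ case of (\ref{trivial formula}) after rescaling. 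Equation (\ref{A_2, 1, 1/2}) is then the image of (\ref{A_3, 2}) under the reversal formula. Equations (\ref{A_2, 2, 2}) and (\ref{A, 3, 2}) are quoted from Dilcher--Skula \cite{DS} and Me\v{s}trovi\'c \cite{M2}, respectively, and their $1/2$-counterparts (\ref{A, 2, 1/2}) and (\ref{A, 3, 1/2}) follow from them by the same reversal argument.

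The main technical obstacle will be that Proposition \ref{reverse prop} (\ref{reverse A_2}) is stated only in $\mathcal{A}_2$, whereas deriving (\ref{A_2, 1, 1/2}) from (\ref{A_3, 2}) requires an $\mathcal{A}_3$-analogue. This requires expanding $(p-n_i)^{k_i}=n_i^{k_i}-k_i\, p\, n_i^{k_i-1}+\binom{k_i}{2}p^2 n_i^{k_i-2}\pmod{p^3}$ and then tracking the $O(\mathbf{p}^2)$ correction, which will involve a sum of weight-increased FPs evaluated at $2$. Once this bookkeeping is done, each of those auxiliary values can be read off from (\ref{A_2, 2, 2}) and the weight-1 cases already established, and the desired identity follows. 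Finally, care must be taken that $2^{\mathbf{p}}$ is expanded to third order as $2(1+q_{\mathbf{p}}(2)\mathbf{p}+\tfrac{1}{2}q_{\mathbf{p}}(2)^2\mathbf{p}^2-\tfrac{1}{3}q_{\mathbf{p}}(2)^3\mathbf{p}^3+\cdots)$ in $\mathcal{A}_3$, so that every cubic $q_{\mathbf{p}}(2)^3$-coefficient in the stated formulas matches. All remaining identities can then be obtained by pairwise comparison.
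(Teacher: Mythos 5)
Your overall strategy --- take the hardest congruences from the literature and propagate them to the remaining arguments via (\ref{trivial formula}) and the reversal formulas --- is essentially the paper's strategy, and your reversal derivations of (\ref{A, 2, 1/2}), (\ref{A, 3, 1/2}), and (\ref{A_2, 1, 1/2}) from the values at $t=2$ (including the observation that an $\mathcal{A}_3$-analogue of (\ref{reverse A_2}) is needed and can be obtained by expanding $(p-n)^{-1}$ to second order) do go through. However, your proposed derivation of (\ref{A_3, 2}) from (\ref{FP1}) rests on a false congruence. You claim $\sum_{k=1}^{p-1}2^k/k\equiv -2\sum_{k=1}^{(p-1)/2}1/k\pmod{p^3}$; already modulo $p$ both sums are $\equiv -2q_p(2)$, so the two sides of your congruence differ by the factor $-2$ (for $p=5$: $\sum_{k=1}^{4}2^k/k\equiv 4$ while $-2\sum_{k\le 2}1/k\equiv 2$). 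Even after deleting the spurious factor the identification fails at the next order: by (\ref{FP1}) and (\ref{trivial formula}) one has $\sum_{k=1}^{(p-1)/2}1/k\equiv -2q_p(2)+q_p(2)^2p\pmod{p^2}$, whereas (\ref{A_3, 2}) asserts $\sum_{k=1}^{p-1}2^k/k\equiv -2q_p(2)\pmod{p^2}$. The underlying reason is that the identity $\text{\rm \pounds}_{\mathcal{A},1}(2)=\text{\rm \pounds}_{\mathcal{A},1}(-1)$ holds only in $\mathcal{A}$; lifting it to $\mathcal{A}_3$ requires the correction terms of an $\mathcal{A}_3$-version of (\ref{gen. of Hoffman's duality}), which involve weight-$2$ and weight-$3$ values at $-1$ and at $2$ that are themselves among the quantities to be determined. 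Your route is therefore circular as written. The paper avoids this by taking (\ref{A_3, 2}) and (\ref{A_2, 2, 2}) as independent external inputs from Z.~H.~Sun \cite[Theorem 4.1]{ZHS2} (only (\ref{FP1}) comes from \cite{ZHS1} together with (\ref{trivial formula}), and only (\ref{A, 3, 1/2}) is deduced internally, by reversal).

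A second, smaller error: $2^{\mathbf{p}}=2(1+q_{\mathbf{p}}(2)\mathbf{p})$ holds \emph{exactly} in every $\mathcal{A}_n$, directly from the definition $q_p(2)=(2^{p-1}-1)/p$; the higher-order expansion $2(1+q_{\mathbf{p}}(2)\mathbf{p}+\tfrac12 q_{\mathbf{p}}(2)^2\mathbf{p}^2-\cdots)$ you propose is incorrect (check $p=5$: $2^5=32=2(1+5\cdot 3)$, not $2(1+15+\tfrac{225}{2})$). Had you used it, the $q_{\mathbf{p}}(2)^2$- and $q_{\mathbf{p}}(2)^3$-coefficients in (\ref{A_2, 1, 1/2}) would come out wrong; with the exact identity the bookkeeping closes correctly.
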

\begin{proof}
	The equality (\ref{FP1}) is obtained by \cite[Theorem 5.2 (c)]{ZHS1} and the equality (\ref{trivial formula}). The equalities (\ref{A_3, 2}) and (\ref{A_2, 2, 2}) are \cite[Theorem 4.1 (i)]{ZHS2} and \cite[Theorem 4.1 (ii)]{ZHS2}, respectively. The equalities (\ref{A, 3, 2}), (\ref{A_2, 1, 1/2}), and (\ref{A, 2, 1/2}) are essentially due to Dilcher and Skula \cite{DS} (see \cite[Remark 4.1]{ZHS2}). The equality (\ref{A_2, 1, 1/2}) is also shown by Me\v{s}trovi\'c \cite{M2}. The equality (\ref{A, 3, 1/2}) is obtained by the equality (\ref{A_2, 2, 2}) and Proposition \ref{reverse prop}.
\end{proof}
We obtain the following special values for F(S)MP by the above lemma:
\begin{proposition}
	Let $\bullet \in \{ \emptyset , \star \}$. Then the following equalities hold$:$
	\begin{align}
	&\text{\rm \pounds}_{\mathcal{A}_2, \{ 1 \}^2}(-1) = -\widetilde{\text{\rm \pounds}}_{\mathcal{A}_2, \{ 1 \}^2}^{\star}(-1)=q_{\mathbf{p}}(2)^2-\left( q_{\mathbf{p}}(2)^3+\frac{13}{24}B_{\mathbf{p}-3} \right) \mathbf{p},
	\label{A_2, 1^2, -1}\\
	&\text{\rm \pounds}_{\mathcal{A}_2, \{ 1 \}^2}^{\star}(-1) = -\widetilde{\text{\rm \pounds}}_{\mathcal{A}_2, \{ 1 \}^2}(-1) = q_{\mathbf{p}}(2)^2-\left( q_{\mathbf{p}}(2)^3+\frac{1}{24}B_{\mathbf{p}-3} \right) \mathbf{p},
	\label{}\\
	&\text{\rm \pounds}_{\mathcal{A}_2, \{ 1\}^2}^{\star}(2) = -\widetilde{\text{\rm \pounds}}_{\mathcal{A}_2, \{ 1 \}^2}(2) = -q_{\mathbf{p}}(2)^2+\left( \frac{2}{3}q_{\mathbf{p}}(2)^3+\frac{1}{12}B_{\mathbf{p}-3} \right) \mathbf{p},
	\label{A_2, 1^2, 2}\\
	&\text{\rm \pounds}_{\mathcal{A}_2, \{ 1 \}^2} (1/2) = -\widetilde{\text{\rm \pounds}}_{\mathcal{A}_2, \{ 1 \}^2}^{\star} (1/2) = \frac{1}{2}q_{\mathbf{p}}(2)^2-\frac{1}{2}q_{\mathbf{p}}(2)^3\mathbf{p},
	\label{A, 1^2, 1/2}\\
	&\text{\rm \pounds}_{\mathcal{A}, (1, 2)}^{\star}(2) = -\widetilde{\text{\rm \pounds}}_{\mathcal{A}, (2, 1)}(2) = -\frac{1}{3}q_{\mathbf{p}}(2)^3-\frac{25}{24}B_{\mathbf{p}-3},
	\label{A, (1, 2), 2}\\
	&\text{\rm \pounds}_{\mathcal{A}, (1, 2)}(1/2) = -\widetilde{\text{\rm \pounds}}_{\mathcal{A}, (2, 1)}^{\star} (1/2) = -\frac{1}{6}q_{\mathbf{p}}(2)^3-\frac{25}{48}B_{\mathbf{p}-3},\\
	&\text{\rm \pounds}_{\mathcal{A}, (2, 1)}^{\star}(2) = - \widetilde{\text{\rm \pounds}}_{\mathcal{A}, (1, 2)}(2) = -\frac{1}{3}q_{\mathbf{p}}(2)^3+\frac{23}{24}B_{\mathbf{p}-3},
	\label{A, (2, 1), 2}
		\end{align}
		\begin{align}
	&\text{\rm \pounds}_{\mathcal{A}, (2, 1)} (1/2) = -\widetilde{\text{\rm \pounds}}_{\mathcal{A}, (1, 2)}^{\star} (1/2) = -\frac{1}{6}q_{\mathbf{p}}(2)^3+\frac{23}{48}B_{\mathbf{p}-3},\\
	&\text{\rm \pounds}_{\mathcal{A}, \{ 1 \}^3}^{\bullet}(-1) = \widetilde{\text{\rm \pounds}}_{\mathcal{A}, \{ 1\}^3}^{\bullet}(-1) = \text{\rm \pounds}_{\mathcal{A}, \{ 1 \}^3}^{\star}(2) = \widetilde{\text{\rm \pounds}}_{\mathcal{A}, \{ 1 \}^3}(2) =-\frac{1}{3}q_{\mathbf{p}}(2)^3-\frac{7}{24}B_{\mathbf{p}-3},
	\label{A, 1^3, -1}\\
	&\text{\rm \pounds}_{\mathcal{A}, \{ 1 \}^3}(1/2) = \widetilde{\text{\rm \pounds}}_{\mathcal{A}, \{ 1 \}^3}^{\star} (1/2) = \frac{1}{6}q_{\mathbf{p}}(2)^3+\frac{7}{48}B_{\mathbf{p}-3}.
	\label{A, 1^3, 1/2}
	\end{align}
	\label{Fermat quotient}\end{proposition}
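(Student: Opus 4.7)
The overall strategy is to derive every numerical value in Proposition \ref{Fermat quotient} from the finite-polylogarithm values of Lemma \ref{FP values} by chaining the functional equations of Section \ref{sec:Functional equations of finite multiple polylogarithms} with the reversal formulas of Proposition \ref{reverse prop} and the known FMZV values in Proposition \ref{FMZV's properties}. The first equality on every line is an immediate application of Corollary \ref{zeta corollary} (\ref{nonstar-star}) or (\ref{nonstar and star2}) to the relevant index $\Bbbk \in \{\{1\}^{2}, \{1\}^{3}, (1,2), (2,1)\}$, once one observes that the FMZV factors in the summations vanish: in $\mathcal{A}$ one has $\zeta_{\mathcal{A}}^{\bullet}(\{1\}^{k})=0$ and $\zeta_{\mathcal{A}}^{\bullet}(2)=0$ by Proposition \ref{FMZV's properties} (\ref{Zhou-Cai}), while in $\mathcal{A}_{2}$ additionally $\zeta_{\mathcal{A}_{2}}^{\bullet}(1)=\tfrac{1}{2}B_{\mathbf{p}-2}\mathbf{p}=0$ (as $B_{p-2}=0$ for $p\geq 5$). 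The extra identity $\text{\rm \pounds}^{\bullet}_{\mathcal{A},\{1\}^{3}}(-1)=\widetilde{\text{\rm \pounds}}^{\bullet}_{\mathcal{A},\{1\}^{3}}(-1)$ in (\ref{A, 1^3, -1}) follows from the reversal formula (\ref{reverse2}), since $(-1)^{\wt(\{1\}^{3})}(-1)^{\mathbf{p}}=1$.

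For the numerical values of the $\{1\}^{3}$-entries, Lemma \ref{multi-FPL} gives $\text{\rm \pounds}_{\mathcal{A},\{1\}^{3}}(t)=\widetilde{\text{\rm \pounds}}^{\star}_{\mathcal{A},\{1\}^{3}}(t)=\text{\rm \pounds}_{\mathcal{A},3}(1-t)$, so substitution of $t=-1, 1/2$ into Lemma \ref{FP values} (\ref{A, 3, 2}), (\ref{A, 3, 1/2}) yields the results; the single residual $\widetilde{\text{\rm \pounds}}_{\mathcal{A},\{1\}^{3}}(2)$ comes from reversal (\ref{reverse2}), which equates it with $-2\,\text{\rm \pounds}_{\mathcal{A},\{1\}^{3}}(1/2)$. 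For the depth-$2$ entries in $\mathcal{A}$, I will apply Hoffman duality (\ref{fn eq 1}) with $\Bbbk=(1,2)$, $\Bbbk^{\vee}=(2,1)$,
\[
\widetilde{\text{\rm \pounds}}^{\star}_{\mathcal{A},(1,2)}(t)=\widetilde{\text{\rm \pounds}}^{\star}_{\mathcal{A},(2,1)}(1-t)-B_{\mathbf{p}-3},
\]
using $\zeta^{\star}_{\mathcal{A}}((2,1))=B_{\mathbf{p}-3}$ from (\ref{dep2 for FMZV}). Evaluating at $t=2$ with the right-hand side read off from Lemma \ref{aux} (\ref{k_1, k_2, -1, star}) determines $\widetilde{\text{\rm \pounds}}^{\star}_{\mathcal{A},(1,2)}(2)$; then the diagonal correction $\widetilde{\text{\rm \pounds}}^{\star}_{\mathcal{A},(k_{1},k_{2})}(t)-\widetilde{\text{\rm \pounds}}_{\mathcal{A},(k_{1},k_{2})}(t)=\text{\rm \pounds}_{\mathcal{A},k_{1}+k_{2}}(t)$ (together with Lemma \ref{FP values}) recovers $\widetilde{\text{\rm \pounds}}_{\mathcal{A},(1,2)}(2)$, and hence $\text{\rm \pounds}^{\star}_{\mathcal{A},(2,1)}(2)$ via the pairings of the first paragraph. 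The values at $t=1/2$ follow from reversal $\widetilde{\text{\rm \pounds}}^{\star}_{\mathcal{A},(2,1)}(1/2)=-\tfrac{1}{2}\text{\rm \pounds}^{\star}_{\mathcal{A},(1,2)}(2)$ combined with one more application of Hoffman duality.

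Finally, for the four $\mathcal{A}_{2}$-level $\{1\}^{2}$-statements, the generalized Hoffman duality (\ref{gen. of Hoffman's duality}) with $\Bbbk=2$, $\Bbbk^{\vee}=\{1\}^{2}$ reads
\[
\widetilde{\text{\rm \pounds}}^{\star}_{\mathcal{A}_{2},\{1\}^{2}}(1-t)=\text{\rm \pounds}_{\mathcal{A}_{2},2}(t)+\bigl(\widetilde{\text{\rm \pounds}}^{\star}_{\mathcal{A},(1,2)}(t)-\text{\rm \pounds}_{\mathcal{A},3}(t)\bigr)\mathbf{p}+\zeta^{\star}_{\mathcal{A}_{2}}(\{1\}^{2}).
\]
Each ingredient is now explicit: $\text{\rm \pounds}_{\mathcal{A}_{2},2}(t)$ at $t=2, 1/2$ from Lemma \ref{FP values} and at $t=-1$ from Proposition \ref{A_2 aux} (\ref{A_2, m, -1}); $\widetilde{\text{\rm \pounds}}^{\star}_{\mathcal{A},(1,2)}(t)$ from the previous paragraph; and $\zeta^{\star}_{\mathcal{A}_{2}}(\{1\}^{2})=\tfrac{1}{3}B_{\mathbf{p}-3}\mathbf{p}$ by (\ref{Zhou-Cai}). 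Substituting $t=2,-1,1/2$ gives $\widetilde{\text{\rm \pounds}}^{\star}_{\mathcal{A}_{2},\{1\}^{2}}$ at $-1,2,1/2$, and the remaining six $\mathcal{A}_{2}$-values follow from the first-paragraph pairings together with the $\mathcal{A}_{2}$-reversal (\ref{reverse A_2}), whose depth-$2$ error terms $\widetilde{\text{\rm \pounds}}^{\star}_{\mathcal{A},(1,2)}(1/t)$ and $\widetilde{\text{\rm \pounds}}^{\star}_{\mathcal{A},(2,1)}(1/t)$ were computed above. No step introduces any new conceptual input beyond the duality and reversal formulas already in hand; the main obstacle is purely computational, namely tracking the rational coefficients of $q_{\mathbf{p}}(2)^{i}$ and $B_{\mathbf{p}-3}$ modulo $\mathbf{p}^{2}$ through the chain of substitutions and verifying, for example, that the $B_{\mathbf{p}-3}$-coefficient of $\widetilde{\text{\rm \pounds}}^{\star}_{\mathcal{A}_{2},\{1\}^{2}}(-1)$ collapses to exactly $\tfrac{13}{24}$ rather than one of the several nearby fractions produced along the way.
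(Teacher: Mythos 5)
Your proposal is correct and follows essentially the same route as the paper: both derive every value by chaining the generalized Hoffman duality (\ref{gen. of Hoffman's duality}), the reversal formulas of Proposition \ref{reverse prop}, and the non-star/star relations of Corollary \ref{zeta corollary} back to the depth-one and depth-two seed values in Lemma \ref{FP values}, Lemma \ref{aux}, Proposition \ref{A_2 aux}, and Proposition \ref{special values in A}. The only divergence is cosmetic: where you recover, for example, $\text{\rm \pounds}_{\mathcal{A}_2,\{1\}^2}^{\star}(2)$ via the $\mathcal{A}_2$-reversal (\ref{reverse A_2}), the paper instead adds the diagonal term $\text{\rm \pounds}_{\mathcal{A}_2,2}(2)$ to the value of $\text{\rm \pounds}_{\mathcal{A}_2,\{1\}^2}(2)$ already known from Theorem \ref{A_2 theorem}, and both computations yield the stated coefficients.
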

\begin{proof}
	We can calculate $\widetilde{\text{\rm \pounds}}_{\mathcal{A}_2, \{1\}^2}^{\star}(-1)$, $\text{\rm \pounds}_{\mathcal{A}_2, \{1\}^2}^{\star}(2)$, $\text{\rm \pounds}_{\mathcal{A}, (1, 2)}^{\star}(2)$, $\text{\rm \pounds}_{\mathcal{A}, (2, 1)}^{\star}(2)$, $\widetilde{\text{\rm \pounds}}_{\mathcal{A}, \{1\}^3}^{\star}(-1)$, and\\
	$\text{\rm \pounds}_{\mathcal{A}, \{1\}^3}^{\star}(2)$ by the equalities 
	\begin{align*}
	&\widetilde{\text{\rm \pounds}}_{\mathcal{A}_2, \{ 1 \}^2}^{\star}(-1)= \zeta_{\mathcal{A}_2}^{\star}(\{ 1 \}^2) + \text{\rm \pounds}_{\mathcal{A}_2, 2}(2)+\bigl(\widetilde{\text{\rm \pounds}}_{\mathcal{A}_2, (1, 2)}^{\star}(2)-\text{\rm \pounds}_{\mathcal{A}_2, 3}(2)\bigr)\mathbf{p},\\
	&\text{\rm \pounds}_{\mathcal{A}_2, \{1\}^2}^{\star}(2) = \text{\rm \pounds}_{\mathcal{A}_2, \{1\}^2}(2)+\text{\rm \pounds}_{\mathcal{A}_2, 2}(2), \ \text{\rm \pounds}_{\mathcal{A}, (1, 2)}^{\star}(2)=\text{\rm \pounds}_{\mathcal{A}, (1, 2)}(2)+\text{\rm \pounds}_{\mathcal{A}, 3}(2),\\
	&\text{\rm \pounds}_{\mathcal{A}, (2, 1)}^{\star}(2) = \text{\rm \pounds}_{\mathcal{A}, (2, 1)}(2)+\text{\rm \pounds}_{\mathcal{A}, 3}(2), \ \widetilde{\text{\rm \pounds}}_{\mathcal{A}, \{1\}^3}^{\star}(-1) = \text{\rm \pounds}_{\mathcal{A}, 3}(2), \ \text{\rm \pounds}_{\mathcal{A}, \{1\}^3}^{\star}(2)=\text{\rm \pounds}_{\mathcal{A}, \{1\}^3}^{\star}(-1),
	\end{align*}
	respectively. Here, we have used Remark \ref{1-var remark} (\ref{gen. of Hoffman's duality}), Lemma \ref{multi-FPL} (\ref{1^m, 1}), and Corollary \ref{ZS lemma} (\ref{ZS star}). All other values obtained by Proposition \ref{reverse prop} and Corollary \ref{zeta corollary} (\ref{nonstar-star}) and (\ref{nonstar and star2}).
\end{proof}
\begin{remark}
	Note that all of the values that appear in the above proposition essentially have been given by Me\v{s}trovi\'{c} \cite[Theorem 1.1]{M} and Tauraso and J. Zhao \cite[Proposition 7.1]{TZ}. We have determined all values of the form $\overline{\text{\rm \pounds}}_{\mathcal{A}_n, \Bbbk}^{\bullet}(r)$ for $- \in \{ \emptyset , \sim \}$, $\bullet \in \{ \emptyset , \star \}$, and $r \in \{ -1, 2^{\pm 1} \}$ when $n+\wt(\Bbbk) \leq 4$ by Lemma \ref{aux}, Proposition \ref{A_2 aux}, Proposition \ref{special values in A}, Theorem \ref{A_2 theorem}, Lemma \ref{FP values}, and Proposition \ref{Fermat quotient}.
	\label{all determine}\end{remark}
Furthermore, we have the following some special values of FMPs of weight $4$.
\begin{proposition}
	Let $\bullet \in \{ \emptyset, \star \}$. Then the following equalities hold$:$
	\begin{align}
	&\text{\rm \pounds}_{\mathcal{A}, (1, 3)}^{\bullet}(-1) = -\widetilde{\text{\rm \pounds}}_{\mathcal{A}, (3, 1)}^{\bullet}(-1) = \frac{1}{2}q_{\mathbf{p}}(2)B_{\mathbf{p}-3},\label{91}\\
	&\text{\rm \pounds}_{\mathcal{A}, (2, 1, 1)}(2) = \widetilde{\text{\rm \pounds}}_{\mathcal{A}, (1, 1, 2)}^{\star}(2)= -\frac{1}{2}q_{\mathbf{p}}(2)B_{\mathbf{p}-3},\\
	&\text{\rm \pounds}_{\mathcal{A}, (2, 1, 1)}^{\star}(1/2) = \widetilde{\text{\rm \pounds}}_{\mathcal{A}, (1, 1, 2)}(1/2) = -\frac{1}{4}q_{\mathbf{p}}(2)B_{\mathbf{p}-3}.
	\label{four}\end{align}
	\label{four prop}\end{proposition}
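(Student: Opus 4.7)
Write $X := \text{\rm \pounds}_{\mathcal A,(1,3)}(-1)$, $Y := \widetilde{\text{\rm \pounds}}_{\mathcal A,(1,1,2)}^\star(2)$ and $Z := \text{\rm \pounds}_{\mathcal A,(2,1,1)}^\star(1/2)$; the three displayed equations assert that $X=-Y=-2Z=\tfrac12 q_{\mathbf p}(2)B_{\mathbf p-3}$ together with the auxiliary identifications within each line. My plan is to prove the inner equalities by general machinery, link $X,Y,Z$ together, and finally pin down one of them explicitly.

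The inner equalities on each line are essentially automatic. The one-variable reversal (\ref{reverse2}), combined with $(-1)^{\mathbf p}=-1$ and $2^{\mathbf p}=2$ in $\mathcal A$, identifies $\text{\rm \pounds}_{\mathcal A,(1,3)}^{\bullet}(-1)=-\widetilde{\text{\rm \pounds}}_{\mathcal A,(3,1)}^{\bullet}(-1)$ and pairs up the $\text{\rm \pounds}$ and $\widetilde{\text{\rm \pounds}}$ values in the other two lines. The collapse of the $\bullet=\star$ case into the $\bullet=\emptyset$ case uses $\text{\rm \pounds}_{\mathcal A,4}(-1)=0$, which follows from Lemma~\ref{aux}~(\ref{m, -1}) together with $B_{\mathbf p-4}=0$ in $\mathcal A$ (since $B_{p-4}=0$ for every odd prime $p$). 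The equalities $\text{\rm \pounds}_{\mathcal A,(2,1,1)}(2)=Y$ and $\widetilde{\text{\rm \pounds}}_{\mathcal A,(1,1,2)}(1/2)=Z$ drop out of Corollary~\ref{zeta corollary}~(\ref{nonstar-star}) and (\ref{nonstar and star2}) because $\zeta_{\mathcal A}^{(\star)}(1)=\zeta_{\mathcal A}^{(\star)}(1,1)=0$ (Wolstenholme's congruences, and (\ref{Zhou-Cai})).

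To link the three scalars, I apply the one-variable Hoffman duality (\ref{fn eq 1}) to $\Bbbk=(3,1)$ at $t=-1$: the dual is $(3,1)^{\vee}=(1,1,2)$, and the correction $\zeta_{\mathcal A}^\star(1,1,2)$ vanishes because by (\ref{Hoffman's duality for FMZV}), (\ref{reverse for FMZV}), (\ref{dep2 for FMZV}) it equals $-\zeta_{\mathcal A}^\star(1,3)=B_{\mathbf p-4}=0$. This gives $\widetilde{\text{\rm \pounds}}_{\mathcal A,(3,1)}^\star(-1)=Y$ and hence $X=-Y$; applying (\ref{reverse2}) to $Z$ gives $Z=Y/2$. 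It therefore suffices to establish $Y=-\tfrac12 q_{\mathbf p}(2)B_{\mathbf p-3}$.

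The main obstacle is the explicit determination of $Y$. The plan is to invoke the $\mathcal A_2$-refinement of Hoffman duality, Remark~\ref{1-var remark}~(\ref{gen. of Hoffman's duality}), at the weight-$3$ index $\Bbbk=(1,2)$ and $t=-1$ (with dual $(1,2)^{\vee}=(2,1)$), and then use the $\mathcal A_2$-reversal formula (\ref{reverse A_2}) at $(t_1,t_2)=(1,2)$ to re-express $\widetilde{\text{\rm \pounds}}_{\mathcal A_2,(2,1)}^\star(2)$ in terms of $\text{\rm \pounds}_{\mathcal A_2,(1,2)}^\star(1/2)$ (known from Lemma~\ref{FP values} and Proposition~\ref{Fermat quotient}) together with weight-$4$ $\mathcal A$-corrections. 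The crucial mechanism is the prefactor $2^{\mathbf p}=2(1+q_{\mathbf p}(2)\mathbf p)$ appearing in that reversal: its $q_{\mathbf p}(2)\mathbf p$ part, multiplied into the $B_{\mathbf p-3}$-contribution carried by the weight-$3$ quantity $\text{\rm \pounds}_{\mathcal A_2,(1,2)}^\star(1/2)$, produces exactly the desired $q_{\mathbf p}(2)B_{\mathbf p-3}$ term in the $\mathbf p$-coefficient. Plugging in $\widetilde{\text{\rm \pounds}}_{\mathcal A_2,(1,2)}^\star(-1)$ from Proposition~\ref{A_2 aux}~(\ref{60}) and the $\mathcal A_2$-value of $\zeta_{\mathcal A_2}^\star(2,1)$ (obtained from the $\mathcal A_2$-Hoffman--Zhao duality (\ref{Hoffman-Zhao duality}) together with the Kummer-type congruence $\widehat B_{2\mathbf p-4}\equiv\widehat B_{\mathbf p-3}\pmod{\mathbf p}$ proved in the paper's appendix), and after separately evaluating the weight-$4$ corrections $\widetilde{\text{\rm \pounds}}_{\mathcal A,(1,1,2)}^\star(-1)$ and $\widetilde{\text{\rm \pounds}}_{\mathcal A,(2,2)}^\star(-1)$ by further applications of Corollary~\ref{zeta corollary}, one isolates the $\mathbf p$-coefficient, sees that the $q_{\mathbf p}(2)^{3}$ and pure $B_{\mathbf p-3}$ contributions cancel, and reads off $Y=-\tfrac12 q_{\mathbf p}(2)B_{\mathbf p-3}$. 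The technical heart of the proof is precisely this $\mathbf p$-coefficient bookkeeping.
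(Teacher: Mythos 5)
Your reduction of all six values to the single seed $Y:=\widetilde{\text{\rm \pounds}}_{\mathcal A,(1,1,2)}^{\star}(2)=\widetilde{\text{\rm \pounds}}_{\mathcal A,(3,1)}^{\star}(-1)$ is correct and is essentially what the paper does: the inner equalities and the star/non-star collapse all follow from Proposition \ref{reverse prop}, Theorem \ref{introthm} (\ref{fn eq 1}), Corollary \ref{zeta corollary}, and the vanishing of $B_{\mathbf p-4}$, $\zeta_{\mathcal A}^{\star}(1)$, $\zeta_{\mathcal A}^{\star}(1,1)$. The gap is in your determination of $Y$. The identity of Remark \ref{1-var remark} (\ref{gen. of Hoffman's duality}) at $\Bbbk=(1,2)$, $t=-1$ reads
\begin{equation*}
\widetilde{\text{\rm \pounds}}_{\mathcal A_2,(1,2)}^{\star}(-1)+\bigl(\widetilde{\text{\rm \pounds}}_{\mathcal A,(1,1,2)}^{\star}(-1)-\widetilde{\text{\rm \pounds}}_{\mathcal A,(2,2)}^{\star}(-1)\bigr)\mathbf p=\widetilde{\text{\rm \pounds}}_{\mathcal A_2,(2,1)}^{\star}(2)-\zeta_{\mathcal A_2}^{\star}(2,1),
\end{equation*}
and the $\mathcal A_2$-reversal of $\widetilde{\text{\rm \pounds}}_{\mathcal A_2,(2,1)}^{\star}(2)$ introduces $\text{\rm \pounds}_{\mathcal A_2,(1,2)}^{\star}(1/2)$ together with the weight-$4$ corrections $\text{\rm \pounds}_{\mathcal A,(1,3)}^{\ast,\star}(1/2,1)$ and $\text{\rm \pounds}_{\mathcal A,(2,2)}^{\ast,\star}(1/2,1)$. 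The unknown $Y$ is the value of $\widetilde{\text{\rm \pounds}}_{(1,1,2)}^{\star}$ at the argument $2$, but the only $(1,1,2)$-term in your system sits at the argument $-1$; applying (\ref{fn eq 1}) or Proposition \ref{reverse prop} to the weight-$4$ terms that do occur produces $\widetilde{\text{\rm \pounds}}_{\mathcal A,(3,1)}^{\star}(2)$, $\widetilde{\text{\rm \pounds}}_{\mathcal A,(1,2,1)}^{\star}(-1)$, etc., but never $\widetilde{\text{\rm \pounds}}_{\mathcal A,(3,1)}^{\star}(-1)$. So $Y$ simply does not appear in the equation you set up, and there is nothing to ``read off.''

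There are two further obstacles even to closing that system. First, $\zeta_{\mathcal A_2}^{\star}(2,1)$ has odd weight, so Proposition \ref{FMZV's properties} (\ref{dep2 A_2 star}) does not apply, and the two tools you invoke — the duality (\ref{Hoffman-Zhao duality}) for $\Bbbk=(2,1)$ and the reversal (\ref{reverse A_2}) — both reduce to the single relation $\zeta_{\mathcal A_2}^{\star}(2,1)+\zeta_{\mathcal A_2}^{\star}(1,2)=0$ (all the weight-$4$ correction terms vanish in $\mathcal A$), hence cannot determine the individual value. Second, $\text{\rm \pounds}_{\mathcal A_2,(1,2)}^{\star}(1/2)$ enters without a factor of $\mathbf p$ and is therefore needed modulo $\mathbf p^{2}$, whereas Lemma \ref{FP values} and Proposition \ref{Fermat quotient} only give it in $\mathcal A$; likewise the weight-$4$ alternating values $\widetilde{\text{\rm \pounds}}_{\mathcal A,(1,1,2)}^{\star}(-1)$ and $\widetilde{\text{\rm \pounds}}_{\mathcal A,(2,2)}^{\star}(-1)$ are not supplied by Corollary \ref{zeta corollary} alone (it converts them into non-star values such as $\text{\rm \pounds}_{\mathcal A,(2,1,1)}(-1)$ and $\text{\rm \pounds}_{\mathcal A,(2,2)}(-1)$, which the paper also does not evaluate). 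The paper avoids all of this by importing the seed value from outside: Tauraso--Zhao's congruence $\sum_{p>j>k>0}(-1)^{k}j^{-3}k^{-1}\equiv-\tfrac12q_p(2)B_{p-3}\pmod p$, i.e.\ $\widetilde{\text{\rm \pounds}}_{\mathcal A,(3,1)}(-1)=-\tfrac12q_{\mathbf p}(2)B_{\mathbf p-3}$, from \cite[Proposition 6.1 (55)]{TZ}, after which your linking argument finishes the proof. You should either cite that congruence or supply an actual derivation of one of the six values; the machinery you assembled does not yield one.
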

\begin{proof}
	By \cite[Proposition 6.1 (55)]{TZ}, we have $\widetilde{\text{\rm \pounds}}_{\mathcal{A}, (3, 1)}(-1)=-\frac{1}{2}q_{\mathbf{p}}(2)B_{\mathbf{p}-3}$. All of the other values are obtained by the functional equations and Proposition \ref{reverse prop}.
\end{proof}
Next, we calculate some special values of FH(S)MPs. The following two lemmas are due to Chamberland and Dilcher \cite{CD}, Tauraso and J. Zhao \cite{TZ}, and  Kh.\ Hessami Pilehrood, T. Hessami Pilehrood, and Tauraso \cite{PPT}:
\begin{lemma}
	Let $m$, $k_1$, $k_2$, and $k_3$ be positive integers, $w=k_1+k_2+k_3$, and $\bullet \in \{ \emptyset , \star \}$. Then the following equalities hold$:$
	\begin{equation}
	\text{\rm \pounds}_{\mathcal{A}, (k_1, k_2)}^{\ast, \bullet}(-1, -1) = (-1)^{k_1}\frac{1-2^{k_1+k_2-1}}{2^{k_1+k_2-1}}\binom{k_1+k_2}{k_1}\frac{B_{\mathbf{p}-k_1-k_2}}{k_1+k_2}
	\label{odd}\end{equation}
	if $k_1+k_2$ is odd,
	\begin{equation}
	\text{\rm \pounds}_{\mathcal{A}, (k_1, k_2)}^{\ast , \bullet}(-1, -1) = \frac{(2^{k_1-1}-1)(2^{k_2-1}-1)}{2^{k_1+k_2-3}k_1k_2}B_{\mathbf{p}-k_1}B_{\mathbf{p}-k_2}\hspace{8mm}
	\label{even bullet}\end{equation}
	if $k_1+k_2$ is even and $k_1, k_2 \geq 2$,
	\begin{equation}
	\text{\rm \pounds}_{\mathcal{A}, (m, 1)}^{\ast, \bullet}(-1, -1) = \text{\rm \pounds}_{\mathcal{A}, (1, m)}^{\ast, \bullet}(-1, -1) = \frac{2^{m-1}-1}{2^{m-2}m}q_{\mathbf{p}}(2)B_{\mathbf{p}-m}
	\label{B to q}\end{equation}
	if $m$ is odd and $m \geq 3$,
	{\footnotesize \begin{equation}
	\text{\rm \pounds}_{\mathcal{A}, (k_1, k_2, k_3)}^{\ast}(-1, -1, 1) = -\text{\rm \pounds}_{\mathcal{A}, (k_1, k_2, k_3)}^{\ast, \star}(-1, -1, 1) = \frac{1}{2}\left\{ (-1)^{k_3}\binom{w}{k_3}-\frac{1-2^{w-1}}{2^{w-1}}\binom{w}{k_1} \right\} \frac{B_{\mathbf{p}-w}}{w}
	\label{-1, -1, 1}\end{equation}
}if $k_1$ is even and $k_2+k_3$ odd,
	{\footnotesize \begin{equation}
	-\text{\rm \pounds}_{\mathcal{A}, (k_1, k_2, k_3)}^{\ast}(1, -1, -1) = \text{\rm \pounds}_{\mathcal{A}, (k_1, k_2, k_3)}^{\ast, \star}(1, -1, -1) = \frac{1}{2}\left\{ (-1)^{k_1}\binom{w}{k_1}-\frac{1-2^{w-1}}{2^{w-1}}\binom{w}{k_3} \right\} \frac{B_{\mathbf{p}-w}}{w}
	\label{1, -1, -1}\end{equation}
}if $k_1+k_2$ is odd and $k_3$ even,
	{\footnotesize \begin{equation}
	\text{\rm \pounds}_{\mathcal{A}, (k_1, k_2, k_3)}^{\ast}(-1, 1, -1) = - \text{\rm \pounds}_{\mathcal{A}, (k_1, k_2, k_3)}^{\ast, \star}(-1, 1, -1) = \frac{1-2^{w-1}}{2^{w}}\left\{ \binom{w}{k_3}-\binom{w}{k_1}\right\} \frac{B_{\mathbf{p}-w}}{w}\hspace{13mm}
	\label{-1, 1, -1}\end{equation}
}if $k_1$ is even, $k_2$ odd, and $k_3$ even,
	\begin{align}
	&\text{\rm \pounds}_{\mathcal{A}, \{ 1 \}^3}^{\ast}(1, -1, -1) = -\text{\rm \pounds}_{\mathcal{A}, \{ 1 \}^3}^{\ast}(-1, -1, 1) = q_{\mathbf{p}}(2)^3+\frac{7}{8}B_{\mathbf{p}-3},
	\label{1, -1, -1 and -1, -1, 1}\\
	&\text{\rm \pounds}_{\mathcal{A}, \{ 1 \}^3}^{\ast, \star}(1, -1, -1) = -\text{\rm \pounds}_{\mathcal{A}, \{ 1 \}^3}^{\ast, \star}(-1, -1, 1) = q_{\mathbf{p}}(2)^3-\frac{7}{8}B_{\mathbf{p}-3},
	\label{1, -1, -1 and -1, -1, 1 star}\\
	&\text{\rm \pounds}_{\mathcal{A}, \{ 1 \}^3}^{\ast, \bullet}(-1, 1, -1) = 0,
	\label{-1, 1, -1=0}\\
	&\text{\rm \pounds}_{\mathcal{A}, \{ 1 \}^3}^{\ast, \bullet}(-1, -1, -1) = -2\text{\rm \pounds}_{\mathcal{A}, \{ 1 \}^3}^{\ast, \bullet}(1, -1, 1) = -\frac{4}{3}q_{\mathbf{p}}(2)^3-\frac{1}{6}B_{\mathbf{p}-3}.
	\label{1, -1, 1}\end{align}
	\label{harmonic alt}\end{lemma}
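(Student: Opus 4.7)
The plan is to collect each identity from the literature on alternating multiple harmonic sums modulo a prime and to handle the star-versus-non-star discrepancy by a short inclusion--exclusion.

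For the depth-two formulas (\ref{odd})--(\ref{B to q}), I would cite the classical evaluations: (\ref{odd}) and (\ref{even bullet}) are the alternating Euler-sum supercongruences of Chamberland--Dilcher \cite{CD} and Tauraso--Zhao \cite{TZ}, while (\ref{B to q}) is the Fermat-quotient evaluation in \cite{TZ}. The star and non-star versions coincide at $(-1,-1)$ because their difference equals $\zeta_{\mathcal{A}}(k_1+k_2)$, which vanishes in $\mathcal{A}$ for every positive integer $k_1+k_2$ by Wolstenholme's theorem and Proposition \ref{FMZV's properties} (\ref{Zhou-Cai}).

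For the triple-sign formulas (\ref{-1, -1, 1})--(\ref{-1, 1, -1}), I would take the non-star statements from the triple-alternating-sum evaluations of Pilehrood--Pilehrood--Tauraso \cite{PPT}. The claim that the star value equals the negative of the non-star one then follows from the inclusion--exclusion
\[
\text{\rm \pounds}_{\mathcal{A},(k_1,k_2,k_3)}^{\ast,\star}(t_1,t_2,t_3) - \text{\rm \pounds}_{\mathcal{A},(k_1,k_2,k_3)}^{\ast}(t_1,t_2,t_3) = \text{\rm \pounds}_{\mathcal{A},(k_1+k_2,k_3)}^{\ast}(t_1 t_2,t_3) + \text{\rm \pounds}_{\mathcal{A},(k_1,k_2+k_3)}^{\ast}(t_1,t_2 t_3) + \zeta_{\mathcal{A}}(w),
\]
where $w = k_1+k_2+k_3$. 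At each of the three sign patterns, every boundary term on the right reduces to either $\zeta_{\mathcal{A}}(k,\ell)$ or $\text{\rm \pounds}_{\mathcal{A},(k,\ell)}^{\ast}(-1,-1)$ with $k+\ell = w$, and under the stipulated parity hypotheses $w$ is odd, so these can be evaluated explicitly using (\ref{odd}) and Proposition \ref{FMZV's properties} (\ref{dep2 for FMZV}); a short computation then matches the right-hand side against twice the non-star value from \cite{PPT}, yielding the stated sign flip.

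The weight-three identities (\ref{1, -1, -1 and -1, -1, 1})--(\ref{1, -1, 1}) for $\Bbbk = \{1\}^3$ fall outside the parity scope of (\ref{-1, -1, 1})--(\ref{-1, 1, -1}) and must be handled separately. The vanishing (\ref{-1, 1, -1=0}) follows immediately from Proposition \ref{reverse prop} (\ref{reverse1}), which in this case collapses to $\text{\rm \pounds}^{\ast,\bullet}_{\mathcal{A},\{1\}^3}(-1,1,-1) = -\text{\rm \pounds}^{\ast,\bullet}_{\mathcal{A},\{1\}^3}(-1,1,-1)$ in the $\mathbb{Q}$-algebra $\mathcal{A}$. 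For (\ref{1, -1, -1 and -1, -1, 1})--(\ref{1, -1, 1}), I would iterate the stuffle product of $\text{\rm \pounds}_{\mathcal{A},1}(\pm 1)$ with $\text{\rm \pounds}_{\mathcal{A},(1,1)}^{\ast,\bullet}(\pm 1,\pm 1)$ to express the desired triple values as linear combinations of depth-two alternating sums $\text{\rm \pounds}_{\mathcal{A},(1,2)}^{\ast}$, $\text{\rm \pounds}_{\mathcal{A},(2,1)}^{\ast}$ and the single-variable finite polylogarithm $\text{\rm \pounds}_{\mathcal{A},3}$, then solve the resulting linear system using the Fermat-quotient evaluations of Lemma \ref{FP values} and Proposition \ref{Fermat quotient}; the reversal formula identifies $\text{\rm \pounds}^{\ast,\bullet}_{\mathcal{A},\{1\}^3}(1,-1,-1)$ with $-\text{\rm \pounds}^{\ast,\bullet}_{\mathcal{A},\{1\}^3}(-1,-1,1)$, so one computation suffices. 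The main obstacle is the combinatorial bookkeeping in both the triple inclusion--exclusion and the stuffle expansion: one must verify that each parity hypothesis leaves only the claimed Bernoulli-number and Fermat-quotient contributions, with all extraneous boundary terms collapsing to zero in $\mathcal{A}$.
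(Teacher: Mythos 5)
The crux of the triple--sign cases (\ref{-1, -1, 1})--(\ref{-1, 1, -1}) is not the passage from non-star to star values but the evaluation of the non-star values themselves, and here your proposal has a genuine gap: you outsource these evaluations to \cite{PPT}, but the explicit triple evaluations there concern indices of the shape $(\{2\}^{a},c,\{2\}^{b})$ or $(\{1\}^{a},2,\{1\}^{b})$, not general $(k_1,k_2,k_3)$ with alternating signs under the stated parity hypotheses; Tauraso--Zhao likewise only provide \emph{relations} among such triple sums (their Theorem 4.1), not closed forms. So the non-star formulas you need are not available off the shelf --- they are what this lemma is proving. Your inclusion--exclusion identity supplies exactly one linear relation between $\text{\rm \pounds}^{\ast,\star}$ and $\text{\rm \pounds}^{\ast}$ at each sign pattern (and, as written, its last term should be $\text{\rm \pounds}_{\mathcal{A},w}(t_1t_2t_3)$, which equals $\zeta_{\mathcal{A}}(w)=0$ at the three patterns only because $t_1t_2t_3=1$ there), so it cannot determine either value by itself. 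The paper closes the gap with a second, independent relation: by \cite[Theorem 4.1]{TZ}, $2\,\text{\rm \pounds}^{\ast}_{\mathcal{A},(k_1,k_2,k_3)}(-1,-1,1)$ equals $\zeta_{\mathcal{A}}(k_3,k_1+k_2)+\text{\rm \pounds}^{\ast}_{\mathcal{A},(k_2+k_3,k_1)}(-1,-1)$ minus a product containing $\text{\rm \pounds}_{\mathcal{A},k_1}(-1)$ (and similarly for $(-1,1,-1)$); the parity hypotheses force that product to vanish by Lemma \ref{aux} (\ref{m, -1}), and the surviving terms are evaluated by (\ref{odd}) and Proposition \ref{FMZV's properties} (\ref{dep2 for FMZV}). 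You need this relation, or an equivalent substitute, before your sign-flip computation has anything to flip.

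The rest of your proposal is correct and in places cleaner than the paper's argument. For the depth-two cases, the observation that $\text{\rm \pounds}^{\ast,\star}_{\mathcal{A},(k_1,k_2)}(-1,-1)-\text{\rm \pounds}^{\ast}_{\mathcal{A},(k_1,k_2)}(-1,-1)=\zeta_{\mathcal{A}}(k_1+k_2)=0$ is a valid and more economical route to the star cases than the paper's appeal to Theorem \ref{MTC}, and it correctly sidesteps the erroneous star formula in \cite{TZ} that the paper warns about. The reversal argument for (\ref{-1, 1, -1=0}) is also correct (the index $\{1\}^3$ is palindromic, $t_1t_2t_3=1$, and $2$ is invertible in $\mathcal{A}$), and the stuffle system you sketch for (\ref{1, -1, -1 and -1, -1, 1})--(\ref{1, -1, 1}) does close: for instance, expanding $\text{\rm \pounds}_{\mathcal{A},1}(-1)\cdot\text{\rm \pounds}^{\ast}_{\mathcal{A},(1,1)}(-1,1)$ determines $\text{\rm \pounds}^{\ast}_{\mathcal{A},\{1\}^3}(-1,-1,1)$ from $\zeta_{\mathcal{A}}(2,1)$, $\text{\rm \pounds}^{\ast}_{\mathcal{A},(1,2)}(-1,-1)$ and known weight-$\leq 2$ Fermat-quotient values, recovering $-q_{\mathbf{p}}(2)^3-\tfrac{7}{8}B_{\mathbf{p}-3}$. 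The paper instead imports these weight-three non-star values from \cite[Proposition 7.6]{TZ} and derives all star cases uniformly from the antipode relation of Theorem \ref{MTC}, which is less elementary but requires no case-by-case linear algebra.
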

\begin{proof}
	The non-star case of the equality (\ref{odd}) is \cite[Theorem 3.1 (15)]{TZ}.
	The equality (\ref{even bullet}) and (\ref{B to q}) are \cite[Theorem 3.1 (20)]{TZ}.
	Now, suppose that $w$ is odd. By \cite[Theorem 4.1]{TZ}, we have
	{\footnotesize \begin{equation}
	2\text{\rm \pounds}_{\mathcal{A}, (k_1, k_2, k_3)}^{\ast}(-1, -1, 1)= \zeta_{\mathcal{A}}(k_3, k_1+k_2)
	+\text{\rm \pounds}_{\mathcal{A}, (k_2+k_3, k_1)}^{\ast}(-1, -1)-\text{\rm \pounds}_{\mathcal{A}, k_1}(-1)\widetilde{\text{\rm \pounds}}_{\mathcal{A}, (k_3, k_2)}(-1)
	\label{dep3rel1},
	\end{equation}
}\begin{equation}
	\begin{split}
	2\text{\rm \pounds}_{\mathcal{A}, (k_1, k_2, k_3)}^{\ast}(-1, 1, -1) &= -\text{\rm \pounds}_{\mathcal{A}, k_1}(-1)\text{\rm \pounds}_{\mathcal{A}, (k_3, k_2)}(-1)-\widetilde{\text{\rm \pounds}}_{\mathcal{A}, (k_2, k_1)}(-1)\text{\rm \pounds}_{\mathcal{A}, k_3}(-1)\\
	& \hspace{5mm}+\text{\rm \pounds}_{\mathcal{A}, (k_3, k_1+k_2)}^{\ast}(-1, -1)+\text{\rm \pounds}_{\mathcal{A}, (k_2+k_3, k_1)}^{\ast}(-1, -1).
	\end{split}
	\label{dep3rel2}\end{equation}
	If $k_1$ is even, then we have $\text{\rm \pounds}_{\mathcal{A}, k_1}(-1)=0$ by Lemma \ref{aux} (\ref{m, -1}). Therefore, by the equality (\ref{dep3rel1}), we can calculate $\text{\rm \pounds}_{\mathcal{A}, (k_1, k_2, k_3)}^{\ast}(-1, -1, 1)$, Proposition \ref{FMZV's properties} (\ref{dep2 for FMZV}), and the equality (\ref{odd}).
	If $k_1$ and $k_3$ are even, then we have $\text{\rm \pounds}_{\mathcal{A}, k_1}(-1)=\text{\rm \pounds}_{\mathcal{A}, k_3}(-1)=0$ by Lemma \ref{aux} (\ref{m, -1}). Therefore, we can calculate $\text{\rm \pounds}_{\mathcal{A}, (k_1, k_2, k_3)}^{\ast}(-1, 1, -1)$ by the equalities (\ref{dep3rel2}) and (\ref{odd}). 
	The non-star case of the equality (\ref{1, -1, -1}) is obtained by Proposition \ref{reverse prop} and the equality (\ref{-1, -1, 1}) and the non-star cases of the equalities (\ref{1, -1, -1 and -1, -1, 1}), (\ref{-1, 1, -1=0}), and (\ref{1, -1, 1}) are obtained by \cite[Proposition 7.6]{TZ}. 
	All star cases are obtained by Theorem \ref{MTC} (\ref{A_n formula}).
	Note that \cite[Theorem 3.1 (16)]{TZ} which is the corresponding formula to the star case of the equality (\ref{odd}) is incorrect.
\end{proof}
\begin{lemma}
	Let $k_1$ and $k_2$ be positive even integers. Then we have
	{\small \begin{align}
	&\text{\rm \pounds}_{\mathcal{A}_2, (k_1, k_2)}^{\ast}(-1, -1) = \left\{ \frac{(k_2-k_1)(2^{k_1+k_2}-1)}{2^{k_1+k_2+1}(k_1+k_2+2)}\binom{k_1+k_2+2}{k_1+1}-\frac{k_1+k_2}{2}\right\}\frac{B_{\mathbf{p}-k_1-k_2-1}}{k_1+k_2+1}\mathbf{p},
	\label{new Zhao}\\
	&\text{\rm \pounds}_{\mathcal{A}_2, (k_1, k_2)}^{\ast, \star}(-1, -1) = \left\{ \frac{(k_2-k_1)(2^{k_1+k_2}-1)}{2^{k_1+k_2+1}(k_1+k_2+2)}\binom{k_1+k_2+2}{k_1+1}+\frac{k_1+k_2}{2}\right\}\frac{B_{\mathbf{p}-k_1-k_2-1}}{k_1+k_2+1}\mathbf{p},
	\label{new Zhao star}
	\end{align}
}\begin{align}
	&\text{\rm \pounds}_{\mathcal{A}_2, \{ 1 \}^2}^{\ast}(-1, -1) = 2q_{\mathbf{p}}(2)^2-\left( 2q_{\mathbf{p}}(2)^3+\frac{1}{3}B_{\mathbf{p}-3}\right) \mathbf{p},
	\label{k_1=k_2=1}\\
	&\text{\rm \pounds}_{\mathcal{A}_2, \{ 1 \}^2}^{\ast, \star}(-1, -1) = 2q_{\mathbf{p}}(2)^2-\left( 2q_{\mathbf{p}}(2)^3-\frac{1}{3}B_{\mathbf{p}-3}\right) \mathbf{p},\label{k_1=k_2=1, star}\\
	&\text{\rm \pounds}_{\mathcal{A}_2, \{1\}^3}^{\ast}(-1, -1, -1) = -\frac{4}{3}q_{\mathbf{p}}(2)^3+\widehat{B}_{\mathbf{p}-3}-\frac{1}{2}\widehat{B}_{2\mathbf{p}-4}+2\left( q_{\mathbf{p}}(2)^4-q_{\mathbf{p}}(2)\widehat{B}_{\mathbf{p}-3} \right)\mathbf{p},\label{109}\\
	&\text{\rm \pounds}_{\mathcal{A}_2, \{1\}^3}^{\ast, \star}(-1, -1, -1) = -\frac{4}{3}q_{\mathbf{p}}(2)^3+\widehat{B}_{\mathbf{p}-3}-\frac{1}{2}\widehat{B}_{2\mathbf{p}-4}+2\left( q_{\mathbf{p}}(2)^4+q_{\mathbf{p}}(2)\widehat{B}_{\mathbf{p}-3} \right)\mathbf{p}\label{1102}.
	\end{align}
	\label{A_2 alt}\end{lemma}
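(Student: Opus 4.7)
The plan is to prove the six identities by combining, in each case, the harmonic (stuffle) product with the reversal formula (\ref{reverse A_2}), the values $\text{\rm \pounds}_{\mathcal{A}_2,m}(-1)$ from Proposition \ref{A_2 aux}, the $\mathcal{A}$-values collected in Lemma \ref{harmonic alt}, and the mod-$\mathbf{p}^3$ formula (\ref{FP1}) for $\text{\rm \pounds}_{\mathcal{A}_3,1}(-1)$. Throughout, write $q := q_{\mathbf{p}}(2)$ and $B := B_{\mathbf{p}-3}$.

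For (\ref{new Zhao}) and (\ref{new Zhao star}), set $P := \text{\rm \pounds}_{\mathcal{A}_2,(k_1,k_2)}^{\ast}(-1,-1)$ and $P' := \text{\rm \pounds}_{\mathcal{A}_2,(k_2,k_1)}^{\ast}(-1,-1)$. The harmonic product reads
\[
\text{\rm \pounds}_{\mathcal{A}_2,k_1}(-1)\,\text{\rm \pounds}_{\mathcal{A}_2,k_2}(-1) = P + P' + \zeta_{\mathcal{A}_2}(k_1+k_2).
\]
Since $k_1, k_2$ are even, both factors on the left lie in $\mathbf{p}\mathcal{A}_2$ by Proposition \ref{A_2 aux} (\ref{A_2, m, -1}), so the product vanishes and $P + P' = -\zeta_{\mathcal{A}_2}(k_1+k_2)$, given by Proposition \ref{FMZV's properties} (\ref{Zhou-Cai}). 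Applying (\ref{reverse A_2}) at $(t_1,t_2)=(-1,-1)$ yields $P - P' = \mathbf{p}\bigl(k_1\,\text{\rm \pounds}_{\mathcal{A},(k_2,k_1+1)}^{\ast}(-1,-1) + k_2\,\text{\rm \pounds}_{\mathcal{A},(k_2+1,k_1)}^{\ast}(-1,-1)\bigr)$, whose right-hand side is supplied by Lemma \ref{harmonic alt} (\ref{odd}) because $k_1+k_2+1$ is odd. The binomial identity $k_1\binom{w+1}{k_2} - k_2\binom{w+1}{k_2+1} = \frac{k_1-k_2}{w+2}\binom{w+2}{k_1+1}$ (with $w=k_1+k_2$) converts $P = \tfrac12((P+P')+(P-P'))$ into (\ref{new Zhao}), and (\ref{new Zhao star}) follows by adding the $n_1=n_2$ diagonal contribution $\zeta_{\mathcal{A}_2}(k_1+k_2)$.

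The case (\ref{k_1=k_2=1}) and (\ref{k_1=k_2=1, star}) is a direct computation from $\text{\rm \pounds}_{\mathcal{A}_2,1}(-1)^2 = 2\,\text{\rm \pounds}^{\ast}_{\mathcal{A}_2,(1,1)}(-1,-1) + \zeta_{\mathcal{A}_2}(2)$, substituting $\text{\rm \pounds}_{\mathcal{A}_2,1}(-1) = -2q + q^2\mathbf{p}$ (the mod-$\mathbf{p}^2$ reduction of (\ref{FP1})) and $\zeta_{\mathcal{A}_2}(2) = \tfrac{2}{3}B\mathbf{p}$. For the $\{1\}^3$ case (\ref{109}) and (\ref{1102}), the depth-three stuffle
\[
\text{\rm \pounds}_{\mathcal{A}_2,1}(-1)\cdot \text{\rm \pounds}^{\ast}_{\mathcal{A}_2,(1,1)}(-1,-1) = 3\,\text{\rm \pounds}^{\ast}_{\mathcal{A}_2,\{1\}^3}(-1,-1,-1) + \widetilde{\text{\rm \pounds}}_{\mathcal{A}_2,(2,1)}(-1) + \text{\rm \pounds}_{\mathcal{A}_2,(1,2)}(-1)
\]
requires a value not supplied by Proposition \ref{A_2 aux}, namely $\text{\rm \pounds}_{\mathcal{A}_2,(1,2)}(-1)$. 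I would recover it from the auxiliary stuffle $\zeta_{\mathcal{A}_2}(2)\cdot \text{\rm \pounds}_{\mathcal{A}_2,1}(-1) = \widetilde{\text{\rm \pounds}}_{\mathcal{A}_2,(2,1)}(-1) + \text{\rm \pounds}_{\mathcal{A}_2,(1,2)}(-1) + \text{\rm \pounds}_{\mathcal{A}_2,3}(-1)$, whose other three entries are supplied by (\ref{58}) and (\ref{A_2, m, -1}); substituting back and rewriting $qB\mathbf{p}$ as $-3q\widehat{B}_{\mathbf{p}-3}\mathbf{p}$ (via $\widehat{B}_{\mathbf{p}-3} = B/(\mathbf{p}-3)$ and $\mathbf{p}^2=0$) gives (\ref{109}). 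Finally, splitting $\sum_{n_1\geq n_2\geq n_3}$ along its three coincidence diagonals yields $\text{\rm \pounds}^{\ast,\star}_{\mathcal{A}_2,\{1\}^3}(-1,-1,-1) - \text{\rm \pounds}^{\ast}_{\mathcal{A}_2,\{1\}^3}(-1,-1,-1) = \widetilde{\text{\rm \pounds}}_{\mathcal{A}_2,(2,1)}(-1) + \text{\rm \pounds}_{\mathcal{A}_2,(1,2)}(-1) + \text{\rm \pounds}_{\mathcal{A}_2,3}(-1)$, which simplifies to $-\tfrac{4}{3}qB\mathbf{p}$ and gives (\ref{1102}).

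The main obstacle is the $\{1\}^3$ case: the mod-$\mathbf{p}^2$ lift of $\text{\rm \pounds}_{\mathcal{A}_2,(1,2)}(-1)$ genuinely requires the detour through the $\zeta_{\mathcal{A}_2}(2)$-stuffle, and one must verify that the mod-$p$ value $-\tfrac{1}{6}B$ recorded in Lemma \ref{harmonic alt} (\ref{1, -1, 1}) is compatible with the mod-$\mathbf{p}^2$ expression $\widehat{B}_{\mathbf{p}-3} - \tfrac12 \widehat{B}_{2\mathbf{p}-4}$ appearing in (\ref{109}); this relies on the Kummer-type congruence $\widehat{B}_{2\mathbf{p}-4} \equiv \widehat{B}_{\mathbf{p}-3} \equiv -\tfrac{1}{3}B \pmod{\mathbf{p}}$.
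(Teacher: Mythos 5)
Your argument is correct, and for the non-star halves it is genuinely different from what the paper does: the paper simply cites $(\ref{new Zhao})$, $(\ref{k_1=k_2=1})$, and $(\ref{109})$ from \cite[Lemma 3.1]{PPT} and \cite[Propositions 7.3 (100), 7.6 (117)]{TZ}, and imports the value of $\widetilde{\text{\rm \pounds}}_{\mathcal{A}_2,(2,1)}(-1)+\text{\rm \pounds}_{\mathcal{A}_2,(1,2)}(-1)$ from \cite[Proposition 7.3 (105), (106)]{TZ}, whereas you rederive all of these internally. Your symmetrization trick for $(\ref{new Zhao})$ — computing $P+P'$ from the stuffle (which collapses because $\text{\rm \pounds}_{\mathcal{A}_2,k_i}(-1)\in\mathbf{p}\mathcal{A}_2$ for even $k_i$) and $P-P'$ from the reversal formula $(\ref{reverse A_2})$ together with Lemma \ref{harmonic alt} $(\ref{odd})$ — checks out, including the binomial identity $k_1\binom{w+1}{k_2}-k_2\binom{w+1}{k_2+1}=\frac{k_1-k_2}{w+2}\binom{w+2}{k_1+1}$, and it makes the lemma self-contained modulo ingredients already established earlier in the paper. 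The depth-two and depth-three computations are also correct; in particular your auxiliary stuffle $\zeta_{\mathcal{A}_2}(2)\,\text{\rm \pounds}_{\mathcal{A}_2,1}(-1)=\widetilde{\text{\rm \pounds}}_{\mathcal{A}_2,(2,1)}(-1)+\text{\rm \pounds}_{\mathcal{A}_2,(1,2)}(-1)+\text{\rm \pounds}_{\mathcal{A}_2,3}(-1)$ reproduces exactly the combination $-\frac{3}{2}(2\widehat{B}_{\mathbf{p}-3}-\widehat{B}_{2\mathbf{p}-4})-\frac{4}{3}q_{\mathbf{p}}(2)B_{\mathbf{p}-3}\mathbf{p}$ that the paper takes from \cite{TZ}. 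For the star halves your diagonal decompositions coincide with the paper's. One small imprecision: you suggest the remaining entries of the auxiliary stuffle are supplied by $(\ref{58})$, but $(\ref{58})$ excludes $k_2=1$, so $\widetilde{\text{\rm \pounds}}_{\mathcal{A}_2,(2,1)}(-1)$ is \emph{not} individually available from Proposition \ref{A_2 aux}; fortunately only the sum $\widetilde{\text{\rm \pounds}}_{\mathcal{A}_2,(2,1)}(-1)+\text{\rm \pounds}_{\mathcal{A}_2,(1,2)}(-1)$ enters both the depth-three stuffle and the star/non-star relation, and your auxiliary stuffle delivers precisely that sum from $(\ref{Zhou-Cai})$, $(\ref{FP1})$, and $(\ref{A_2, m, -1})$, so the argument goes through. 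What your route buys is independence from three external citations; what the paper's route buys is brevity.
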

\begin{proof}
	The equalities (\ref{new Zhao}), (\ref{k_1=k_2=1}), and (\ref{109}) are \cite[Lemma 3.1]{PPT}, \cite[Proposition 7.3 (100)]{TZ}, and \cite[Proposition 7.6 (117)]{TZ}, respectively. The equalities (\ref{new Zhao star}), (\ref{k_1=k_2=1, star}), and (\ref{1102}) are obtained by the relations
	\begin{align*}
	&\text{\rm \pounds}_{\mathcal{A}_2, (k_1, k_2)}^{\ast, \star}(-1, -1) = \text{\rm \pounds}_{\mathcal{A}_2, (k_1, k_2)}^{\ast}(-1, -1) + \zeta_{\mathcal{A}_2}(k_1+k_2),\\
	&\text{\rm \pounds}_{\mathcal{A}_2, \{ 1 \}^2}^{\ast, \star}(-1, -1) = \text{\rm \pounds}_{\mathcal{A}_2, \{ 1 \}^2}^{\ast}(-1, -1) + \zeta_{\mathcal{A}_2}(2),
	\end{align*}
and	
\[
	\text{\rm \pounds}_{\mathcal{A}_2, \{1\}^3}^{\ast, \star}(-1, -1, -1) = \text{\rm \pounds}_{\mathcal{A}_2, \{1\}^3}^{\ast}(-1, -1, -1)+\widetilde{\text{\rm \pounds}}_{\mathcal{A}_2, (2,1)}(-1)+\text{\rm \pounds}_{\mathcal{A}_2, (1,2)}(-1)+\text{\rm \pounds}_{\mathcal{A}_2, 3}(-1),
	\]
	respectively. Here, note that 
	\[
	\widetilde{\text{\rm \pounds}}_{\mathcal{A}_2, (2,1)}(-1)+\text{\rm \pounds}_{\mathcal{A}_2, (1,2)}(-1) = -\frac{3}{2}\left( 2\widehat{B}_{\mathbf{p}-3}-\widehat{B}_{2\mathbf{p}-4} \right) - \frac{4}{3}q_{\mathbf{p}}(2)B_{\mathbf{p}-3}\mathbf{p}
	\]
	holds by \cite[Proposition 7.3 (105) and (106)]{TZ}.
\end{proof}
\begin{theorem}
	Let $m, k_1, k_2$, and $k_3$ be positive integers and $\bullet \in \{ \emptyset, \star\}$. Let $w=k_1+k_2$ and $w'=k_1+k_2+k_3$. Then we have the following equalities$:$
	\begin{equation}
	\text{\rm \pounds}_{\mathcal{A}, \{1\}^w}^{\ast}(\{1\}^{k_1-1}, 1/2, 2, \{1\}^{k_2-1})=(-1)^{k_1}\frac{1-2^{w-1}}{2^{w-1}}\binom{w}{k_1}\frac{B_{\mathbf{p}-w}}{w}
	\label{MV1'}\end{equation}
	if $w$ is odd,
	\begin{equation}
	\text{\rm \pounds}_{\mathcal{A}, \{ 1 \}^{w}}^{\ast , \star}(\{ 1 \}^{k_1-1}, 2, 1/2, \{ 1 \}^{k_2-1}) 
	=
	(-1)^{k_1}\frac{2^{w-1}-1}{2^{w-1}}\binom{w}{k_1}\frac{B_{\mathbf{p}-w}}{w}
	\label{MV1}\end{equation}
	if $w$ is odd,
	\begin{equation}
	\text{\rm \pounds}_{\mathcal{A}, \{1\}^w}^{\ast}(\{1\}^{k_1-1}, 1/2, 2, \{1\}^{k_2-1}) = 0
	\label{MV2'}\end{equation}
	if $w$ is even,
	\begin{equation}
	\text{\rm \pounds}_{\mathcal{A}, \{ 1 \}^{w}}^{\ast , \star}(\{ 1 \}^{k_1-1}, 2, 1/2, \{ 1 \}^{k_2-1}) 
	=
	-\frac{(2^{k_1-1}-1)(2^{k_2-1}-1)}{2^{w-3}k_1k_2}B_{\mathbf{p}-k_1}B_{\mathbf{p}-k_2}
	\label{MV1.5}\end{equation}
	if $w$ is even and $k_1, k_2 \geq 2$,
	\begin{equation}
	\text{\rm \pounds}_{\mathcal{A}, \{1\}^{m+1}}^{\ast, \star}(\{1\}^{m-1}, 2, 1/2)=\text{\rm \pounds}_{\mathcal{A}, \{1\}^{m+1}}^{\ast, \star}(2, 1/2, \{1\}^{m-1})=\frac{1-2^{m-1}}{2^{m-2}m}q_{\mathbf{p}}(2)B_{\mathbf{p}-m}
	\label{B to q 2}\end{equation}
	if $m$ is odd and $m \geq 3$,
	\begin{equation}
	\text{\rm \pounds}_{\mathcal{A}, \{1\}^w}^{\ast}(2, \{1\}^{k_1-2}, 1/2, 2, \{1\}^{k_2-1})=\frac{2^{w-1}-1}{2^{w-1}}\left\{\binom{w}{k_1}-1 \right\}\frac{B_{\mathbf{p}-w}}{w}
	\label{MV2.5}\end{equation}
	if $k_1\geq 3$ is odd and $k_2$ is even,
	\begin{equation}
	\text{\rm \pounds}_{\mathcal{A}, \{1\}^m}^{\ast}(2, 1/2, 2, \{1\}^{m-3})=\frac{(1-2^{m-1})(m^2-m+2)}{2^m}\frac{B_{\mathbf{p}-m}}{m}
	\label{MV2.7}\end{equation}
	if $m\geq 3$ is odd, 
	\begin{equation}
	\text{\rm \pounds}_{\mathcal{A}, \{ 1 \}^{w}}^{\ast , \star}( \{ 1 \}^{k_1-1}, 2, 1/2, \{ 1 \}^{k_2-2}, 2) = \frac{1-2^{w-1}}{2^{w-1}}\left\{ 1+(-1)^{k_1-1}\binom{w}{k_1}\right\} \frac{B_{\mathbf{p}-w}}{w}
	\label{MV2}\end{equation}
	if $w\geq 3$ is odd and $k_2 \geq 2$,
	\begin{equation}
	\text{\rm \pounds}_{\mathcal{A}, \{1\}^w}^{\ast}(\{1\}^{k_1-1}, 1/2, 2, \{1\}^{k_2-2}, 1/2)=\frac{1-2^{w-1}}{2^w}\left\{ \binom{w}{k_1}-1\right\} \frac{B_{\mathbf{p}-w}}{w}
	\label{reverse mult1}\end{equation}
	if $k_1$ is even and $k_2\geq 3$ is odd,
	\begin{equation}
	\text{\rm \pounds}_{\mathcal{A}, \{1\}^m}^{\ast}(\{1\}^{m-3}, 1/2, 2, 1/2) = \frac{(2^{m-1}-1)(m^2-m+2)}{2^{m+1}}\frac{B_{\mathbf{p}-m}}{m}
	\label{reverse mult2}\end{equation}
	if $m$ is odd and $m \geq 3$,
	\begin{equation}
	\text{\rm \pounds}_{\mathcal{A}, \{ 1 \}^{w}}^{\ast , \star}(1/2, \{ 1 \}^{k_1-2}, 2, 1/2, \{ 1 \}^{k_2-1})= \frac{2^{w-1}-1}{2^{w}}\left\{ 1+(-1)^{k_1}\binom{w}{k_1}\right\} \frac{B_{\mathbf{p}-w}}{w}
	\label{MV3}\end{equation}
	if $w\geq 3$ is odd and $k_1 \geq 2$,
	\begin{equation}
	\text{\rm \pounds}_{\mathcal{A}, \{1\}^m}^{\ast}(1, 2, \{1\}^{m-2})=-2\text{\rm \pounds}_{\mathcal{A}, \{1\}^m}^{\ast}(\{1\}^{m-2}, 1/2, 1) = \frac{(2^{m-1}-1)(m-1)}{2^{m-1}}\frac{B_{\mathbf{p}-m}}{m}
	\label{MV4'}\end{equation}
	if $m\geq 3$ is odd,
	\begin{equation}
	\text{\rm \pounds}_{\mathcal{A}, \{ 1 \}^m}^{\ast , \star}(\{ 1 \}^{m-2}, 2, 1) = -2\text{\rm \pounds}_{\mathcal{A}, \{ 1 \}^m}^{\ast , \star}( 1, 1/2, \{ 1 \}^{m-2}) =\frac{(2^{m-1}-1)(m-1)}{2^{m-1}}\frac{B_{\mathbf{p}-m}}{m}
	\label{MV4}\end{equation}
	if $m\geq 3$ is odd,
	{\scriptsize \begin{equation}
		\text{\rm \pounds}_{\mathcal{A}, (\{ 1 \}^{k_1+k_2-1}, 2, \{ 1 \}^{k_3-1})}^{\ast}(\{ 1 \}^{k_1-1}, 1/2, 2, \{ 1 \}^{k_2+k_3-2})
		=\frac{1}{2}\left\{ (-1)^{k_3}\binom{w'}{k_3}-\frac{1-2^{w'-1}}{2^{w'-1}}\binom{w'}{k_1}\right\} \frac{B_{\mathbf{p}-w'}}{w'}
		\label{MV5'''}\end{equation}
	}if $k_1$ is even and $k_2+k_3$ odd,
	{\scriptsize \begin{equation}
		\text{\rm \pounds}_{\mathcal{A}, (\{ 1 \}^{k_1+k_2-1}, 2, \{ 1 \}^{k_3-1})}^{\ast, \star}(\{ 1 \}^{k_1-1}, 2, 1/2, \{ 1 \}^{k_2+k_3-2})
		=\frac{1}{2}\left\{ (-1)^{k_3}\binom{w'}{k_3}-\frac{1-2^{w'-1}}{2^{w'-1}}\binom{w'}{k_1}\right\} \frac{B_{\mathbf{p}-w'}}{w'}
		\label{MV5}\end{equation}
	}if $k_1$ is even and $k_2+k_3$ odd,
	{\scriptsize \begin{equation}
		\text{\rm \pounds}_{\mathcal{A}, (\{ 1 \}^{k_1-1}, 2, \{ 1\}^{k_2+k_3-1})}^{\ast}(\{ 1 \}^{k_1+k_2-2}, 1/2, 2, \{ 1 \}^{k_3-1})
		=-\frac{1}{2}\left\{ (-1)^{k_1}\binom{w'}{k_1}-\frac{1-2^{w'-1}}{2^{w'-1}}\binom{w'}{k_3}\right\} \frac{B_{\mathbf{p}-w'}}{w'}
		\label{MV5''}\end{equation}
	}if $k_1+k_2$ odd and $k_3$ even,
	{\scriptsize \begin{equation}
		\text{\rm \pounds}_{\mathcal{A}, (\{ 1 \}^{k_1-1}, 2, \{ 1\}^{k_2+k_3-1})}^{\ast, \star}(\{ 1 \}^{k_1+k_2-2}, 2, 1/2, \{ 1 \}^{k_3-1})
		=-\frac{1}{2}\left\{ (-1)^{k_1}\binom{w'}{k_1}-\frac{1-2^{w'-1}}{2^{w'-1}}\binom{w'}{k_3}\right\} \frac{B_{\mathbf{p}-w'}}{w'}
		\label{MV5'}\end{equation}
	}if $k_1+k_2$ odd and $k_3$ even,
	{\small \begin{equation}
		\text{\rm \pounds}_{\mathcal{A}, \{1\}^{w'}}^{\ast}(\{1\}^{k_1-1}, 1/2, 2, \{1\}^{k_2-2}, 1/2, 2, \{1\}^{k_3-1})=\frac{1-2^{w'-1}}{2^{w'}}\left\{\binom{w'}{k_1}-\binom{w'}{k_3}\right\}\frac{B_{\mathbf{p}-w'}}{w'}
		\label{MV6'}\end{equation}	
	}if $k_1$ is even, $k_2$ odd, $k_3$ even, and $k_2>1$,
	{\small \begin{equation}
	\text{\rm \pounds}_{\mathcal{A}, \{ 1 \}^{w'}}^{\ast, \star}(\{ 1 \}^{k_1-1}, 2, 1/2, \{ 1 \}^{k_2-2}, 2, 1/2, \{ 1 \}^{k_3-1}) = \frac{1-2^{w'-1}}{2^{w'}}\left\{ \binom{w'}{k_3}-\binom{w'}{k_1} \right\} \frac{B_{\mathbf{p}-w'}}{w'}
	\label{MV6}\end{equation}
}if $k_1$ is even, $k_2$ odd, $k_3$ even, and $k_2>1$,
	\begin{equation}
	\text{\rm \pounds}_{\mathcal{A}, \{1\}^{w+1}}^{\ast}(\{1\}^{k_1-1}, 1/2, 1, 2, \{1\}^{k_2-1}) = \frac{1-2^w}{2^{w+1}}\left\{ \binom{w+1}{k_1}-\binom{w+1}{k_2}\right\}\frac{B_{\mathbf{p}-w-1}}{w+1}
	\label{MV7'}\end{equation}
	if $k_1$ and $k_2$ are even,
	\begin{equation}
	\text{\rm \pounds}_{\mathcal{A}, \{1\}^{w+1}}^{\ast, \star}(\{1\}^{k_1-1}, 2, 1, 1/2, \{1\}^{k_2-1}) = \frac{1-2^w}{2^{w+1}}\left\{ \binom{w+1}{k_2}-\binom{w+1}{k_1}\right\}\frac{B_{\mathbf{p}-w-1}}{w+1}
	\label{MV7}\end{equation}
	if $k_1$ and $k_2$ are even,
	\begin{align}
	&\text{\rm \pounds}_{\mathcal{A}, (1, 2)}^{\ast, \bullet}(2, 1/2) = -\text{\rm \pounds}_{\mathcal{A}, (2, 1)}^{\ast, \bullet}(2, 1/2) = q_{\mathbf{p}}(2)^3-\frac{7}{8}B_{\mathbf{p}-3},
	\label{(1,2) 2, 1/2}\\
	&\text{\rm \pounds}_{\mathcal{A}, (1, 2)}^{\ast, \bullet}(1/2, 2) = -\text{\rm \pounds}_{\mathcal{A}, (2, 1)}^{\ast, \bullet}(1/2, 2) = -\frac{7}{8}B_{\mathbf{p}-3},
	\label{(1, 2) 1/2, 2}\\
	&\text{\rm \pounds}_{\mathcal{A}, \{ 1 \}^3}^{\ast, \bullet}(2, 1, 1/2) = 0.
	\label{2, 1, 1/2=0}\end{align}
	\label{FHMP special values}\end{theorem}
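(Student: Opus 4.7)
The plan is to deduce each identity from the multi-variable functional equation (\ref{main functional equation}) of Theorem \ref{MTA} (reduced modulo $p$, so in $\mathcal{A}$, where the $\mathbf{p}$-correction vanishes) for the star cases, and from its non-star companion—arising by the same argument applied to Theorem \ref{theoremB} in place of Theorem \ref{theoremA}, or equivalently from Theorem \ref{MTC}\,(\ref{A_n formula})—for the non-star cases, together with the known evaluations of alternating FHMPs supplied by Lemma \ref{harmonic alt}. The central dictionary is the change of variables $\sigma_i=s_1s_2\cdots s_i$ (equivalently $s_i=\sigma_i/\sigma_{i-1}$ with $\sigma_0:=1$), which gives
\[
\text{\rm \pounds}^{\cyr sh,\star}_{\mathcal{A},\{1\}^w}(\sigma_1,\ldots,\sigma_w)=\text{\rm \pounds}^{\ast,\star}_{\mathcal{A},\{1\}^w}(s_1,\ldots,s_w).
\]
Under this dictionary, shuffle arguments $(\{1\}^{a},2,\{1\}^{b})$ and $(\{1\}^{a},2,\{1\}^{b},2,\{1\}^{c})$ become harmonic arguments $(\{1\}^{a},2,1/2,\{1\}^{b-1})$ and $(\{1\}^{a},2,1/2,\{1\}^{b-1},2,1/2,\{1\}^{c-1})$, which are exactly the patterns listed in the theorem.

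For the depth-$2$ identities (\ref{MV1'})--(\ref{MV2}), together with (\ref{MV2.5})--(\ref{MV4}), I will take $\Bbbk=(k_1,k_2)$ in (\ref{main functional equation}) and substitute $(t_1,t_2)\in\{-1,1\}^2$. Since $1-t_i\in\{0,2\}$, the arguments on the right-hand side lie in $\{0,2\}$; a $0$-entry either collapses a summand entirely (as $0^{n_w}=0$ for $n_w\geq 1$) or leaves an FSSMP of strictly smaller shape, and the survivors are, via the dictionary above, precisely the FHSMPs demanded by the theorem. The left-hand side evaluates to $\text{\rm \pounds}^{\ast,\star}_{\mathcal{A},(k_1,k_2)}(\pm 1,\pm 1)$, whose value is supplied by Lemma \ref{harmonic alt}\,(\ref{odd}), (\ref{even bullet}) or (\ref{B to q}) under the stated parity of $w=k_1+k_2$. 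This directly yields the star identities (\ref{MV1}), (\ref{MV1.5}), (\ref{B to q 2}), (\ref{MV2}). The non-star analogues (\ref{MV1'}), (\ref{MV2'}), (\ref{MV2.5}), (\ref{MV2.7}), (\ref{reverse mult1}), (\ref{reverse mult2}) come by the same specialization applied to Theorem \ref{theoremB}, or equivalently by combining Theorem \ref{MTC}\,(\ref{A_n formula}) with the star values just computed; the reversed identities (\ref{MV3}), (\ref{MV4'}), (\ref{MV4}), (\ref{MV7'}), (\ref{MV7}) then follow by Proposition \ref{reverse prop}.

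The depth-$3$ identities (\ref{MV5'''})--(\ref{MV6}) are handled in exactly the same way but with $\Bbbk=(k_1,k_2,k_3)$ in (\ref{main functional equation}) and $(t_1,t_2,t_3)\in\{-1,1\}^3$. The left-hand sides now become $\text{\rm \pounds}^{\ast,\star}_{\mathcal{A},(k_1,k_2,k_3)}(\varepsilon_1,\varepsilon_2,\varepsilon_3)$ with $\varepsilon\in\{(-1,-1,1),(-1,1,-1),(1,-1,-1)\}$, whose values are supplied by Lemma \ref{harmonic alt}\,(\ref{-1, -1, 1})--(\ref{1, -1, -1}) under the appropriate parity hypotheses on $k_1,k_2,k_3$. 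The weight-$3$ Fermat-quotient identities (\ref{(1,2) 2, 1/2})--(\ref{2, 1, 1/2=0}) are obtained by the same specialization, but with Lemma \ref{FP values} and Proposition \ref{Fermat quotient} as the evaluation input in place of Lemma \ref{harmonic alt}.

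The main obstacle is not conceptual but combinatorial: for each identity one must select the correct substitution $(t_1,\ldots,t_m)$ so that after the $\sigma\leftrightarrow s$ translation the surviving term on the right-hand side of (\ref{main functional equation}) matches the announced harmonic pattern, and in the non-star cases one must verify that the auxiliary summands from (\ref{A_n formula}) vanish. The latter typically happens because these summands involve either $\zeta^{\star}_{\mathcal{A}}(\{1\}^j)=0$ in $\mathcal{A}$ (Proposition \ref{FMZV's properties}\,(\ref{Zhou-Cai})) or a product of two Bernoulli numbers with opposite parities, one factor of which vanishes by $B_{2n+1}=0$ ($n\geq 1$); the stated parity restrictions on $w$ and $w'$ are chosen precisely so that these cancellations occur and so that the Bernoulli number in the answer is non-trivial.
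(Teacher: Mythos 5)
Your plan is essentially the paper's own proof: the star cases come from Theorem \ref{MTA}/Corollary \ref{MTB} specialized at $\pm 1$ and translated from FSSMP to FHSMP arguments exactly via your $\sigma\leftrightarrow s$ dictionary, with Lemma \ref{harmonic alt} (and Lemmas \ref{aux}, \ref{FP values}) as evaluation input; the non-star cases come from Theorem \ref{MTC}\,(\ref{A_n formula}) plus vanishing of the cross terms; and the remaining identities follow by Proposition \ref{reverse prop}. One caveat: your stated mechanism for killing the auxiliary summands (a factor $\zeta_{\mathcal{A}}^{\star}(\{1\}^j)=0$ or a Bernoulli-parity vanishing) does not cover (\ref{MV2.7}), where the two cross terms $-\text{\rm \pounds}_{\mathcal{A},1}(2)\,\text{\rm \pounds}^{\ast,\star}_{\mathcal{A},\{1\}^{m-1}}(\{1\}^{m-3},2,1/2)$ and $\text{\rm \pounds}^{\ast}_{\mathcal{A},\{1\}^2}(2,1/2)\,\widetilde{\text{\rm \pounds}}^{\star}_{\mathcal{A},\{1\}^{m-2}}(2)$ are individually nonzero (both proportional to $q_{\mathbf{p}}(2)^2B_{\mathbf{p}-m+2}$) and must be shown to cancel against each other, which requires the additional weight-two evaluation $\text{\rm \pounds}^{\ast}_{\mathcal{A},\{1\}^2}(2,1/2)=-2q_{\mathbf{p}}(2)^2$ in $\mathcal{A}$.
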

\begin{proof}
	First, we prove the star-cases. By Theorem \ref{MTA}, we have
	{\footnotesize \begin{equation}
	\text{\rm \pounds}_{\mathcal{A}, (k_1, k_2)}^{\cyr sh , \star}(s, t) = \text{\rm \pounds}_{\mathcal{A}, \{ 1 \}^{k_1+k_2}}^{\cyr sh , \star}(\{ 1 \}^{k_1-1}, 1-s, \{ 1 \}^{k_2-1}, 1-t)-\text{\rm \pounds}_{\mathcal{A}, \{ 1 \}^{k_1+k_2}}^{\cyr sh , \star}(\{ 1\}^{k_1-1}, 1-s, \{ 1 \}^{k_2}),
	\label{s,t}\end{equation}
}where $s$ and $t$ are indeterminates. If we substitute $-1$ (resp. $1$) for $s$ (resp. $t$) in  the equality (\ref{s,t}), then we see that
	\[
	(\text{L. H. S. of (\ref{s,t})}) = \text{\rm \pounds}_{\mathcal{A}, (k_1, k_2)}^{\cyr sh , \star}(-1, 1) = \text{\rm \pounds}_{\mathcal{A}, (k_1, k_2)}^{\ast , \star}(-1, -1)
	\]
	and
	{\small \[
	(\text{R. H. S. of (\ref{s,t})}) = - \text{\rm \pounds}_{\mathcal{A}, \{ 1 \}^{k_1+k_2}}^{\cyr sh , \star}(\{ 1 \}^{k_1-1}, 2, \{ 1\}^{k_2}) = - \text{\rm \pounds}_{\mathcal{A}, \{ 1 \}^{k_1+k_2}}^{\ast , \star}(\{ 1 \}^{k_1-1}, 2, 1/2, \{ 1\}^{k_2-1}) .
	\]
}Therefore, we obtain the equality (\ref{MV1}), (\ref{MV1.5}), and (\ref{B to q 2}) by Lemma \ref{harmonic alt} (\ref{odd}), (\ref{even bullet}), and (\ref{B to q}), respectively. Next, if we substitute $-1$ for $s$ and $t$ in the equality (\ref{s,t}), then we see that
	\[
	(\text{L. H. S. of (\ref{s,t})}) = \text{\rm \pounds}_{\mathcal{A}, (k_1, k_2)}^{\cyr sh , \star}(-1, -1) = \text{\rm \pounds}_{\mathcal{A}, (k_1, k_2)}^{\star}(-1)
	\]
	and
	\[
	(\text{R. H. S. of (\ref{s,t})}) = \text{\rm \pounds}_{\mathcal{A}, \{ 1 \}^{k_1+k_2}}^{\cyr sh , \star}(\{ 1\}^{k_1-1}, 2, \{ 1 \}^{k_2-1}, 2)- \text{\rm \pounds}_{\mathcal{A}, \{ 1\}^{k_1+k_2}}^{\cyr sh , \star}(\{ 1 \}^{k_1-1}, 2, \{ 1\}^{k_2}).
	\]
	Therefore, by combining Lemma \ref{aux} (\ref{k_1, k_2, -1, non-star}) and the equality (\ref{MV1}) which has obtained just before, we have the explicit value of $\text{\rm \pounds}_{\mathcal{A}, \{ 1 \}^{k_1+k_2}}^{\cyr sh , \star}(\{ 1\}^{k_1-1}, 2, \{ 1 \}^{k_2-1}, 2)$ if $k_1+k_2$ is odd. By translating FSSMP into FHSMP, we have the equality (\ref{MV2}) and the first value of (\ref{MV4}). The equality (\ref{MV3}) and the rest of (\ref{MV4}) are obtained by Proposition \ref{reverse prop}. By Corollary \ref{MTB}, we have the following equality:

	\[
	\text{\rm \pounds}_{\mathcal{A}, (k_1, k_2, k_3)}^{\cyr sh, \star}(-1, 1, 1) = -\text{\rm \pounds}_{\mathcal{A}, (\{ 1 \}^{k_1+k_2-1}, 2, \{ 1 \}^{k_3-1})}^{\cyr sh, \star}(\{ 1 \}^{k_1-1}, 2, \{ 1 \}^{k_2+k_3-1}).
	\]
	After translating FSSMPs into FHSMPs, we have the equality (\ref{MV5}) by Lemma \ref{harmonic alt} (\ref{-1, -1, 1}) when $k_1$ is even and $k_2+k_3$ odd and we have the explicit value of $\text{\rm \pounds}_{\mathcal{A}, (1, 2)}^{\ast, \star}(2, 1/2)$ by Lemma \ref{harmonic alt} (\ref{1, -1, -1 and -1, -1, 1 star}) when $k_1=k_2=k_3=1$. The equality (\ref{MV5'}) and the explicit value of $\text{\rm \pounds}_{\mathcal{A}, (2, 1)}^{\ast, \star}(2, 1/2)$ are obtained by Proposition \ref{reverse prop}. The star cases of the equality (\ref{(1, 2) 1/2, 2}) are obtained by the following relations:
	\[
	\text{\rm \pounds}_{\mathcal{A}, 2}(2)\text{\rm \pounds}_{\mathcal{A}, 1}(1/2) = \text{\rm \pounds}_{\mathcal{A}, (2, 1)}^{\ast, \star}(2, 1/2)+\text{\rm \pounds}_{\mathcal{A}, (1, 2)}^{\ast, \star}(1/2, 2)-\zeta_{\mathcal{A}}(3)
	\]
	and
	\[
	\text{\rm \pounds}_{\mathcal{A}, 1}(2)\text{\rm \pounds}_{\mathcal{A}, 2}(1/2) = \text{\rm \pounds}_{\mathcal{A}, (1, 2)}^{\ast, \star}(2, 1/2)+\text{\rm \pounds}_{\mathcal{A}, (2, 1)}^{\ast, \star}(1/2, 2)-\zeta_{\mathcal{A}}(3).
	\]
	By Theorem \ref{MTA}, we have the following equality:
	\[
	\text{\rm \pounds}_{\mathcal{A}, (k_1, k_2, k_3)}^{\cyr sh, \star}(-1, -1, 1) = -\text{\rm \pounds}_{\mathcal{A}, \{ 1 \}^{w'}}^{\cyr sh, \star}(\{ 1 \}^{k_1-1}, 2, \{ 1 \}^{k_2-1}, 2, \{ 1 \}^{k_3}).
	\]
	After translating FSSMPs into FHSMPs, we have the equalities (\ref{MV6}) and (\ref{MV7}) by Lemma \ref{harmonic alt} (\ref{-1, 1, -1}) when $k_1$ is even, $k_2$ odd, and $k_3$ even and we have the explicit value of $\text{\rm \pounds}_{\mathcal{A}, \{ 1 \}^3}^{\ast, \star}(2, 1, 1/2)$ by Lemma \ref{harmonic alt} (\ref{-1, 1, -1=0}) when $k_1=k_2=k_3=1$. 
	
	Next, we prove non-star cases. By Theorem \ref{MTC} (\ref{A_n formula}), we have
	\begin{equation}
	\begin{split}
	&(-1)^{w}\text{\rm \pounds}_{\mathcal{A}, \{1\}^w}^{\ast}(\{1\}^{k_1-1}, 1/2, 2, \{1\}^{k_2-1})\\
	&=-\text{\rm \pounds}_{\mathcal{A}, \{1\}^w}^{\ast, \star}(\{1\}^{k_2-1}, 2, 1/2, \{1\}^{k_1-1})-(-1)^{k_1}\widetilde{\text{\rm \pounds}}_{\mathcal{A}, \{1\}^{k_1}}(1/2)\widetilde{\text{\rm \pounds}}_{\mathcal{A}, \{1\}^{k_2}}^{\star}(2).
	\end{split}
	\end{equation}
	Suppose $k_1, k_2 \geq 2$. By Proposition \ref{special values in A} (\ref{easy gen of Sun}) and (\ref{gen of Sun}), we have
	\[
	\widetilde{\text{\rm \pounds}}_{\mathcal{A}, \{1\}^{k_1}}(1/2)\widetilde{\text{\rm \pounds}}_{\mathcal{A}, \{1\}^{k_2}}^{\star}(2)= -\frac{(2^{k_1-1}-1)(2^{k_2-1}-1)}{2^{w-3}k_1k_2}B_{\mathbf{p}-k_1}B_{\mathbf{p}-k_2}.
	\]
	If $w$ is odd, then 
	\[
	\text{\rm \pounds}_{\mathcal{A}, \{1\}^w}^{\ast}(\{1\}^{k_1-1}, 1/2, 2, \{1\}^{k_2-1})=\text{\rm \pounds}_{\mathcal{A}, \{1\}^w}^{\ast, \star}(\{1\}^{k_2-1}, 2, 1/2, \{1\}^{k_1-1})
	\]
	holds and if $w$ is even, then
	\[
	\begin{split}
	&\text{\rm \pounds}_{\mathcal{A}, \{1\}^w}^{\ast}(\{1\}^{k_1-1}, 1/2, 2, \{1\}^{k_2-1})\\
	&=\frac{(2^{k_1-1}-1)(2^{k_2-1}-1)}{2^{w-3}k_1k_2}B_{\mathbf{p}-k_1}B_{\mathbf{p}-k_2}+(-1)^{k_1}\frac{(2^{k_1-1}-1)(2^{k_2-1}-1)}{2^{w-3}k_1k_2}B_{\mathbf{p}-k_1}B_{\mathbf{p}-k_2}=0,
	\end{split}
	\]
	holds by the equality (\ref{MV1.5}). The case $k_1=1$ and $k_2=1$ are similar. Therefore we obtain the equalities (\ref{MV1'}) and (\ref{MV2'}). Suppose that $w$ is odd. By Theorem \ref{MTC} (\ref{A_n formula}), we have
	\begin{equation}
	\begin{split}
	&\text{\rm \pounds}_{\mathcal{A}, \{1\}^w}^{\ast}(2, \{1\}^{k_1-2}, 1/2, 2, \{1\}^{k_2-1})=\text{\rm \pounds}_{\mathcal{A}, \{1\}^w}^{\ast, \star}(\{1\}^{k_2-1}, 2, 1/2, \{1\}^{k_1-2}, 2)\\
	&\hspace{23.5mm}+\sum_{j=1}^{k_1-1}(-1)^j\text{\rm \pounds}_{\mathcal{A}, \{1\}^j}^{\ast}(2)\text{\rm \pounds}_{\mathcal{A}, \{1\}^{w-j}}^{\ast, \star}(\{1\}^{k_2-1}, 2, 1/2, \{1\}^{k_1-j-1})\\
	&\hspace{23.5mm}+(-1)^{k_1}\text{\rm \pounds}_{\mathcal{A}, \{1\}^{k_1}}^{\ast}(2, \{1\}^{k_1-2}, 1/2)\widetilde{\text{\rm \pounds}}_{\mathcal{A}, \{1\}^{k_2}}^{\star}(2).
	\end{split}
	\label{universe}\end{equation}
	Suppose that $k_2$ is even. $\widetilde{\text{\rm \pounds}}_{\mathcal{A}, \{1\}^{k_2}}^{\star}(2)=0$ holds by Proposition \ref{special values in A} (\ref{easy gen of Sun}). Furthermore, if $j$ is even, $\text{\rm \pounds}_{\mathcal{A}, \{1\}^j}(2)=0$ holds and if $j$ is odd, $\text{\rm \pounds}_{\mathcal{A}, \{1\}^{w-j}}^{\ast, \star}(\{1\}^{k_2-1}, 2, 1/2, \{1\}^{k_1-j-1})=0$ holds by the equality (\ref{MV1.5}). Hence, 
	\[
	\text{\rm \pounds}_{\mathcal{A}, \{1\}^w}^{\ast}(2, \{1\}^{k_1-2}, 1/2, 2, \{1\}^{k_2-1})=\text{\rm \pounds}_{\mathcal{A}, \{1\}^w}^{\ast, \star}(\{1\}^{k_2-1}, 2, 1/2, \{1\}^{k_1-2}, 2)
	\]
	holds and we have the equality (\ref{MV2.5}) by the equality (\ref{MV2}). If $m \geq 5$ is odd, we have
	\begin{equation*}
	\begin{split}
	&\text{\rm \pounds}_{\mathcal{A}, \{1\}^m}^{\ast}(2, 1/2, 2, \{1\}^{m-3})
	-\text{\rm \pounds}_{\mathcal{A}, \{1\}^m}^{\ast, \star}(\{1\}^{m-3}, 2, 1/2, 2)\\
	&=-\text{\rm \pounds}_{\mathcal{A}, 1}(2)\text{\rm \pounds}_{\mathcal{A}, \{1\}^{m-1}}^{\ast, \star}(\{1\}^{m-3}, 2, 1/2)+\text{\rm \pounds}_{\mathcal{A}, \{1\}^2}^{\ast}(2, 1/2)\widetilde{\text{\rm \pounds}}_{\mathcal{A}, \{1\}^{m-2}}^{\star}(2)\\
	&=-(-2q_{\mathbf{p}}(2))\frac{1-2^{m-3}}{2^{m-4}(m-2)}q_{\mathbf{p}}(2)B_{\mathbf{p}-m+2}+(-2q_{\mathbf{p}}(2)^2)\frac{1-2^{m-3}}{2^{m-4}}\frac{B_{\mathbf{p}-m+2}}{m-2}\\
	&=0
	\end{split}
	\end{equation*}
	by the equality (\ref{universe}), Lemma \ref{FP values} (\ref{A_3, 2}), the equality (\ref{B to q 2}), Theorem \ref{A_2 multiple thm} (\ref{110}) below, and Proposition \ref{special values in A} (\ref{easy gen of Sun}). The case $m=3$ is similar. Therefore, we have the equality (\ref{MV2.7}). The equalities (\ref{reverse mult1}) and (\ref{reverse mult2}) are obtained by Proposition \ref{reverse prop}. The equalities (\ref{MV4'}) are obtained by Corollary \ref{zeta corollary} (\ref{Tauraso-Zhao}) and the equality (\ref{MV4}).  Since the equality (\ref{MV5'''}) is obtained by Proposition \ref{reverse prop}, we prove the equality (\ref{MV5''}). Suppose that $k_1+k_2$ is odd and $k_3$ even. By Theorem \ref{MTC} (\ref{A_n formula}), we have
	{\scriptsize \begin{equation*}
	\begin{split}
	\text{\rm \pounds}_{\mathcal{A}, (\{1\}^{k_1-1}, 2, \{1\}^{k_2+k_3-1})}^{\ast}(\{1\}^{k_1+k_2-2}, 1/2, 2, \{1\}^{k_3-1})
	&=-\text{\rm \pounds}_{\mathcal{A}, (\{1\}^{k_2+k_3-1}, 2, \{1\}^{k_1-1})}^{\ast, \star}(\{1\}^{k_3-1}, 2, 1/2, \{1\}^{k_1+k_2-2})\\
	&\hspace{3.6mm}-(-1)^{k_1+k_2-1}\widetilde{\text{\rm \pounds}}_{\mathcal{A}, (\{1\}^{k_1-1}, 2, \{1\}^{k_2-1})}(1/2)\widetilde{\text{\rm \pounds}}_{\mathcal{A}, \{1\}^{k_3}}^{\star}(2).
	\end{split}
	\end{equation*}}
	Since $\widetilde{\text{\rm \pounds}}_{\mathcal{A}, \{1\}^{k_3}}^{\star}(2)=0$ by Proposition \ref{special values in A} (\ref{easy gen of Sun}), we have the equality (\ref{MV5''}) by the equality (\ref{MV5}). Suppose that $k_1$ is even, $k_2$ odd, $k_3$ even, and $k_2>1$. By Theorem \ref{MTC} (\ref{A_n formula}), we have
	{\footnotesize \begin{equation*}
	\begin{split}
	&\text{\rm \pounds}_{\mathcal{A}, \{1\}^{w'}}^{\ast}(\{1\}^{k_1-1}, 1/2, 2, \{1\}^{k_2-2}, 1/2, 2, \{1\}^{k_3-1})
	=\text{\rm \pounds}_{\mathcal{A}, \{1\}^{w'}}^{\ast, \star}(\{1\}^{k_3-1}, 2, 1/2, \{1\}^{k_2-2}, 2, 1/2, \{1\}^{k_1-1})\\
	&\hspace{9.5mm}+(-1)^{k_1}\widetilde{\text{\rm \pounds}}_{\mathcal{A}, \{1\}^{k_1}}(1/2)\text{\rm \pounds}_{\mathcal{A}, \{1\}^{k_2+k_3}}^{\ast, \star}(\{1\}^{k_3-1}, 2, 1/2, \{1\}^{k_2-2}, 2)\\
	&\hspace{9.5mm}+\sum_{j=0}^{k_2-2}(-1)^{k_1+j+1}\text{\rm \pounds}_{\mathcal{A}, \{1\}^{k_1+j+1}}^{\ast}(\{1\}^{k_1-1}, 1/2, 2, \{1\}^j)\text{\rm \pounds}_{\mathcal{A}, \{1\}^{k_2+k_3-j-1}}^{\ast, \star}(\{1\}^{k_3-1}, 2, 1/2, \{1\}^{k_2-j-2})\\
	&\hspace{9.5mm}+(-1)^{k_1+k_2}\text{\rm \pounds}_{\mathcal{A}, \{1\}^{k_1+k_2}}^{\ast}(\{1\}^{k_1-1}, 1/2, 2, \{1\}^{k_2-2}, 1/2)\widetilde{\text{\rm \pounds}}_{\mathcal{A}, \{1\}^{k_3}}^{\star}(2).
	\end{split}
	\end{equation*}
}For $j = 0, \dots, k_2-2$, if $j$ is odd, $\text{\rm \pounds}_{\mathcal{A}, \{1\}^{k_1+j+1}}^{\ast}(\{1\}^{k_1-1}, 1/2, 2, \{1\}^j)=0$ holds by the equality (\ref{MV2'}) and if $j$ is even, 
	\[
	\text{\rm \pounds}_{\mathcal{A}, \{1\}^{k_2+k_3-j-1}}^{\ast, \star}(\{1\}^{k_3-1}, 2, 1/2, \{1\}^{k_2-j-2}) = \text{(a certain element of $\mathcal{A}$)} \times B_{\mathbf{p}-k_3} =0
	\]
	holds by the equality (\ref{MV1.5}). Furthermore, $\widetilde{\text{\rm \pounds}}_{\mathcal{A}, \{1\}^{k_1}}(1/2)=\widetilde{\text{\rm \pounds}}_{\mathcal{A}, \{1\}^{k_3}}^{\star}(2)=0$ holds by Proposition \ref{special values in A} (\ref{easy gen of Sun}) and (\ref{gen of Sun}). Therefore we obtain the equality (\ref{MV6'}) by the equality (\ref{MV6}). Similarly, we see that the equality (\ref{MV7'}) holds. The non-star cases of the equalities (\ref{(1,2) 2, 1/2}), (\ref{(1, 2) 1/2, 2}), and (\ref{2, 1, 1/2=0}) are also obtained by the star cases of them.
\end{proof}
\begin{remark}
	The case $m=3$ of the equalities (\ref{MV4'})
	\begin{equation}
	\text{\rm \pounds}_{\mathcal{A}, \{ 1 \}^3}^{\ast}(1, 2, 1) = -2\text{\rm \pounds}_{\mathcal{A}, \{ 1\}^3}^{\ast}(1, 1/2, 1) =\frac{1}{2}B_{\mathbf{p}-3}
	\label{1^3 non-star}
	\end{equation}
	also have been obtained by Tauraso and J. Zhao \cite[Proposition 7.1 (85)]{TZ}.
	\label{TZ remark}\end{remark}
\begin{theorem}
	Let $k_1$ and $k_2$ be positive even integers and $w=k_1+k_2+1$. Then we have
	\begin{align}
	&\text{\rm \pounds}_{\mathcal{A}_2, \{1\}^{w-1}}^{\ast}(\{1\}^{k_1-1}, 1/2, 2, \{1\}^{k_2-1}) = - \frac{1}{2}\left\{1+\frac{2^{w-1}-1}{2^{w-1}}\binom{w}{k_2} \right\}\frac{B_{\mathbf{p}-w}}{w}\mathbf{p},\label{A_2 mult nonstar}\\
	&\text{\rm \pounds}_{\mathcal{A}_2, \{1\}^{w-1}}^{\ast, \star}(\{1\}^{k_1-1}, 2, 1/2, \{1\}^{k_2-1}) = \frac{1}{2}\left\{ 1+\frac{2^{w-1}-1}{2^{w-1}}\binom{w}{k_1} \right\} \frac{B_{\mathbf{p}-w}}{w}\mathbf{p},
	\label{A_2 multiple}\\
	&\text{\rm \pounds}_{\mathcal{A}_2, \{1\}^2}^{\ast}(2, 1/2) = -2q_{\mathbf{p}}(2)^2+\left( q_{\mathbf{p}}(2)^3-\frac{7}{8}B_{\mathbf{p}-3} \right) \mathbf{p},
	\label{110}\\
	&\text{\rm \pounds}_{\mathcal{A}_2, \{1\}^2}^{\ast, \star}(2, 1/2) = -2q_{\mathbf{p}}(2)^2+\left( q_{\mathbf{p}}(2)^3-\frac{5}{24}B_{\mathbf{p}-3} \right) \mathbf{p},
	\label{111}\\
	&\text{\rm \pounds}_{\mathcal{A}_2, \{1\}^2}^{\ast}(1/2, 2) = \frac{5}{24}B_{\mathbf{p}-3} \mathbf{p},
	\label{112}\\
	&\text{\rm \pounds}_{\mathcal{A}_2, \{1\}^2}^{\ast, \star}(1/2, 2) = \frac{7}{8}B_{\mathbf{p}-3} \mathbf{p}.
	\label{113}\
	\end{align}
	\label{A_2 multiple thm}\end{theorem}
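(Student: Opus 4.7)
The plan is to lift the strategy of Theorem \ref{FHMP special values} to $\mathcal{A}_2$, crucially using that $\mathbf{p}^2=0$ in $\mathcal{A}_2$, so that any product of two elements of $\mathbf{p}\mathcal{A}_2$ vanishes. First I would establish the star identity (\ref{A_2 multiple}) by applying Theorem \ref{MTA} to $\Bbbk=(k_1,k_2)$ in $\mathcal{A}_{2,{\mathbb Z}[t_1,t_2]}$ and substituting $t_1=-1$, $t_2=1$. As in the derivation of (\ref{MV1}) inside Theorem \ref{FHMP special values}, the substitution $t_2=1$ kills the first term on the right-hand side of MTA (since the last FSSMP argument becomes $0$ while the summation index remains $\ge 1$), so that after translating FSSMP into FHSMP the equation expresses the target $\text{\rm \pounds}^{\ast,\star}_{\{1\}^{w-1}}(\{1\}^{k_1-1},2,1/2,\{1\}^{k_2-1})$ in terms of $\text{\rm \pounds}^{\ast,\star}_{(k_1,k_2)}(-1,-1)$ together with a $\mathbf{p}$-correction involving $\text{\rm \pounds}^{\ast,\star}_{(1,k_1,k_2)}(1,-1,-1)$ and $\text{\rm \pounds}^{\ast,\star}_{(k_1+1,k_2)}(-1,-1)$. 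Inserting Lemma \ref{A_2 alt} (\ref{new Zhao star}), Lemma \ref{harmonic alt} (\ref{1, -1, -1}), and Lemma \ref{harmonic alt} (\ref{odd}), and then simplifying via the standard binomial identities $(k_1+1)\binom{w+1}{k_1+1}=(w+1)\binom{w}{k_1}$ and $(k_1+1)\binom{w}{k_1+1}=(k_2+1)\binom{w}{k_1}$ yields (\ref{A_2 multiple}).

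Next I would derive the non-star identity (\ref{A_2 mult nonstar}) by applying Theorem \ref{MTC} (\ref{A_n formula}) to $\Bbbk=\{1\}^{w-1}$ with arguments $(\{1\}^{k_1-1},1/2,2,\{1\}^{k_2-1})$. The endpoint $j=w-1$ gives the desired $\text{\rm \pounds}^{\ast}$ (with positive sign since $w-1=k_1+k_2$ is even), and the endpoint $j=0$ gives the value of (\ref{A_2 multiple}) with $k_1\leftrightarrow k_2$, just established. For every intermediate $j\in\{1,\dots,w-2\}$ I claim that the corresponding product vanishes in $\mathcal{A}_2$: split into the ranges $1\le j\le k_1-1$, $j=k_1$, $j=k_1+1$, and $k_1+2\le j\le w-2$, and invoke Proposition \ref{FMZV's properties} (\ref{Zhou-Cai}), Theorem \ref{A_2 theorem}, and Theorem \ref{FHMP special values} (\ref{MV1.5}), together with the vanishing $B_{p-n}=0$ for $n\ge 3$ odd, to check that in each range either both factors lie in $\mathbf{p}\mathcal{A}_2$ (so the product is $0$ by $\mathbf{p}^2=0$) or one of the factors is already $0$ in $\mathcal{A}_2$ by parity. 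Only the two endpoints then survive, giving (\ref{A_2 mult nonstar}).

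Finally, for the weight-two values (\ref{110})--(\ref{113}), I would repeat the Step-1 argument with $\Bbbk=(1,1)$ and $(t_1,t_2)=(-1,1)$; the required inputs are $\text{\rm \pounds}^{\ast,\star}_{(1,1)}(-1,-1)$ from Lemma \ref{A_2 alt} (\ref{k_1=k_2=1, star}), $\text{\rm \pounds}^{\ast,\star}_{\{1\}^3}(1,-1,-1)$ from Lemma \ref{harmonic alt} (\ref{1, -1, -1 and -1, -1, 1 star}), and $\text{\rm \pounds}^{\ast,\star}_{(2,1)}(-1,-1)$ from Lemma \ref{harmonic alt} (\ref{odd}); this produces (\ref{111}). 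Then (\ref{110}) follows from (\ref{111}) by subtracting $\zeta_{\mathcal{A}_2}(2)$ (the depth-two star/non-star relation), (\ref{112}) follows from (\ref{110}) via the stuffle identity $\text{\rm \pounds}_1(2)\text{\rm \pounds}_1(1/2)=\text{\rm \pounds}^{\ast}_{(1,1)}(2,1/2)+\text{\rm \pounds}^{\ast}_{(1,1)}(1/2,2)+\zeta_{\mathcal{A}_2}(2)$ combined with Lemma \ref{FP values}, and (\ref{113}) follows from (\ref{112}) by adding $\zeta_{\mathcal{A}_2}(2)$. The main obstacle is the case analysis in Step 2: one must systematically track which factor supplies the $\mathbf{p}$-divisibility in each $j$-range, and in particular handle the asymmetry that some factors vanish outright in $\mathcal{A}_2$ (via $B_{p-n}=0$) rather than merely lying in $\mathbf{p}\mathcal{A}_2$.
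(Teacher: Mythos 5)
Your proposal follows the paper's own proof essentially verbatim: (\ref{A_2 multiple}) is obtained from Theorem \ref{MTA} at $(t_1,t_2)=(-1,1)$ with exactly the inputs (\ref{new Zhao star}), (\ref{1, -1, -1}), and the star case of (\ref{odd}); (\ref{A_2 mult nonstar}) from Theorem \ref{MTC} with the same term-by-term vanishing analysis of the intermediate $j$ (the paper additionally cites (\ref{MV2'}) for the non-star factors in the range $j>k_1+1$, which you should add to your list of inputs); and (\ref{110})--(\ref{113}) from the $\Bbbk=(1,1)$ case of Theorem \ref{MTA} together with the same stuffle relations and Lemma \ref{FP values}. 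The only blemish is a transposed parity in your stated vanishing criterion: what is needed is $B_m=0$ for odd $m\geq 3$, hence $B_{\mathbf{p}-n}=0$ when $n$ is \emph{even} (e.g.\ $B_{\mathbf{p}-k_2}$ with $k_2$ even), not when $n$ is odd.
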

\begin{proof}
	By Theorem \ref{MTA}, we have
	\begin{equation*}
	\begin{split}
	&\text{\rm \pounds}_{\mathcal{A}_2, (k_1, k_2)}^{\ast, \star}(-1, -1) + \Bigl(\text{\rm \pounds}_{\mathcal{A}_2, (1, k_1, k_2)}^{\ast, \star}(1, -1, -1)-\text{\rm \pounds}_{\mathcal{A}_2, (k_1+1, k_2)}^{\ast, \star}(-1, -1) \Bigr) \mathbf{p}\\ 
	&=-\text{\rm \pounds}_{\mathcal{A}_2, \{1\}^{k_1+k_2}}^{\ast, \star}(\{1\}^{k_1-1}, 2, 1/2, \{1\}^{k_2-1}).
	\end{split}
	\end{equation*}
	Therefore the equality (\ref{A_2 multiple}) is obtained by Lemma \ref{harmonic alt} (\ref{odd}), (\ref{1, -1, -1}), and Lemma \ref{A_2 alt} (\ref{new Zhao star}). By Theorem \ref{MTC} (\ref{A_n formula}), we have
	\begin{equation*}
	\begin{split}
	&\text{\rm \pounds}_{\mathcal{A}_2, \{1\}^{w-1}}^{\ast}(\{1\}^{k_1-1}, 1/2, 2, \{1\}^{k_2-1})
	=-\text{\rm \pounds}_{\mathcal{A}_2, \{1\}^{w-1}}^{\ast, \star}(\{1\}^{k_2-1}, 2, 1/2, \{1\}^{k_1-1})\\
	&\hspace{35mm}-\sum_{j=1}^{k_1-1}(-1)^j\zeta_{\mathcal{A}_2}(\{1\}^j)\text{\rm \pounds}_{\mathcal{A}_2, \{1\}^{w-j-1}}^{\ast, \star}(\{1\}^{k_2-1}, 2, 1/2, \{1\}^{k_1-j-1})\\
	&\hspace{35mm}-(-1)^{k_1}\widetilde{\text{\rm \pounds}}_{\mathcal{A}_2, \{1\}^{k_1}}(1/2)\widetilde{\text{\rm \pounds}}_{\mathcal{A}_2, \{1\}^{k_2}}^{\star}(2)\\
	&\hspace{35mm}-\sum_{i=0}^{k_2-2}(-1)^{k_1+i+1}\text{\rm \pounds}_{\mathcal{A}_2, \{1\}^{k_1+i+1}}^{\ast}(\{1\}^{k_1-1}, 1/2, 2, \{1\}^i)\zeta_{\mathcal{A}_2}^{\star}(\{1\}^{k_2-i-1}).
	\end{split}
	\end{equation*}
	For $j=1, \dots, k_1-1$, if $j$ is odd, then $\zeta_{\mathcal{A}_2}(\{1\}^j)=0$ and if $j$ is even, then \[
	\zeta_{\mathcal{A}_2}(\{1\}^j)\text{\rm \pounds}_{\mathcal{A}_2, \{1\}^{w-j-1}}^{\ast, \star}(\{1\}^{k_2-1}, 2, 1/2, \{1\}^{k_1-j-1})=(\text{a certain element of $\mathcal{A}_2$}) \times B_{\mathbf{p}-k_2}=0
	\] by Proposition \ref{FMZV's properties} (\ref{Zhou-Cai}) and Theorem \ref{FHMP special values} (\ref{MV1.5}). By Theorem \ref{A_2 theorem}, we have
	\[
	\widetilde{\text{\rm \pounds}}_{\mathcal{A}_2, \{1\}^{k_1}}(1/2)\widetilde{\text{\rm \pounds}}_{\mathcal{A}_2, \{1\}^{k_2}}^{\star}(2)=0.
	\]
	For $i=0, \dots, k_2-2$, if $i$ is even, then $\zeta_{\mathcal{A}_2}^{\star}(\{1\}^{k_2-i-1})=0$ holds and if $i$ is odd, then 
	\[
	\text{\rm \pounds}_{\mathcal{A}_2, \{1\}^{k_1+i+1}}^{\ast}(\{1\}^{k_1-1}, 1/2, 2, \{1\}^i)\zeta_{\mathcal{A}_2}^{\star}(\{1\}^{k_2-i-1})=0
	\]
	by Proposition \ref{FMZV's properties} (\ref{Zhou-Cai}) and Theorem \ref{FHMP special values} (\ref{MV2'}). Therefore, we have the equality (\ref{A_2 mult nonstar}) by the equality (\ref{A_2 multiple}).
	We also have the equality (\ref{111}) by applying Theorem \ref{MTA}. The equalities (\ref{110}), (\ref{112}), and (\ref{113}) are obtained by the following relations:
	\begin{align*}
	&\text{\rm \pounds}_{\mathcal{A}_2, \{1\}^2}^{\ast, \star}(2, 1/2) = \text{\rm \pounds}_{\mathcal{A}_2, \{1\}^2}^{\ast}(2, 1/2) + \zeta_{\mathcal{A}_2}(2),\\
	&\text{\rm \pounds}_{\mathcal{A}_2, 1}(2)\text{\rm \pounds}_{\mathcal{A}_2, 1}(1/2) = \text{\rm \pounds}_{\mathcal{A}_2, \{1\}^2}^{\ast}(2, 1/2)+\text{\rm \pounds}_{\mathcal{A}_2, \{1\}^2}^{\ast}(1/2, 2)+\zeta_{\mathcal{A}_2}(2),\\
	&\text{\rm \pounds}_{\mathcal{A}_2, 1}(2)\text{\rm \pounds}_{\mathcal{A}_2, 1}(1/2) = \text{\rm \pounds}_{\mathcal{A}_2, \{1\}^2}^{\ast, \star}(2, 1/2)+\text{\rm \pounds}_{\mathcal{A}_2, \{1\}^2}^{\ast, \star}(1/2, 2)-\zeta_{\mathcal{A}_2}(2).
	\end{align*}
\end{proof}
\subsection{Relation between Ono-Yamamoto's FMPs and our FMPs}
\label{subsec:Relation between Ono-Yamamoto's FMPs and our FMPs}
Ono and Yamamoto gave another definition of finite multiple polylogarithms in the paper \cite{OY}. Their purpose is to establish the shuffle relation of FMPs (\cite[Theorem 3.6]{OY}). In this subsection, we give the relation between Ono-Yamamoto's FMPs and our FMPs. Furthermore, we calculate some special values of Ono-Yamamoto's FMPs.
\begin{definition}
	Let $\Bbbk = (k_1, \dots , k_m)$ be an index. Then {\em Ono-Yamamoto's finite multiple polylogarithm} $\li_{\Bbbk}(t) \in \mathcal{A}_{{\mathbb Z} [t]}$ is defined by
	\[
	\li_{\Bbbk}(t) := \Biggl( \sideset{}{'}\sum\limits_{0<l_1, \dots , l_m<p} \frac{t^{l_1+\cdots +l_m}}{l_1^{k_1}(l_1+l_2)^{k_2}\cdots (l_1+\cdots +l_m)^{k_m}} \Biggr)_p,
	\]
	where the summation $\sum'$ runs over only fractions whose denominators are prime to $p$. 
\end{definition}
By the substitution $l_i \mapsto p-l_i$, we have
\begin{equation}
\li_{\Bbbk}(r) = (-1)^{\wt (\Bbbk )} r^{\dep (\Bbbk )}\li_{\Bbbk}(r^{-1})
\label{reverse for OY}\end{equation}
for any non-zero rational number $r$.

We prepare the following notations to discuss the relation between Ono-Yamamoto's FMPs and our FMPs (cf. \cite[Section 2]{OY}):
\[
[l]:= \{ 1, \dots , l \} ,
\]
\[
\Phi_{m, l} := \{ \phi \colon [m] \to [l] : \text{surjective} \mid \phi (a) \neq \phi (a+1) \ \text{for all} \ a \in [m-1] \} ,
\]
\[
m_{\phi}:= l \ \text{when} \ \phi \in \Phi_{m, l},
\]
\[
\Phi_m := \bigsqcup_{l=1}^m \Phi_{m, l}, \ \delta_{\phi}(i) := \# \{ a \in [i-1] \mid \phi (a) > \phi (a+1) \} \ \text{for $\phi$ in $\Phi_m$},
\]
\[
\beta \colon \Phi_m \to [m] \ \text{is defined by} \ \beta (\phi) := \delta_{\phi}(m)+1, \ \Phi_m^i := \beta^{-1}(i), 
\]
where $i, l$, and $m$ are positive integers.
\begin{proposition}
	Let $\Bbbk = (k_1, \dots , k_m)$ be an index. Then we have
	\begin{equation}
	\li_{\Bbbk}(t) = \sum_{i=1}^m t^{(i-1)\mathbf{p}} \sum_{\phi \in \Phi_{m}^i} \text{\rm \pounds}_{\mathcal{A}, (\sum_{\phi (j)=m_{\phi}}k_j, \dots , \sum_{\phi(j) = 1}k_j)}^{\ast}(\{ 1 \}^{m_{\phi}-\phi (m)}, t , \{ 1 \}^{\phi (m)-1}).
	\label{OYSS}\end{equation}
	\label{OY-SS}\end{proposition}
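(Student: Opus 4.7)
The plan is to reparametrize the defining sum for $\li_{\Bbbk}(t)$ by the ``shape'' of its partial sums modulo $p$. For each admissible tuple $(l_1,\dots,l_m)$ with $0<l_j<p$ and $N_j:=l_1+\cdots+l_j\not\equiv 0\pmod{p}$, write $N_j=n_j+i_jp$ with $n_j\in\{1,\dots,p-1\}$ and $i_j\in\mathbb{Z}_{\ge 0}$. Since $0<l_{j+1}<p$, one immediately checks that $i_1=0$ and $i_{j+1}-i_j\in\{0,1\}$, with $i_{j+1}=i_j$ iff $n_{j+1}>n_j$ and $i_{j+1}=i_j+1$ iff $n_{j+1}<n_j$; the case $n_{j+1}=n_j$ is excluded by $l_{j+1}\not\equiv 0\pmod{p}$. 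Let $M_1>M_2>\cdots>M_l$ enumerate the distinct values occurring among $n_1,\dots,n_m$, and define $\phi\colon[m]\to[l]$ by $n_j=M_{l+1-\phi(j)}$; that is, $\phi(j)$ is the rank of $n_j$ counted from below. Then $\phi$ is surjective with $\phi(j)\ne\phi(j+1)$, so $\phi\in\Phi_{m,l}$, and conversely any pair $(\phi\in\Phi_{m,l},\,M_1>\cdots>M_l$ in $\{1,\dots,p-1\})$ uniquely recovers an admissible tuple via $n_j:=M_{l+1-\phi(j)}$, $i_1:=0$, and the recursive rule above, giving a bijection.

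Under this bijection the total wrap-around count equals
\[
i_m=\#\{a\in[m-1]:n_{a+1}<n_a\}=\#\{a\in[m-1]:\phi(a)>\phi(a+1)\}=\delta_\phi(m),
\]
so $\beta(\phi)=i_m+1$, and the admissible tuples with $i_m=i-1$ are parametrized by $\phi\in\Phi_m^i$ together with $M_1>\cdots>M_{m_\phi}>0$ in $\{1,\dots,p-1\}$. I would then rewrite the summand: in $\mathcal{A}_{\mathbb{Z}[t]}$ the $t$-exponent factors as $t^{N_m}=t^{n_m}(t^p)^{i_m}=t^{M_{l+1-\phi(m)}}\cdot t^{(i-1)\mathbf{p}}$, so the ``$t$-placement'' on the $M$-variables is exactly the tuple $(\{1\}^{m_\phi-\phi(m)},t,\{1\}^{\phi(m)-1})$ prescribed by the proposition, while the denominator rearranges as
\[
\prod_{j=1}^m n_j^{k_j}=\prod_{a=1}^l M_{l+1-a}^{\sum_{\phi(j)=a}k_j}=\prod_{b=1}^{m_\phi}M_b^{\sum_{\phi(j)=m_\phi+1-b}k_j},
\]
yielding the index $\bigl(\sum_{\phi(j)=m_\phi}k_j,\dots,\sum_{\phi(j)=1}k_j\bigr)$ exactly as in (\ref{OYSS}).

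Finally, letting $M_1>\cdots>M_{m_\phi}>0$ range over all such strict chains identifies the inner sum with the FHMP $\text{\rm \pounds}_{\mathcal{A},(\sum_{\phi(j)=m_\phi}k_j,\dots,\sum_{\phi(j)=1}k_j)}^{\ast}(\{1\}^{m_\phi-\phi(m)},t,\{1\}^{\phi(m)-1})$, and summing over $\phi\in\Phi_m^i$ and then over $i\in[m]$ reproduces the right-hand side of (\ref{OYSS}). The argument is purely combinatorial; the main bookkeeping obstacle is the index reversal between the decreasing ordering $M_1>\cdots>M_{m_\phi}$ and the ``rank from below'' convention encoded in $\phi$, which is responsible for the $l+1-\phi(j)$ shift and for the reversal of the collection $(k_j)$ appearing in the resulting index.
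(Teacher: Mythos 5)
Your argument is correct and complete: the reparametrization of the admissible tuples $(l_1,\dots,l_m)$ by the residues $n_j$ of the partial sums together with the carry counts $i_j$, the identification of the carry pattern with $\delta_\phi$ (hence $\beta(\phi)=i_m+1$), and the bookkeeping of the $t$-exponent $t^{n_m}(t^p)^{i_m}$ and of the exponents $\sum_{\phi(j)=a}k_j$ all check out. This is exactly the combinatorial decomposition that the paper invokes without reproducing it (it defers to Ono--Yamamoto's Proposition 2.4), so you have simply written out in full the proof the authors omit.
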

\begin{proof}
	We omit the proof because it is completely the same as the proof of \cite[Proposition 2.4]{OY}.
\end{proof}
\begin{corollary}
	Let $k_1, k_2$, and $k_3$ be positive integers. Then we have
	\begin{equation}
	\li_{(k_1, k_2)}(t) = \text{\rm \pounds}_{\mathcal{A}, (k_2, k_1)}(t)+t^{\mathbf{p}}\widetilde{\text{\rm \pounds}}_{\mathcal{A}, (k_1, k_2)}(t),
	\label{OY2}\end{equation}
	\begin{equation}
	\begin{split}
	\li_{(k_1, k_2, k_3)}(t) &= \text{\rm \pounds}_{\mathcal{A}, (k_3, k_2, k_1)}(t)+t^{\mathbf{p}} \text{\rm \pounds}_{\mathcal{A}, (k_3, k_1, k_2)}(t) + t^{\mathbf{p}}\widetilde{\text{\rm \pounds}}_{\mathcal{A}, (k_2, k_1, k_3)}(t)\\
	&\hspace{4mm} +t^{\mathbf{p}}\text{\rm \pounds}_{\mathcal{A}, (k_2, k_3, k_1)}^{\ast}(1, t, 1) + t^{\mathbf{p}}\text{\rm \pounds}_{\mathcal{A}, (k_1, k_3, k_2)}^{\ast}(1, t, 1)\\
	&\hspace{4mm} + t^{\mathbf{p}}\text{\rm \pounds}_{\mathcal{A}, (k_1+k_3, k_2)}(t) + t^{\mathbf{p}}\widetilde{\text{\rm \pounds}}_{\mathcal{A}, (k_2, k_1+k_3)}(t) +t^{2\mathbf{p}}\widetilde{\text{\rm \pounds}}_{\mathcal{A}, (k_1, k_2, k_3)}(t) .
	\end{split}
	\label{OY3}\end{equation}
	\label{OY k_1, k_2}\end{corollary}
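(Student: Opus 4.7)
The plan is to derive both formulas (\ref{OY2}) and (\ref{OY3}) as direct specializations of Proposition \ref{OY-SS}; no further ingredient is needed, so the work reduces to a combinatorial enumeration of the set $\Phi_m$ for $m=2,3$, the computation of the descent count $\beta(\phi)-1=\delta_{\phi}(m)$ for each $\phi$, and a translation of each resulting FHMP into the $\text{\rm \pounds}$- or $\widetilde{\text{\rm \pounds}}$-notation.

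For $m=2$, I would first observe that $\Phi_{2,1}$ is empty, since the only map $[2]\to [1]$ sends $1$ and $2$ both to $1$ and hence violates $\phi(a)\neq \phi(a+1)$. Therefore $\Phi_2=\Phi_{2,2}$ consists of exactly the identity $\phi=(1,2)$ and the transposition $\phi=(2,1)$; the former lies in $\Phi_2^1$ (no descent) and the latter in $\Phi_2^2$ (one descent). Reading off $m_{\phi}$, $\phi(m)$, and the index $(\sum_{\phi(j)=m_{\phi}}k_j,\dots,\sum_{\phi(j)=1}k_j)$ in each case and plugging these into the formula of Proposition \ref{OY-SS} gives immediately the two terms on the right-hand side of (\ref{OY2}).

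For $m=3$ the enumeration is longer but equally mechanical. The set $\Phi_3$ contains the two ``alternating'' maps $(1,2,1),(2,1,2)\in\Phi_{3,2}$ and the six permutations of $\{1,2,3\}$ in $\Phi_{3,3}$, eight elements in total. Computing the descent count $\delta_{\phi}(3)$ for each one, I would sort them into
\[
\Phi_3^1=\{(1,2,3)\},\quad \Phi_3^2=\{(1,2,1),(2,1,2),(1,3,2),(2,1,3),(2,3,1),(3,1,2)\},\quad \Phi_3^3=\{(3,2,1)\}.
\]
Substituting each $\phi$ into the formula of Proposition \ref{OY-SS}, the term coming from $(1,2,3)$ produces $\text{\rm \pounds}_{\mathcal{A},(k_3,k_2,k_1)}(t)$, the term from $(3,2,1)$ produces $t^{2\mathbf{p}}\widetilde{\text{\rm \pounds}}_{\mathcal{A},(k_1,k_2,k_3)}(t)$, and the six elements of $\Phi_3^2$ produce precisely the six terms with prefactor $t^{\mathbf{p}}$ appearing in (\ref{OY3}), in one-to-one correspondence as follows: $(1,2,1)\mapsto \widetilde{\text{\rm \pounds}}_{\mathcal{A},(k_2,k_1+k_3)}(t)$, $(2,1,2)\mapsto \text{\rm \pounds}_{\mathcal{A},(k_1+k_3,k_2)}(t)$, $(2,1,3)\mapsto \text{\rm \pounds}_{\mathcal{A},(k_3,k_1,k_2)}(t)$, $(2,3,1)\mapsto \widetilde{\text{\rm \pounds}}_{\mathcal{A},(k_2,k_1,k_3)}(t)$, $(1,3,2)\mapsto \text{\rm \pounds}_{\mathcal{A},(k_2,k_3,k_1)}^{\ast}(1,t,1)$, and $(3,1,2)\mapsto \text{\rm \pounds}_{\mathcal{A},(k_1,k_3,k_2)}^{\ast}(1,t,1)$.

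The essentially only obstacle is the bookkeeping: since the index in Proposition \ref{OY-SS} is listed in \emph{decreasing} order of $\phi(j)$ while the argument pattern $(\{1\}^{m_{\phi}-\phi(m)},t,\{1\}^{\phi(m)-1})$ places $t$ according to the position $\phi(m)$, one must be careful to match each surjection $\phi$ to the correct term on the right-hand side. No substantive new idea beyond Proposition \ref{OY-SS} is required.
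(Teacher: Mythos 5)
Your proposal is correct and follows exactly the paper's route: the corollary is stated as an immediate specialization of Proposition \ref{OY-SS} to $m=2$ and $m=3$, and your enumeration of $\Phi_2$ and $\Phi_3$, the descent counts $\beta(\phi)$, and the resulting eight terms (including the matching of each surjection to its term in (\ref{OY3})) all check out against the definitions.
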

In general, it is difficult to calculate each term in the right hand side of (\ref{OYSS}) for $1<i<m$. Therefore it seems to hard to calculate special values of Ono-Yamamoto's FMPs. However, we can evaluate the following values:
\begin{proposition}
	Let $\Bbbk = (k_1, \dots , k_m)$ be an index. Then we have
	\begin{align}
	&\li_{\Bbbk}(1)=0,
	\label{OYsp1}\\
	&\li_{\{ 1 \}^2}(-1) = \li_{\{ 1 \}^2}(2) = 2q_{\mathbf{p}}(2)^2, \ \li_{\{ 1 \}^2}(1/2) = \frac{1}{2}q_{\mathbf{p}}(2)^2,
	\label{OY 1^2}\\
	&\li_{(1, 2)}(-1)=\li_{(2,1)}(-1)=0,
	\label{OY (1,2) -1}\\
	&\li_{(1, 2)}(2) = \frac{2}{3}q_{\mathbf{P}}(2)^3-\frac{2}{3}B_{\mathbf{p}-3}, \ \li_{(2, 1)}(2) = \frac{2}{3}q_{\mathbf{p}}(2)^3+\frac{4}{3}B_{\mathbf{p}-3},
	\label{OY (1,2) 2}\\
	&\li_{(1,2)}(1/2) = -\frac{1}{6}q_{\mathbf{p}}(2)^3+\frac{1}{6}B_{\mathbf{p}-3}, \ \li_{(2, 1)}(1/2) = -\frac{1}{6}q_{\mathbf{p}}(2)^3-\frac{1}{3}B_{\mathbf{p}-3},
	\label{OY (1,2) 1/2}\\
	&\li_{\{ 1 \}^3}(-1) = \li_{\{ 1 \}^3}(2) = -\frac{4}{3}q_{\mathbf{p}}(2)^3-\frac{2}{3}B_{\mathbf{p}-3}, \ \li_{\{ 1 \}^3}(1/2) = \frac{1}{6}q_{\mathbf{p}}(2)^3+\frac{1}{12}B_{\mathbf{p}-3}.
	\label{OY 1^3}\end{align}
	\label{OY special values}\end{proposition}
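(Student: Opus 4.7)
The strategy is to combine the expansions of $\li_{\Bbbk}(t)$ as sums of our FMPs (Proposition \ref{OY-SS} and its depth-two and depth-three consequences in Corollary \ref{OY k_1, k_2}) with the explicit values of F(S)MPs and FH(S)MPs already computed in Subsection \ref{subsec:Special values of F(S)MPs}, reducing everything modulo $\mathbf{p}$ to work in $\mathcal{A}$. The reversal identity (\ref{reverse for OY}) then converts any evaluation at $r=2$ into the corresponding one at $r=1/2$, while the equalities $(-1)^{\mathbf{p}}=-1$ and $2^{\mathbf{p}}=2$ in $\mathcal{A}$ trivialize the factors $t^{\mathbf{p}}$ and $t^{2\mathbf{p}}$ in those expansions.

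For (\ref{OYsp1}), change variables in the defining sum of $\li_{\Bbbk}(1)$ by setting $n_i := l_1+\cdots+l_i \pmod{p}$. The primality of the denominators forces $n_i \in \{1,\dots,p-1\}$, and each $l_i \in \{1,\dots,p-1\}$ translates to $n_i \neq n_{i-1}$ (with $n_0 := 0$). Inclusion--exclusion on these inequalities rewrites the sum as a signed combination of products $\prod_j H_{p-1}^{(K_j)}$, one factor per maximal run of merged indices, with each $K_j \leq \wt(\Bbbk)$. For all but finitely many $p$, each power sum $H_{p-1}^{(K_j)}=\zeta_{\mathcal{A}}(K_j)$ is $\equiv 0 \pmod{p}$, hence $\li_{\Bbbk}(1)=0$ in $\mathcal{A}$.

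For the depth-two values (\ref{OY 1^2})--(\ref{OY (1,2) 1/2}), specialize (\ref{OY2}) at $t \in \{-1, 2\}$. For $\Bbbk=\{1\}^2$, substitute $\text{\rm \pounds}_{\mathcal{A},\{1\}^2}^{\bullet}(t)$ and $\widetilde{\text{\rm \pounds}}_{\mathcal{A},\{1\}^2}^{\bullet}(t)$ from Proposition \ref{Fermat quotient} (using that at the $\mathcal{A}$-level the star and non-star versions agree up to the depth-one correction), and note that $\text{\rm \pounds}_{\mathcal{A},\{1\}^2}(2)=0$ by Proposition \ref{special values in A} (\ref{easy gen of Sun}) since $B_{p-2}=0$. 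For $\Bbbk=(1,2)$ and $(2,1)$, the two terms at $t=-1$ coincide by Lemma \ref{aux} (\ref{k_1, k_2, -1, non-star}), which yields (\ref{OY (1,2) -1}); at $t=2$, begin from the star values in Proposition \ref{Fermat quotient} (\ref{A, (1, 2), 2})--(\ref{A, (2, 1), 2}), convert them to non-star values using Lemma \ref{FP values} (\ref{A, 3, 2}), and simplify to obtain (\ref{OY (1,2) 2}). The corresponding $1/2$-values then follow from (\ref{reverse for OY}) with $r=2$.

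For the depth-three values (\ref{OY 1^3}), specialize (\ref{OY3}) at $(k_1,k_2,k_3)=(1,1,1)$. Eight terms appear: the two copies of $\text{\rm \pounds}_{\mathcal{A},\{1\}^3}^{\ast}(1,t,1)$ combine, and at $t=-1$ the contributions from $\text{\rm \pounds}_{\mathcal{A},\{1\}^3}(t)$ and from $\widetilde{\text{\rm \pounds}}_{\mathcal{A},\{1\}^3}(t)$ cancel separately against their partners. The remaining ingredients are $\widetilde{\text{\rm \pounds}}_{\mathcal{A},\{1\}^3}(t)$ from Proposition \ref{Fermat quotient} (\ref{A, 1^3, -1}), $\text{\rm \pounds}_{\mathcal{A},\{1\}^3}(2)$ from Proposition \ref{special values in A} (\ref{easy gen of Sun}), $\text{\rm \pounds}_{\mathcal{A},\{1\}^3}^{\ast}(1,2,1)$ from Theorem \ref{FHMP special values} (\ref{MV4'}), $\text{\rm \pounds}_{\mathcal{A},\{1\}^3}^{\ast}(1,-1,1)$ extracted as $-\tfrac{1}{2}\text{\rm \pounds}_{\mathcal{A},\{1\}^3}^{\ast}(-1,-1,-1)$ via Lemma \ref{harmonic alt} (\ref{1, -1, 1}), and $\text{\rm \pounds}_{\mathcal{A},(2,1)}(t)$, $\widetilde{\text{\rm \pounds}}_{\mathcal{A},(1,2)}(t)$ from the depth-two step above. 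The $1/2$-case is then obtained from (\ref{reverse for OY}). The main obstacle is the arithmetic bookkeeping: the rational coefficients of $q_{\mathbf{p}}(2)^3$ and $B_{\mathbf{p}-3}$ must be tracked through all eight terms and shown to cancel precisely into the stated expressions.
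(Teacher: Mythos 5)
Your proposal is correct, and for the low-depth special values (\ref{OY 1^2})--(\ref{OY 1^3}) it takes essentially the same route as the paper: expand via Corollary \ref{OY k_1, k_2}, use $r^{\mathbf{p}}=r$ in $\mathcal{A}$ to remove the powers $t^{\mathbf{p}}$, $t^{2\mathbf{p}}$, and substitute the values from Lemma \ref{aux}, Proposition \ref{special values in A}, Proposition \ref{Fermat quotient}, Lemma \ref{harmonic alt} (\ref{1, -1, 1}) and Remark \ref{TZ remark} (\ref{1^3 non-star}); I have checked the bookkeeping and the coefficients of $q_{\mathbf{p}}(2)^3$ and $B_{\mathbf{p}-3}$ do come out as stated (e.g.\ at $t=2$ for $\{1\}^3$ the sum $3\text{\rm \pounds}_{\mathcal{A},\{1\}^3}(2)+6\widetilde{\text{\rm \pounds}}_{\mathcal{A},\{1\}^3}(2)+4\text{\rm \pounds}_{\mathcal{A},\{1\}^3}^{\ast}(1,2,1)+2\text{\rm \pounds}_{\mathcal{A},(2,1)}(2)+2\widetilde{\text{\rm \pounds}}_{\mathcal{A},(1,2)}(2)$ indeed collapses to $-\tfrac{4}{3}q_{\mathbf{p}}(2)^3-\tfrac{2}{3}B_{\mathbf{p}-3}$). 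Where you genuinely diverge is (\ref{OYsp1}). The paper applies Proposition \ref{OY-SS} to write $\li_{\Bbbk}(1)$ as a sum of FMZVs indexed by $\phi\in\Phi_m$, groups the $\phi$'s into orbits under the symmetric group, and invokes the symmetric-sum relation of Proposition \ref{FMZV's properties} (\ref{sym sum}). You instead work directly on the defining sum: the substitution $n_i=l_1+\cdots+l_i \bmod p$ turns it into a sum over tuples in $\{1,\dots,p-1\}^m$ subject only to the adjacency conditions $n_i\neq n_{i+1}$, and inclusion--exclusion over those conditions produces signed products of power sums $\sum_{n=1}^{p-1}n^{-K}$ with $K\le\wt(\Bbbk)$, each of which vanishes modulo $p$ once $p>\wt(\Bbbk)+1$. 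Both arguments are valid; yours is more self-contained, since it bypasses Hoffman's symmetric-sum theorem and exposes the elementary mechanism behind it, while the paper's is shorter given that (\ref{sym sum}) is already recorded and that the decomposition of Proposition \ref{OY-SS} is needed anyway for the remaining evaluations.
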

\begin{proof}
	If $m=1$, we have $\li_{\Bbbk}(1)=\zeta_{\mathcal{A}}(\Bbbk )=0$. We assume that $m$ is greater than or equal to $2$. Let $l$ be one of $2, \dots , m$ and $S_l$ the $l$-th symmetric group. We define an equivalence relation on $\Phi_{m, l}$ as follows: $\phi \sim \phi'$ holds for $\phi , \phi' \in \Phi_{m, l}$ if and only if there exists $\sigma \in S_l$ such that $\phi = \sigma \circ \phi'$ holds. We take and fix a system of representatives $\{ \phi_{l, 1}, \dots \phi_{l, i_l} \}$ of the quotient set $\Phi_{m, l}/S_l$ where $i_l$ is the cardinality of $\Phi_{m, l}/S_l$. Then, by Proposition \ref{OY-SS}, we have
	\begin{equation*}
	\begin{split}
	\li_{\Bbbk}(1) &= \sum_{\phi \in \Phi_m}\zeta_{\mathcal{A}}\Bigl( \sum_{\phi (j)=m_{\phi}}k_j, \dots , \sum_{\phi (j) = 1}k_j \Bigr)
	=\sum_{l=2}^m\sum_{\phi \in \Phi_{m, l}}\zeta_{\mathcal{A}}\Bigl( \sum_{\phi (j)=l}k_j, \dots , \sum_{\phi (j) = 1}k_j \Bigr) \\
	&=\sum_{l=2}^m\sum_{s=1}^{i_l}\left( \sum_{\sigma \in S_l}\zeta_{\mathcal{A}} \Biggl( \sigma \Bigl( \sum_{\phi_{l, s} (j)=l}k_j, \dots , \sum_{\phi_{l, s} (j) = 1}k_j \Bigr) \Biggr) \right) .
	\end{split}
	\end{equation*}
	We see that this is zero by Proposition \ref{FMZV's properties} (\ref{sym sum}).
	
	The equalities (\ref{OY 1^2}), (\ref{OY (1,2) -1}), (\ref{OY (1,2) 2}), (\ref{OY (1,2) 1/2}), and (\ref{OY 1^3}) are obtained by Corollary \ref{OY k_1, k_2}, Lemma \ref{aux}, Proposition \ref{special values in A}, Proposition \ref{Fermat quotient}, Lemma \ref{harmonic alt} (\ref{1, -1, 1}), and Remark \ref{TZ remark} (\ref{1^3 non-star}).
\end{proof}
\appendix
\section{An elementary proof of the congruences between Bernoulli numbers and generalized Bernoulli numbers}
\label{sec:An elementary proof of the congruences between Bernoulli numbers and generalized Bernoulli numbers}
Let $N$ be a positive integer and $\chi$ a Dirichlet character modulo $N$. The generalized Bernoulli numbers $B_{n, \chi}$ are defined by
\[
\sum_{a=1}^N\frac{\chi (a)te^{at}}{e^{Nt}-1} = \sum_{n=0}^{\infty}B_{n, \chi}\frac{t^n}{n!}.
\]
We use the following formula for non-trivial character $\chi$:
\begin{equation}
B_{1, \chi} = \frac{1}{N}\sum_{a=1}^N\chi (a)a.
\label{B_1}\end{equation}
In this appendix, we give an elementary proof of the following classical congruence:
\begin{proposition}
	Let $k$ be an integer greater than or equal to $2$ and $p$ a prime greater than $k+1$. Let $\omega_p$ be the Teichm\"uller character. Then 
	\begin{equation}
	B_{1, \omega_p^{-k}} \equiv -\frac{B_{p-k}}{k} \pmod{p}.
	\end{equation}
	\label{classical congruence}\end{proposition}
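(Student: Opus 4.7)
The plan is to evaluate $pB_{1,\omega_p^{-k}}$ modulo $p^2$ via the formula~(\ref{B_1}), use the Teichm\"uller approximation $\omega_p(a)\equiv a^p\pmod{p^2}$ to reduce the problem to a power sum, and then match the Bernoulli numbers via Kummer's congruence.

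First I would start from (\ref{B_1}) to write $pB_{1,\omega_p^{-k}}=\sum_{a=1}^{p-1}\omega_p(a)^{-k}\,a$. The standard property $\omega_p(a)\equiv a^p=a(1+pq_p(a))\pmod{p^2}$ (provable by iterating $\omega_p(a)=\lim_n a^{p^n}$, using $(1+py)^{p^n}\equiv 1+p^{n+1}y\pmod{p^{n+2}}$) inverts to $a\equiv \omega_p(a)(1-pq_p(a))\pmod{p^2}$, yielding
\[
\omega_p(a)^{-k}\,a \;\equiv\; \omega_p(a)^{1-k}\bigl(1-pq_p(a)\bigr)\pmod{p^2}.
\]
Since $2\le k\le p-2$, the exponent $1-k$ is not divisible by $p-1$, so $\omega_p^{1-k}$ is a non-trivial character of $(\mathbb Z/p\mathbb Z)^\times$ and $\sum_{a=1}^{p-1}\omega_p(a)^{1-k}=0$. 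Therefore $pB_{1,\omega_p^{-k}}\equiv -p\sum_a q_p(a)\,\omega_p(a)^{1-k}\pmod{p^2}$, and dividing by $p$ with $\omega_p(a)^{1-k}\equiv a^{p-k}\pmod p$ produces the key reduction
\[
B_{1,\omega_p^{-k}}\;\equiv\;-\sum_{a=1}^{p-1}q_p(a)\,a^{p-k}\pmod p.
\]

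Second, I would evaluate the Fermat-quotient sum through the identity $pq_p(a)=a^{p-1}-1$, which gives $p\sum q_p(a)a^{p-k}=\sum a^{2p-1-k}-\sum a^{p-k}$. The hypothesis $p>k+1$ ensures by von Staudt--Clausen that both $B_{p-k}$ and $B_{2p-1-k}$ are $p$-adic integers, since $p-k\equiv 2p-1-k\equiv 1-k\pmod{p-1}$ is non-zero modulo $p-1$. Faulhaber's formula $\sum_{a=1}^{p-1}a^m\equiv pB_m\pmod{p^2}$ then gives $\sum q_p(a)a^{p-k}\equiv B_{2p-1-k}-B_{p-k}\pmod p$. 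Kummer's congruence $B_{2p-1-k}/(2p-1-k)\equiv B_{p-k}/(p-k)\pmod p$, combined with the reductions $2p-1-k\equiv-(k+1)$ and $p-k\equiv-k$ mod $p$, yields $B_{2p-1-k}\equiv \frac{k+1}{k}B_{p-k}\pmod p$. Substituting back gives $\sum q_p(a)a^{p-k}\equiv B_{p-k}/k\pmod p$ and hence the desired $B_{1,\omega_p^{-k}}\equiv -B_{p-k}/k\pmod p$.

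The main obstacle is the careful $p$-adic bookkeeping at every approximation step and the verification of $p$-integrality of all Bernoulli numbers involved, which is precisely what the hypothesis $p>k+1$ purchases via von Staudt--Clausen. The use of Kummer's congruence at the last step is essential: the appendix will either invoke it as a known elementary fact or re-derive it in parallel from the same Faulhaber machinery. The case $k$ even is in fact trivial, since then $B_{p-k}=0$ for $p-k\geq 3$ odd and $B_{1,\omega_p^{-k}}=0$ because the character $\omega_p^{-k}$ is even; the genuine content therefore lies in the case $k$ odd.
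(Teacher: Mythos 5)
Your proof is correct, and it follows essentially the same route as the paper: approximate the Teichm\"uller character by $p$-th powers (equivalently, expand $\langle a\rangle^k=\omega_p(a)^{-k}a^k$ in terms of the Fermat quotient), reduce $pB_{1,\omega_p^{-k}}$ to power sums $\sum_{a=1}^{p-1}a^{m}$ via $pq_p(a)=a^{p-1}-1$, and finish with Faulhaber's formula and Kummer's congruence. The paper simply carries this identical expansion to higher $p$-adic precision to prove the stronger Theorem \ref{appB thm} in $\mathcal{A}_3$, of which the Proposition is the mod $p$ shadow; your use of character orthogonality to kill the leading term is a minor shortcut available only at the lowest precision.
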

For an integer $m$ and a positive integer $n$, we define $B_{1, \omega_{\mathbf{p}}^m}$ to be $(B_{1, \omega_p^m} \bmod{p^n}) \in \mathcal{A}_n$. Then we have $B_{1, \omega_{\mathbf{p}}^{-k}} = \widehat{B}_{\mathbf{p}-k}$ in $\mathcal{A}$ by the above proposition.\footnote[1]{See Example \ref{example} about the notation $\widehat{B}_m$. The element $\widehat{B}_{\mathbf{p}-k}$ is considered as a finite analogue of Riemann zeta value $\zeta (k)$. See \cite{K}.} Therefore we can express some FMZVs or some special values of FMPs by the generalized Bernoulli numbers, e.g. 
\[
\zeta_{\mathcal{A}}(k_1, k_2) = (-1)^{k_1-1}\binom{k_1+k_2}{k_1}B_{1, \omega_{\mathbf{p}}^{-k_1-k_2}}
\]
where $k_1$ and $k_2$ are positive integers. 

We can prove Proposition \ref{classical congruence} by using the Kubota-Leopoldt $p$-adic $L$-function (cf. \cite[Corollary 5.15]{Wa}), however, we give a proof of stronger results by elementary calculations. 
\begin{theorem}
	Let $k$ be an integer greater than or equal to $2$. If $k$ is odd, then
	\begin{equation}
	\begin{split}
	B_{1, \omega_{\mathbf{p}}^{-k}} &= \frac{k^2-k+6}{2}\widehat{B}_{\mathbf{p}-k}-(k^2-2k+3)\widehat{B}_{2\mathbf{p}-k-1}+\frac{k^2-3k+2}{2}\widehat{B}_{3\mathbf{p}-k-2}\\
	& \hspace{5mm}+k(\widehat{B}_{\mathbf{p}-k}-\widehat{B}_{2\mathbf{p}-k-1})\mathbf{p}-k\widehat{B}_{\mathbf{p}-k-2} \mathbf{p}^2 \ \  \text{in} \ \mathcal{A}_3
	\end{split}
	\label{A_3 app odd}\end{equation}
	and if $k$ is even, then
	\begin{equation}
	B_{1, \omega_{\mathbf{p}}^{-k}} = -3k(\widehat{B}_{\mathbf{p}-k-1}-\widehat{B}_{2\mathbf{p}-k-2})\mathbf{p} \ \text{in} \ \ \mathcal{A}_3.
	\label{A_3 app even}\end{equation}
	\label{appB thm}\end{theorem}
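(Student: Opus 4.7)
The plan is to evaluate $B_{1,\omega_p^{-k}}$ modulo $p^3$ starting from the defining formula $B_{1,\chi}=N^{-1}\sum_{a=1}^N\chi(a)a$ applied to $N=p$ and $\chi=\omega_p^{-k}$. The crucial input is the refined Teichmüller approximation $\omega_p(a)\equiv a^{p^j}\pmod{p^{j+1}}$, which I would prove by induction on $j$: if $a^{p^{j-1}}=\omega_p(a)+p^j u$, then the binomial expansion of $(\omega_p(a)+p^j u)^p$ combined with $\omega_p(a)^p=\omega_p(a)$ (from $\omega_p(a)^{p-1}=1$) yields the next level of precision. Taking $j=3$ and using the identity $\omega_p(a)^{p-1}=1$ to rewrite $\omega_p^{-k}=\omega_p^{p-1-k}$, the summand becomes $a^n$ modulo $p^4$ where $n:=p^3(p-1-k)+1$. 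The decisive feature is that $n-1=p^3(p-1-k)$ is divisible by $p^3$.

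Next I would apply Faulhaber's formula
\[
\sum_{a=1}^{p-1}a^n=\frac{1}{n+1}\sum_{i=1}^{n+1}\binom{n+1}{i}B_{n+1-i}\,p^i
\]
and observe that $\binom{n+1}{i}/(n+1)=n(n-1)\cdots(n-i+2)/i!$ acquires the factor $n-1\equiv 0\pmod{p^3}$ as soon as $i\geq 3$. Together with the prefactor $p^i\geq p^3$ this pushes the $i\geq 3$ contributions beyond $p^5$, so they vanish modulo $p^4$; here the $p$-integrality of $B_{n+1-i}$ for $i=1,2,3$ is guaranteed by von Staudt--Clausen once $p$ is large relative to $k$. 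After dividing by $p$ this yields
\[
B_{1,\omega_p^{-k}}\equiv B_n+\tfrac{n}{2}B_{n-1}\,p\pmod{p^3}.
\]
A parity split isolates the two cases: for $k$ odd, $n$ is even and $n-1$ is an odd integer $\geq 3$, forcing $B_{n-1}=0$ and leaving only $B_n$; for $k$ even, $n$ is odd and $\geq 3$, forcing $B_n=0$ and leaving only $\tfrac{n}{2}B_{n-1}p$. Since $n\equiv 1\pmod{p^3}$, the prefactors may be reduced to $1$ and $\tfrac{1}{2}$.

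The main obstacle is then to express the huge-index $B_n$ (odd case) modulo $p^3$, respectively $B_{n-1}$ (even case) modulo $p^2$, in terms of the small Bernoulli values $\widehat{B}_{p-k+j(p-1)}$ appearing in the statement. Since the standard Kummer congruence yields only precision $p^1$, a refined version at precision $p^3$ (resp.\ $p^2$) is required. My approach would be to interpret these refined congruences as Newton forward-difference interpolation at the three nodes $m_j=p-k+j(p-1)$ for $j=0,1,2$ (resp.\ two nodes in the even case), using the explicit factorization
\[
n-(p-k)=(p-1)(p^2+p+1)(p-k-1)
\]
to control the divided differences. The resulting Lagrange weights, which are polynomials in $k$, produce the quadratic coefficients $(k^2-k+6)/2,\ -(k^2-2k+3),\ (k^2-3k+2)/2$ of the odd case and the prefactor $-3k$ of the even case; the residual term $-k\widehat{B}_{\mathbf{p}-k-2}\mathbf{p}^2$ in the odd formula arises from a more careful accounting of the Teichmüller approximation at precision $p^4$. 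This refined-Kummer step is the technically delicate part: one obtains it elementarily by expanding the symbolic sum $\widehat{B}_m\equiv -p^{-1}\sum_{a=1}^{p-1}a^{m-1}\pmod{p^{?}}$ using $a^{p-1}=1+p\,q_p(a)$ to separate the $1$, $p$, and $p^2$ contributions, and then matching the resulting expressions across the three (resp.\ two) nodes.
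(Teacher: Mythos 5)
Your opening reduction is correct and is a genuinely different route from the paper's: the paper expands $\langle a\rangle^k=(1+pq_p(a))^{k/(p-1)}$ in the Fermat quotient and reduces everything to the four power sums $\sum a^{1-k},\sum a^{p-k},\sum a^{2p-k-1},\sum a^{3p-k-2}$ of its Lemma~\ref{app lemma}, whereas you collapse the summand to a single power $a^{n}$ with $n=p^{3}(p-1-k)+1$ and correctly observe that Faulhaber plus $p^{3}\mid n-1$ leaves only $B_{n}+\tfrac n2B_{n-1}p$ modulo $p^{3}$. But everything after that point is asserted rather than proved, and the assertions do not survive scrutiny. The entire theorem now rests on a ``refined Kummer congruence'' expressing $B_{n}$ mod $p^{3}$ through $\widehat{B}_{\mathbf p-k},\widehat{B}_{2\mathbf p-k-1},\widehat{B}_{3\mathbf p-k-2}$. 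The classical refined Kummer congruences do not apply, because $n-(p-k)=(p-1)(p^{2}+p+1)(p-k-1)$ is divisible by $p-1$ but not by $p(p-1)$; what you need is the second forward difference $\widehat{B}_{3p-k-2}-2\widehat{B}_{2p-k-1}+\widehat{B}_{p-k}\equiv 0\pmod{p^{2}}$ \emph{together with its explicit error term modulo $p^{3}$}. Your sketch of how to obtain this starts from the identity $\widehat{B}_m\equiv-p^{-1}\sum_a a^{m-1}$, which is wrong as written ($p^{-1}\sum_{a=1}^{p-1}a^{m-1}$ approximates $B_{m-1}$, not $B_m/m$), and carrying the idea out correctly amounts to exactly the power-sum computations of Lemma~\ref{app lemma}, so nothing is saved. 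Moreover the term $-k\widehat{B}_{\mathbf p-k-2}\mathbf p^{2}$ cannot arise from ``a more careful accounting of the Teichm\"uller approximation'': with $\omega_p(a)\equiv a^{p^{3}}\pmod{p^{4}}$ the approximation is already exact enough for a mod-$p^{3}$ statement about $B_{1,\omega_{\mathbf p}^{-k}}$, so if such a term appears it must come out of the second-order Kummer error, which you never compute.

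The more serious problem is that your own reduction contradicts the coefficients you claim it produces, and you did not notice. In the even case the same Faulhaber estimate gives $pB_{n-1}\equiv\sum_{a=1}^{p-1}a^{n-1}\equiv\sum_{a=1}^{p-1}\omega_p^{-k}(a)=0\pmod{p^{4}}$ (a nontrivial character sum), hence $B_{n-1}\equiv0\pmod{p^{3}}$ and your formula forces $B_{1,\omega_{\mathbf p}^{-k}}\equiv0\pmod{p^{3}}$. This agrees with the classical vanishing $B_{1,\chi}=0$ for every nontrivial even character $\chi$, but it is incompatible with the prefactor $-3k$ you say the method ``produces'': that would require $\widehat{B}_{\mathbf p-k-1}=\widehat{B}_{2\mathbf p-k-2}$ in $\mathcal{A}_{2}$, which already fails for $k=2$, $p=7$ (there $\widehat{B}_{4}-\widehat{B}_{10}=-7/440$). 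Similarly, in the odd case the Newton interpolation with $M=(p^{2}+p+1)(p-k-1)\equiv-(k+1)-kp\pmod{p^{2}}$ has second-order weight $\binom{M}{2}\equiv\tfrac{(k+1)(k+2)}{2}\pmod p$, not $\tfrac{(k-1)(k-2)}{2}$; the two choices differ by $3k$ times the second difference, which is $O(p^{2})$ but not $O(p^{3})$, so the stated coefficients cannot simply be read off as ``Lagrange weights.'' In short: the proposal establishes only the easy reduction, leaves the one genuinely hard lemma unproved, and the coefficients it asserts are not the ones its own method yields. This tension between your (correct) reduction and the displayed formulas is real and needs to be confronted head-on rather than absorbed into an unexecuted ``delicate step.''
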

\begin{corollary}
	Let $k$ be an integer greater than or equal to $2$. Then
	\begin{align}
	&B_{1, \omega_{\mathbf{p}}^{-k}} = (k+2)\widehat{B}_{\mathbf{p}-k}-(k+1)\widehat{B}_{2\mathbf{p}-k-1} \ \ \text{in} \ \mathcal{A}_2,
	\label{A_2 app}\\
	&B_{1, \omega_{\mathbf{p}}^{-k}} = \widehat{B}_{\mathbf{p}-k} \ \  \text{in} \ \mathcal{A}.
	\label{A app}\end{align}
	\label{app cor}\end{corollary}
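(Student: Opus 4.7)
The plan is to derive Corollary \ref{app cor} directly from Theorem \ref{appB thm} by reducing the $\mathcal{A}_3$-valued identities modulo $\mathbf{p}^2$ for the $\mathcal{A}_2$-statement and modulo $\mathbf{p}$ for the $\mathcal{A}$-statement, then invoking classical Kummer congruences between the Bernoulli numbers $\widehat{B}_{m_i}$, where $m_i := (i+1)\mathbf{p} - k - i$ traces out the arithmetic progression $(\mathbf{p}-k) + i(\mathbf{p}-1)$. The argument splits by the parity of $k$.

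For even $k$, both $p-k$ and $2p-k-1$ are odd integers $\geq 3$ when $p>k+1$, so $\widehat{B}_{\mathbf{p}-k} = \widehat{B}_{2\mathbf{p}-k-1} = 0$ in $\mathcal{A}_n$ (since $B_{2j+1}=0$ for $j\geq1$), making the right-hand side of the claimed formula zero. On the other hand, Theorem \ref{appB thm} gives $B_{1, \omega_{\mathbf{p}}^{-k}} = -3k(\widehat{B}_{\mathbf{p}-k-1} - \widehat{B}_{2\mathbf{p}-k-2})\mathbf{p}$ in $\mathcal{A}_3$, and the first-order Kummer congruence (applicable since $p-k-1 \equiv 2p-k-2 \pmod{p-1}$ with $p-1 \nmid p-k-1$) yields $\widehat{B}_{\mathbf{p}-k-1} - \widehat{B}_{2\mathbf{p}-k-2} \in \mathbf{p}\mathcal{A}_2$, so the product with $\mathbf{p}$ lands in $\mathbf{p}^2 \mathcal{A}_2 = 0$. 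The further reduction to $\mathcal{A}$ is immediate.

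For odd $k$, the $\mathcal{A}$-statement follows by reducing the Theorem's formula modulo $\mathbf{p}$ (which kills the terms with explicit $\mathbf{p}$ factor), applying $\widehat{B}_{m_0} \equiv \widehat{B}_{m_1} \equiv \widehat{B}_{m_2} \pmod{\mathbf{p}}$ from first-order Kummer, and using the coefficient identity $\tfrac{k^2-k+6}{2} - (k^2-2k+3) + \tfrac{k^2-3k+2}{2} = 1$. For the $\mathcal{A}_2$-statement, subtracting the claimed right-hand side $(k+2)\widehat{B}_{\mathbf{p}-k} - (k+1)\widehat{B}_{2\mathbf{p}-k-1}$ from the mod-$\mathbf{p}^2$ reduction of the Theorem rearranges (using $k^2-3k+2 = (k-1)(k-2)$) into
\begin{equation*}
\tfrac{(k-1)(k-2)}{2}\bigl(\widehat{B}_{m_0} - 2\widehat{B}_{m_1} + \widehat{B}_{m_2}\bigr) + k\bigl(\widehat{B}_{m_0} - \widehat{B}_{m_1}\bigr)\mathbf{p}.
\end{equation*}
The second summand vanishes in $\mathcal{A}_2$ by first-order Kummer. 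The first summand is where the main obstacle sits: it requires the second-order Kummer congruence $\widehat{B}_{m_0} - 2\widehat{B}_{m_1} + \widehat{B}_{m_2} \equiv 0 \pmod{\mathbf{p}^2}$, a classical statement expressing $p$-adic analyticity of $i \mapsto \widehat{B}_{m_i}$ along the progression $m_i = m_0 + i(p-1)$. In keeping with the elementary spirit of the appendix I would establish it via Carlitz-type finite-difference manipulations on power sums modulo $p^2$, rather than by invoking the Kubota--Leopoldt $p$-adic $L$-function.
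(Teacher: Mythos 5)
Your derivation is correct and follows exactly the route the paper indicates ("by Kummer's congruences"): reduce Theorem \ref{appB thm} modulo $\mathbf{p}^2$ and $\mathbf{p}$, use that $\widehat{B}_{\mathbf{p}-k}=\widehat{B}_{2\mathbf{p}-k-1}=0$ for even $k$, and for odd $k$ collapse the remainder via the coefficient identities you record. The only place you overcomplicate is the "main obstacle'': the second\-/order congruence $\widehat{B}_{m_0}-2\widehat{B}_{m_1}+\widehat{B}_{m_2}\equiv 0 \pmod{p^2}$ is precisely the case $m=2$, $l=p-k$ of Z.~H.~Sun's congruence (\ref{Kummer-Sun}), which is quoted immediately after the corollary and already used in the proof of Lemma \ref{app lemma}, so no separate Carlitz-type argument is needed.
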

\begin{proof}
	We can prove these by the same method as the proof of Theorem \ref{appB thm} or by Kummer's congruences below.
\end{proof}
We consider congruences in the $p$-adic integer ring $\mathbb{Z}_p$. First, we recall Kummer's congruences in the sense of Z. H. Sun.
\begin{proposition}[Z. H. Sun \cite{ZHS1}]
	Let $p$ be an odd prime number. Let $m$ and $l$ be positive integers satisfying $l \not \equiv 0 \pmod{p-1}$. Then
	\begin{equation}
	\widehat{B}_{m(p-1)+l}\equiv m\widehat{B}_{p-1+l}-(m-1)(1-p^{l-1})\widehat{B}_{l} \pmod{p^2}.
	\label{Kummer-Sun}\end{equation} 
\end{proposition}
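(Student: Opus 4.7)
The plan is to proceed by induction on $m$, exploiting the affine-in-$m$ structure of the right-hand side. Set
$$T(m) := m\,\widehat{B}_{p-1+l}-(m-1)(1-p^{l-1})\,\widehat{B}_l,$$
so the assertion becomes $\widehat{B}_{m(p-1)+l} \equiv T(m) \pmod{p^2}$. Because $T$ is affine in $m$, its second finite difference $T(m+1)-2T(m)+T(m-1)$ vanishes identically. Hence, once the congruence is verified for two consecutive starting values of $m$, it will propagate to all larger $m$ provided we have the second-difference congruence
$$\widehat{B}_{(m+1)(p-1)+l} - 2\widehat{B}_{m(p-1)+l} + \widehat{B}_{(m-1)(p-1)+l} \equiv 0 \pmod{p^2}\qquad(\ast)$$
for every $m \geq 2$.

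The case $m=1$ is the tautology $\widehat{B}_{p-1+l}=T(1)$. The case $m=2$ amounts to
$$\widehat{B}_{2(p-1)+l} \equiv 2\widehat{B}_{p-1+l}-(1-p^{l-1})\widehat{B}_l \pmod{p^2}.$$
Both this base case and the second-difference congruence $(\ast)$ are extracted from a single input, the second-order strengthened Kummer congruence
$$\sum_{i=0}^{2}(-1)^{i}\binom{2}{i}\,(1-p^{\,l'+i(p-1)-1})\,\widehat{B}_{l'+i(p-1)} \equiv 0 \pmod{p^2},$$
valid for any positive integer $l'\not\equiv 0\pmod{p-1}$. When $l'=l+(m-1)(p-1)$ with $m\geq 2$, every exponent $l'+i(p-1)-1$ is at least $p-1\geq 2$, so each factor $(1-p^{\,?})$ reduces to $1$ modulo $p^2$ and we recover $(\ast)$. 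When $l'=l$, the only exponent that can fall below $2$ is $l-1$ itself; retaining the surviving factor $(1-p^{l-1})$ through the computation produces exactly the correction term in $T(2)$.

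Granting the strengthened Kummer congruences, the induction finishes the proof: assuming the claim for $m-1$ and $m$, subtract twice the $m$-th instance from the sum of the $(m-1)$-st and $(m+1)$-st instances, then invoke $(\ast)$ together with the vanishing of $\Delta^{2}T$ to conclude the $(m+1)$-st case. The substantive obstacle is thus the mod $p^{2}$ Kummer congruence itself, which must be proved elementarily, without Kubota--Leopoldt $p$-adic $L$-functions. The standard route uses a Voronoi- or Carlitz-type identity expressing $(1-p^{n-1})\widehat{B}_n$, for $n\not\equiv 0\pmod{p-1}$, as an explicit rational linear combination of power sums $\sum_{a\in(\mathbb{Z}/p^{2}\mathbb{Z})^{\times}} a^n$; Euler's theorem $a^{p(p-1)}\equiv 1\pmod{p^{2}}$ for $p\nmid a$ then converts the vanishing of the second finite difference into a purely algebraic identity among these power sums, which is the content carried out in \cite{ZHS1}.
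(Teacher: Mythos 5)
The first thing to say is that the paper offers no proof of this proposition at all: it is imported verbatim from Z.~H.~Sun's paper \cite{ZHS1} (the bracketed attribution is the ``proof''), so there is no in-paper argument to measure yours against. Judged on its own terms, your reduction is correct: $T(m)$ is affine in $m$, so its second difference vanishes; the cases $m=1$ (tautology) and $m=2$ follow from the second-order Kummer congruence at $l'=l$ after discarding the factors $(1-p^{l+p-2})$ and $(1-p^{l+2p-3})$ modulo $p^2$; and the instances at $l'=l+(m-1)(p-1)$ give exactly the second-difference congruence $(\ast)$ needed to propagate the claim by induction. Two caveats. First, discarding a factor $(1-p^{N})$ modulo $p^2$ requires $p^{N}\widehat{B}_{n}\equiv 0\pmod{p^2}$, and $\widehat{B}_n=B_n/n$ is not \emph{a priori} $p$-integral when $p\mid n$; one should either invoke the standard fact that $\widehat{B}_n\in\mathbb{Z}_p$ for $p-1\nmid n$ or note that $v_p(n)\le\log_p n$ makes the exponent large enough --- a one-line remark, but it belongs in the proof. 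Second, and more substantively, everything you have done is a formal rearrangement: the entire arithmetic content sits in the mod-$p^2$ second-order Kummer congruence, which you explicitly defer to a Voronoi/Carlitz-type argument ``carried out in \cite{ZHS1}.'' That deferred input is essentially the theorem of Sun from which he himself deduces the stated corollary, so your proposal relocates the citation rather than replacing it. That is a perfectly reasonable thing to do --- and arguably more informative than the paper's bare citation, since it isolates exactly which classical congruence is needed --- but it should be presented as a derivation \emph{from} the generalized Kummer congruences, not as an independent proof.
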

This congruence is a generalization of Kummer's congruence, that is, $\widehat{B}_n \equiv \widehat{B}_m \pmod{p}$ when $n \equiv m \pmod{p-1}$.

From now on, we fix an integer $k$ greater than or equal to $2$ and an odd prime $p$ satisfying $p> k+4$.
\begin{lemma}
	Let $p$ and $k$ be as above. Then
	{\footnotesize \begin{align*}
	&\sum_{a=1}^{p-1}a^{1-k} \equiv
	\begin{cases}
	-(k-1)(3\widehat{B}_{p-k}-3\widehat{B}_{2p-k-1}+\widehat{B}_{3p-k-2})p-\binom{k+1}{3}\widehat{B}_{p-k-2}p^3 \pmod{p^4} & \text{if $k$ is odd,}\\
	\binom{k}{2}(2\widehat{B}_{p-k-1}-\widehat{B}_{2p-k-2})p^2 \pmod{p^4} & \text{if $k$ is even,}
	\end{cases}\\
	&\sum_{a=1}^{p-1}a^{p-k} \equiv
	\begin{cases}
	-k\widehat{B}_{p-k}p+\widehat{B}_{p-k}p^2-\binom{k+2}{3}\widehat{B}_{p-k-2}p^3 \pmod{p^4} & \text{if $k$ is odd,}\\
	\binom{k+1}{2}\widehat{B}_{p-k-1}p^2-\frac{2k+1}{2}\widehat{B}_{p-k-1}p^3 \pmod{p^4} & \text{if $k$ is even,}
	\end{cases}\\
	&\sum_{a=1}^{p-1}a^{2p-k-1} \equiv
	\begin{cases}
	-(k+1)\widehat{B}_{2p-k-1}p+2\widehat{B}_{2p-k-1}p^2-\binom{k+3}{3}\widehat{B}_{p-k-2}p^3 \pmod{p^4} & \text{if $k$ is odd,}\\
	\binom{k+2}{2}\widehat{B}_{2p-k-2}p^2-(2k+3)\widehat{B}_{p-k-1}p^3 \pmod{p^4} & \text{if $k$ is even,}
	\end{cases}\\
	&\sum_{a=1}^{p-1}a^{3p-k-2} \equiv 
	\begin{cases}
	-(k+2)\widehat{B}_{3p-k-2}p+3(2\widehat{B}_{2p-k-1}-\widehat{B}_{p-k})p^2-\binom{k+4}{3}\widehat{B}_{p-k-2}p^3 \pmod{p^4} & \text{if $k$ is odd,}\\
	\binom{k+3}{2}(2\widehat{B}_{2p-k-2}-\widehat{B}_{p-k-1})p^2 -\frac{3}{2}(2k+5)\widehat{B}_{p-k-1}p^3 \pmod{p^4} & \text{if $k$ is even.}
	\end{cases}
	\end{align*}}
	\label{app lemma}\end{lemma}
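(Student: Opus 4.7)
I plan to evaluate all four sums through Faulhaber's formula in the Bernoulli-polynomial form
\[
\sum_{a=1}^{p-1} a^n \;=\; \frac{B_{n+1}(p)-B_{n+1}}{n+1} \;=\; \sum_{j=1}^{n+1} \frac{1}{n+1}\binom{n+1}{j} B_{n+1-j}\, p^j,
\]
which I expand modulo $p^4$ using $B_m = m\widehat{B}_m$ and the fact that $B_{2\ell+1}=0$ for $\ell \geq 1$. The three positive exponents $p-k$, $2p-k-1$, $3p-k-2$ are treated directly. The reciprocal exponent $1-k$ is reduced to Faulhaber evaluations via the elementary mod-$p^4$ identity $(1-a^{p-1})^4 = p^4 q_p(a)^4 \equiv 0$; multiplying by $a^{1-k}$ for $a$ coprime to $p$ yields
\[
a^{1-k} \;\equiv\; 4a^{p-k} - 6a^{2p-k-1} + 4a^{3p-k-2} - a^{4p-k-3} \pmod{p^4},
\]
so that $\sum_{a=1}^{p-1} a^{1-k}$ becomes a signed combination of the three listed sums and one auxiliary sum $\sum_{a=1}^{p-1} a^{4p-k-3}$ of the same type.

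\textbf{Executing Faulhaber and Kummer-Sun.} For $n = mp-k-m+1$ ($m = 1,\dots,4$) the $j=1$ Faulhaber term contributes $(mp-(k+m-1))\widehat{B}_n\,p$, giving the $p^1$- and $p^2$-coefficients of the congruence. The $j=2$ term vanishes, its Bernoulli factor being of odd index greater than $1$ in every case. For the $j=3$ term, Kummer's congruence $\widehat{B}_{(m-1)(p-1)+p-k-2}\equiv \widehat{B}_{p-k-2}\pmod p$ collapses the result to the stated $-\binom{k+m+1}{3}\widehat{B}_{p-k-2}\,p^3$ (or its even-$k$ analogue). To express the $p^2$-coefficients of $\sum a^{3p-k-2}$ and $\sum a^{4p-k-3}$ in terms of $\widehat{B}_{p-k}$ and $\widehat{B}_{2p-k-1}$ as in the lemma, I apply Kummer-Sun (Proposition~\ref{Kummer-Sun}) with $l = p-k$ (odd $k$, so $p^{l-1}\equiv 0 \pmod{p^2}$), yielding $\widehat{B}_{m(p-1)+(p-k)} \equiv m\widehat{B}_{2p-k-1}-(m-1)\widehat{B}_{p-k}\pmod{p^2}$, and similarly with $l = p-k-1$ in the even-$k$ case. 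This disposes of the last three congruences of the lemma and produces the needed formula for $\sum a^{4p-k-3}$.

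\textbf{Finite-difference collapse for the reciprocal case, and main obstacle.} Setting $A_m := \widehat{B}_{(m-1)(p-1)+p-k}$ and combining the four Faulhaber expansions with signed coefficients $+4,-6,+4,-1$, one checks that the combination isolates the third discrete derivative $\Delta^3 A_1 := A_4-3A_3+3A_2-A_1$: for odd $k$ the $p^1$-coefficient equals the claimed $-(k-1)(3A_1-3A_2+A_3)$ plus $(k+3)\Delta^3 A_1$, and the $p^2$-coefficient equals $-4\Delta^3 A_1$. Kummer-Sun applied twice (for $m=2$ and $m=3$) gives $\Delta^3 A_1\equiv 0\pmod{p^2}$, which absorbs both errors modulo $p^4$. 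The $p^3$-coefficient reduces via Kummer's congruence to $\bigl[-4\binom{k+2}{3}+6\binom{k+3}{3}-4\binom{k+4}{3}+\binom{k+5}{3}\bigr]\widehat{B}_{p-k-2}$, which equals $-\binom{k+1}{3}\widehat{B}_{p-k-2}$ by the polynomial identity $\Delta^4\binom{k+1}{3}=0$ (valid since $\binom{k+1}{3}$ has $k$-degree $3$). The even-$k$ case proceeds by the same template: all $p^1$-contributions vanish (every relevant Bernoulli number has odd index $>1$), and the $p^2$-analogue of the collapse delivers the claimed $\binom{k}{2}(2\widehat{B}_{p-k-1}-\widehat{B}_{2p-k-2})p^2$ after the same Kummer-Sun manipulations. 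The main obstacle in this scheme is precision bookkeeping at the $p^1$-level: Kummer-Sun natively provides the $A_m$ only modulo $p^2$, whereas reaching mod-$p^4$ precision at the $p^1$-coefficient would naively demand mod-$p^3$ information. The rescue is that only the third finite difference $\Delta^3 A_1$ enters the final combination---a combinatorial reflection of the $p$-adic analyticity of $L_p(s,\omega^{1-k})$---and Kummer-Sun mod $p^2$ already forces $\Delta^3 A_1 = O(p^2)$, exactly the precision needed after accounting for the extra factor of $p$.
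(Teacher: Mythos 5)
Your treatment of the three positive-exponent sums coincides with the paper's: Faulhaber's formula truncated at $p^4$, the von Staudt--Clausen theorem to control the tail, the vanishing of odd-index Bernoulli numbers, and the Kummer--Sun congruence (\ref{Kummer-Sun}) to rewrite the $p^2$-coefficients. That part is correct. For $\sum_{a=1}^{p-1}a^{1-k}$ the paper simply observes that this sum is the $p$-component of $\zeta_{\mathcal{A}_4}(k-1)$ and quotes Proposition \ref{FMZV's properties} (\ref{RZA4}), whereas you attempt a self-contained derivation via $(1-a^{p-1})^4\equiv 0\pmod{p^4}$. The reduction $a^{1-k}\equiv 4a^{p-k}-6a^{2p-k-1}+4a^{3p-k-2}-a^{4p-k-3}$ is correct, and so is your identification of the error terms as multiples of $\Delta^3A_1=A_4-3A_3+3A_2-A_1$, where $A_m=\widehat{B}_{(m-1)(p-1)+p-k}$.

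The gap is in the final absorption step for odd $k$. The error at the $p^1$-level is $(k+3)\Delta^3A_1\cdot p$, so killing it modulo $p^4$ requires $\Delta^3A_1\equiv 0\pmod{p^3}$. But the Kummer--Sun congruence (\ref{Kummer-Sun}) is a statement modulo $p^2$ only: it makes $m\mapsto A_m$ linear modulo $p^2$ and hence gives $\Delta^3A_1\equiv 0\pmod{p^2}$, which after multiplication by $p$ yields only $O(p^3)$ --- one power of $p$ short. Your claim that this is ``exactly the precision needed after accounting for the extra factor of $p$'' is therefore false, and as written your argument establishes the first congruence only modulo $p^3$. (The $p^2$-level error $-4\Delta^3A_1\cdot p^2$, the $p^3$-coefficient identity $\Delta^4\binom{k+1}{3}=0$, and the entire even-$k$ case are fine, since there mod-$p^2$ information suffices.) The missing statement $\Delta^3A_1\equiv 0\pmod{p^3}$ is in fact true --- it is the third-order Kummer congruence, reflecting the $p$-adic analyticity you allude to --- but it does not follow from (\ref{Kummer-Sun}) and is proved nowhere in the paper; you would need to import it from \cite{ZHS1} or prove it separately, or else do what the paper does and invoke Proposition \ref{FMZV's properties} (\ref{RZA4}) directly for the first congruence.
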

\begin{proof}
	The first congruence is deduced by Proposition \ref{FMZV's properties} (\ref{RZA4}). Let $m$ be an integer greater than $3$. Then, by Faulhaber's formula, we have
	\begin{equation*}
	\begin{split}
	\sum_{a=1}^{p-1}a^m &= \frac{1}{m+1}\sum_{i=1}^{m+1}\binom{m+1}{i}B_{m+1-i}p^i\\
	&\equiv pB_m+\frac{p^2}{2}mB_{m-1}+\frac{p^3}{6}m(m-1)B_{m-2}+\frac{p^4}{24}m(m-1)(m-2)B_{m-3} \pmod{p^4}
	\end{split}
	\end{equation*}
	since $\frac{1}{m+1}\binom{m+1}{i}B_{m+1-i}p^i=p^4\binom{m}{i-1}pB_{m+1-i}\frac{p^{i-5}}{i}$ and $pB_{m+1-i}, p^{i-5}/i$ are $p$-adic integers for $i \geq 5$. If $m=p-k, 2p-k-1$, or $3p-k-2$, then $\frac{p^4}{24}m(m-1)(m-2)B_{m-3}$ also vanishes mod $p^4$ since $B_{m-3}$ is a $p$-adic integer by the von Staudt-Clausen theorem. Since $B_m=0$ if $m\geq 3$ is odd, we have
	\begin{align*}
	&\sum_{a=1}^{p-1}a^{p-k} \equiv
	\begin{cases}
	pB_{p-k}+\frac{p^3}{6}k(k+1)B_{p-k-2} \pmod{p^4} & \text{if $k$ is odd},\\
	\frac{p^2}{2}(p-k)B_{p-k-1} \pmod{p^4} & \text{if $k$ is even},
	\end{cases}\\
	&\sum_{a=1}^{p-1}a^{2p-k-1} \equiv
	\begin{cases}
	pB_{2p-k-1}+\frac{p^3}{6}(k+1)(k+2)B_{2p-k-3} \pmod{p^4} & \text{if $k$ is odd},\\
	\frac{p^2}{2}(2p-k-1)B_{2p-k-2} \pmod{p^4} & \text{if $k$ is even},
	\end{cases}\\
	&\sum_{a=1}^{p-1}a^{3p-k-2} \equiv
	\begin{cases}
	pB_{3p-k-2}+\frac{p^3}{6}(k+2)(k+3)B_{3p-k-4} \pmod{p^4} & \text{if $k$ is odd},\\
	\frac{p^2}{2}(3p-k-2)B_{3p-k-3} \pmod{p^4} & \text{if $k$ is even}.
	\end{cases}
	\end{align*}
	We obtain the desired formulas by changing each $B_m$ to $\widehat{B}_m$ and by Kummer's congruence (\ref{Kummer-Sun}), e.g. 
	\[
	\widehat{B}_{3p-k-\varepsilon -2} \equiv 2\widehat{B}_{2p-k-\varepsilon -1}-\widehat{B}_{p-k-\varepsilon} \pmod{p^2}
	\]
	where $\varepsilon \in \{ 0, 1 \}$.
\end{proof}
\begin{proof}[Proof of Theorem $\ref{appB thm}$]
	We put $\omega := \omega_p$. For $a \in \mathbb{Z}_p^{\times}$, we define $\langle a \rangle$ to be $\omega (a)^{-1}a$. Then $\langle a \rangle \in 1+p\mathbb{Z}_p$. Let \[
	\log_p \colon \mathbb Z_p^\times\rightarrow p\mathbb Z_p 
	\]
	be {\em a $p$-adic logarithm function} and let $\exp$ be {\em the $p$-adic exponential function}, which is the right inverse of $\log_p$. Then we have
	\begin{equation}
	\langle a \rangle = \exp ( \log_p \langle a \rangle ) = \exp (\log_p a) = \exp \left( \frac{1}{p-1}\log_p (a^{p-1}) \right) .
	\label{<a>}\end{equation}
	Let $\alpha := \frac{1}{p-1}\log_p (a^{p-1}) = \frac{1}{p-1}\log_p (1+pq_p(a)) \in p\mathbb{Z}_p$ where $q_p(a)$ is the Fermat quotient (See Definition \ref{def of FQ}). Since we can check easily that $\alpha^n/n! \equiv 0 \pmod{p^4}$ if $n \geq 4$, we have
	\begin{equation}
	\langle a \rangle \equiv 1+\alpha +\frac{\alpha^2}{2}+\frac{\alpha^3}{6} \pmod{p^4}.
	\label{exp}\end{equation} 
	By definition of a $p$-adic logarithm, we have
	\[
	\log_p (1+pq_p(a)) = \sum_{n=1}^{\infty} \frac{(-1)^{n-1}}{n}p^nq_p(a)^n \equiv pq_p(a)-\frac{1}{2}p^2q_p(a)^2+\frac{1}{3}p^3q_p(a)^3 \pmod{p^4}.
	\]
	Therefore, by the congruence (\ref{exp}), we have
	{\footnotesize \begin{equation*}
	\begin{split}
	\langle a \rangle &\equiv 1+\frac{1}{p-1}\left( pq_p(a)-\frac{1}{2}p^2q_p(a)^2+\frac{1}{3}p^3q_p(a)^3 \right) +\frac{1}{2}\left( \frac{1}{p-1} \right)^2 \left( pq_p(a)-\frac{1}{2}p^2q_p(a)^2+\frac{1}{3}p^3q_p(a)^3 \right)^2 \\
	&\hspace{8mm}+\frac{1}{6}\left( \frac{1}{p-1} \right)^3 \left( pq_p(a)-\frac{1}{2}p^2q_p(a)^2+\frac{1}{3}p^3q_p(a)^3 \right)^3 \\
	&\equiv 1-q_p(a)p-(q_p(a)-q_p(a)^2)p^2-\left( q_p(a)-\frac{3}{2}q_p(a)^2+q_p(a)^3 \right) p^3 \pmod{p^4}.
	\end{split}
	\end{equation*}
}Note that the last congruence is obtained by $\frac{1}{p-1}=-\sum_{i=0}^{\infty}p^i$, $\left( \frac{1}{p-1}\right)^2 = \sum_{i=1}^{\infty}ip^{i-1}$, and $\left( \frac{1}{p-1} \right)^3 = -\sum_{i=2}^{\infty}i(i-1)p^{i-2}$. By the binomial expansion, we have
	\begin{equation*}
	\begin{split}
	\langle a \rangle^k &\equiv 1+k\left\{ -q_p(a)p-(q_p(a)-q_p(a)^2)p^2-\left( q_p(a)-\frac{3}{2}q_p(a)^2+q_p(a)^3 \right) p^3\right\} \\
	&\hspace{7mm}+\binom{k}{2}\left\{ -q_p(a)p-(q_p(a)-q_p(a)^2)p^2-\left( q_p(a)-\frac{3}{2}q_p(a)^2+q_p(a)^3 \right) p^3\right\}^2 \\
	&\hspace{7mm}+\binom{k}{3}\left\{ -q_p(a)p-(q_p(a)-q_p(a)^2)p^2-\left( q_p(a)-\frac{3}{2}q_p(a)^2+q_p(a)^3 \right) p^3\right\}^3 \\
	&\equiv 1-kq_p(a)p-\left\{ kq_p(a)-\binom{k+1}{2}q_p(a)^2 \right\} p^2 \\
	&\hspace{7mm}- \left\{ kq_p(a)-\left( k^2+\frac{k}{2} \right) q_p(a)^2+\frac{k^3+3k^2-4k}{6}q_p(a)^3 \right\} p^3\pmod{p^4}
	\end{split}
	\end{equation*}
	Hence, by the equality (\ref{B_1}), we have
	{\footnotesize \begin{equation*}
	\begin{split}
	&pB_{1, \omega^{-k}} = \sum_{a=1}^{p-1}\omega^{-k}(a)a = \sum_{a=1}^{p-1}a^{1-k}\langle a \rangle^k \\
	&\equiv \sum_{a=1}^{p-1}a^{1-k}-kp\sum_{a=1}^{p-1}a^{1-k}q_p(a)-\left\{ k\sum_{a=1}^{p-1}a^{1-k}q_p(a)-\binom{k+1}{2}\sum_{a=1}^{p-1}a^{1-k}q_p(a)^2 \right\}p^2 \\
	&\hspace{5mm} -\left\{ k\sum_{a=1}^{p-1}a^{1-k}q_p(a)-\left( k^2+\frac{k}{2} \right) \sum_{a=1}^{p-1}a^{1-k}q_p(a)^2+\frac{k^3+3k^2-4k}{6}\sum_{a=1}^{p-1}a^{1-k}q_p(a)^3 \right\}p^3 \pmod{p^4}.
	\end{split}
	\end{equation*}
}Since
	\begin{align*}
	&p\sum_{a=1}^{p-1}a^{1-k}q_p(a) = -\sum_{a=1}^{p-1}a^{1-k}+\sum_{a=1}^{p-1}a^{p-k},\\
	&p^2\sum_{a=1}^{p-1}a^{1-k}q_p(a)^2 = \sum_{a=1}^{p-1}a^{1-k}-2\sum_{a=1}^{p-1}a^{p-k}+\sum_{a=1}^{p-1}a^{2p-k-1},\\
	&p^3\sum_{a=1}^{p-1}a^{1-k}q_p(a)^3 =-\sum_{a=1}^{p-1}a^{1-k}+3\sum_{a=1}^{p-1}a^{p-k}-3\sum_{a=1}^{p-1}a^{2p-k-1}+\sum_{a=1}^{p-1}a^{3p-k-2},\end{align*}
	we have
	{\footnotesize \begin{equation*}
	\begin{split}
	pB_{1, \omega^{-k}} &\equiv \left\{ \frac{k^3+6k^2+5k+6}{6}+\left( k^2+\frac{3}{2}k\right) p +kp^2 \right\} \sum_{a=1}^{p-1}a^{1-k} -\left\{ \frac{k^3+5k^2}{2}+(k^2+k)p+kp^2 \right\} \sum_{a=1}^{p-1}a^{p-k} \\
	&\hspace{5mm}+\left\{ \frac{k^3+4k^2-3k}{2}+\left( k^2+\frac{k}{2} \right) p \right\} \sum_{a=1}^{p-1}a^{2p-k-1} -\frac{k^3+3k^2-4k}{6}\sum_{a=1}^{p-1}a^{3p-k-1} \pmod{p^4}.
	\end{split}
	\end{equation*}
}If $k$ is odd, by Lemma \ref{app lemma}, we have
	{\footnotesize \begin{equation*}
	\begin{split}
	pB_{1, \omega^{-k}} &\equiv \left\{ \frac{k^3+6k^2+5k+6}{6}+\left( k^2+\frac{3}{2}k\right) p +kp^2 \right\} \left\{ -(k-1)(3\widehat{B}_{p-k}-3\widehat{B}_{2p-k-1}+\widehat{B}_{3p-k-2})p  \right.\\
	&\hspace{5mm}\left. -\binom{k+1}{3}\widehat{B}_{p-k-2}p^3 \right\}\\
	&\hspace{5mm} -\left\{ \frac{k^3+5k^2}{2}+(k^2+k)p+kp^2 \right\} \left\{ -k\widehat{B}_{p-k}p+\widehat{B}_{p-k}p^2-\binom{k+2}{3}\widehat{B}_{p-k-2}p^3\right\} \\
	&\hspace{5mm}+\left\{ \frac{k^3+4k^2-3k}{2}+\left( k^2+\frac{k}{2} \right) p \right\} \left\{ -(k+1)\widehat{B}_{2p-k-1}p+2\widehat{B}_{2p-k-1}p^2-\binom{k+3}{3}\widehat{B}_{p-k-2}p^3\right\} \\
	&\hspace{5mm}-\frac{k^3+3k^2-4k}{6}\left\{ -(k+2)\widehat{B}_{3p-k-2}p+3(2\widehat{B}_{2p-k-1}-\widehat{B}_{p-k})p^2-\binom{k+4}{3}\widehat{B}_{p-k-2}p^3\right\}\\
	&\equiv \left\{ \frac{k^2-k+6}{2}\widehat{B}_{p-k}-(k^2-2k+3)\widehat{B}_{2p-k-1}+\frac{k^2-3k+2}{2}\widehat{B}_{3p-k-2} \right\} p \\
	&\hspace{5mm}+ k(\widehat{B}_{p-k}-\widehat{B}_{2p-k-1})p^2-k\widehat{B}_{p-k-2}p^3 \pmod{p^4}.
	\end{split}
	\end{equation*}
}If $k$ is even, then
	\begin{equation*}
	\begin{split}
	pB_{1, \omega^{-k}} &\equiv \left\{ \frac{k^3+6k^2+5k+6}{6}+\left( k^2+\frac{3}{2}k\right) p +kp^2 \right\} \left\{ \binom{k}{2}(2\widehat{B}_{p-k-1}-\widehat{B}_{2p-k-2})p^2 \right\}\\
	&\hspace{5mm} -\left\{ \frac{k^3+5k^2}{2}+(k^2+k)p+kp^2 \right\} \left\{ \binom{k+1}{2}\widehat{B}_{p-k-1}p^2-\frac{2k+1}{2}\widehat{B}_{p-k-1}p^3\right\} \\
	\end{split}
	\end{equation*}
	\begin{equation*}
	\begin{split}
	&\hspace{5mm}+\left\{ \frac{k^3+4k^2-3k}{2}+\left( k^2+\frac{k}{2} \right) p \right\} \left\{ \binom{k+2}{2}\widehat{B}_{2p-k-2}p^2-(2k+3)\widehat{B}_{p-k-1}p^3\right\} \\
	&\hspace{5mm}-\frac{k^3+3k^2-4k}{6}\left\{ \binom{k+3}{2}(2\widehat{B}_{2p-k-2}-\widehat{B}_{p-k-1})p^2 -\frac{3}{2}(2k+5)\widehat{B}_{p-k-1}p^3\right\}\\
	&\equiv -3k(\widehat{B}_{p-k-1}-\widehat{B}_{2p-k-2})p^2 \pmod{p^4}.
	\end{split}
	\end{equation*}
	This completes the proof.
\end{proof}
\section{Table of sufficient conditions for congruences}
\label{sec:Table of sufficient conditions for congruences}
In this appendix, we give a sufficient condition that the congruence obtained as each $p$-component of the special value of a FMP holds. First, we list the equalities whose $p$-component congruences hold for all prime numbers: {\footnotesize Theorem \ref{introthm} (\ref{fn eq 1}) and (\ref{fn eq 2}), Proposition \ref{FMZV's properties} (\ref{reverse for FMZV}) and (\ref{Hoffman's duality for FMZV}), Proposition \ref{reverse prop} (\ref{reverse A_2}), (\ref{reverse1}), and (\ref{reverse2}), Theorem \ref{MTA} (\ref{main functional equation}), Corollary \ref{MTB} (\ref{substitution functional equation}), Remark \ref{1-var remark} (\ref{gen. of Hoffman's duality}) and (\ref{Hoffman-Zhao duality}), Theorem \ref{MTC} (\ref{A_n formula}), Corollary \ref{zeta corollary} (\ref{nonstar-star}), (\ref{nonstar and star2}), and (\ref{non-star and star}), Proposition \ref{dist} (\ref{dist0}), Proposition \ref{A_2 aux} (\ref{trivial formula}), Subsection \ref{subsec:Relation between Ono-Yamamoto's FMPs and our FMPs} (\ref{reverse for OY}), Proposition \ref{OY-SS} (\ref{OYSS}), and Corollary \ref{OY k_1, k_2} (\ref{OY2}) and (\ref{OY3})}. Here, we understand non-star sum (e.g. the $p$-component of $\zeta_{\mathcal{A}}(\Bbbk)$) as $0$ if $p \leq \dep (\Bbbk)$.  
\begin{table}[htb]
	\caption{Lemma \ref{vanish lemma}, Proposition \ref{FMZV's properties}, Corollary \ref{zeta corollary}, Theorem \ref{PPT theorem}, Lemma \ref{multi-FPL}, Theorem \ref{dist thm}, Corollary \ref{ZS lemma}, Lemma \ref{aux}, Lemma \ref{A_2 aux}, Proposition \ref{special values in A}, Theorem \ref{A_2 theorem}, Lemma \ref{FP values}, Proposition \ref{Fermat quotient}, Proposition \ref{four prop}, Lemma \ref{harmonic alt}, Lemma \ref{A_2 alt}, Theorem \ref{FHMP special values}, Remark \ref{TZ remark}, Theorem \ref{A_2 multiple thm}, Proposition \ref{OY special values}, Theorem \ref{appB thm}, and Corollary \ref{app cor}}
	\begin{tabular}{|c||c|c|c|c|}
		\hline
		number &(\ref{A_2 vanish})&(\ref{Zhou-Cai})&(\ref{RZA3})&(\ref{RZA4})\\ \hline
		condition &$p>n$&$p>mk+2$&$p>k+3$&$p>k+4$\\ \hline \hline
		number &(\ref{dep2 for FMZV})&(\ref{dep2 A_2}) and (\ref{dep2 A_2 star})&(\ref{dep3 for FMZV})&(\ref{sym sum})\\ \hline
		condition &$p>k_1+k_2-1$&$p>w+1$ &$p>w'$&$p>\wt (\Bbbk)+1$\\ \hline \hline
		number &(\ref{A-S for FMZV})&(\ref{Tauraso-Zhao})&(\ref{PPT1}) and (\ref{PPT2})&(\ref{1^m, 1}), (\ref{MT Fn-Eq}), (\ref{dist1}) - (\ref{m, -1})\\ \hline
		condition &$p>k_2$&$p>mk+1$&$p>k_1+k_2+1$&$p>m+1$\\ \hline \hline
		number &(\ref{k_1, k_2, -1, non-star}) and (\ref{k_1, k_2, -1, star})&(\ref{A_2, m, -1})&(\ref{57}) - (\ref{60})&(\ref{easy gen of Sun}) and (\ref{gen of Sun})\\ \hline
		condition &$p>w+1 $&$p>m+2$&$p>w+2$&$p>m+1$\\ \hline \hline
		number &(\ref{k_1, k_2, 2, non-star}) - (\ref{k_1, k_2, 2, star})&(\ref{A_2, m, 2}) and (\ref{A_2, m, 1/2})&(\ref{FP1}) - (\ref{A, 1^3, 1/2})&(\ref{91}) - (\ref{four})\\ \hline
		condition &$p>w+1$&$p>m+2$&$p>3$&$p>5$\\ \hline \hline
		number &(\ref{odd}) and (\ref{even bullet})&(\ref{B to q})&(\ref{-1, -1, 1}) - (\ref{-1, 1, -1})&(\ref{1, -1, -1 and -1, -1, 1}) - (\ref{1, -1, 1})\\ \hline
		condition  &$p>k_1+k_2+1$&$p>m+2$&$p>w+1$&$p>3$\\ \hline \hline
		number &(\ref{new Zhao}) and (\ref{new Zhao star})&(\ref{k_1=k_2=1}) and (\ref{k_1=k_2=1, star})&(\ref{109}) and (\ref{1102})&$\begin{array}{c}(\ref{MV1'}) \text{-} (\ref{MV1.5}), (\ref{MV2.5}), \\(\ref{MV2}), (\ref{reverse mult1}), (\ref{MV3})\end{array}$\\ \hline
		condition &$p>k_1+k_2+2$&$p>3$&$p>5$&$p>w+1$\\ \hline \hline
		number &(\ref{B to q 2})&$\begin{array}{l}(\ref{MV2.7}), (\ref{reverse mult2}), \\ (\ref{MV4'}), (\ref{MV4})\end{array}$&(\ref{MV5'''}) - (\ref{MV6})&(\ref{MV7'}) and (\ref{MV7})\\ \hline
		condition &$p>m+2$&$p>m+1$& $p>w'+1$&$p>w+2$\\ \hline \hline
		number &(\ref{(1,2) 2, 1/2}) - (\ref{2, 1, 1/2=0}), (\ref{1^3 non-star})&(\ref{A_2 mult nonstar}) and (\ref{A_2 multiple})&(\ref{110}) - (\ref{113})&(\ref{OYsp1})\\ \hline 
		condition &$p>3$&$p>w+1$&$p>3$&$p>\wt (\Bbbk)+1$\\ \hline \hline
		number  &(\ref{OY 1^2}) - (\ref{OY 1^3})&(\ref{A_3 app odd}) and (\ref{A_3 app even})&(\ref{A_2 app})&(\ref{A app})\\ \hline
		condition &$p>3$&$p>k+4$&$p>k+3$&$p>k+2$\\ \hline
	\end{tabular}
\end{table}

For example, (\ref{Zhou-Cai}) in the above table means that 
\[
\zeta_{p-1}^{\star}(\{ k \}^m) := \sum_{p-1\geq n_1\geq \cdots \geq n_m\geq 1}\frac{1}{n_1^k\cdots n_m^k} \equiv k\frac{B_{p-mk-1}}{mk+1}p \pmod{p^2}
\]
holds for any positive integers $m, k$ and any prime $p$ satisfying $p> mk+2$. The mod $p$ version
\[
\zeta_{p-1}^{\star}(\{k\}^m) \equiv 0 \pmod{p}
\]
holds for $p>mk+1$ (cf. \cite[Theorem 4.4]{Ho}).

\section*{Acknowledgements}
The authors would like to thank Prof. Masanobu Kaneko for helpful comments, correcting several historical
mistakes, and giving them his manuscript for the finite multiple zeta values.
They also thank their advisor Prof.\ Tadashi Ochiai and Sho Ogaki for reading the manuscript carefully.

\bigskip

\address{\sc
	Department of Mathematics, Graduate School of Science Osaka University Toyonaka, Osaka 560-0043 Japan
}\\
{\it E-mail address}: {k-sakugawa@cr.math.sci.osaka-u.ac.jp}

\vspace{5mm}
\noindent

\address{\sc
	Department of Mathematics, Graduate School of Science Osaka University Toyonaka, Osaka 560-0043 Japan
}\\
{\it E-mail address}: {shinchan.prime@gmail.com}


\begin{thebibliography}{99}
	\bibitem{CD}
	M. Chamberland, K. Dilcher, \emph{Divisibility properties of a class of binomial sums}, J. Number Theory \textbf{120} (2006), no.\ 2, 349--371. 
	\bibitem{D} 
	K. Dilcher, \emph{Some $q$-series identities related to divisor functions}, Discrete Math.\ \textbf{145} (1995), no.\ 1--3, 83--93.
	\bibitem{DS} 
	K. Dilcher, L. Skula, \emph{The cube of the Fermat quotient}, Integers \textbf{6} (2006), A24, 12 pp. 
	\bibitem{EG} 
	P. Elbaz-Vincent, H. Gangl, \emph{On poly(ana)logs I}, Compositio Math.\ \textbf{130} (2002), no.\ 2, 161--214.
	\bibitem{E} 
	L. Euler, \emph{Demonstratio insignis theorematis numberici circa unicias potestatum binomialium}, Nova Acta Academiae Scientarum Imperialis Petropolitinae \textbf{15}, (1799/1802), 33--43.
	\bibitem{He} 
	S. Bang, J. E. Dawson, A. N. 't Woord, O. P. Lossers, V. Hernandez, \emph{Problems and Solutions: Solutions: A Reciprocal Summation Identity: 10490}, Amer.\ Math.\ Monthly \textbf{106} (1999), no.\ 6, 588--590. 
	\bibitem{Ho2} 
	M. Hoffman, \emph{Quasi-shuffle products}, J. Algebraic Combin.\ \textbf{11} (2000), no.\ 1, 49--68.
	\bibitem{Ho} 
	M. Hoffman, \emph{Quasi-symmetric functions and mod $p$ multiple harmonic sums}, Kyushu J. Math.\ \textbf{69} (2015), 345--366.
	\bibitem{IKOO} 
	K. Ihara, J. Kajikawa, Y. Ohno, J. Okuda, \emph{Multiple zeta values vs.\ multiple zeta-star values}, J. Alegebra \textbf{332} (2011), 187--208.
	\bibitem{K} 
	M. Kaneko, \emph{Finite multiple zeta values}, RIMS K\^oky\^uroku Bessatsu (Japanese), to appear.
	\bibitem{KZ} 
	M. Kaneko, D. Zagier, \emph{Finite multiple zeta values}, in preparation.
	\bibitem{Ka2} 
	G. Kawashima, \emph{A class of relations among multiple zeta values}, J. Number Theory \textbf{129} (2009), no.\ 4, 755--788. 
	\bibitem{KT} 
	G. Kawashima, T. Tanaka, \emph{Newton series and extended derivation relations for multiple $L$-values}, preprint arXiv:0801.3062.
	\bibitem{Ko} 
	M. Kontsevich, \emph{The $1\frac{1}{2}$-logarithm}, Appendix to ``On poly(ana)logs I'' by Elbaz-Vincent, P., Gangl, H., Compositio Math.\ \textbf{130} (2002), no.\ 2, 211--214.
	\bibitem{K02} 
	M. Kontsevich, \emph{Holonomic $\mathcal{D}$-modules and positive characteristic}, Jpn.\ J. Math.\ \textbf{4} (2009), no.\ 1, 1--25.
	\bibitem{M} 
	R. Me\v{s}trovi\'c, \emph{On the mod $p^2$ determination of $\sum_{k=1}^{p-1}H_k/(k\cdot 2^k)$: another proof of a conjecture by Sun}, Publ.\ Math.\ Debrecen \textbf{82} (2013), no.\ 1, 107--123.
	\bibitem{M2} 
	R. Me\v{s}trovi\'c, \emph{Congruences involving the Fermat quotient}, Czechoslovak Math.\ J. \textbf{63}(138) (2013), no.\ 4, 949--968.
	\bibitem{MT2} 
	S. Mattarei, R. Tauraso, \emph{Congruences of multiple sums involving sequences invariant under the binomial transform}, J. Integer Seq., \textbf{13} (2010), no.\ 5, Article 10.\ 5.\ 1, 12 pp. 
	\bibitem{MT1}
	S. Mattarei, R. Tauraso, \emph{Congruences for central binomial sums and finite polylogarithms}, J. Number Theory, \textbf{133} (2013), no.\ 1, 131--157.
	\bibitem{PPT}
	Kh.\ H. Pilehrood, T. H. Pilehrood, R. Tauraso, \emph{New properties of multiple harmonic sums modulo $p$ and $p$-analogues of Leshchiner's series}, Trans.\ Amer.\ Math.\ Soc.\ \textbf{366} (2014), no.\ 6, 3131--3159.
	\bibitem{OY} 
	M. Ono, S. Yamamoto, \emph{Shuffle product of finite multiple polylogarithms}, preprint arXiv:1502.06693.
	\bibitem{Si} 
	J. H. Silverman, \emph{Wieferich's criterion and the abc-conjecture}, J. Number Theory \textbf{30} (1988), no.\ 2, 226--237.
	\bibitem{ZHS1} 
	Z. H. Sun, \emph{Congruences concerning Bernoulli numbers and Bernoulli polynomials}, Discrete Appl.\ Math. \textbf{105} (2000), no.\ 1--3, 193--223. 
	\bibitem{ZHS2} 
	Z. H. Sun, \emph{Congruences involving Bernoulli and Euler numbers}, J. Number Theory \textbf{128} (2008), no.\ 2, 280--312.
	\bibitem{ZWS} 
	Z. W. Sun, \emph{Arithmetic theory of harmonic numbers}, Proc.\ Amer.\ Math.\ Soc. \textbf{140} (2012), no.\ 2, 415--428.
	\bibitem{SZ} 
	Z. W. Sun, L. L. Zhao, \emph{Arithmetic theory of harmonic numbers (II)}, Colloq.\ Math.\ \textbf{130} (2013), no.\ 1, 67--78.
	\bibitem{T} 
	R. Tauraso, \emph{Congruences involving alternating multiple harmonic sums}, Electron.\ J.\ Combin.\ \textbf{17} (2010), no.\ 1, Research Paper 16, 11 pp.
	\bibitem{TZ} 
	R. Tauraso, J. Zhao, \emph{Congruences of alternating multiple harmonic sums}, J. Comb.\ Number Theory \textbf{2} (2010), no.\ 2, 129--159.
	\bibitem{Wa} 
	L. C. Washington, \emph{Introduction to cyclotomic fields. Second edition},\ Graduate Texts in Mathematics, \textbf{83}, Springer-Verlag, New York, 1997.
	\bibitem{Y} 
	S. Yamamoto, \emph{Interpolation of multiple zeta and zeta-star values}, J. Algebra, Volume \textbf{385} (2013), 102--114.
	\bibitem{Z} 
	J. Zhao, \emph{Wolstenholme type theorem for multiple harmonic sums}, Int.\ J. Number Theory \textbf{4} (2008), no. 1, 73--106.
	\bibitem{ZS} 
	L. L. Zhao, Z. W. Sun, \emph{Some curious congruences modulo primes}, J. Number Theory \textbf{130} (2010), no.\ 4, 930--935.
	\bibitem{Zl} 
	S. A. Zlobin, \emph{Generating functions for the values of a multiple zeta function}, Vestnik Moskov.\ Univ.\ Ser.\ I Mat.\ Mekh.\ 2005, no.\ 2, 55--59, {\bf 73}; translation in Mosccow Univ.\ Math.\ Bull.\ \textbf{60} (2005), no.\ 2, 44--48.
\end{thebibliography}
\end{document}